\newcommand{\cD}{\mathcal{D}}
\newcommand{\cE}{\mathcal{E}}
\newcommand{\cF}{\mathcal{F}}
\newcommand{\cM}{\mathcal{M}}
\newcommand{\cO}{\mathcal{O}}
\newcommand{\cP}{\mathcal{P}}
\newcommand{\cX}{\mathcal{X}}
\newcommand{\cY}{\mathcal{Y}}
\newcommand{\bA}{\mathbb{A}}
\newcommand{\bC}{\mathbb{C}}
\newcommand{\bE}{\mathbb{E}}
\newcommand{\bF}{\mathbb{F}}
\newcommand{\bG}{\mathbb{G}}
\newcommand{\bP}{\mathbb{P}}
\newcommand{\bQ}{\mathbb{Q}}
\newcommand{\bR}{\mathbb{R}}
\newcommand{\bZ}{\mathbb{Z}}
\newcommand{\fa}{\mathfrak{a}}
\newcommand{\fb}{\mathfrak{b}}
\newcommand{\fc}{\mathfrak{c}}
\newcommand{\fd}{\mathfrak{d}}
\newcommand{\fm}{\mathfrak{m}}
\newcommand{\fp}{\mathfrak{p}}
\newcommand{\fq}{\mathfrak{q}}
\newcommand{\ft}{\mathfrak{t}}
\newcommand{\fI}{\mathfrak{I}}
\newcommand{\fD}{\mathfrak{D}}
\newcommand{\fS}{\mathfrak{S}}
\renewcommand{\Re}{\operatorname{Re}}
\DeclareMathOperator{\Aut}{Aut}
\DeclareMathOperator{\Gal}{Gal}
\DeclareMathOperator{\GL}{GL}
\DeclareMathOperator{\SL}{SL}
\DeclareMathOperator{\Spec}{Spec}
\DeclareMathOperator{\Tr}{Tr}
\DeclareMathOperator{\aff}{aff}
\DeclareMathOperator{\dual}{dual}
\DeclareMathOperator{\id}{id}
\DeclareMathOperator{\im}{im}
\DeclareMathOperator{\hyp}{hyp}
\DeclareMathOperator{\mult}{mult}
\DeclareMathOperator{\ord}{ord}
\DeclareMathOperator{\rk}{rank}
\DeclareMathOperator{\trace}{trace}
\DeclareMathOperator{\wt}{wt}
\theoremstyle{plain}% default style
\newtheorem{theorem}{Theorem}[section]
\newtheorem{lemma}[theorem]{Lemma}
\newtheorem{proposition}[theorem]{Proposition}
\newtheorem{corollary}[theorem]{Corollary}
\theoremstyle{definition} % definition style
\newtheorem{definition}{Definition}
\newtheorem{conjecture}{Conjecture}
\newtheorem{remark}{Remark}
\newtheorem*{notation}{Notation}
\theoremstyle{remark} % remark style
\newtheorem*{example}{Example}
\DeclareFontFamily{U}{wncy}{}
\DeclareFontShape{U}{wncy}{m}{n}{<->wncyr10}{}
\DeclareSymbolFont{mcy}{U}{wncy}{m}{n}
\DeclareMathSymbol{\Sha}{\mathord}{mcy}{"58}
\newcommand{\lp}{\left(}
\newcommand{\rp}{\right)}
\newcommand{\la}{\langle}
\newcommand{\ra}{\rangle}
\newcommand{\lara}[1]{\la #1 \ra}
\newcommand{\lbrb}[1]{\lp #1 \rp}
\newcommand{\lcrc}[1]{\left\{ #1 \right\}}
\title{The average analytic rank of elliptic curves with prescribed level structure}
\author{Peter J. Cho}
\address{Department of Mathematical Sciences, Ulsan National Institute of Science and Technology, UNIST-gil 50, Ulsan 44919, Korea}
\email{petercho@unist.ac.kr}
\author{Keunyoung Jeong}
\address{Department of Mathematics Education, Chonnam National University, 77, Yongbong-ro, Buk-gu, Gwangju 61186, Korea}
\email{keunyoung@jnu.ac.kr}
\author{Junyeong Park}
\address{Department of Mathematics Education, Chonnam National University, 77, Yongbong-ro, Buk-gu, Gwangju 61186, Korea}
\email{junyeongp@gmail.com}
\numberwithin{equation}{section}
\subjclass{11G05, 11M26 (primary), 11F72, 14D23 (secondary)}
\begin{document}

\begin{abstract}
Assuming the Hasse--Weil conjecture and the generalized Riemann hypothesis for the $L$-functions of elliptic curves, we establish an upper bound of the average analytic rank of elliptic curves over a number field with a level structure such that the corresponding compactified moduli stack is representable by the projective line.
\end{abstract}

\maketitle

\section{Introduction}

Research on the distribution of analytic ranks of elliptic curves has been actively conducted under the generalized Riemann hypothesis for the $L$-functions of elliptic curves.
Under this assumption, Brumer \cite{Bru} first showed that the average analytic rank of the elliptic curves over the rationals is less than $2.3$. 
Heath-Brown \cite{Hea} refined it by $2$. Later, Young \cite{You} improved it further by $\frac{25}{14}$.
In \cite{CJ1}, Cho and Jeong studied the distribution of analytic ranks of elliptic curves over the rationals by estimating the $n$-level density with multiplicity, and in \cite{CJ2} they gave an explicit upper bound of the average analytic rank of elliptic curves with a prescribed torsion subgroup under certain moment conditions.\footnote{Some errors in \cite{CJ2} were addressed in its corrigendum \cite{CJ3}, where the moment conditions were removed.} Recently, Philips \cite{Phi2} showed that the average analytic rank of elliptic curves over a number field $K$ of degree $d$ is at most $\frac{9}{2}d + \frac{1}{2}$.
The main result of this paper is an explicit upper bound for the average analytic rank of elliptic curves over a number field with a prescribed level structure.

To describe the setting more precisely, we introduce several definitions and employ the following notation.
\begin{notation} Throughout this article, the following notation will be used without further comment.
\begin{itemize}
    \item Given a stack $\mathcal{M}$ in groupoids over the category of schemes, for each ring $R$ we denote by $\underline{\mathcal{M}}(R)$ the \emph{groupoid of $R$-points}. We denote by $\mathcal{M}(R)$ the \emph{set of isomorphism classes} of $\underline{\mathcal{M}}(R)$.
    \item A stack is \emph{representable} if it is representable by a scheme. Then the representability of ($1$-)morphisms of stacks is correspondingly defined.
    \item Given a number field $K$ and a nonzero prime $\mathfrak{p}\subseteq\mathcal{O}_K$, we denote
    \begin{itemize}
        \item $K_\mathfrak{p}$ the $\mathfrak{p}$-adic completion of $K$,
        \item $\mathcal{O}_{K,\mathfrak{p}}\subseteq K_\mathfrak{p}$ the valuation ring, and
        \item $\kappa(\mathfrak{p})$ the residue field at $\mathfrak{p}$.
    \end{itemize}
    \item For a positive integer $N$, denote $\zeta_N$ any primitive $N$-th root of unity.
\end{itemize} 
\end{notation}
Let $\Gamma$ be a congruence subgroup, and let $\cX_{\Gamma}$ be the moduli stack of elliptic curves with level structure $\Gamma$.
There is an isomorphism between $\cX_{\SL_2(\bZ)}$ and $\cP(4, 6)$, a weighted projective stack where the height is canonically given,  over $\mathbb{Z}[1/6]$. We define the naive height of the elliptic curves over a number field as the pullback of the height on $\cP(4, 6)$.
% Also, the naive height of an elliptic curve over a number field $K$ coincides with the height of the corresponding point of $\cP(4, 6)$.
% So the isomorphism between $\cX_{\SL_2(\bZ)}$ and $\cP(4, 6)$ preserves the height (For details, see section \ref{subsec:countingec}).
Then $\cE_{K, \Gamma}(X)$, the set of $K$-isomorphism classes of elliptic curves over $K$ with naive height less than $X$ and level structure $\Gamma$, is well-defined.
Since we want to count the number of elliptic curves that admit a prescribed level structure,
not the number of pairs of an elliptic curve and a level structure,
$\cE_{K, \Gamma}(X)$ should be identified with a subset of the image of the forgetful functor $\cX_{\Gamma}(K) \to \cX_{\SL_2(\bZ)}(K)$ with a condition on the naive height.

From now on, we assume that the $L$-functions $L(E/K,s)$ of $E$ over $K$ admit analytic continuation. We define the analytic rank of $E$ over $K$ to be the vanishing order of $L(E/K,s)$ at the central point, and denote it by $r_E$. The average analytic rank of elliptic curves over $K$ is given by
\begin{align*}
    \lim_{X \to \infty} \frac{1}{|\cE_{K, \Gamma}(X)|} \sum_{E \in \cE_{K, \Gamma}(X)} r_E,
\end{align*}
even though the existence of the limit is not known yet.
However, Young’s result \cite{You} can be reformulated as
\begin{align*}
    \limsup_{X \to \infty} \frac{1}{|\cE_{\bQ, \SL_2(\bZ)}(X)|} \sum_{E \in \cE_{\bQ, \SL_2(\bZ)}(X)} r_E < \frac{25}{14}.
\end{align*}
Similarly, Theorem 1 of \cite{CJ2} can be rewritten as, for example, 
\begin{align} \label{eqn:resultCJ2}
    \limsup_{X \to \infty} \frac{1}{|\cE_{\bQ, \Gamma_1(N)}(X)|} \sum_{E \in \cE_{\bQ, \Gamma_1(N)}(X)} r_E < 30.5
\end{align}
for $N = 5, 6$.

In addition, we assume that the $L$-functions $L(E/K,s)$ satisfy a standard functional equation and the generalized Riemann hypothesis (GRH).
The GRH is required to relate the average analytic rank to the average value of the test function evaluated at low-lying zeros over these $L$-functions, while holomorphicity and the functional equation are necessary for applying Weil's explicit formula. 
Although assuming such deep conjectures may seem unsatisfactory, we note the following points.
Previous results \cite{Bru, Hea, You, Phi2} also rely on these same conjectures. Moreover, over $\bQ$, the Hasse–Weil conjecture is already established through the modularity theorem for elliptic curves over $\bQ$.
Furthermore, there exist many number fields $K$ for which every elliptic curve is known to be modular (and hence its
$L$-function has analytic continuation and satisfies a functional equation).
On the other hand, we can find many number fields $K$ where an arbitrary elliptic curve is modular (hence, its $L$-function has the analytic continuation and the functional equation).
For instance, elliptic curves over real quadratic fields \cite{FLS}, totally real cubic fields \cite{DNS}, totally real quartic fields not containing $\sqrt{5}$ \cite{Box}, totally real field of degree $5$ with finitely many exceptions \cite{IIY}, and infinitely many imaginary quadratic fields \cite{CN} are known to be modular.

\smallskip

For positive integers $M, N$, we define $\Gamma_1(M, N) \vcentcolon =\Gamma(M) \cap \Gamma_1(MN)$.
The first main theorem of this paper is stated below.
\begin{theorem} \label{mainthm:rankbound}
    Assume the Hasse--Weil conjecture and the GRH for the $L$-functions of elliptic curves $E$ over a number field $K$.
    Let $\Gamma = \Gamma_1(M, N)$ be a congruence subgroup for which $\cX_{\Gamma} \cong \bP^1$ over $\Spec \bZ[1/6MN]$.
    Then there is an explicit constant $c(K, \Gamma)$ satisfying
    \begin{align*}
        \limsup_{X \to \infty} \frac{1}{|\cE_{K, \Gamma}(X)|} \sum_{E \in \cE_{K, \Gamma}(X)} r_E < c(K, \Gamma) + \frac{1}{2}.
    \end{align*}
\end{theorem}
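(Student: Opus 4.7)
\smallskip

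\noindent\emph{Proof proposal.} The plan is to adapt the Brumer--Heath-Brown explicit-formula framework used in \cite{CJ1, CJ2, Phi2}, exploiting the additional rigidity of the hypothesis $\cX_\Gamma \cong \bP^1$, which supplies a universal elliptic curve $\cE \to \bP^1$ whose $K$-fibers parametrize the curves being averaged over.

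First, I would make the counting precise. The forgetful morphism $\cX_\Gamma \to \cX_{\SL_2(\bZ)} \cong \bP(4,6)$ is a map between (weighted) projective lines; pulling back the canonical height on $\bP(4,6)$ endows $\bP^1$ with a height under which $\cE_{K,\Gamma}(X)$ corresponds to a height-bounded subset of $\bP^1(K)$. A Schanuel-type count on $\bP^1(K)$ then produces an asymptotic $|\cE_{K,\Gamma}(X)| \sim C_{K,\Gamma}\, X^{\alpha}$, with explicit $C_{K,\Gamma}$ and exponent $\alpha$ depending on $K$ and on the ramification of the forgetful map, after excluding the $K$-points of $\bP^1$ corresponding to degenerate fibers of the universal family.

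Next, under GRH I would fix a non-negative even Schwartz $\phi$ with $\phi(0)=1$ and $\hat\phi$ supported in $[-\sigma,\sigma]$. Then
\begin{align*}
r_E \leq \sum_{\gamma_E} \phi\!\lbrb{\gamma_E \tfrac{\log C_E}{2\pi}},
\end{align*}
and Weil's explicit formula (justified by the Hasse--Weil conjecture) rewrites the right-hand side as $\hat\phi(0)$ plus a weighted sum over primes $\fp$ of $K$ of terms of the shape $a_E(\fp^k)\,N\fp^{-k/2}\,\hat\phi\lbrb{\frac{k\log N\fp}{\log C_E}}$. Averaging over $E \in \cE_{K,\Gamma}(X)$ and swapping sums reduces the problem to estimating, for each prime $\fp$ with $N\fp \leq X^{\sigma}$, the family average
\begin{align*}
\frac{1}{|\cE_{K,\Gamma}(X)|}\sum_{E \in \cE_{K,\Gamma}(X)} a_E(\fp^k).
\end{align*}
Because the curves in question correspond to $K$-points of $\bP^1$, reducing modulo $\fp$ converts these into character sums over $\bP^1(\cO_K/\fp)$ of traces of Frobenius on fibers of the universal family; Deligne's bound handles $k=1$ with a saving of $N\fp^{-1/2}$, while $k=2$ contributes the explicit main term.

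The hard part will be obtaining uniform control of the above trace sums for all $\fp$ of norm $\leq X^\sigma$, particularly at primes of bad reduction of the universal family, and then bookkeeping the exact dependence of every constant on $K$ (entering through both the analytic conductor bound, which determines the admissible $\sigma$, and through Schanuel's constant in the count). The $+1/2$ in the stated bound reflects the well-known loss coming from the factor $\hat\phi(0)/\phi(0)$, which is bounded below by $1/2$ once $\hat\phi$ is constrained to $[-2,2]$; this is optimal given only GRH. Taking $\phi$ to be the corresponding extremal Selberg--Fejér majorant and consolidating all resulting constants then produces the explicit $c(K,\Gamma)$.
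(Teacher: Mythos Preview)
Your overall framework---explicit formula under GRH/Hasse--Weil, Schanuel-type counting on $\mathbb{P}^1(K)$ via the forgetful map $\mathcal{X}_\Gamma\to\mathcal{X}\cong\mathbb{P}(4,6)$, and averaging Frobenius traces over fibers---matches the paper. But two points are off, and the first is a genuine gap.

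\textbf{The $k=1$ sum does not vanish with Deligne alone.} Your claim that ``Deligne's bound handles $k=1$ with a saving of $N\mathfrak{p}^{-1/2}$'' is correct at a single prime: the family average of $\widehat{a}_E(\mathfrak{p})$ is $O(q^{-1/2})$. But summing this over $\mathfrak{p}$ with $N\mathfrak{p}\le X^\sigma$ gives
\[
S_1 \;\asymp\; \frac{1}{\log X}\sum_{q\le X^\sigma}\frac{\log q}{\sqrt{q}}\cdot O(q^{-1/2})
\;=\; \frac{O(1)}{\log X}\sum_{q\le X^\sigma}\frac{\log q}{q}\;=\;O(1),
\]
not $o(1)$. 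The paper states explicitly that ``the estimation from Deligne's bound is not sufficient'' here. Instead it uses the Kaplan--Petrow trace formula to write the first moment as a sum of Hecke traces on $S_3(\Gamma(n_1,n_2\nu))$ (Proposition~\ref{prop:Hmoment1ref}), and then exploits cancellation \emph{over primes} via a prime number theorem for the associated automorphic $L$-functions (Lemmas~\ref{coeff-sum} and~\ref{lem:Cheb}) to force $S_1=o(1)$. Without this step you would still get \emph{some} explicit bound, but not the one in the table, and your write-up does not account for the residual $O(1)$ term at all.

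\textbf{The origin of the $+\tfrac12$ is misidentified.} The $+\tfrac12$ is not a lower bound on $\widehat\phi(0)/\phi(0)$; in fact the support parameter here is $\sigma<\tfrac{2}{3d\,e(\Gamma)}$, far from $2$, and $\widehat\phi(0)/\phi(0)=1/\sigma$ for the Fej\'er kernel used. The $+\tfrac12$ comes from the $k=2$ sum: one shows $S_2=-\tfrac12\phi(0)+o(1)$ via the second-moment estimate $\sum_a a^2 H_\Gamma(a,q)=q+O(q^{1/2})$ together with $\sum_a H_\Gamma(a,q)=1+O(q^{-1})$. The explicit $c(K,\Gamma)=18d\,e(\Gamma)$ arises from the conductor bound $A_E\ll X^{12}$, which contributes $12\widehat\phi(0)/\phi(0)=12/\sigma$, optimized at the largest admissible $\sigma$.
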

For a number field $K$ of degree $d$, the constant $c(K, \Gamma)$ is given as below. 
\begin{align*}
    \begin{array}{l|ll}
    c(K, \Gamma) & \Gamma \\ \hline 
       18d   & \Gamma_1(5), \Gamma_1(6), \Gamma(2) \cap \Gamma_1(4), \Gamma(3) \\
       36d  & \Gamma_1(7), \Gamma_1(8), \Gamma(2) \cap \Gamma_1(6), \Gamma(4) \\
       54d & \Gamma_1(9), \Gamma_1(10), \Gamma(3) \cap \Gamma_1(6) \\
       72d & \Gamma_1(12), \Gamma(2) \cap \Gamma_1(8) \\
       90d & \Gamma(5)
    \end{array}
\end{align*}

Theorem \ref{mainthm:rankbound} gives the bound $18.5$ when $K = \bQ$ and $\Gamma = \Gamma_1(5), \Gamma_1(6)$, that is better than our previous results (\ref{eqn:resultCJ2}).
See Remark \ref{rmk:estpowcompare} for the reason. 

By studying the average size of Selmer groups of elliptic curves, 
Bhargava and Shankar~\cite{BS, BS15} obtained an \emph{unconditional} upper bound of~$1$ for the average \emph{algebraic} rank of elliptic curves over~$\bQ$. 
In his thesis~\cite{Sha}, Shankar also established an upper bound of~$1.5$ for the average algebraic rank of elliptic curves over a number field with class number~$1$. 
Bhargava and Ho~\cite{BH} obtained an upper bound of~$\tfrac{7}{6}$ (resp.\ $\tfrac{3}{2}$) for the algebraic rank of elliptic curves over~$\bQ$ with $2$-torsion (resp.\ $3$-torsion) using the Bhargava--Shankar method. 
However, no results are currently known for more complicated level structures beyond $\Gamma_1(2)$ and $\Gamma_1(3)$ (see also Laga's thesis~\cite[I.1]{Lag}) or for number fields with class number~$>1$.

We follow the strategy of \cite{CJ2} to prove Theorem \ref{mainthm:rankbound}.
Let $\cE$ be a certain set of isomorphism classes of elliptic curves.
Counting the number of elements in $\cE$ satisfying a local condition naturally leads to a kind of weighted Hurwitz class number, where the weights reflect arithmetic data from $\cE$.
Suppose we further show that the moments of traces of Frobenius, weighted by these Hurwitz class numbers, admit good asymptotic bounds. In that case, we obtain an upper bound for the average analytic rank of $\cE$.
%By using the Eichler--Selberg trace formula of Kaplan--Petrow \cite{KP2}, 
Hence, we reduce the average analytic rank problems to the problem of counting elements in $\cE$ with a ``local condition''.
Here, a ``local condition'' on an elliptic curve $E$ at a prime $\fp$ of $K$ means a condition on the mod $\fp$ reduction $\overline{E}$ of $E$.
For instance, the condition of good (resp. multiplicative, additive) reduction means that the smooth locus $\overline{E}^\mathrm{sm}\subseteq\overline{E}$ is an elliptic curve (resp. the multiplicative group $\bG_m$, the additive group $\bG_a$). 

One of the main contributions of this paper is the count of elliptic curves which admit a prescribed level structure and a local condition at a prime $\fp$ with a power-saving error term which is independent of $\fp$.
There has been extensive work on counting elliptic curves with prescribed level structures, including \cite{HS, PPV, PS, CKV, BN, BS24, MV}, as well as several studies incorporating local conditions, either on elliptic curves themselves \cite{CJ1} or on Weierstrass equations \cite{CS}.
Phillips \cite{Phi1} studied the case where $\cX_{\Gamma}$ is a weighted projective line with a certain technical condition, and suggested an asymptotic formula for the number of elements of $\cE_{\Gamma}(X)$ satisfying a local condition at $\fp$, and infinitely many local conditions.
However, the error term in his result depends on $\fp$.
In contrast, when $\cX_{\Gamma}$ is representable, we establish a count with an error term independent of $\fp$, a feature that is crucial for deriving an upper bound on the average analytic rank.
In this setting, the problem reduces to counting points in weighted projective space, a subject investigated in earlier works such as \cite{Den, Dar, BN}, and more recently in \cite{BM}.
For further details and remarks on previous approaches, see the beginning of section~\ref{sec: phi refine}.

% Let us summarize some recent related works on counting elliptic curves.
% Harron--Snowden \cite{HS} gave an asymptotic growth rate of $|\cE_{\bQ, \Gamma}(X)|$, where $\Gamma$ comes from a torsion subgroup of elliptic curves over the rationals.
% Cullinan--Kenny--Voight \cite{CKV} gave asymptotics of $|\cE_{\bQ, \Gamma}(X)|$ for a larger class of $\Gamma$, though not encompassing all previous cases, with a power saving error term.
% Using the theory of moduli stacks, 
% Bruin--Najman \cite{BN} gave an asymptotic growth rate of $|\cE_{K, \Gamma}(X)|$ for a number field $K$ and a level structure $\Gamma$ such that  $\cX_{\Gamma}$ is a weighted projective stack with some technical conditions
% (See also \cite{Den, Dar}).
% There are several papers that concern counting elliptic curves with the local condition.
% Cho--Jeong \cite{CJ1} counted the number of elements in $\cE_{\bQ, \SL_2(\bZ)}(X)$ with finitely many local conditions at primes $\geq 5$.
% Cremona--Sadek \cite{CS} also counted the elements of $\cE_{\bQ, \SL_2(\bZ)}(X)$ with possibly infinitely many local conditions at any primes.
% Using the theory of moduli stacks, Phillips \cite{Phi1, Phi2} explained how to count the number of elements in $\cE_{K, \Gamma}(X)$ with possibly infinitely many local conditions when $\cX_{\Gamma}$ is isomorphic to a weighted projective line with some technical conditions, different from those of \cite{BN}.

In Cho and Jeong's previous works \cite{CJ1, CJ2}, a local condition was defined simply as a condition on the Weierstrass equation modulo $\fp$.
By adopting the modular stack viewpoint already introduced by Phillips \cite{Phi1, Phi2}, such a condition is now described as a subset of $\cX(\kappa(\fp))$, where $\kappa(\fp)$ is the residue field modulo $\fp$.
This perspective has several advantages.
First, it allows us to overcome the difficulty discussed in \cite[Remark 2]{CJ2}, namely that when one imposes a local condition on a Weierstrass equation, the number of fibers of $J_1 \in \bF_p^2$ and $J_2 \in \bF_p^2$ may differ even if the corresponding elliptic curves are isomorphic (see Remark \ref{rmk:Wvsphi}).
Second, it provides a more intuitive framework and enables generalizations of certain results in \cite{CJ2} (see Remarks \ref{rem:cuspcompareCJ}, \ref{rmk:cusprmk}, and \ref{rmk: HGam and HG}).
As an example, we obtain the following theorem.

\begin{theorem}({Theorem \ref{thm:cuspthm}}) \label{mainthm:cusp}
Let $K$ be a number field, $\Gamma$ a congruence subgroup of genus zero of level $N$ for which $\mathcal{X}_\Gamma \cong \bP^1$ over $\bZ[1/6N]$, $\fp$ a prime of $K$ that does not divide $6N$, and $\kappa(\fp)$ the residue field of $\fp$.
Let $\cX_{\Gamma}^{\mathrm{cusp}}(\kappa(\fp))$ be the set of $\kappa(\fp)$-rational cusps of $\cX_{\Gamma}$.
    Then, the probability that an elliptic curve has multiplicative reduction at $\fp$ is 
    $|\cX_{\Gamma}^{\mathrm{cusp}}(\kappa(\fp))|/|\cX_{\Gamma}(\kappa(\fp))|$.
\end{theorem}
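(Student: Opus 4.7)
The plan is to reduce the statement to a counting problem on the weighted projective line and then invoke Phillips' counting theorem \cite{Phi1}, making use of the reinterpretation of ``local condition'' as a subset of $\cX_{\Gamma}(\kappa(\fp))$ that the paper has already set up. Throughout, I use the height-preserving isomorphism $\cX_{\Gamma}\cong\bP(u)$ recalled in Section \ref{subsec:countingec} together with the mod $\fp$ reduction map on the weighted projective line constructed in Section \ref{subsection:reduction}.

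The first step is to identify multiplicative reduction with the cuspidal locus. Because $\fp$ does not divide the level of $\Gamma$, the stack $\cX_{\Gamma}$ is smooth over $\Spec\cO_{K,\fp}$, and the $\Gamma$-structure spreads out étale locally over the closed fiber. The classical description of the compactified moduli stack identifies its cuspidal locus with the stack of N\'eron polygons equipped with $\Gamma$-structure. Consequently, if $x_E\in\cX_{\Gamma}(K)$ classifies an elliptic curve $E/K$, then the mod $\fp$ reduction of $x_E$ lies in $\cX_{\Gamma}^{\mathrm{cusp}}(\kappa(\fp))$ precisely when the N\'eron model of $E$ over $\cO_{K,\fp}$ has multiplicative reduction; additive reduction is excluded by the coprimality of $\fp$ and the level. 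This gives the desired translation from the local condition ``multiplicative reduction at $\fp$'' into the subset $\cX_{\Gamma}^{\mathrm{cusp}}(\kappa(\fp))\subseteq\cX_{\Gamma}(\kappa(\fp))$.

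The second step is a straightforward application of Phillips' counting theorem. Transport the cuspidal local condition along $\cX_{\Gamma}\cong\bP(u)$ to obtain a subset $S_{\fp}\subset\bP(u)(\kappa(\fp))$ compatible with the reduction map on $\bP(u)$, as arranged in \S \ref{subsection:reduction}. Then \cite{Phi1} gives an asymptotic for the number of $K$-points of $\bP(u)$ of height at most $X$ whose reduction falls in $S_{\fp}$, while the same theorem with $S_{\fp}$ replaced by the full set $\bP(u)(\kappa(\fp))$ returns $|\cE_{K,\Gamma}(X)|$. Dividing, the main terms produce the ratio $|\cX_{\Gamma}^{\mathrm{cusp}}(\kappa(\fp))|/|\cX_{\Gamma}(\kappa(\fp))|$, and the error terms are subdominant as $X\to\infty$.

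The main obstacle is the first step: carefully verifying that the reduction map on $\cX_{\Gamma}(K)$ coming from the moduli interpretation agrees with the reduction map on $\bP(u)(K)$ built in \S \ref{subsection:reduction}, and that the latter sends a $K$-point into the cusps of $\cX_{\Gamma}\otimes\kappa(\fp)$ exactly when the classified elliptic curve acquires multiplicative reduction at $\fp$. Once this compatibility, which is the whole point of the new viewpoint emphasized in the introduction, is established, Phillips' theorem closes the argument mechanically.
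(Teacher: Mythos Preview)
Your approach is correct and essentially the paper's: identify multiplicative reduction with the cuspidal locus, invoke Phillips' count with that local condition, and take the ratio. The paper phrases the local condition downstairs on $\cX(\kappa(\fp))$, applies Proposition~\ref{prop:EKGamz} for each cusp $z$ of $\cX$, and then uses Lemma~\ref{lem:cuspGam} to rewrite $\sum_{z\in\cX^{\mathrm{cusp}}(\bF_q)}|\phi_\Gamma^{-1}(z)|$ as $|\cX_\Gamma^{\mathrm{cusp}}(\bF_q)|$; you shortcut this by working with the cusps of $\cX_\Gamma$ upstairs. One small imprecision to fix: the set counted via \cite{Phi1} (Proposition~\ref{prop:phi411} here) is not the set of $K$-points of $\bP(u)$ but rather the image $\phi_\Gamma(\bP(u)(K))\subset\bP(4,6)(K)$ with the height on $\bP(4,6)$, which is what matches $|\cE_{K,\Gamma}(X)|$; the ratio is unaffected, but your phrasing should reflect this.
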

Here, we define the probability by the proportion of elliptic curves counted by naive height.
%Hence, for almost all prime $\fp$, the probability that an elliptic curve with $\Gamma$-level structure has multiplicative reduction is proportional to the number of cusps of $\cX_{\Gamma}(\kappa(\fp))$.
Another result, which can be easily obtained after understanding the moduli stack heuristic, is the following.
In \cite[Corollary 3.13]{CJ2}, Cho and Jeong gave examples of primes at which the probabilities of having split and non-split multiplicative reduction in the set of elliptic curves with a prescribed torsion subgroup are not equal.
After taking a finite extension of number fields, this phenomenon will disappear at almost all primes if $\cX_{\Gamma}$ is representable.
The precise statement can be found in Corollary \ref{cor:spnonsp}.

Another key step in the proof of Theorem~\ref{mainthm:rankbound} is to establish a bound for the moments of the trace of Frobenius, weighted by a certain variant of the Hurwitz class numbers.
As in \cite{CJ2}, we carry out this estimation using the Eichler--Selberg trace formula of Kaplan and Petrow~\cite{KP2}.
However, we note that \cite{CJ2} contains an error in the estimation of %$\mathbb{E}_p(U_1(t, p)\Phi_A)$, which affects the computation of 
the first moment of the trace of Frobenius weighted by a certain class number. This issue has been corrected in corrigendum \cite{CJ3}. In this paper, we follow the approach of \cite{CJ3}, which employs a variant of the prime number theorem for Hecke eigenforms, rather than relying solely on the Deligne bound.

% Another main step of the proof of Theorem \ref{mainthm:rankbound} is to give a bound of the moments of traces of Frobenius weighted by a certain variant of Hurwitz class numbers.
% As we did in \cite{CJ2}, it can be done using the Eichler--Selberg trace formula of Kaplan--Petrow \cite{KP2}. 
% Unfortunately, there is an error in \cite{CJ2} when estimating $\mathbb{E}_p(U_1(t, p)\Phi_A)$, which affects the estimation of the first moment of the trace of the Frobenius automorphisms.
% In this paper, we give an alternating approach that uses a variant of the prime number theorem for Hecke eigenforms, not just a Deligne bound.
% {\color{cyan} We also note that Theorem \ref{mainthm:cusp} covers more congruence subgroups than Theorem \ref{mainthm:rankbound}. 
% This is because we do not have an appropriate bound of moments of traces of Frobenius when the congruence subgroup is $\Gamma_0(8) \cap \Gamma(2), \Gamma_0(16) \cap \Gamma_1(8),$ or $\Gamma_0(25) \cap \Gamma_1(5)$.
% For a detailed discussion, see Remark \ref{rmk: gen zero cong}.
% }

\smallskip

Theorem \ref{mainthm:rankbound} is proved in Section \ref{sec:average}. 
In the proof, we have 
\begin{align*}
    \frac{1}{|\cE_{K, \Gamma}(X)|} \sum_{E \in \cE_{K, \Gamma}(X)} r_E
    &\leq  \frac{12}{\sigma} + \frac{1}{2} + 
    o(1).
\end{align*}
Here, the constant $\sigma$ is a positive constant such that the support of the Fourier transform of a test function is contained in $[-\sigma,\sigma]$.
Roughly, Katz and Sarnak's philosophy says that the same result holds for any test function with no restriction on supports, and the average comes from the terms not related to $\sigma$ (For other examples, see \cite[Conjecture 3.3]{You}).
%Hence the hypothetical average is $\frac{1}{2}$ under Katz--Sarnak philosophy in this case.
Therefore, we suggest the following conjecture.
\begin{conjecture} \label{conj:main}
    %Let $\Gamma$ be a congruence group of genus zero for which $\cX_{\Gamma} \cong \bP^1$ over $\bZ[1/6]$.
    Let $\Gamma$ be a congruence group such that $\cX_{\Gamma} \cong \bP^1$ over an open subset of $\Spec \bZ[1/6]$.
    The average analytic rank of elliptic curves over a number field which admit a prescribed level structure $\Gamma$ is $\frac{1}{2}$.
\end{conjecture}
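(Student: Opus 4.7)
The plan is to follow the Katz--Sarnak heuristic that the family $\{L(s, E) : E \in \cE_{K, \Gamma}\}$ has orthogonal symmetry type, and consequently its one-level density statistics match those of a random orthogonal matrix ensemble. The proof should begin exactly as in Theorem \ref{thm:rankbound01}: apply Weil's explicit formula to rewrite $r_E$ in terms of a sum over primes of $K$ of $a_\fp(E) \log N(\fp)$ (plus lower-order contributions), average this over $E \in \cE_{K, \Gamma}(X)$, and interchange summation so the inner sum becomes a moment of traces of Frobenius weighted by (a variant of) Hurwitz class numbers. The Eichler--Selberg trace formula of Kaplan--Petrow then reduces the question to explicit analytic estimates of these weighted moments.

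The new ingredient needed is that these moment estimates remain sharp for test functions $\phi$ whose Fourier transforms have \emph{arbitrarily} large support. In Theorem \ref{mainthm:rankbound} the support is constrained to $[-\sigma,\sigma]$, producing the residual term $12/\sigma$. To obtain $\tfrac{1}{2}$, one would establish
\begin{align*}
\frac{1}{|\cE_{K, \Gamma}(X)|} \sum_{E \in \cE_{K, \Gamma}(X)} \sum_{\gamma_E} \phi\!\left(\gamma_E \tfrac{\log X}{2\pi}\right) \longrightarrow \int_{-\infty}^{\infty} \phi(x)\, W_O(x)\, dx
\end{align*}
for every even Schwartz $\phi$, where $W_O$ is the one-level density for the orthogonal symmetry type. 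Combined with the standard inequality $r_E \leq \sum_{\gamma_E}\phi\!\left(\gamma_E \tfrac{\log X}{2\pi}\right)$ for admissible $\phi$, together with an approximation-to-the-indicator argument (choose a sequence $\phi_n$ with $\phi_n(0)=1$, $\phi_n \geq 0$, and $\int \phi_n W_O \to \tfrac{1}{2}$), this yields the conjectured bound, and the matching lower bound from the functional-equation sign contribution.

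The main obstacle is precisely the removal of the support restriction. The bottleneck is the bound on moments $\sum_{E} a_\fp(E)^n$ for large primes $\fp$ beyond the Deligne-type input: genuine cancellation in the family must be exhibited, reflecting the conjectured equidistribution of Frobenius conjugacy classes within the orthogonal ensemble. This lies at the same depth as the Katz--Sarnak density conjecture itself, which remains open even for the classical family $\cE_{\bQ, \SL_2(\bZ)}$. Any approach via the Eichler--Selberg trace formula seems to require fundamentally new estimates for the geometric side at primes $\fp$ with $N(\fp)$ a non-trivial power of $X$, which is where all existing explicit-formula arguments encounter their limit.
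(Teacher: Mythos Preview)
The statement is a \emph{conjecture}, not a theorem: the paper does not prove it. The paper's entire justification is the paragraph immediately preceding Conjecture~\ref{conj:main}, which observes that in the proof of Theorem~\ref{thm:rankbound01} one obtains
\[
\frac{1}{|\cE_{K, \Gamma}(X)|} \sum_{E \in \cE_{K, \Gamma}(X)} r_E \leq \frac{12}{\sigma} + \frac{1}{2} + o(1),
\]
and then appeals informally to the Katz--Sarnak philosophy to argue that the $12/\sigma$ term should disappear if one could take test functions with unrestricted support, leaving $\tfrac{1}{2}$. No further argument is given.

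Your proposal is therefore not a proof to be compared with the paper's proof, because there is none. That said, your analysis is accurate and in fact more detailed than the paper's own motivation: you correctly identify that the mechanism would be establishing the one-level density against $W_O$ for all Schwartz test functions, you correctly locate the obstruction at the support restriction coming from the moment estimates, and you correctly note that this is at the depth of the Katz--Sarnak density conjecture and is open already for $\cE_{\bQ,\SL_2(\bZ)}$. One minor point: your sketch focuses on the upper bound via $r_E \leq \phi(0)^{-1}\sum_{\gamma_E}\phi(\cdots)$ together with an approximation argument, but to get the average \emph{equal} to $\tfrac{1}{2}$ (not just $\leq \tfrac{1}{2}$) one also needs the matching lower bound, which you allude to only in the final clause; the paper does not discuss this direction at all.
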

It is worth mentioning that there is a heuristic on Selmer ranks that makes the $\frac{1}{2}$-conjecture for the algebraic rank of all elliptic curves. 
For a global field $k$ and a prime integer $p$, the average rank of $p$-Selmer groups of elliptic curves is believed to be $p+1$. 
If we further assume that the parity of ranks of elliptic curves is equidistributed, then we can conclude the average algebraic rank of $E/k$ is $\frac{1}{2}$. 
For the details, we refer \cite[Theorem 1.1, Conjecture 1.2]{PR12}.

Finally, we remark that the representability condition on $\cX_{\Gamma}$ is used to define the mod $\fp$ reduction on $\cX_{\Gamma}(K_{\fp})$, to show that the forgetful functor has finite defect, and to control the number of preimages of the forgetful functor.
Since these problems may not require the full strength of representability, we hope that they can be resolved and that our results can be extended accordingly in the near future.

In section \ref{sec:Prestack}, we define a mod $\fp$ reduction map on the rational points of the compactified moduli stack.
In section \ref{sec:counting}, we count the number of elliptic curves over a number field with a level structure and a local condition, and prove Theorem \ref{mainthm:cusp}, assuming Theorem \ref{prop:phi411}.
In section \ref{sec:classnum}, we define the weighted Hurwitz class numbers and give an asymptotic of the moments of traces of Frobenius automorphism weighted by the weighted Hurwitz class numbers.
In section \ref{sec:average}, we give a proof of Theorem \ref{mainthm:rankbound} under the assumption of Theorem \ref{prop:phi411}.
Finally, we consider the problem of counting points in a weighted projective space and prove Theorem \ref{prop:phi411} in section \ref{sec: phi refine}.

\smallskip

\textbf{Acknowledgement} The authors thank Dohyeong Kim for suggesting Theorem \ref{mainthm:cusp}.
We also thank Yeong-Wook Kwon, Daeyeol Jeon, and Chul-hee Lee for the useful discussion, Tristan Phillips for his kind explanations of our countless questions, and the anonymous referee for useful suggestions and numerous corrections, not only in the present paper but also in our earlier work (notably Corollary \ref{cor: Cor of CJ2}).
%the anonymous referee for useful suggestions and many corrections, including Corollary \ref{cor: Cor of CJ2}.

\smallskip

\textbf{Funding}
P. J. Cho was supported by the National Research Foundation of Korea (NRF) grant funded by the Korea government (MSIT) (No. RS-2022-NR069491 and No. RS-2025-02262988). 
K. Jeong was supported by the National Research Foundation of Korea (NRF) grant funded by the Korea government (MSIT) (RS-2024-00341372 and RS-2024-00415601). 
J. Park was supported by the Samsung Science and Technology Foundation under Project Number SSTF-BA2001-02, Basic Science Research Program through the
National Research Foundation of Korea (NRF) funded by the Ministry of Education (RS-2024-00449679), and the National Research Foundation of Korea(NRF) grant funded by the Korea government (MSIT) (RS-2024-00415601).

\section{Preliminaries on moduli stacks} \label{sec:Prestack}

\subsection{Cusps} \label{subsec:cusps}
Let $\Gamma\subseteq\mathrm{SL}_2(\mathbb{Z})$ be a congruence subgroup of level $N$, 
and let $\mathcal{Y}_\Gamma$ be the corresponding moduli stack with $\mathcal{X}_\Gamma$ its compactification. For the modular descriptions of $\mathcal{X}_\Gamma$, we will use Drinfeld level structures on (generalized) elliptic curves as in \cite[Chapter 1]{KM85} and \cite[Section 2.3]{Con}. 
For a ring $R$, the members of $\mathcal{X}_\Gamma(R)$ are written as a pair $(E,\alpha)$ of a generalized elliptic curve $E$ and a level structure $\alpha$ on $E$ defined by $\Gamma$, as in \cite{DR73}, \cite{KM85}, and \cite{Con}.
For simplicity, we denote 
\begin{align*}
    \cX \vcentcolon= \cX_{\SL_2(\bZ)}, \qquad \cY \vcentcolon= \cY_{\SL_2(\bZ)}.
\end{align*}
We also denote by $\phi_\Gamma:\mathcal{X}_\Gamma\rightarrow\mathcal{X}$ the morphism forgetting the corresponding level structure.
If $R$ is a ring where $N$ is invertible, then $\mathcal{Y}_{\Gamma,R}$ is smooth over $\Spec R$ by \cite[Th\'eor\`eme IV.3.4]{DR73}. Hence $\mathcal{Y}_{\Gamma,R}$ is normal (cf. \cite[\href{https://stacks.math.columbia.edu/tag/033C}{Tag 033C}]{Stacks} and \cite[\href{https://stacks.math.columbia.edu/tag/04YE}{Tag 04YE}]{Stacks}). Since $\mathcal{X}_{\Gamma,R}$ is the normalization of $\mathcal{X}_R$ in $\mathcal{Y}_{\Gamma,R}$ by \cite[D\'efinition IV.3.2]{DR73}, we have Cartesian squares:
\begin{align*}
    \xymatrix{
    \mathcal{Y}_{\Gamma,R} \ar[d]_-{\phi_{\Gamma, R}} \ar@{^(->}[r] & \mathcal{X}_{\Gamma,R} \ar[d]^-{\phi_{\Gamma, R}} \\
    \mathcal{Y}_R \ar[d]_-j \ar@{^(->}[r] & \mathcal{X}_R \ar[d]^-j \\
    \mathbb{A}^1_R \ar@{^(->}[r] & \mathbb{P}^1_R
    }
\end{align*}
where $j$ here is the ``$j$-invariant'' or, more precisely, the universal map from the moduli stack to the associated coarse moduli scheme (cf. \cite[VI.1]{DR73}). By \cite[Th\'eor\`eme IV.3.4]{DR73},
\begin{align} \label{eqn: def cX cusp}
    \mathcal{X}_{\Gamma,R}^\mathrm{cusp}\vcentcolon=\mathcal{X}_{\Gamma,R}\setminus\mathcal{Y}_{\Gamma,R}=(j\circ\phi_{\Gamma, R})^{-1}(\infty)
\end{align}
is a substack of $\mathcal{X}_{\Gamma,R}$ finite \'etale over $\Spec R$, which we will call the substack of cusps. In what follows, \emph{cusps of $\mathcal{X}_{\Gamma,R}$} means the elements in $\mathcal{X}_{\Gamma,R}^\mathrm{cusp}(T)$ for some $R$-scheme $T$.

\begin{lemma} \label{lem:cuspGam} 
Let $\Gamma$ be a congruence subgroup of level $N$ and let $q$ be a prime power satisfying $(q, N)=1$.
Then, $\phi_{\Gamma,\mathbb{F}_q}^{-1}(\mathcal{X}_{\mathbb{F}_q}^\mathrm{cusp}(\mathbb{F}_q))=\mathcal{X}_{\Gamma,\mathbb{F}_q}^\mathrm{cusp}(\mathbb{F}_q)$.
\end{lemma}
\begin{proof}
   It follows immediately from the above discussion.
\end{proof}

Let $Y_{\Gamma,R}$ and $X_{\Gamma,R}$ be the coarse moduli scheme of $\mathcal{Y}_{\Gamma,R}$ and $\mathcal{X}_{\Gamma,R}$ over $\Spec R$ respectively. By definition, $j\circ\phi_{\Gamma, R}:\mathcal{Y}_{\Gamma,R}\rightarrow\mathbb{A}^1_R$ factors uniquely through the canonical map $\mathcal{Y}_{\Gamma,R}\rightarrow Y_{\Gamma,R}$, and by \cite[Proposition IV.3.10]{DR73}, $X_{\Gamma,R}$ is isomorphic to the normalization of $\mathbb{P}^1_R$ in $Y_{\Gamma,R}$. Consequently, we get a commutative cube:
\begin{align*}
    \xymatrix@!0@R=3.5pc@C=3.5pc{
    & \mathcal{Y}_{\Gamma,R} \ar[dl]_-{\phi_{\Gamma, R}} \ar[dd]|\hole \ar@{^(->}[rr] & & \mathcal{X}_{\Gamma,R} \ar[dl]_-{\phi_{\Gamma, R}} \ar[dd] \\
    \mathcal{Y}_R \ar[dd]_(.3)j \ar@{^(->}[rr] & & \mathcal{X}_R \ar[dd]_(.3)j & \\
    & Y_{\Gamma,R} \ar[dl] \ar@{^(->}[rr]|\hole & & X_{\Gamma,R} \ar[dl] \\
    \mathbb{A}^1_R \ar@{^(->}[rr] & & \mathbb{P}^1_R &
    }
\end{align*}
where the bottom square and the back square are Cartesian as well. Now $(X_\Gamma\setminus Y_\Gamma)_\mathrm{red}$\footnote{The reduced induced closed subscheme (see \cite[\href{https://stacks.math.columbia.edu/tag/01J4}{Tag 01J4}]{Stacks}).} is the cusp of the coarse moduli in the sense of \cite[8.6.3]{KM85}. Note that (cf. \cite[Definition I.8.1]{DR73}) if $\overline{s}$ is a geometric point of $\Spec R$, then the canonical map $\mathcal{X}_{\Gamma,R}\rightarrow X_{\Gamma,R}$ induces a bijection\footnote{Recall that given a stack $\mathcal{M}$ we denote by $\mathcal{M}(\overline{s})$ the \emph{set of isomorphism classes} in the groupoid $\underline{\mathcal{M}}(\overline{s})$.}
\begin{align*}
    \xymatrix{\mathcal{X}_{\Gamma,R}^\mathrm{cusp}(\overline{s})=\mathcal{X}_{\Gamma,R}(\overline{s})\setminus\mathcal{Y}_{\Gamma,R}(\overline{s}) \ar[r]^-\sim & X_{\Gamma,R}(\overline{s})\setminus Y_{\Gamma,R}(\overline{s})}.
\end{align*}

\begin{lemma} \label{lem:cuspbound}
Let $\Gamma$ be a congruence subgroup of level $N$ and let $q$ be a prime power satisfying $(q, N)=1$. Then $|\mathcal{X}_{\Gamma,\mathbb{F}_q}^\mathrm{cusp}(\mathbb{F}_q)|$ is bounded by a constant depending only on $\Gamma$.
\end{lemma}
\begin{proof} Applying \cite[IV.3]{DR73} to
\begin{align*}
    H=(\Gamma\bmod N)\rtimes(\mathbb{Z}/N\mathbb{Z})^\times\subseteq\mathrm{SL}_2(\mathbb{Z}/N\mathbb{Z})\rtimes(\mathbb{Z}/N\mathbb{Z})^\times\cong\mathrm{GL}_2(\mathbb{Z}/N\mathbb{Z}),
\end{align*}
we conclude that $\phi_\Gamma:\mathcal{X}_\Gamma\rightarrow\mathcal{X}$ is finite and representable of degree $[\mathrm{SL}_2(\mathbb{Z}):\Gamma]$ (One may use the Riemann surface description to get the degree). Hence we have
\begin{align*}
    |\mathcal{X}_{\Gamma,\mathbb{F}_q}^\mathrm{cusp}(\mathbb{F}_q)|\leq[\mathrm{SL}_2(\mathbb{Z}):\Gamma]\cdot|\mathcal{X}_{\mathbb{F}_q}^\mathrm{cusp}(\mathbb{F}_q)|\rlap{\ .}
\end{align*}
By \cite[VI.1.4]{DR73}, we have $|\mathcal{X}_{\mathbb{F}_q}^\mathrm{cusp}(\mathbb{F}_q)|=|j^{-1}[1,0]|\leq2$ so we get
\begin{align*}
    |\mathcal{X}_{\Gamma,\mathbb{F}_q}^\mathrm{cusp}(\mathbb{F}_q)|\leq2[\mathrm{SL}_2(\mathbb{Z}):\Gamma]\rlap{\ .}
\end{align*}
This concludes the proof.
\end{proof}

\subsection{Reduction}\label{subsection:reduction}

Let $K$ be a number field with ring of integers $\mathcal{O}_K$. For each nonzero prime ideal $\mathfrak{p}$ of $\mathcal{O}_K$, a possible obstruction to get the ``reduction modulo $\mathfrak{p}$'' on $\mathcal{X}_\Gamma(K)$ is finding an appropriate section of the usual base change map:
\begin{align}\label{obstruction}
    \xymatrix{-\otimes_{\mathcal{O}_{K,\mathfrak{p}}}K_\mathfrak{p}:\mathcal{X}_\Gamma(\mathcal{O}_{K,\mathfrak{p}}) \ar[r] & \mathcal{X}_\Gamma(K_\mathfrak{p}) & C \ar@{|->}[r] & C\otimes_{\mathcal{O}_{K,\mathfrak{p}}}K_\mathfrak{p}.}
\end{align}
\begin{remark}\label{rmk:obstruction}
The map \eqref{obstruction} is injective by the valuative criterion for properness of algebraic stacks \cite[\href{https://stacks.math.columbia.edu/tag/0CLZ}{Tag 0CLZ}]{Stacks} together with the uniqueness part of the valuative criterion of algebraic stacks \cite[\href{https://stacks.math.columbia.edu/tag/0CLG}{Tag 0CLG}]{Stacks}. Consequently, \eqref{obstruction} has at most one section. This uniqueness also implies that a section of \eqref{obstruction} is independent of parametrizations, i.e., of replacing $\mathcal{X}_\Gamma$ by an isomorphic stack.
\end{remark}

To make the story clear, we first consider the case where $\mathcal{X}_\Gamma$ is representable over $\mathbb{Z}[1/N]$ and the characteristic of the residue field $\kappa(\mathfrak{p})$ is relatively prime to $N$ so that $\mathfrak{p}\in\Spec\mathcal{O}_K[1/N]$. In this case, the canonical map $\mathcal{X}_\Gamma\rightarrow X_\Gamma$ to the associated coarse moduli scheme induces a bijection
\begin{align*}
    \xymatrix{\mathcal{X}_\Gamma(R) \ar[r]^-\sim & X_\Gamma(R)}
\end{align*}
for every ring $R$ where $N$ is invertible. By \cite[Proposition IV 3.10]{DR73} and \cite[\href{https://stacks.math.columbia.edu/tag/0CL6}{Tag 0CL6}]{Stacks}, $X_\Gamma$ is proper over $\mathbb{Z}[1/N]$. Hence, by the valuative criterion for properness \cite[\href{https://stacks.math.columbia.edu/tag/0BX5}{Tag 0BX5}]{Stacks}, the commutative solid diagram
\begin{align*}
    \xymatrix{
    \Spec K_\mathfrak{p} \ar[d] \ar[r] & X_{\Gamma,\mathbb{Z}[1/N]} \ar[d] \\
    \Spec\mathcal{O}_{K,\mathfrak{p}} \ar[r] \ar@{-->}[ur] & \Spec\mathbb{Z}[1/N]
    }
\end{align*}
admits a unique dashed arrow making the diagram commutative, i.e., $X_\Gamma(\mathcal{O}_{K,\mathfrak{p}})\rightarrow X_\Gamma(K_\mathfrak{p})$ is bijective. Since we have assumed that $\mathcal{X}_\Gamma$ is representable, \eqref{obstruction} becomes bijective for each nonzero prime ideal $\mathfrak{p}$ in $\mathcal{O}_K[1/N]$. Therefore, we have only one possible choice, the inverse of this base change map. Using this, we may define the modulo $\mathfrak{p}$ map $\psi_\mathfrak{p}$ on $X_\Gamma(K)$ to be the composite:
\begin{align}\label{reduction-representable}
    \begin{aligned}
    \xymatrixcolsep{4pc}\xymatrix{
    & X_\Gamma(\mathcal{O}_{K,\mathfrak{p}}) \ar[d]^-\wr_-{-\otimes_{\mathcal{O}_{K,\mathfrak{p}}}K_\mathfrak{p}} \ar[r]^-{-\otimes_{\mathcal{O}_{K,\mathfrak{p}}}\kappa(\mathfrak{p})} & X_\Gamma(\kappa(\mathfrak{p}))\rlap{\ .} \\
    X_\Gamma(K) \ar[r]_-{-\otimes_KK_\mathfrak{p}} & X_\Gamma(K_\mathfrak{p}) & 
    }
    \end{aligned}
\end{align}
\begin{remark} \label{reduction-P1}
Later, we will work with representable $\cX_\Gamma$ of genus $0$. We have an isomorphism $X_\Gamma\cong\mathbb{P}^1$ in this case. Then the corresponding map $\psi_\mathfrak{p}:\mathbb{P}^1(K_\mathfrak{p})\rightarrow\mathbb{P}^1(\kappa(\mathfrak{p}))$ is described as follows: Given $x\in\mathbb{P}^1(K_\mathfrak{p})$, we choose a representative $x=[x_0,x_1]$ such that $x_i\in\mathcal{O}_{K,\mathfrak{p}}$ with $\min\{\mathrm{val}_\mathfrak{p}(x_0),\mathrm{val}_\mathfrak{p}(x_1)\}$ is minimal, and then we have $\psi_\mathfrak{p}(x)=[x_0\bmod\mathfrak{p},x_1\bmod\mathfrak{p}]$.
\end{remark}

\begin{proposition} \label{representability}
    The stack $\mathcal{X}_\Gamma$ is representable over $\Spec\mathbb{Z}[1/N]$ for each of the following cases.
    \begin{enumerate}
        \item $\Gamma=\Gamma(N)$ with $N\geq3$.
        \item $\Gamma=\Gamma_1(N)$ with $N\geq5$.
    \end{enumerate}
\end{proposition}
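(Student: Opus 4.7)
The plan is to use the standard criterion that a Deligne--Mumford stack is representable by a scheme (equivalently, an algebraic space, given that the coarse moduli is a scheme) precisely when all of its geometric automorphism groups are trivial. Since $\mathcal{X}_\Gamma$ is a proper smooth Deligne--Mumford stack over $\Spec\mathbb{Z}[1/N]$ with coarse moduli $X_\Gamma$ a scheme by \cite[Proposition IV.3.10]{DR73}, it suffices to show that the automorphism group of every geometric object of $\mathcal{X}_\Gamma$ is trivial.

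For the interior $\mathcal{Y}_\Gamma$, I would handle the two cases separately using the classification of $\Aut(E)$ for an elliptic curve $E$ over an algebraically closed field $k$ of characteristic coprime to $N$ (generically $\{\pm\id\}$, $\mu_4$ at $j=1728$, $\mu_6$ at $j=0$, and a quaternion maximal order case for supersingular curves in the excluded characteristics). For $\Gamma(N)$ with $N\geq 3$: a nontrivial automorphism $\phi$ of $E$ fixes a full level-$N$ basis iff it acts trivially on $E[N]$, and one checks that $\Aut(E)\hookrightarrow\End(E)\to\End(E[N])$ is injective as soon as $N\geq 3$, because every nontrivial unit in the relevant imaginary quadratic order differs from $1$ by an element of norm at most $4$. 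For $\Gamma_1(N)$ with $N\geq 5$: an automorphism $\phi\neq\id$ fixing a point $P$ of exact order $N$ must satisfy $P\in\ker(\phi-\id)$, whose cardinality equals $\deg(\phi-\id)$; running over the possible $\phi\in\Aut(E)$ gives $\deg(\phi-\id)\leq 4$, forcing $N\leq 4$, contradicting $N\geq 5$.

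For the cusps, which form a closed substack of $\mathcal{X}_\Gamma$ finite \'etale over $\Spec\mathbb{Z}[1/N]$ (as established in the preceding discussion), I would invoke the description of the formal completion at a cusp via Tate curves from \cite[Chapitre VII]{DR73}: a cusp corresponds to a N\'eron $n$-gon $C_n$ with $n\mid N$ equipped with a $\Gamma$-structure, and the automorphism group of $C_n$ as a generalized elliptic curve is $\mu_n\rtimes\{\pm\id\}$ (inversion together with rotations of the $n$ components). A case-by-case check of stabilizers of $\Gamma(N)$- and $\Gamma_1(N)$-structures on $C_n$, analogous to the interior computation but applied to the torsion of the smooth locus $C_n^{\mathrm{sm}}\simeq\mathbb{G}_m\times\mathbb{Z}/n\mathbb{Z}$, yields triviality of the stabilizers under the hypotheses $N\geq 3$ and $N\geq 5$ respectively. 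The main obstacle will be this cusp analysis: while the interior case is a classical computation with units in imaginary quadratic orders, correctly parametrizing $\Gamma$-level structures on N\'eron polygons and tracking the $\mu_n\rtimes\{\pm\id\}$-action requires care. In practice, the cleanest route is to cite the full discussion in \cite[Chapitre IV]{DR73} (and in \cite{KM85} for the $\Gamma_1(N)$ case), where this is carried out systematically; the bound $N\geq 5$ rather than $N\geq 4$ for $\Gamma_1(N)$ is precisely what is forced by the cuspidal analysis, consistent with the fact that the paper's table uses $\Gamma(2)\cap\Gamma_1(4)$ in place of $\Gamma_1(4)$ alone.
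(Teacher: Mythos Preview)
Your argument is correct in outline and is essentially the content behind the cited results, but the paper takes a different route: it simply cites \cite[Corollaire 2.9]{DR73} for $\Gamma(N)$ with $N\geq 3$ and \cite[Proposition 2.1]{Gro90} for $\Gamma_1(N)$ with $N\geq 5$, treating both as known. So where you unpack the automorphism-group computation on the interior and at the cusps, the paper outsources the whole thing in one line. Your approach has the advantage of being self-contained and of explaining \emph{why} the thresholds are $N\geq 3$ and $N\geq 5$ (your remark that the cuspidal analysis is what pushes $\Gamma_1$ to $N\geq 5$ is correct and nicely ties into the appearance of $\Gamma(2)\cap\Gamma_1(4)$ in the table); the paper's approach is shorter and points the reader to the precise statements in the literature.

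One small wording issue in your sketch: you refer to the quaternion-order automorphism groups as arising ``in the excluded characteristics,'' but over $\mathbb{Z}[1/N]$ with $(N,6)=1$ characteristics $2$ and $3$ are \emph{not} excluded, so supersingular curves with $|\Aut(E)|=24$ or $12$ do occur. Your argument survives this, since for every nontrivial $\phi$ in those larger groups one still has $\deg(\phi-1)\leq 4$ (the reduced norm computation in the quaternion order gives the same bound), but you should say so explicitly rather than suggesting those cases are absent. Likewise, your justification for the $\Gamma(N)$ case (``units in the relevant imaginary quadratic order'') literally only covers the non-supersingular situation; the same degree bound handles the rest.
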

\begin{proof}
    (1) is \cite[Corollaire IV.2.9]{DR73}. (2) is \cite[Proposition 2.1]{Gro90}.
\end{proof}

\begin{proposition} \label{representability-(M,N)}
Let $\Gamma\subseteq\Sigma\subseteq\mathrm{SL}_2(\mathbb{Z})$ be congruence subgroups. If $\mathcal{X}_\Sigma$ is representable over some base scheme, then so is $\mathcal{X}_\Gamma$ over the same base scheme. Consequently, the following statements hold.
\begin{enumerate}
    \item Denote $\Gamma_1(M,N)\vcentcolon=\Gamma(M)\cap\Gamma_1(MN)$.
    \begin{itemize}
        \item $\mathcal{X}_{\Gamma_1(M,N)}$ for $MN\geq5$ is representable over $\Spec\mathbb{Z}[1/MN]$.
        \item $\mathcal{X}_{\Gamma_1(M,N)}$ for $M\geq3$ is representable over $\Spec\mathbb{Z}[1/M]$.
    \end{itemize}
    \item $\mathcal{X}_{\Gamma_0(16)\cap\Gamma_1(8)}$ is representable over $\Spec\mathbb{Z}[1/2]$.
    \item $\mathcal{X}_{\Gamma_0(25)\cap\Gamma_1(5)}$ is representable over $\Spec\mathbb{Z}[1/5]$.
\end{enumerate}
\end{proposition}
\begin{proof} By assumption, we have the following tower of moduli stacks (over some fixed base scheme):
\begin{align*}
    \xymatrix{
    \mathcal{X}_\Gamma \ar[d] \\
    \mathcal{X}_\Sigma \ar[d] \\
    \mathcal{X}
    }
\end{align*}
By construction, $\mathcal{X}_\Gamma\rightarrow\mathcal{X}$ and $\mathcal{X}_\Sigma\rightarrow\mathcal{X}$ are representable (cf. \cite[IV.3]{DR73}). Given a map from a scheme $t:T\rightarrow\mathcal{X}_\Sigma$, we may take $2$-fibered products:
\begin{align*}
    \xymatrixcolsep{4pc}\xymatrix{
    t^\ast\mathcal{X}_\Gamma \ar[d] \ar[r] & (f\circ t)^\ast\mathcal{X}_\Gamma \ar[d] \ar[r] & \mathcal{X}_\Gamma \ar[d] \\
    T \ar@/_1pc/[dr]_-{\mathrm{Id}_T} \ar[r]^-{(t,\mathrm{Id}_T)} & (f\circ t)^\ast\mathcal{X}_\Sigma \ar[d]_-{\mathrm{pr}_T} \ar[r]^-{\mathrm{pr}_{\mathcal{X}_\Sigma}} & \mathcal{X}_\Sigma \ar[d]^-f \\
    & T \ar[r]_-{f\circ t} & \mathcal{X}
    }
\end{align*}
where we implicitly use the pasting property of $2$-Cartesian squares (cf. \cite[\href{https://stacks.math.columbia.edu/tag/02XD}{Tag 02XD}]{Stacks}). Since $\mathcal{X}_\Sigma$ is representable and $\mathcal{X}$ is a Deligne--Mumford stack, $(f\circ t)^\ast\mathcal{X}_\Sigma$ is representable. Since $\mathcal{X}_\Gamma\rightarrow\mathcal{X}$ is representable, $(f\circ t)^\ast\mathcal{X}_\Gamma$ is representable. Being the $2$-fibered product of representable stacks, $t^\ast\mathcal{X}_\Gamma$ is representable as well. Hence we conclude that $\mathcal{X}_\Gamma\rightarrow\mathcal{X}_\Sigma$ is representable. Therefore, $\mathcal{X}_\Gamma$ is representable.\\
\indent Note that $\mathcal{X}_{\Gamma_1(MN)}$ over $\Spec\mathbb{Z}[1/MN]$ for $MN\geq5$ and $\mathcal{X}_{\Gamma(M)}$ over $\Spec\mathbb{Z}[1/M]$ for $M\geq3$ are representable by Proposition \ref{representability} so the desired representability follows from the above observation.
\end{proof}

\begin{remark} \label{rmk: gen zero cong}
We now list candidate congruence subgroups for which $\cX_{\Gamma}$ is isomorphic to $\bP^1$ over some open subset of $\Spec \bZ[1/6]$. 
%Some of them are known to satisfy the condition, while others remain as possible candidates.
By \cite{CP03}, the following congruence subgroups are of genus-zero:
%{\color{cyan} Unfortunately, this is not a full list. Compare with \cite{BN}.}
\begin{align*}
    &\Gamma_1(N) \textrm{ for } N = 1, \cdots, 10, 12, \\
    &\Gamma_0(N) \textrm{ for } N = 1, \cdots, 10, 12, 16, 18, 25, \\
    &\Gamma(N) \textrm{ for } N = 1, \cdots, 5, \\
    &\Gamma_1(4) \cap \Gamma(2), \quad \Gamma_0(4) \cap \Gamma(2), \quad \Gamma_1(3) \cap \Gamma(2), \quad \Gamma_0(3) \cap \Gamma(2), \\
    &\Gamma_1(2) \cap \Gamma(3),   \quad 
    \Gamma_1(8) \cap \Gamma(2), \quad \Gamma_0(8) \cap \Gamma(2), \quad \Gamma_0(16) \cap \Gamma_1(8), \quad \Gamma_0(25) \cap \Gamma_1(5).
\end{align*}
When $\Gamma$ is of genus-zero and $\cX_{\Gamma}$ is representable, then $\cX_{\Gamma} \cong \bP^1$.
%By Propositions \ref{representability} and \ref{representability-(M,N)}, 
We summarize the results for representability (over some open subset of $\Spec\mathbb{Z})$ as follows:
\begin{itemize}
\item $\mathcal{X}_{\Gamma_1(N)}$ for $N=5,\cdots,10,12$ are representable over $\Spec\mathbb{Z}[1/N]$ %and $\cX_{\Gamma_1(N)}$ for $N \leq 4$ is  also weighted projective line 
(cf. \cite[Table 1]{BN}).
\item $\mathcal{X}_{\Gamma_0(N)}$ is not representable.
\item By Proposition \ref{representability}, $\mathcal{X}_{\Gamma(N)}$ for $N \geq 3$ is representable over $\Spec\mathbb{Z}[1/N]$.
\item By Proposition \ref{representability-(M,N)}, we know that
\begin{align*}
    &\mathcal{X}_{\Gamma_1(2) \cap \Gamma(3)},  \quad \mathcal{X}_{\Gamma_1(8) \cap \Gamma(2)}, \quad \mathcal{X}_{\Gamma_1(3) \cap \Gamma(2)}, \quad \mathcal{X}_{\Gamma_0(16) \cap \Gamma_1(8)},  \quad \mathcal{X}_{\Gamma_0(25) \cap \Gamma_1(5)}
\end{align*}
are representable over some open subset of $\Spec\mathbb{Z}$. 
Also, $\cX_{\Gamma_1(4) \cap \Gamma(2)}$ is also isomorphic to $\bP^1$ (cf. \cite[Table 1]{BN}).
% Note that we have $\Gamma_1(3) \cap \Gamma(2) = \Gamma_1(2, 3) $ and $\Gamma_1(2) \cap \Gamma(3) = \Gamma_1(3,2)$.  %{\color{magenta}Since $\mathcal{X}_{\Gamma(3)}$ is representable over $\Spec\mathbb{Z}[1/3]$, we do not need this.}
% Also, $\Gamma_1(4) \cap \Gamma(2)$ is also 
\item 
We do not know whether remaining $3$ cases $\Gamma_0(4) \cap \Gamma(2), \Gamma_0(3) \cap \Gamma(2), \Gamma_0(8) \cap \Gamma(2)$ satisfy $\cX_{\Gamma} \cong \bP^1$ or not.
\end{itemize}
We do not have the average rank theorem for $\Gamma_0(16) \cap \Gamma_1(8)$ and $ \Gamma_0(25) \cap \Gamma_1(5)$, because the lack of Theorem \ref{thm:Hmoment12} in this case.
\end{remark}

Suppose that $\mathcal{X}_\Gamma$ is not representable. 
In this case, we cannot say that \eqref{obstruction} is bijective because the existence part of the valuative criterion for algebraic stacks needs an extension of valuation rings \cite[\href{https://stacks.math.columbia.edu/tag/0CLK}{Tag 0CLK}]{Stacks}.
Suppose that $\Gamma$ has level $N$ and $\mathcal{X}_\Gamma$ is not necessarily representable. Note that the cusps are $\mathcal{O}_K[\zeta_N]$-rational by \cite[section 9.4]{KM85} and \cite[Theorem 10.9.1]{KM85} (cf. \cite[VII.2]{DR73}). 

Let $\mathfrak{p}$ be a nonzero prime ideal of $\mathcal{O}_K[1/N]$.
For a given $E\in\mathcal{Y}(K_\mathfrak{p})$, we let $\mathfrak{W}(E)$ denote a minimal Weierstrass model of $E$ over $\mathcal{O}_{K,\mathfrak{p}}$ in the sense of \cite[VII.1]{Sil}. This is unique up to isomorphism over $\mathcal{O}_{K,\mathfrak{p}}$ by \cite[Proposition VII.1.3]{Sil}. For each ring $R$, we also denote $\underline{\mathcal{X}_\Gamma}(R)^\ast\vcentcolon=\underline{\mathcal{X}_\Gamma}(R)\coprod\ast$ where $\ast$ is the category with a single object and a single morphism. By assigning the elliptic curves with additive reduction at $\fp$ to the additional point $\ast$, the usual reduction process via the minimal Weierstrass model gives the following map\footnote{This map is well-defined because we are working over isomorphism classes (see the notation on the first page).}
\begin{align}\label{reduction-Weierstrass}
    \xymatrix{\mathcal{Y}(K_\mathfrak{p}) \ar[r] & \mathcal{X}(\kappa(\mathfrak{p}))^\ast & E \ar@{|->}[r] & \mathfrak{W}(E)\otimes_{\mathcal{O}_{K,\mathfrak{p}}}\kappa(\mathfrak{p})}.
\end{align}
Note that the smooth locus $\mathfrak{W}(E)^\mathrm{sm}\subseteq\mathfrak{W}(E)$ becomes the identity component of the N\'eron model of $E$ over $\mathcal{O}_{K,\mathfrak{p}}$. By the N\'eron mapping property (see \cite[Definition 1.2.1]{BLR90}), the canonical map
\begin{align*}  
    \xymatrix{\mathfrak{W}(E)^\mathrm{sm}(\mathcal{O}_{K,\mathfrak{p}}) \ar[r] & E(K_\mathfrak{p})}
\end{align*}
is a group isomorphism. Following \cite[VII.3 Proposition 3.1]{Sil} and its proof, for a positive integer $\ell$, one can show that the composition
\begin{align*}
    \xymatrix{E(K_\mathfrak{p})[\ell^\infty] \ar[r]^-\sim & \mathfrak{W}(E)^\mathrm{sm}(\mathcal{O}_{K,\mathfrak{p}})[\ell^\infty] \ar[r] & \mathfrak{W}(E)^\mathrm{sm}(\kappa(\mathfrak{p}))}
\end{align*}
is injective when the characteristic of $\kappa(\fp)$ does not divide $\ell$. Consequently, if a $\Gamma$-structure on an elliptic curve $E$ over $K$ is determined by a set of $K$-rational torsion points in $E$ of order relatively prime to the characteristic of $\kappa(\mathfrak{p})$, then \eqref{reduction-Weierstrass} uniquely lifts to
\begin{align*}
    \xymatrix{\mathcal{Y}_\Gamma(K_\mathfrak{p}) \ar[r] & \mathcal{X}_\Gamma(\kappa(\mathfrak{p}))^\ast\rlap{\ .}}
\end{align*}

In some cases, we can extend this to $\mathcal{X}_\Gamma(K_\mathfrak{p})$. For example, if $\mathcal{O}_{K,\mathfrak{p}}$ admits a primitive $N$-th root of unity, then \eqref{obstruction} induces a bijection on $\mathcal{X}_\Gamma^\mathrm{cusp}(\mathcal{O}_{K,\mathfrak{p}})\cong\mathcal{X}_\Gamma^\mathrm{cusp}(K_\mathfrak{p})$. In this case, we have an obvious reduction modulo $\mathfrak{p}$ on cusps analogous to \eqref{reduction-representable}:
\begin{align*}
    \xymatrixcolsep{4pc}\xymatrix{
    & \mathcal{X}_\Gamma^\mathrm{cusp}(\mathcal{O}_{K,\mathfrak{p}}) \ar[d]^-\wr \ar[r]^-{-\otimes_{\mathcal{O}_{K,\mathfrak{p}}}\kappa(\mathfrak{p})} & \mathcal{X}_\Gamma^\mathrm{cusp}(\kappa(\mathfrak{p}))\rlap{\ .} \\
    \mathcal{X}_\Gamma^\mathrm{cusp}(K) \ar[r] & \mathcal{X}_\Gamma^\mathrm{cusp}(K_\mathfrak{p}) & 
    }
\end{align*}
Consequently, we get the reduction maps for such $K$ in this case:
\begin{align}\label{reduction-zetaN}
    \xymatrix{\psi_\mathfrak{p}:\mathcal{X}_\Gamma(K) \ar[r] & \mathcal{X}_\Gamma(K_\mathfrak{p}) \ar[r] & \mathcal{X}_\Gamma(\kappa(\mathfrak{p}))^\ast}.
\end{align}
In particular, the cusps of $\mathcal{X}$ are all defined over $\mathbb{Z}$, and hence for any number field $K$ and any nonzero prime ideal $\mathfrak{p}$ of $\mathcal{O}_K$, we have reduction modulo $\mathfrak{p}$ on $\mathcal{X}(K)$.

Finally, by construction, \eqref{reduction-representable} and \eqref{reduction-zetaN} are compatible with forgetting the level structures. In other words, they fit into the following commutative square:
\begin{align} \label{eqn:comdiag}
\begin{aligned}
    \xymatrix{
    \mathcal{X}_\Gamma(K) \ar[d]_-{\psi_\mathfrak{p}} \ar[r]^-{\phi_\Gamma} & \mathcal{X}(K) \ar[d]^-{\psi_\mathfrak{p}} \\
    \mathcal{X}_\Gamma(\kappa(\mathfrak{p}))^\ast \ar[r]_-{\phi_\Gamma} & \mathcal{X}(\kappa(\mathfrak{p}))^\ast\rlap{\ .}
    }
\end{aligned}
\end{align}

\begin{remark}\label{Weierstarass-not-honest}
    The reduction process using the Weierstrass minimal model does not give an honest section of \eqref{obstruction}. Namely, we have minimal models over $\mathcal{O}_{K,\mathfrak{p}}$ which have additive reduction. This is impossible for objects in $\mathcal{X}_{\Gamma}(\mathcal{O}_{K,\mathfrak{p}})$.
\end{remark}

\begin{remark}\label{wtdprojred}
    Note that $\Gamma$ in Propositions \ref{representability} and \ref{representability-(M,N)} has the following properties.
    \begin{itemize}
        \item $\mathcal{X}_\Gamma$ is representable.
        \item A $\Gamma$-structure on an elliptic curve $E$ is determined by a subset of rational torsion points on $E$ (cf. \cite[Chapter 1]{KM85}).
    \end{itemize}
    By the uniqueness of $\psi_\mathfrak{p}$ in the representable case, the reduction process using the minimal model agrees with the one defined via the valuative criterion. 
    %This agrees with the fact that elliptic curves with level structure $\Gamma$ with representable $\cX_{\Gamma}$ have semistable reduction.
\end{remark}

Let $\mathcal{P}(w)\vcentcolon=\mathcal{P}(w_0,\cdots,w_n)$ be a weighted projective space regarded as a quotient stack.

\begin{remark} \label{rmk:cXbPdiag}
    Suppose that we have an isomorphism $\mathcal{X}_\Gamma\cong\mathcal{P}(u)$ onto a weighted projective line as a stack. On $\mathcal{P}(u)=\mathcal{P}(u_0,u_1)$, we can imagine a reduction process analogous to Remark \ref{reduction-P1}: For a given $x\in\mathcal{P}(u)(K_\mathfrak{p})$, we choose a representative $x=[x_0,x_1]$ such that $x_i\in\mathcal{O}_{K,\mathfrak{p}}$ with $\min\{\mathrm{val}_\mathfrak{p}(x_0),\mathrm{val}_\mathfrak{p}(x_1)\}$ minimal, and then take $[x_0\bmod\mathfrak{p},x_1\bmod\mathfrak{p}]$.
    
    However, if one of $u_i$ is larger than $1$, then representatives like $(x_0,x_1)$ with $\mathrm{val}_\mathfrak{p}(x_i)=1$ for $i=0,1$ map to $(0,0)\in\mathbb{A}^2(\kappa(\mathfrak{p}))$ which does not define a point in $\mathcal{P}(u)(\kappa(\mathfrak{p}))$. Correspondingly, we add a dummy point $\mathcal{P}(u)(\kappa(\mathfrak{p}))^\ast\vcentcolon=\mathcal{P}(u)(\kappa(\mathfrak{p}))\cup\ast$ and send the ill-defined points to $\ast$.

    Moreover, by Remark \ref{Weierstarass-not-honest}, we cannot guarantee the uniqueness of ``sections'' and hence the compatibility of the reduction process on $\mathcal{X}_\Gamma$ and the reduction process on $\mathcal{P}(u)$ described above. Fortunately, for $\mathfrak{p}\in\Spec\mathcal{O}_K[1/6N]$, the two reduction processes are compatible under the usual parametrization
    \begin{align*}
        \xymatrix{\mathcal{P}(4,6) \ar[r] & \mathcal{X} & [A,B] \ar@{|->}[r] & y^2=x^3+Ax+B.}
    \end{align*}
    In summary, we have a commutative diagram
    \begin{align} \label{diag:modp}
    \begin{aligned}
    \xymatrix{
    \cX_{\Gamma}(K_{\fp}) \ar[r] \ar[d]_-{\psi_{\fp}} & \cP(u)(K_{\fp}) \ar[d]^-{\psi_{\fp}} \\
    \cX_{\Gamma}(\kappa(\fp))^* \ar[r] & \cP(u)(\kappa(\fp))^*
    }    
    \end{aligned}
    \end{align}
    when $\Gamma = \SL_2$ or $\cX_{\Gamma}$ is representable.
\end{remark}

\begin{remark} \label{rmk:Phivsphi}
    It is a natural question to compare the parametrization $\Phi_{\Gamma}$ in \cite{CJ2} and the morphism $\phi_{\Gamma} : \cX_{\Gamma} \to \cX$ forgetting the level structure. In \cite{CJ2}, we implicitly chose an isomorphism
    \begin{align*}
        \xymatrix{\mathcal{P}(1,1) \ar[r]^-\sim & \mathcal{X}_\Gamma & t \ar@{|->}[r] & (E(u_t,v_t),\alpha_t)}
    \end{align*}
    and rewrote $E(u_t,v_t)$ as $y^2=x^3+f_\Gamma(t)x+g_\Gamma(t)$. This gives a commutative diagram:
    \begin{align*}
        \xymatrix{
        \mathcal{P}(1,1)(\mathbb{Q}) \ar[d]_-{\Phi_\Gamma} \ar[r]^-\sim & \mathcal{X}_\Gamma(\mathbb{Q}) \ar[d]^-{\phi_\Gamma} & t \ar@{|->}[d] \ar@{|->}[r] & (E(u_t,v_t),\alpha_t) \ar@{|->}[d] \\
        \mathcal{P}(4,6)(\mathbb{Q}) \ar[r]^-\sim & \mathcal{X}(\mathbb{Q}) & (f_\Gamma(t),g_\Gamma(t)) \ar@{|->}[r] & E(u_t,v_t).
        }
    \end{align*}
\end{remark}

\section{Counting elliptic curves revisited} \label{sec:counting}

\subsection{Weights for local conditions} \label{subse:Intui}

In this section, we define weights for local conditions and compare them with those in \cite{CJ2}.
To compare the results with the cases of $\cX_{\Gamma} \cong \bP^1$, we recall the results of \cite{CJ2} for $|G| \geq 5$, where $G$ is a finite abelian group that can arise as a torsion subgroup of elliptic curves over $\bQ$. 
For such $G$, there exist polynomials $f_{G}(t), g_{G}(t)$ such that 
\begin{align*}
	y^2 = x^3 + f_{G}(t)x + g_{G}(t), \qquad t \in \bQ,
\end{align*}
parametrizes the elliptic curves with a prescribed torsion subgroup $G$.
After clearing denominators, we may regard $f_{G}, g_{G}$ as polynomials with integral coefficients, defined on pairs of relatively prime integers.
Let $\cE(X)$ be the set of isomorphism classes of elliptic curves over $\bQ$ whose naive height is $\leq X$, and
\begin{align*}
    \cE_G(X) \vcentcolon = \lcrc{E \in \cE(X) : E(\bQ) \geq G}, \qquad 
    \cE_{G, J}(X) \vcentcolon = \lcrc{E \in \cE_G(X) : E \equiv C_J \pmod{p} },
\end{align*}
where $C_J$ denotes the (possibly singular) curve given by the equation $y^2 = x^3 + \alpha x + \beta$ for $J = (\alpha, \beta) \in \bF_p^2$.
Heuristically,
\begin{align} \label{eqn:WGJinCJ}
	W_{G, J} = \lcrc{(a, b) \in \bF_p^2 : (f_{G}(a, b) , g_{G}(a, b)) \equiv J \pmod{p}}
\end{align}
is a ``weight'' which measures the number of fibers of $\cE_{G}(X) \to \bF_p^2$.
By \cite[Theorem 3.9]{CJ2}, as expected, there are explicit constants $c(G), d(G)$ satisfying
\begin{align*}
	|\cE_{G, J}(X)| = \frac{|W_{G,J}|}{p^2-1}c(G)X^{\frac{1}{d(G)}} + O(X^{\frac{1}{2d(G)}} + p^{-1}X^{\frac{1}{2d(G)}}\log X).
\end{align*}
Moreover, $|W_{G, J}|$ is the number of embeddings of $G$ into $E_J(\bF_p)$ when $G = \bZ/5\bZ$ or $\bZ/6\bZ$.
However, this interpretation of $|W_{G,J}|$ as embeddings does not hold for other $G$.
In fact, the cardinality of $W_{G, J}$ is not the same even when the curves $E_{J}$ are isomorphic over $\bF_p$.
For example, let $G = \bZ/7\bZ$ and let
\begin{align*}
    E_{J_1} : y^2 = x^3 + 2x + 1,  \qquad
    E_{J_2} : y^2 = x^3 + 2x + 4.
\end{align*}
Then $E_{J_1}$ and $E_{J_2}$ are isomorphic over $\bF_5$ but $|W_{\bZ/7\bZ, J_1}| = 0$ and $|W_{\bZ/7\bZ, J_2}|=12$.
Note that the number of embeddings of $\bZ/7\bZ$ in $E_{J_i}(\bF_5)$ is $6$ (cf. \cite[Remark 2]{CJ2}).
Therefore, we need to redefine the weight $W_{G, J}$ to ensure that $|W_{G, J_1}| = |W_{G, J_2}|$ when $E_{J_1} \cong E_{J_2}$.

As we have seen in section \ref{sec:Prestack}, it can be achieved by considering a local condition of an elliptic curve as a finite subset in $\cX(\kappa(\fp))^\ast$.
More precisely, we say that $(E, \alpha) \in \cX_{\Gamma}(K_{\fp})$ satisfies the local condition $S \subset \cX(\kappa(\fp))^*$ if $(\psi_{\fp} \circ \phi_{\Gamma, K_{\fp}})(E, \alpha)$ is in $S$.
% If this definition of the local condition as a subset of $\cX(\kappa(\fp))^*$ is compatible with the previous weight (\ref{eqn:WGJinCJ}), then we would have
% \begin{align*}
%     \frac{|W_{G, J}|}{p^2-1} \sim
%     \lim_{X \to \infty} 
%     \frac{\left| \phi_{\Gamma, K}(\cY_{\Gamma}(K)) \cap H^{-1}([0, X]) \cap \psi_{\fp}^{-1}(z) \right| } {\left| \phi_{\Gamma, K}(\cY_{\Gamma}(K)) \cap H^{-1}([0, X]) \right| } 
% \end{align*}
% where $H$ is a naive height function from $\cY(K)$ to $\bR_{>0}$.
According to the commutative diagram (\ref{eqn:comdiag}), it is natural to replace $W_{G, J}$ by $\phi_{\Gamma, \kappa(\fp)}^{-1}(E_z)$ where $E_J$ is isomorphic to the (generalized) elliptic curve corresponds to $E_z \in \cX(\kappa(\fp))$ over $\kappa(\fp)$.

From now on, we usually write $\phi_{\Gamma, R}$ as $\phi_{\Gamma}$.

\begin{proposition} \label{prop:keymod}
Let $M, N$ be positive integers, $\Gamma = \Gamma_1(M, N)$ and $\phi_{\Gamma} : \cY_{\Gamma} \to \cY$ the map forgetting the level structure. 
Suppose that the characteristic of $\kappa(\fp)$ is relatively prime to the level of $\Gamma$.
Then, for the induced function $\phi_\Gamma:\mathcal{Y}_\Gamma(\kappa(\fp))\rightarrow\mathcal{Y}(\kappa(\fp))$ and for $E\in\mathcal{Y}(\kappa(\fp))$,
\begin{align*}
|\phi_\Gamma^{-1}(E)|=\frac{\left| \lcrc{i : \bZ/M\bZ \times \bZ/MN\bZ \hookrightarrow E(\kappa(\fp))} \right| }{|\Aut_{\kappa(\fp)}(E)|}
 \end{align*}
\end{proposition}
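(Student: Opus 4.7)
The strategy is the standard groupoid-fiber computation: the fiber of $\phi_\Gamma$ over $E$ is the set of $\Gamma$-level structures on $E$ modulo the natural $\Aut_{\kappa(\fp)}(E)$-action, and the representability of $\mathcal{Y}_\Gamma$ makes this action free, permitting one to simply divide. Concretely, unfolding definitions, an element of $\phi_\Gamma^{-1}(E)$ is an isomorphism class of pairs $(E',\alpha)$ with $E'\cong E$ and $\alpha$ a $\Gamma$-level structure; choosing the representative $E'=E$, two pairs $(E,\alpha_1)$ and $(E,\alpha_2)$ become isomorphic in $\underline{\mathcal{Y}_\Gamma}(\kappa(\fp))$ iff some $\sigma\in\Aut_{\kappa(\fp)}(E)$ satisfies $\sigma_\ast\alpha_1=\alpha_2$. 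Hence $\phi_\Gamma^{-1}(E)$ is in bijection with the orbit set of $\Aut_{\kappa(\fp)}(E)$ acting on the set of $\Gamma$-structures on $E$.

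Next I would identify a $\Gamma$-structure on $E$ with an injective group homomorphism $\bZ/M\bZ\times\bZ/MN\bZ\hookrightarrow E(\kappa(\fp))$ via the Drinfeld/Katz--Mazur description of $[\Gamma_1(M,N)]$-structures (cf. \cite[Chapter 3]{KM85}). Since the characteristic of $\kappa(\fp)$ is coprime to the level $MN$ by hypothesis, the group scheme $E[MN]$ is étale, so a $\Gamma$-structure over $\kappa(\fp)$ amounts precisely to a Galois-equivariant embedding of abstract finite groups, and thus to a group-theoretic embedding landing in $E(\kappa(\fp))$.

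Finally I would invoke representability via Corollary \ref{representability-(M,N)}: the hypothesis $MN\geq 5$ (implicit in the setting) makes $\mathcal{Y}_\Gamma$ an algebraic space, so every object of $\underline{\mathcal{Y}_\Gamma}(\kappa(\fp))$ has trivial automorphism group; equivalently, no nontrivial $\sigma\in\Aut_{\kappa(\fp)}(E)$ fixes any $\Gamma$-structure, and hence the action is free. Orbit-stabilizer then yields
\[
|\phi_\Gamma^{-1}(E)|=\frac{\#\{\Gamma\text{-structures on }E\}}{|\Aut_{\kappa(\fp)}(E)|}=\frac{\#\text{ embeddings of }\bZ/M\bZ\times\bZ/MN\bZ\text{ in }E(\kappa(\fp))}{|\Aut_{\kappa(\fp)}(E)|}.
\]
The crux is the second step: one must be careful that the Katz--Mazur $[\Gamma_1(M,N)]$-structure really corresponds to a plain group-theoretic embedding (without additional Weil-pairing data hidden in the count), which is valid here precisely because $\kappa(\fp)$ has characteristic coprime to the level, so the level structure is determined by its action on étale torsion and no further rigidification is needed.
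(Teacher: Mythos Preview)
Your argument is correct and follows essentially the same route as the paper: identify $\phi_\Gamma^{-1}(E)$ with the set of $\Gamma$-structures on $E$ modulo $\Aut_{\kappa(\fp)}(E)$, observe that the action is free, and divide. The one substantive difference is in how freeness is justified. You appeal to Corollary~\ref{representability-(M,N)} to conclude that $\mathcal{Y}_\Gamma$ itself is representable, which needs the hypothesis $MN\geq5$ that you call ``implicit in the setting''; the paper instead cites \cite[Theorem~7.1.3]{KM85} to get that the \emph{morphism} $\phi_\Gamma:\mathcal{Y}_\Gamma\to\mathcal{Y}$ has representable fibers, which holds with no constraint on $MN$. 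The latter is a weaker statement but suffices: once the fiber over $E$ is a scheme, its points carry no extra automorphisms, and the orbit count goes through. So your proof is fine for the cases actually used later (where $\mathcal{X}_\Gamma$ is assumed representable anyway), but the proposition as stated covers all $M,N$, and for that you want the paper's reference rather than Corollary~\ref{representability-(M,N)}.
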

\begin{proof} 
By \cite[Theorem 7.1.3]{KM85}, the fibers of $\phi_\Gamma:\mathcal{Y}_\Gamma\rightarrow\mathcal{Y}$ are representable for $\Gamma$. Hence, each point in the fiber has no additional automorphisms. Then
\begin{align*}
    |\phi_\Gamma^{-1}(E)|=\frac{|\underline{\phi_\Gamma}^{-1}(E)|}{|\Aut_{\kappa(\fp)}(E)|}
\end{align*}
because each automorphism of $E$ defines a different but isomorphic level structure. Hence it suffices to determine $|\underline{\phi_\Gamma}^{-1}(E)|$ in each case, where $\underline{\phi_\Gamma}^{-1}(E)$ is the set of corresponding level structures on $E$.
%\textcolor{blue}{Give a detailed description of the level structure if necessary}.
%\indent When $\Gamma = \Gamma_1(N)$ for $N = 1, \cdots, 10, 12$, $\underline{\phi}_{\Gamma}$ sends $(E/S, P \in E(S))$ where the order of $P$ is exactly $N$, to $E/S$ where $S$ is an $\bF_q$-scheme.
%Since the number of points of order exactly $N$ is equal to the number of embedding of $\bZ/N\bZ$ to $E(\bF_q)$, we obtain (i) for $\Gamma = \Gamma_1(N)$. The case $\Gamma=\Gamma_0(N)$ can be treated similarly.
\end{proof}

Now, we compare $\phi_{\Gamma}^{-1}(E_z)$ and $W_{G, J}$ when $\cX_{\Gamma} \cong \bP^1$.

\begin{lemma} \label{lem:WGJcomapare}
Let $\Gamma$ be a congruence subgroup, either $\Gamma_1(N)$ for $N = 5, \ldots, 10, 12$ or $\Gamma(2) \cap \Gamma_1(2N)$ for $N = 3, 4$, and let $G$ be the corresponding abelian group. Let $p \geq 5$ be a prime, $E_z \in \cX(\bF_p)$, and $z \in \cP(4,6)(\bF_p)$ the point associated to it.
Then, 
\begin{align*}
|\phi_{\Gamma}^{-1}(E_z)| = \frac{1}{p-1}\sum_{\substack{J \in \bF_p^2  \\ E_J \cong E_z}} |W_{G, J}|.
\end{align*}
\end{lemma}
\begin{proof}
Since we are in the situation of Remark \ref{rmk:Phivsphi}, $\phi_\Gamma$ is identified with $(f_G,g_G)$ via the implicitly chosen isomorphism $\mathcal{X}_\Gamma \cong \bP^1$. 
This is also compatible with the mod $p$ reduction introduced in (\ref{diag:modp}), provided $p \geq 5$. 
Therefore, for $E_z \in \cY(\bF_p)$,
\begin{align*}
	\bigsqcup_{\substack{J \in \bF_p^2  \\ E_J \cong E_z}} W_{G, J}
        &= \lcrc{ (a, b) \in \bF_p^2 : (f_{G}(a, b), g_{G}(a, b)) \equiv J \pmod{ p} \textrm{ for } E_J \cong E_z  }\\
	&= \lcrc{(a, b) \in \bF_p^2 : E_{(f_{G}(a, b), g_{G}(a, b))} \cong E_z},
\end{align*}
by the definition of $W_{G, J}$ in (\ref{eqn:WGJinCJ}).
By \cite[Appendix A]{CJ2}, if $(a, b) \in W_{G, J}$ then $(ua, ub) \in W_{G, J}$ for $u \in \bF_p^\times$. 
On the other hand, for $t \in \bP^1(\bF_p)$, there are $(p-1)$ pairs $(a, b) \in \bF_p^2$ such that $(f_G(a, b), g_G(a, b)) = \phi_{\Gamma}(t)$. 
% Note that the pair $(0, 0)$ does not appear in the set on the right-hand side, and $(p-1)$ pairs in $\bF_p^2$ correspond to one element of $\cP(1,1)(\bF_p) \cong \bP^1(\bF_p)$. 
So
\begin{align*}
\frac{1}{p-1}\sum_{E_J \cong E_z}|W_{G, J}| &=
\left| \lcrc{ t \in \bP^1(\bF_p) : E_{(f_{G}(t), g_{G}(t))}  \cong E_z} \right| = \left| \lcrc{t \in \bP^1(\bF_p) : E_{\phi_{\Gamma}(t)}  \cong E_z} \right| \\
&= \left| \lcrc{t \in \bP^1(\bF_p) : \phi_{\Gamma}(t) = z} \right|.
\end{align*}
Hence, we have the result.
\end{proof}

\begin{remark} \label{rmk:Wvsphi}
Proposition \ref{prop:keymod} can be regarded as a modification of \cite[Lemma 2.6]{CJ2}, which proves Proposition \ref{prop:keymod} for $\bZ/N\bZ$ with $N = 2, \ldots, 6$ and $\bZ/2\bZ \times \bZ/2\bZ, \bZ/2\bZ \times \bZ/4\bZ$ by direct computation with coordinates. 
For $E_z \in \cY(\bF_p)$, the number of pairs $J \in \bF_p^2$ such that $E_J \cong E_z$ is $\frac{p-1}{|\Aut_{\bF_p}(E_z)|}$. 
Hence, the right-hand side of Lemma \ref{lem:WGJcomapare} can be interpreted as follows:
\begin{align*}
    \frac{1}{p-1} \sum_{E_J \cong E_z}|W_{G, J}|=\frac{1}{|\Aut_{\bF_p}(E_z)|}\times (\text{``average of $|W_{G,J}|$''}).
\end{align*}
Furthermore, Lemma \ref{lem:WGJcomapare} resolves the problem introduced in \cite[Remark 2]{CJ2}. 
Since there are two isomorphic elliptic curves $E_{J_1}, E_{J_2}$ with $|W_{G, J_1}| = 0$ and $|W_{G, J_2}| = 12$, 
we have 
\begin{align*}
|\underline{\phi_\Gamma}^{-1}(E_{J_i})| = |\Aut_{\bF_5}(E_{J_i})||\phi_{\Gamma}^{-1}(E_{J_i})| = \frac{1}{2}(0 + 12) = 6   
\end{align*}
which is exactly the number of embeddings of $\bZ/7\bZ$ into $E_{J_i}(\bF_5)$.
\end{remark}

\subsection{Counting elliptic curves} \label{subsec:countingec}

We recall that $\cP(w)$ denotes the weighted projective space, and that $\cP(w)(\kappa(\fp))^\ast$ is defined as $\cP(w)(\kappa(\fp)) \cup \ast$, as in Remark \ref{rmk:cXbPdiag}.
\begin{definition} \label{def:wtz}
For $z \in \cP(w_0, w_1)(\kappa(\fp))^*$, we define its weight with respect to the map $\bA^2(\kappa(\fp)) \to \cP(w_0, w_1)(\kappa(\fp))^*$ by
\begin{align*}
    \mathrm{wt}(z) 
    %= \mathrm{wt}_{\Gamma, \fp}(z) 
    = \sum_{\substack{(z_0, z_1) \in \bA^2(\kappa(\fp)) \\ (z_0, z_1) \equiv z \in \cP(w_0, w_1)(\kappa(\fp))^{\ast} }} 1.
\end{align*}  
\end{definition}
In other words, $\mathrm{wt}(z)$ is simply the cardinality of the fiber of 
$\bA^2(\kappa(\fp)) \to \cP(w_0, w_1)(\kappa(\fp))^\ast$ over $z$.
From now on, we write $q \vcentcolon= |\kappa(\fp)|$ and use also $\bF_q$ for $\kappa(\fp)$.
We write $\mu_k(\bF_q)$ for the group of $k$-th roots of unity in $\bF_q$.

\begin{lemma} \label{lem:wtXast}
Let $\cP(w_0, w_1)$ be a weighted projective line and let $g = \gcd(w_0, w_1)$.
For a point $z \in \cP(w_0, w_1)(\bF_q)^*$ and $a, b \in \bF_q^\times$, we have $\wt(\ast) = 1$, and 
\begin{align*}
    \wt([a,b]) =  \frac{q-1}{|\mu_g(\bF_q)|}, \quad 
    \wt([a,0]) = \frac{q-1}{|\mu_{w_0}(\bF_q)|}, \quad
    \wt([0,b]) = \frac{q-1}{|\mu_{w_1}(\bF_q)|}.
\end{align*}
\end{lemma}
\begin{proof}
This follows immediately from the definition of the weight $\wt$.
% Let $\cP(w) = \cP(w_0, w_1)$ and $\pi$ the natural projection from $\bA^2(\bF_q)$ to $\cP(w_0, w_1)(\bF_q)^*$.
% Then $\pi^{-1}(\ast) = \lcrc{ (0, 0) }$ and
% \begin{align*}
%     |\pi^{-1}([a, b])| = \frac{q-1}{|\mu_g(\bF_q)|}, \quad 
%     |\pi^{-1}([a, 0])| = \frac{q-1}{|\mu_{w_0}(\bF_q)|}, \quad
%     |\pi^{-1}([0, b])| = \frac{q-1}{|\mu_{w_1}(\bF_q)|}
% \end{align*}
% for $a, b \neq 0$ by the definition of $\cP(w_0, w_1)$.
\end{proof}

For a tuple of natural numbers $w$, we define $|w| = \sum_{i=0}^{n} w_i$.
Let $M_K$ (resp. $M_{K, 0}$, $M_{K, \infty}$) be the set of places of $K$ (resp. finite places, infinite places).
We normalize the absolute values as follows:
\begin{align*}
    |\pi_v|_v = \frac{1}{N_{K/\bQ}(\fp_v)}, \, \textrm{for finite }v, \qquad
    |a|_v = |i_v(a)|^{[K_v : \bR]}, \, \textrm{for infinite }v,
\end{align*}
where $\pi_v$ is a uniformizer at $v$ and $i_v : K \hookrightarrow K_v$ is the natural embedding.
For $(x_0, \cdots, x_n) \in K^{n+1} \setminus \lcrc{0}$ we define
\begin{align*}
    |(x_0, \cdots, x_n)|_{w, v} \vcentcolon=
    \left\{ \begin{array}{lll}
        \max\limits_{0 \leq i \leq n} \lcrc{|\pi_v|_v^{ \lfloor \frac{\ord_v(x_i)}{w_i} \rfloor }} & \textrm{for } v \in M_{K, 0},  \\
        \max\limits_{0 \leq i \leq n} \lcrc{|x_i|_v^{\frac{1}{w_i}}} &  \textrm{for } v \in M_{K,\infty}.
    \end{array}
    \right.
\end{align*}
Then, for $[x_0, \cdots, x_n] \in \cP(w)(K)$, we define
\begin{align*}
    H_{w, K}([x_0, \cdots, x_n]) \vcentcolon=
    \prod_{v \in M_K} |(x_0, \cdots, x_n)|_{w, v}.
\end{align*}
This definition does not depend on the choice of homogeneous coordinates, so $H_{w,K}$ is well defined on $\cP(w)(K)$.
For an isomorphism class of elliptic curves over $K$, we define its height $H(E)$ as the height of the corresponding point in $\cP(4, 6)(K)$.
In other words,
\begin{align*}
    H(E) \vcentcolon= H_{(4, 6), K}([A, B])
\end{align*}
where $y^2 = x^3 + Ax + B$ is any Weierstrass model of $E$.
If a given elliptic curve has a global minimal Weierstrass model, in other words, if there exist $A, B \in \cO_{K}$ such that no place $v \in M_{K,0}$ satisfies $v^4 \mid A$ and $v^6 \mid B$, then we have
\begin{align} \label{eqn:Heightobs}
    H_{(4, 6), K}([A, B]) 
    % = \prod_{v \in M_K} |(A, B)|_{(4, 6), v}
    = \prod_{v \in M_{K, \infty}} \max \lcrc{|A|_v^{\frac{1}{4}}, |B|_v^{\frac{1}{6}} }
    = \prod_{v \in M_{K, \infty}} \max \lcrc{|A^3|_v, |B^2|_v }^{\frac{1}{12}}.
\end{align}

When the class number of $K$ is greater than $1$, an elliptic curve may not admit a global minimal model.
In this case, we make the following modification, which will be used later.
\begin{lemma} \label{lem: Height inf}
Let $E$ be an elliptic curve over a number field $K$.
Then there exists a Weierstrass model $y^2 = x^3 + Ax + B$ and a constant $C_K$, depending only on $K$, such that
\begin{align*}
H(E) = H_{(4,6), K}([A,B]) \geq C_K \prod_{v \in M_{K, \infty}} |(A, B)|_{(4, 6), v}. 
\end{align*}
\end{lemma}
\begin{proof}
For a number field $K$ and a set $S$ of finite places of $K$, we denote by $\mathcal{O}_{K,S}$ the ring of $S$-integers.
Since the class number of $K$ is finite, there exists a finite set of places $S$ such that $\cO_{K,S}$ is a principal ideal domain.
Then every elliptic curve $E$ over $K$ admits an $S$-minimal Weierstrass equation of the form $y^2 = x^3 + Ax + B$ (see \cite[Proposition VIII.8.7]{Sil}).
For any prime $v \notin S$, we have $\ord_v(A) \leq 4$ or $\ord_v(B) \leq 6$, which implies $|(A,B)|_{(4,6),v} = 1$.

For a prime $v \in S$, there exists a positive integer $n_v$ such that $\fp_v^{n_v}$ is principal, i.e., $\fp_v^{n_v} = a_v\cO_K$ for some $a_v \in \cO_K$, by the finiteness of the class number.
Then there exists an integer $k_v \geq 0$ such that, after replacing the Weierstrass equation by
\begin{align*}
    y^2 = x^3 + a_v^{4k_v}Ax + a_v^{6k_v}B,
\end{align*}
we may assume that 
\begin{align*}
    \ord_v(a_v^{4k_v}A) < 4n_v \quad \textrm{or} \quad \ord_v(a_v^{6k_v}B) < 6n_v.
\end{align*}
We set $A \vcentcolon = a_v^{4k_v}A$ and $B \vcentcolon = a_v^{6k_v}B$.
Let $q_v \vcentcolon= |\kappa(\fp_v)|$. 
Then for $v \in S$,
\begin{align*}
    |(A, B)|_{(4, 6), v} = \max \lcrc{ q_v^{-\left\lfloor\frac{\ord_v(A)}{4}\right\rfloor}, q_v^{-\left\lfloor\frac{\ord_v(B)}{6}\right\rfloor}}
    \geq q_v^{-n_v}.
\end{align*}
Considering both cases $v \in S$ and $v \not\in S$, we have
\begin{align*} %\label{eqn: Height inf bound}
    H_{(4, 6), K}([A, B]) 
    = \prod_{v \in M_{K, 0}} |(A, B)|_{(4, 6), v} \prod_{v \in M_{K, \infty}} |(A, B)|_{(4, 6), v}
    \geq \prod_{v \in S} q_v^{-n_v} \prod_{v \in M_{K, \infty}} |(A, B)|_{(4, 6), v}.
\end{align*}
Since the set $S$ and the integers $n_v$ depend only on the number field $K$, we obtain the result.
\end{proof}

When $\Gamma$ satisfies $\cX_{\Gamma} \cong \bP^1$ over $\Spec \bZ[1/6N]$,
the morphism $\phi_{\Gamma} : \cX_{\Gamma} \to \cX$ forgetting the level structure induces a morphism $\cP(1, 1) \to \cP(4, 6)$, which we also denote by $\phi_{\Gamma}$.
We define
\begin{align} \label{eqn:def eGam}
    e(\Gamma) \vcentcolon= e(\phi_{\Gamma}) = \frac{u_0u_1}{24}[\SL_2(\bZ) : \Gamma]
\end{align}
when $\cX_{\Gamma} \cong \cP(u)$ following \cite[Lemma 4.1, \S 8]{BN}.
We recall that $q = |\kappa(\fp)|$ and $d = [K : \bQ]$.

\begin{theorem} \label{prop:phi411}
Let $\Gamma$ be a congruence subgroup such that $\cX_{\Gamma} \cong \bP^1$ over some open subset of $\Spec \bZ[1/6]$, and let $\phi_{\Gamma}: \cP(1, 1) \to \cP(4, 6)$ be the morphism corresponding to the morphism forgetting the level structure. 
For a prime $\fp \not\in S_{\phi_{\Gamma}}$, a point $z \in \cP(4, 6)(\bF_q)$, and
\begin{align*} 
    \Omega_{\fp, z } = \lcrc{ y_{\fp} \in \cP(4, 6)(K_{\fp}) : \psi_{\fp}(y_{\fp}) = z  },
\end{align*}
there exists a constant $\kappa$, depending on $K$ and $\Gamma$, such that 
\begin{align*}
    &\left|
    \lcrc{y \in \phi_{\Gamma}(\cP(1,1)(K)) : H_{(4,6), K}(y) \leq X, \psi_{\fp}(i_{\fp}(y)) \in \Omega_{\fp, z}}
    \right| \\
    &= \kappa \cdot \frac{\# \phi_{\Gamma, \bF_q}^{-1}(z)}{q+1} \cdot X^{\frac{2}{e(\Gamma)}} +
    O\lbrb{ \lbrb{1 + \frac{1}{q}} X^{\frac{2d- 1}{de(\Gamma)}} \log X  
    }.
\end{align*}
\end{theorem}
\begin{proof} 
The definition of $S_{\phi_\Gamma}$ and the proof are given in the last section.
We emphasize that the implied constant does not depend on $\fp$.
\end{proof}

Let $\cE_K(X)$ be the set of isomorphism classes of elliptic curves over $K$ of naive height at most $X$, and let $\cE_{K, \Gamma}(X) \subset \cE_K(X)$ be the subset consisting of elliptic curves that admit a $\Gamma$-level structure.
Then
\begin{align*}
    \cE_{K, \Gamma}(X) = \lcrc{E \in \phi_{\Gamma}(\cX_{\Gamma}(K)) \subset \cX(K) : H(E) \leq X}.
\end{align*}
For $E_z \in \cX(\bF_q)$, we define
\begin{align*}
    \cE_{K, \Gamma, \fp}^z(X) \vcentcolon=
    \lcrc{E \in \cE_{K, \Gamma}(X) : E \cong E_z \pmod{\fp}},
\end{align*}
and for $a$ an integer satisfying the Weil bound $[-2\sqrt{q}, 2\sqrt{q}]$ at $\fp$, we define
\begin{align*}
    \cE_{K, \Gamma, \fp}^a(X) \vcentcolon = 
    \lcrc{E \in \cE_{K, \Gamma}(X) : E \text{ has good reduction at } \fp \text{ and } a_{\fp}(E) = a}.
\end{align*}

We paraphrase Theorem \ref{prop:phi411} as follows.
\begin{proposition} \label{prop:EKGamz}
Let $\Gamma$ be a congruence subgroup of level $N$ such that $\cX_{\Gamma} \cong \bP^1$ over $\bZ[1/6N]$, and let $\fp$ be a prime not dividing $6N$.
Let $\phi_{\Gamma} : \cP(1, 1) \to \cP(4, 6)$ be the morphism forgetting the level structure and $E_z \in \cX(\bF_q)$.
Then there exists a constant $\kappa$ depending on $K$ and $\Gamma$ such that 
\begin{align} \label{eqn:EKGamz}
  |\cE_{K, \Gamma, \fp}^z(X)| =
  \kappa \cdot \frac{|\phi_{\Gamma, \bF_q}^{-1}(z)|}{q+1}   \cdot  X^{\frac{2}{e(\Gamma)}}
    +    O\lbrb{ \lbrb{1 + \frac{1}{q}} X^{\frac{2d- 1}{de(\Gamma)}} \log X }.
\end{align}
Moreover, $|\cE^*_{K, \Gamma, \fp}(X)| = 0$.
\end{proposition}
\begin{proof}
Using $\cX_{\Gamma} \cong \bP^1$, $\cX \cong \cP(4, 6)$, and the definition of $\phi_{\Gamma}$ and height of elliptic curves,
\begin{align*}
    \cE_{K, \Gamma}(X) = \lcrc{ y \in \phi_{\Gamma}(\cP(1,1)(K)) \subset \cP(4, 6)(K) : H_{(4, 6), K}(y) \leq X}.
\end{align*}
%We recall that $z \in \cP(4, 6)(\bF_q)$ is the image of $E_z \in \cX(\bF_q)$.
Since the diagram (\ref{diag:modp}) commutes, 
\begin{align*}
    \cE_{K, \Gamma, \fp}^z(X) = \lcrc{ y \in \phi_{\Gamma}(\cP(1,1)(K)) \subset \cP(4, 6)(K) : H_{(4, 6), K}(y) \leq X, \psi_{\fp}(y) = z },
\end{align*}
and (\ref{eqn:EKGamz}) follows from Theorem \ref{prop:phi411}.
Note that $|\phi_{\Gamma, \bF_q}^{-1}(z)|$ is bounded by the degree of $\phi_{\Gamma}$.
Also, there are no elliptic curves with a representable level structure that have additive reduction at $\fp$, by (\ref{reduction-representable}).
\end{proof}

By definition, we have
\begin{align*}
    \sum_{E_z \in \cX(\bF_q)^*}|\cE_{K, \Gamma, \fp}^z(X)| = |\cE_{K, \Gamma}(X)|.
\end{align*}
Using Proposition \ref{prop:EKGamz}, we see that the sum of the leading terms on the left-hand side agrees with that of the right-hand side, which provides a simple sanity check.
By Proposition \ref{prop:EKGamz} (see also \cite[Corollary 3.10]{CJ2} and Remark \ref{rmk:estpowcompare}),
\begin{align}  \label{eqn:EKgama}
    |\cE_{K, \Gamma, \fp}^a(X)| &= 
    \frac{\kappa}{q+1}
    \lbrb{\sum_{a_{\fp}(E_z) = a} |\phi_{\Gamma}^{-1}(z)|} X^{\frac{2}{e(\Gamma)}} + O_{\Gamma, K}\lbrb{ \lbrb{1 + \frac{1}{q}}
    \lbrb{\sum_{a_{\fp}(E_z) = a} 1} X^{\frac{2d-1}{de(\Gamma)}}\log X}.
\end{align}
Estimating (\ref{eqn:EKgama}) will be crucial for obtaining an upper bound for the average rank.

\begin{remark}
At first glance, Proposition \ref{prop:EKGamz} claims that the exponent of the asymptotic of $|\cE_{K, \Gamma}(X)|$ does not depend on $K$. 
This is because we used the non-absolute height $H_{w,K}$ in this paper.
Hence, some modification is needed to compare Proposition \ref{prop:EKGamz} with the previous results \cite{CJ1, CJ2}.
\end{remark}

\subsection{Multiplicative reduction}

First, we give a concrete computation of the number of cusps of modular curves over finite fields.

\begin{lemma} \label{lem:numcuspff}
Let $k$ be a field and $\overline{k}$ its algebraic closure. Choose a primitive root of unity $\zeta_N\in\overline{k}$ and identify $\mu_N(\overline{k})\cong\mathbb{Z}/N\mathbb{Z}$ as a $\mathrm{Gal}(\overline{k}/k)$-module via the mod $N$ cyclotomic character $\chi$ (i.e., $\sigma(\zeta_N)=\zeta_N^{\chi(\sigma)}$).
Identifying $(\mathbb{Z}/N\mathbb{Z})^2$ with column vectors, we introduce
\begin{align*}
    \xymatrix{\mathrm{Hom}((\mathbb{Z}/N\mathbb{Z})^2,\mathbb{Z}/N\mathbb{Z})\times\mathrm{Gal}(\overline{k}/k) \ar[r] & \mathrm{Hom}((\mathbb{Z}/N\mathbb{Z})^2,\mathbb{Z}/N\mathbb{Z}) & (\rho,\sigma) \ar@{|->}[r] & \rho\circ{\begin{pmatrix} 1 & 0 \\ 0 & \chi(\sigma) \end{pmatrix}}.}
\end{align*}
and we let $\mathrm{SL}_2(\mathbb{Z}/N\mathbb{Z})$ act on $\mathrm{Hom}((\mathbb{Z}/N\mathbb{Z})^2,\mathbb{Z}/N\mathbb{Z})$ by the right multiplication. In this setting, the number of $k$-rational cusps of $X_\Gamma$ for each congruence subgroup $\Gamma$ of $\mathrm{SL}_2(\mathbb{Z}/N\mathbb{Z})$ is given as follows:
    \begin{align*}
    \#
    \left(
     {\left\{\begin{array}{c} \textrm{surjective homomorphisms} \\(\mathbb{Z}/N\mathbb{Z})^2\rightarrow\mathbb{Z}/N\mathbb{Z}\end{array}\middle\}\right/(\pm \Gamma)}
     \right)^{\Gal(\overline{k}/k)}.
    \end{align*}
\end{lemma}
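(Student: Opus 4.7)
The approach is to reduce to $\Gamma=\Gamma(N)$, use the standard modular description of cusps of $X(N)_{\overline{k}}$ via N\'eron $N$-gons with level structure, and trace the Galois action through this description.

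First, I reduce to the case $\Gamma=\Gamma(N)$. The map $X(N)\to X_\Gamma$ realizes $X_\Gamma$ as the quotient of $X(N)$ by $\pm\Gamma/\{\pm I\}$, and the $\pm\Gamma$-action on the set $\Xi$ of geometric cusps of $X(N)_{\overline{k}}$ is defined over $\mathbb{Z}[1/N]$ and therefore commutes with the $\Gal(\overline{k}/k)$-action. Hence the $k$-rational cusps of $X_\Gamma$ correspond bijectively to the $\Gal(\overline{k}/k)$-fixed points of $\Xi/(\pm\Gamma)$, reducing the problem to describing $\Xi$ as a $\Gal(\overline{k}/k)$-set.

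Next, I invoke the Deligne--Rapoport description of cusps (cf. \cite[Ch.~10]{KM85}): the geometric points of the cusp substack of $\mathcal{X}(N)_{\overline{k}}$ correspond to N\'eron $N$-gons $C_N$ together with a $\Gamma(N)$-structure, i.e.\ an isomorphism $\alpha:(\mathbb{Z}/N\mathbb{Z})^2\xrightarrow{\sim}C_N^{\mathrm{sm}}[N]=\mu_N\times\mathbb{Z}/N\mathbb{Z}$ (the first factor coming from the identity component $\mathbb{G}_m\subseteq C_N^{\mathrm{sm}}$, the second from the component group $\mathbb{Z}/N\mathbb{Z}$). Passing to isomorphism classes and accounting for $\Aut(C_N)$, one gets a natural bijection between $\Xi$ and the set of surjective homomorphisms $(\mathbb{Z}/N\mathbb{Z})^2\to\mathbb{Z}/N\mathbb{Z}$ modulo $\pm I$, under which $\alpha$ is sent to its projection $\rho_\alpha:=\mathrm{pr}_2\circ\alpha$ onto the component-group factor. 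The component group is an \'etale group scheme defined over the prime field and so carries trivial Galois action, whereas $\mu_N(\overline{k})$ carries the cyclotomic character $\chi$. Under the fixed identification $\mu_N(\overline{k})\cong\mathbb{Z}/N\mathbb{Z}$ via $\zeta_N$, an element $\sigma\in\Gal(\overline{k}/k)$ thus acts on the target $\mu_N\times\mathbb{Z}/N\mathbb{Z}$ of $\alpha$ by $(\chi(\sigma),\mathrm{id})$, and this transports via the bijection $\alpha\mapsto\rho_\alpha$ to the action $\rho\mapsto\rho\circ\mathrm{diag}(1,\chi(\sigma))$ on surjective homomorphisms, precisely matching the action defined in the statement. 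Taking $\Gal$-fixed points on the quotient $\Xi/(\pm\Gamma)$ then yields the formula.

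The main technical obstacle lies in step three: matching the conventions so that the cyclotomic twist lands on the correct coordinate of the source of $\rho$, and verifying that the induced action is precomposition with $\mathrm{diag}(1,\chi(\sigma))$ (rather than, say, $\mathrm{diag}(\chi(\sigma),1)$ or some conjugate variant). Once the convention $\rho_\alpha=\mathrm{pr}_2\circ\alpha$ is fixed together with the specified identification $\mu_N(\overline{k})\cong\mathbb{Z}/N\mathbb{Z}$, this is a routine matrix chase, but it is the step where the asymmetry in the statement's definition of the $\Gal$-action originates and must be handled with care.
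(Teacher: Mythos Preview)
Your approach is precisely the paper's: the paper's proof is the single citation \cite[Theorem 10.9.1]{KM85}, and you are sketching that theorem's content. However, the step you flag as a ``routine matrix chase'' has a real gap as written.

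With your bijection $\alpha\mapsto\rho_\alpha:=\mathrm{pr}_2\circ\alpha$, post-composition of $\alpha$ by $\mathrm{diag}(\chi(\sigma),1)$ does \emph{not} transport to pre-composition of $\rho_\alpha$ by $\mathrm{diag}(1,\chi(\sigma))$: computing directly, $\mathrm{pr}_2\circ\mathrm{diag}(\chi(\sigma),1)\circ\alpha=\mathrm{pr}_2\circ\alpha$, so the induced Galois action on $\rho$ would be trivial. The ingredient you are missing is the Weil-pairing (equivalently, determinant) normalization. Without it, $\alpha$ runs over all of $\mathrm{GL}_2(\mathbb{Z}/N\mathbb{Z})$ and $\alpha\mapsto\rho_\alpha$ is not a bijection onto surjections mod $\pm I$ (each fiber is a $(\mathbb{Z}/N\mathbb{Z})^\times$-torsor indexed by $\det\alpha$). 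Imposing $\det\alpha=1$, which singles out the geometrically connected $X(N)$, makes the map a bijection; but then $\mathrm{diag}(\chi(\sigma),1)\circ\alpha$ has determinant $\chi(\sigma)\neq 1$ and must be brought back into $\mathrm{SL}_2$ by a pre-composition on the source. It is this renormalization---well-defined modulo $\Aut(C_N)$ and $\pm I$---that produces the nontrivial action $\rho\mapsto\rho\circ\mathrm{diag}(1,\chi(\sigma)^{\pm1})$ appearing in the statement. That is the substantive content of the Katz--Mazur computation, not merely a convention check.
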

\begin{proof} This is \cite[Theorem 10.9.1]{KM85} considering the Galois action. To recover the Galois action, we use the identification $\mu_N(\overline{k})\cong\mathbb{Z}/N\mathbb{Z}$ and \cite[10.6]{KM85} to identify the displayed description and the one \cite[VI.5]{DR73} which uses a certain quotient of $\mathrm{Isom}(\mathbb{Z}/N\mathbb{Z}\times\mu_N,(\mathbb{Z}/N\mathbb{Z})^2)$.
\end{proof}

\begin{corollary} \label{cor:Numcusps}
Let $N \geq 1$ be an integer and let $q$ be a prime power.
Suppose that $(q, N) = 1$. Then, \\
(i) If $N = \ell$ or $2\ell$ for an odd prime $\ell$, we have
\begin{align*}
    |X_1(N)^{\mathrm{cusp}}(\bF_q)| =  \left\{
        \begin{array}{lll}
        2(\ell-1)     & \textrm{if }  q \equiv \pm 1 \pmod{N}, \\
        \ell-1     &  \textrm{if }  q \not\equiv \pm 1 \pmod{N}.
        \end{array}
        \right.
\end{align*}
(ii) For $N = 4, 8, 9, 12$, we have $|X_1(4)^{\mathrm{cusp}}(\bF_q)|=3$,
\begin{align*}
        &|X_1(8)^{\mathrm{cusp}}(\bF_q)| = \left\{
        \begin{array}{lll}
        6     & \textrm{if } q \equiv \pm 1 \pmod{8},  \\
        4     & \textrm{if } q \not\equiv \pm 1 \pmod{8}.  
        \end{array}
        \right.\\
       &|X_1(12)^{\mathrm{cusp}}(\bF_q)| = \left\{
        \begin{array}{lll}
        10     & \textrm{if } q \equiv 1 \pmod{12},  \\
        6     & \textrm{if }q \not\equiv 1 \pmod{12}.
        \end{array}
        \right.\\
        &|X_1(9)^{\mathrm{cusp}}(\bF_q)| = \left\{
        \begin{array}{lll}
        8     & \textrm{if }   q \equiv 1 \pmod{9},  \\
        6     & \textrm{if }  q \equiv 8 \pmod{9}, \\
        5   & \textrm{if }  q \equiv 4, 7 \pmod{9}, \\
        3   & \textrm{if }  q \equiv 2, 5 \pmod{9}.
        \end{array}
        \right.
\end{align*}
\end{corollary}
\begin{proof}
    This is an application of Lemma \ref{lem:numcuspff}. 
    Denote by $e_1,e_2$ the standard basis for $(\mathbb{Z}/N\mathbb{Z})^2$. 
    Then a surjective homomorphism $\alpha:(\mathbb{Z}/N\mathbb{Z})^2\rightarrow\mathbb{Z}/N\mathbb{Z}$ is determined by $\alpha_i\vcentcolon=\alpha(e_i)$ for $i=1,2$ which are relatively prime to each other. 
    Let $\overline{\Gamma_1(N)}\leq\mathrm{SL}_2(\bZ/N\bZ)$ denote the image of $\Gamma_1(N) \leq \mathrm{SL}_2(\bZ)$ under the reduction modulo $N$ homomorphism $\mathrm{SL}_2(\bZ)\rightarrow\mathrm{SL}_2(\bZ/N\bZ)$. 
    Then, an element $T$ in $\pm \overline{\Gamma_1(N)}$ can be written as
    \begin{align*}
        \begin{pmatrix}
            \pm 1 & t \\ 0 & \pm 1
        \end{pmatrix}
    \end{align*}
    for some $t \in \bZ/N\bZ$.
    % \begin{align*}
    %     U\vcentcolon=\begin{pmatrix} \pm1 & u \\ 0 & \pm1 \end{pmatrix}\in\pm\Gamma_1(N)\subseteq\mathrm{SL}_2(\mathbb{Z}/N\mathbb{Z}).
    % \end{align*}
    In the setting of Lemma \ref{lem:numcuspff}, $T$ naturally acts on $\alpha$ from right:
    \begin{align*}
        \alpha\cdot T=\begin{pmatrix}
            \alpha_1 & \alpha_2
        \end{pmatrix}
        \begin{pmatrix}
            \pm1 & t \\ 0 & \pm1 
        \end{pmatrix}
        =\begin{pmatrix}
            \pm\alpha_1 & t\alpha_1\pm\alpha_2
        \end{pmatrix}.
    \end{align*}
    Hence a surjective homomorphism $\beta:(\mathbb{Z}/N\mathbb{Z})^2\rightarrow\mathbb{Z}/N\mathbb{Z}$ with $\beta_i\vcentcolon=\beta(e_i)$ lies in the same $\pm\overline{\Gamma_1(N)}$-orbit of $\alpha$ if and only if there is $t\in\mathbb{Z}/N\mathbb{Z}$ such that
    \begin{align}\label{orbit-alpha}
\beta_1=\pm\alpha_1,\qquad\beta_2=t\alpha_1\pm\alpha_2
    \end{align}
    where the signs must be in the same order. 
    
    % {\color{magenta} How about just writing
    % \begin{align*}
    % \pm\overline{\Gamma_1(N)}\cdot\alpha=\{(\alpha_1,t\alpha_1+\alpha_2),(-\alpha_1,t\alpha_1-\alpha_2)\ |\ t\in\mathbb{Z}/N\mathbb{Z}\}   
    % \end{align*}
    % and saying that we will compute the right hand side?}
    
    We will give a $\pm\overline{\Gamma_1(N)}$-orbit of each $\alpha$ by concretely computing the action.
    Since $\alpha$ is assumed to be surjective, there are only three possible cases:
    \begin{enumerate}
        \item $\alpha_1=0$ and $\alpha_2\in (\bZ/N\bZ)^\times$.
        \item $\alpha_1 \in (\bZ/N\bZ)^\times$ and there is no restriction on $\alpha_2$.
        \item $\alpha_1 = vd$ for some $v \in (\bZ/N\bZ)^\times$ and $d \mid N$ with $d \neq 1, N$ and $\alpha_2$ is relatively prime to $d$.
    \end{enumerate}
    % {\color{blue} There are three cases for $\alpha$: $\alpha_1 = 0$ and $\alpha_2 \in (\bZ/N\bZ)^\times$, $\alpha_1 \in (\bZ/N\bZ)^\times$ and there is no restriction on $\alpha_2$, and finally, $\alpha_1 = vd$ for some $v \in (\bZ/N\bZ)^\times$ and $d \mid N$ and $\alpha_2$ is relatively prime to $d$.
    % Since $\alpha$ is surjective, other cases do not appear. Indeed, $\alpha_2$ should be a unit when $\alpha_1 = 0$, and $\alpha_2$ should be relatively prime to $d$ when $\alpha_1$ is divided by $d$.
    % }

    Case (1). Let $\varphi$ be the Euler totient function. From \eqref{orbit-alpha}, we deduce that $\beta$ is in the $\pm\overline{\Gamma_1(N)}$-orbit of $\alpha$ if and only if $(\beta_1,\beta_2)=(0,\pm\alpha_2)$. Hence the $\pm\overline{\Gamma_1(N)}$-orbit of $\alpha$ is of size $2$ and represented by $\{(0,\pm\alpha_2)\}$. This also shows that there are $\varphi(N)/2$ many $\pm\overline{\Gamma_1(N)}$-orbits of this type.
    
    Case (2). From \eqref{orbit-alpha}, we deduce that $\beta$ is in the $\pm\overline{\Gamma_1(N)}$-orbit of $\alpha$ if and only if $\beta_1=\pm\alpha_1$. Hence the $\pm\overline{\Gamma_1(N)}$-orbit of $\alpha$ is of size $2N$ and represented by $\{(\pm\alpha_1,\gamma_2) :  \gamma_2\in\mathbb{Z}/N\mathbb{Z}\}$. 
    This also shows that there are $(N\varphi(N))/(2N)=\varphi(N)/2$ many $\pm\overline{\Gamma_1(N)}$-orbits of this type.

    Case (3). We first count the possible choices of $\alpha$:
    \begin{align*}
        \#\{\textrm{possible $\alpha$}\}=\#\{\textrm{possible $\alpha_1$}\}\times\#\{\textrm{possible $\alpha_2$}\}=\varphi\left(\frac{N}{d}\right)\times\varphi(d)\frac{N}{d}=\frac{N}{d}\varphi\left(\frac{N}{d}\right)\varphi(d).
    \end{align*}
    
    We will compute concrete representatives of the $\pm\overline{\Gamma_1(N)}$-orbits of the given $\alpha=(\alpha_1,\alpha_2)$. Since $\alpha_1$ is not a unit, we have
    \begin{align*}
        \#\{\pm\alpha_1\}=\left\{\begin{array}{ll}
        1 & \textrm{if $N$ is even and $\alpha_1=\frac{N}{2}$}, \\
        2 & \textrm{otherwise}.
        \end{array}\right.
    \end{align*}
    
    We first consider the exceptional case, $N$ is even and $\alpha_1=N/2$. For $t\in\mathbb{Z}/N\mathbb{Z}$ we have
    \begin{align*}
        t\alpha_1=\left\{\begin{array}{ll}
        \alpha_1 & \textrm{if $t\equiv1\bmod2$}, \\
        0 & \textrm{if $t\equiv0\bmod2$}.
        \end{array}\right.
    \end{align*}
    So $\beta_2\in\{\pm\alpha_2,\alpha_1\pm\alpha_2\}$. To compute $\# \lcrc{ \pm \alpha_2, \alpha_1 \pm \alpha_2}$, we remove duplicate expressions. Since $(\alpha_2,2)=1$, we have $\alpha_2\neq-\alpha_2$. Moreover, $\alpha_2=N/2-\alpha_2$ if and only if $2\alpha_2=N/2$. By assumption, $\alpha_2$ is relatively prime to $\alpha_1=N/2$. Hence $\alpha_2\in(\mathbb{Z}/\alpha_1\bZ)^\times$, which says that $2\alpha_2=N/2$ if and only if $N/2$ divides $2$. Since we have assumed that $N/2\neq 1$, we must have $N=4$. Consequently, we get
    \begin{align*}
        \#\{\pm\alpha_2,\alpha_1\pm\alpha_2\}=\left\{\begin{array}{ll}
        2 & \textrm{if $N=4$}, \\
        4 & \textrm{otherwise.}
        \end{array}\right.
    \end{align*}
    Hence the $\pm\overline{\Gamma_1(N)}$-orbit of $\alpha$ in this case can be represented as follows:
    \begin{align*}
        \{(\alpha_1,t\alpha_1\pm\alpha_2)\ |\ t\in\mathbb{Z}/N\mathbb{Z}\}=\{(\alpha_1,\pm\alpha_2),(\alpha_1,\alpha_1\pm\alpha_2)\}.
    \end{align*}
    This shows that the $\pm\overline{\Gamma_1(N)}$-orbit of $\alpha$ in this case is of size $4$.
    Since we have $d=N/2$ in this case, the number of $\pm\overline{\Gamma_1(N)}$-orbits becomes
    \begin{align*}
        \frac{1}{4}\times2\varphi(2)\varphi\left(\frac{N}{2}\right)=\frac{1}{2}\varphi\left(\frac{N}{d}\right)\varphi(d).
    \end{align*}

    For the other case, we have $\#\{\pm\alpha_1\}=2$. Since $\{t\alpha_1 : t\in\mathbb{Z}/N\mathbb{Z}\}=d\mathbb{Z}/N\mathbb{Z}$, we have
    \begin{align*}
        \{t\alpha_1\pm\alpha_2 : 
        t\in\mathbb{Z}/N\mathbb{Z}\}=\{\gamma_2\in\mathbb{Z}/N\mathbb{Z} :  \gamma_2\equiv\pm\alpha_2\bmod d\}.
    \end{align*}
    Hence the $\pm\overline{\Gamma_1(N)}$-orbit of $\alpha$ in this case can be represented as follows:
    \begin{align*}
        \{(\pm\alpha_1,\gamma_2) : \gamma_2\equiv\pm\alpha_2\bmod d\},
    \end{align*} 
    where the signs must be in the same order.
    Since the size of the set is $2N/d$, the number of $\pm \overline{\Gamma_1(N)}$-orbit of $\alpha$ is 
    \begin{align*}
        \frac{d}{2N}\times\frac{N}{d}\varphi\left(\frac{N}{d}\right)\varphi(d)=\frac{1}{2}\varphi\lbrb{\frac{N}{d}}\varphi(d)
    \end{align*}
    again. Considering cases (1)-(3), there are totally
    \begin{align*}
        \frac{\varphi(N)}{2} + \frac{\varphi(N)}{2} + \frac{1}{2}\sum_{\substack{d \mid N \\ d \neq 1, N}}\varphi\lbrb{\frac{N}{d}}\varphi(d)
        = \frac{1}{2}\sum_{\substack{d \mid N }}\varphi\lbrb{\frac{N}{d}}\varphi(d)
    \end{align*}
    many $\pm\overline{\Gamma_1(N)}$-orbits. This computation reproves the classical result on the number of cusps of the Riemann surface $X_1(N)(\mathbb{C})$, which is
    \begin{align*}
        \left| X_1(N)^{\mathrm{cusp}}(\bC) \right| = \frac{1}{2}\sum_{\substack{d \mid N }}\varphi\lbrb{\frac{N}{d}}\varphi(d)
    \end{align*}
    for $N > 4$ (cf. \cite[p. 107]{DS}).
    
From the computation so far, we get representatives of the set
    \begin{align} \label{eqn: surjective orbit}
     {\left\{\begin{array}{c} \textrm{surjective homomorphisms} \\(\mathbb{Z}/N\mathbb{Z})^2\rightarrow\mathbb{Z}/N\mathbb{Z}\end{array}\middle\}\right/(\pm \Gamma)}.
    \end{align} 
    It remains to compute the subset of elements in the set \eqref{eqn: surjective orbit} fixed by the Galois action. The action in Lemma \ref{lem:numcuspff} is determined by
    \begin{align*}
        \alpha^\sigma(1,0) = \alpha \lbrb{ \begin{pmatrix}
            1 & 0
        \end{pmatrix} \begin{pmatrix}
            1 & 0 \\ 0 & \chi(\sigma)
        \end{pmatrix} } = \alpha(1, 0), \qquad
        \alpha^\sigma(0,1) = \alpha \lbrb{ \begin{pmatrix}
            0 & 1
        \end{pmatrix} \begin{pmatrix}
            1 & 0 \\ 0 & \chi(\sigma)
        \end{pmatrix} } = \alpha(0, \chi(\sigma)).
    \end{align*}
    In other words, $\sigma$ acts on $\alpha_1$ trivially, and on $\alpha_2$ via the cyclotomic character $\sigma(\zeta_N) = \zeta_N^{\chi(\sigma)}$.

    When $N=5$, there are $4$ classes in the set (\ref{eqn: surjective orbit}) whose representatives are
    \begin{align*}
    \lcrc{(0, \pm 1)}, \quad \lcrc{(0, \pm 2)}, \quad \lcrc{(\pm 1, \beta_2) : \beta_2 \in \bZ/5\bZ}, \quad
    \lcrc{(\pm 2, \beta_2) : \beta_2 \in \bZ/5\bZ}.
     \end{align*}
     We note that the first two are case (1), and the last two are case (2).
     If $q \not\equiv \pm 1 \pmod{5}$, then there is $\sigma \in \Gal(\overline{\bF_q}/\bF_q)$ such that $\sigma(\zeta_5) = \zeta_5^e$ for a generator $e \in (\bZ/5\bZ)^\times$, since $[\bF_q(\zeta_N) : \bF_q] \leq 2$ if and only if $q \equiv \pm 1 \pmod{N}$.
     %The elements $\lcrc{(0, \pm 1)}, \lcrc{(0, \pm 2)}$ are not stable {\color{magenta} fixed?} under the action of $\sigma$, so we have {\color{magenta} Do you mean that ``Since $\sigma$ permutes $\{(0,\pm1)\}$ and $\{(0,\pm2)\}$, we have ...''}
     Since $\sigma$ permutes $\{(0,\pm1)\}$ and $\{(0,\pm2)\}$, we have
     \begin{align*}
         |X_1(5)^{\mathrm{cusp}}(\bF_q)| = \left\{ \begin{array}{lll}
         4     &  \textrm{if } q \equiv \pm 1 \pmod{5}, \\
         2     &  \textrm{if } q \not\equiv \pm 1 \pmod{5}.
         \end{array} \right.
    \end{align*}
    This proof also works for all prime $N$:
    since $N$ is prime, the cusps come from cases (1) or (2). The cusps from (2), whose representatives are $\lcrc{(\pm \beta_1, \beta_2) : \beta_2 \in \bZ/N\bZ}$ are always rational, and the ones from (1), whose representatives are $\lcrc{(0, \pm \beta_2) : \beta_2 \in (\bZ/N\bZ)^\times}$, are rational only when $\Gal(\bF_q(\zeta_N)/\bF_q) \cong \lcrc{\pm 1}$ which is equivalent to $q \equiv \pm 1 \pmod{N}$.
    
    % {\color{blue} 
    %  When $N = 6$, the set of representatives of (\ref{eqn: surjective orbit}) is
    %  \begin{align*}
    %      \lcrc{(0, \pm 1)}, \quad \lcrc{(\pm 1, \alpha_2) : \alpha_2 \in \bZ/6\bZ}, \quad 
    %      \lcrc{(3, \alpha_2) : (\alpha_2, 3) = 1}, \quad \lcrc{(\pm 2, \alpha_2): (\alpha_2, 2) = 1 }.
    %  \end{align*}
    %  Each representative comes from the first, the second, the third for $d = 3$, and the third for $d = 2$ case, respectively.
    %  Since $[\bF_p(\zeta_6) : \bF_p] \leq 2$, the Galois action always acts on the second coordinate by $\pm 1$. Hence we always have $|X_1(6)^{\mathrm{cusp}}(\bF_p)| = 4$.}

    Let $N = 2\ell$ for a prime $\ell > 2$. 
    Then, the representatives of \eqref{eqn: surjective orbit} are
    \begin{align*}
        &\lcrc{(0, \pm \gamma_2)} \quad \textrm{ for } \gamma_2 \in (\bZ/N\bZ)^\times, \\
        & \lcrc{(\pm \gamma_1, \gamma_2) : \gamma_2 \in \bZ/N\bZ} \quad \textrm{ for } \gamma_2 \in (\bZ/N\bZ)^\times, \\
        & \lcrc{(\pm 2 \gamma_1, \gamma_2) : \gamma_2 \equiv 1 \pmod{2}} \quad \textrm{ for } \gamma_1 \in \lcrc{1, 2, \cdots, \frac{\ell-1}{2}}, \\
        & \lcrc{(\ell, \pm \gamma_2), (\ell, \ell \pm \gamma_2) } \quad \textrm{ for } \gamma_1 \in \lcrc{1, 2, \cdots, \frac{\ell-1}{2}}.
    \end{align*}
    We note that the cusps in each line come from the case (1), (2), (3) with $d=2$, and (3) with $d = \ell$, respectively. 
    The second and third ones are always rational, and the first and fourth ones are rational when $\Gal(\bF_q(\zeta_N)/\bF_q) \cong \lcrc{\pm 1}$. Therefore,
    \begin{align*}
        |X_1(2\ell)^{\mathrm{cusp}}(\bF_q)| = \left\{
        \begin{array}{lll}
        2\varphi(2\ell)     & \textrm{if }  q \equiv \pm 1 \pmod{2\ell}, \\
        \varphi(2\ell)     &  \textrm{if }  q \not\equiv \pm 1 \pmod{2\ell}.
        \end{array}
        \right.
    \end{align*}
The remaining cases can each be shown by similar calculations.
Here we describe the representatives of cusps of the case (3) and the Galois group for the double-check.
For $N = 4$, it is easy to check that every cusp is rational.
When $N = 8$, the cusps of the case (3) are
\begin{align*}
    \lcrc{(\pm 2, \gamma_2) : \gamma_2 \equiv 1 \pmod{2}}, \qquad
    \lcrc{(4, \pm 1), (4, 2 \pm 1)}
\end{align*}
which arise from the cases $d=2$ and $d=4$, respectively.
%which is the case of $d = 2, 4$, respectively. \textcolor{blue}{왼쪽 문장 무슨 뜻인가요? 확인요!}
Hence they are all rational, so we have
\begin{align*}
        |X_1(8)^{\mathrm{cusp}}(\bF_q)| = \left\{
        \begin{array}{lll}
        6     & \textrm{if } q \equiv \pm 1 \pmod{8},  \\
        4     & \textrm{if } q \not\equiv \pm 1 \pmod{8}.  
        \end{array}
        \right.   
\end{align*}
    When $N = 9$, the cusps of the case (3) are
    \begin{align*}
        % &\lcrc{(0, \pm 1)}, \quad \lcrc{(0, \pm 2)}, \quad
        % \lcrc{(0, \pm 4)}, \\
        % & \lcrc{(\pm 1, \gamma_2)}, \quad \lcrc{(\pm 2, \gamma_2)}, \quad \lcrc{(\pm 4, \gamma_2)}, \\
        & \lcrc{(3, \gamma_2) : \gamma_2 \equiv 1 \pmod{3}} \cup \lcrc{(6, \gamma_2) : \gamma_2 \equiv 2 \pmod{3}}, \\ 
        & \lcrc{(3, \gamma_2) : \gamma_2 \equiv 2 \pmod{3}} \cup \lcrc{(6, \gamma_2) : \gamma_2 \equiv 1 \pmod{3}},
    \end{align*}
    and
    \begin{align*}
        \Gal(\bF_q(\zeta_9)/\bF_q) = \left\{
        \begin{array}{lll}
        e     & \textrm{if }  q \equiv 1 & \pmod{9},  \\
        \bZ/2\bZ     &  \textrm{if }  q\equiv 8 &\pmod{9}, \\
        \bZ/3\bZ     &  \textrm{if }  q\equiv 4, 7 &\pmod{9}, \\
        \bZ/6\bZ     &  \textrm{if }  q\equiv 2, 5 &\pmod{9}. \\
        \end{array}
        \right.
    \end{align*}
    Hence, we have
    \begin{align*}
        |X_1(9)^{\mathrm{cusp}}(\bF_q)| = \left\{
        \begin{array}{lll}
        8     & \textrm{if }   q \equiv 1 &\pmod{9},  \\
        6     & \textrm{if }  q \equiv 8 &\pmod{9}, \\
        5   & \textrm{if }  q \equiv 4, 7 &\pmod{9}, \\
        3   & \textrm{if }  q \equiv 2, 5 &\pmod{9}.
        \end{array}
        \right.
    \end{align*}
    % Cusps in the second line are always rational. 
    % When $q \equiv \pm 1 \pmod{N}$, then $\Gal(\bF_q(\zeta_N)/\bF_q) \cong \lcrc{\pm 1}$.
    % The elements in the first line are stable under $\pm$-action, but not for multiplication by $a$ action, where $a \in (\bZ/9\bZ)^\times$ is a generator, and the last two are not stable under $\pm$-action.

When $N = 12$, there are $\varphi(12)/2$ cusps of the case (1), which are rational only when $\Gal(\bF_q(\zeta_N)/\bF_q) \cong \lcrc{\pm 1}$, and the same number of cusps of the case (2) which are rational.
    The case (3) is subdivided into four cases, $d = 2, 3, 4, 6$.
    For each subcase, the representatives are
    \begin{align*}
        &\lcrc{(\pm 2, \gamma_2) : \gamma_2 \equiv 1 \pmod{2}}, \\
        &\lcrc{(3, \gamma_2) : \gamma_2 \equiv 1 \pmod{3}} \cup \lcrc{(9, \gamma_2) : \gamma_2 \equiv 2 \pmod{3}}, \\
        &\lcrc{(3, \gamma_2) : \gamma_2 \equiv 2 \pmod{3}} \cup \lcrc{(9, \gamma_2) : \gamma_2 \equiv 1 \pmod{3}}, \\
        &\lcrc{(4, \gamma_2) : \gamma_2 \equiv 1 \pmod{4}} \cup \lcrc{(8, \gamma_2) : \gamma_2 \equiv 2 \pmod{4}}, \\
        &\lcrc{(4, \gamma_2) : \gamma_2 \equiv 2 \pmod{4}} \cup \lcrc{(8, \gamma_2) : \gamma_2 \equiv 1 \pmod{4}}, \\
        & \lcrc{(6, \pm 1), (6, \pm 5)}.
    \end{align*}
    Since
    \begin{align*}
        \Gal(\bF_q(\zeta_{12})/\bF_q) = \left\{
        \begin{array}{lll}
        e     & q \equiv 1 & \pmod{12},  \\
        \bZ/2\bZ     &  q\equiv 5, 7, 11 & \pmod{12},
        \end{array}
        \right.
    \end{align*}
    we have
    \begin{align*}
        |X_1(12)^{\mathrm{cusp}}(\bF_q)| = \left\{
        \begin{array}{lll}
        10     & q \equiv 1 & \pmod{12},  \\
        6     & q \not\equiv 1  & \pmod{12}.
        \end{array}
        \right.
    \end{align*}
    This completes the verification for all cases.
\end{proof}

We believe that Corollary \ref{cor:Numcusps} is known to experts, but we included the detailed proof for comparison with the previous result \cite[Proposition 2.2]{CJ2} in Remark \ref{rem:cuspcompareCJ}. 
We also attempted to generalize Corollary \ref{cor:Numcusps} to arbitrary $N$, but the resulting statement does not appear to admit a simple description. 
For example, when $N = \ell^2$ for an odd prime $\ell$, the case (3) for $d = \ell$ gives representatives
\begin{align*}
    \lcrc{(\ell, \gamma_2) : \gamma_2 \equiv i \pmod{\ell}} \cup 
    \lcrc{(\ell, \gamma_2) : \gamma_2 \equiv \ell - i \pmod{\ell}}
\end{align*}
for $i = 1, \cdots, \ell-1$. 
Since $\Gal(\bF_q(\zeta_{\ell^2})/\bF_q)$ is a quotient of $(\bZ/\ell^2\bZ)^\times \cong \bZ/\ell(\ell-1)\bZ$, the number of subcases increases as the number of prime divisors of $(\ell-1)$ grows. 
Instead, we give a general statement for the easier cases $N = \ell, 2\ell$, and include the special cases $N = 4, 8, 9, 12$ to cover the cases in Mazur's torsion theorem and \cite[Proposition 2.2]{CJ2}.

% {\color{blue} This is an old proof:
% When $N = 7$, there are six orbits
% \begin{align*}
%     \lcrc{(\pm 1, 0)}, \,\, \lcrc{(\pm 2, 0)}, \,\, \lcrc{(\pm 3, 0)}, \,\,
%     \lcrc{(a, \pm 1)}_{a \in \bZ/7\bZ}, \,\, \lcrc{(a, \pm 2)}_{a \in \bZ/7\bZ}, \,\, 
%     \lcrc{(a, \pm 3)}_{a \in \bZ/7\bZ}.
% \end{align*}
% When $p \not\equiv \pm 1 \pmod{N}$, there is a $\sigma \in \Gal(\overline{\bF_p}/\bF_p)$ satisfying $\sigma(\zeta_7) = \zeta_7^e$ where $e$ is an element of order $3$ in $(\bZ/7\bZ)^\times$.
% Hence, the last three are not defined over $\bF_p$.
% When $N = 8$, there are six orbits
% \begin{align*}
%     \lcrc{(\pm 1, 0)}, \,\, \lcrc{(\pm 3, 0)}, \,\, \lcrc{(1, \pm 1)}, \,\,
%     \lcrc{(1, \pm 3)}, \,\, \lcrc{(b, 2b)}_{b \in (\bZ/8\bZ)^\times}, \,\,
%     \lcrc{(b, 4b)}_{b \in (\bZ/8\bZ)^\times}.
% \end{align*}
% By the same argument, the middle two are not in $\bF_p$-points when $p \equiv \pm 1 \pmod{8}$.
% }

\begin{remark} \label{rem:cuspcompareCJ}
Corollary \ref{cor:Numcusps} generalizes \cite[Proposition 2.2]{CJ2}, which computes
\begin{align*}
    \sum_{\substack{J = (A, B)\in \bF_p^2 \\ 4A^3 + 27B^2 = 0}} |W_{G, J}|.
\end{align*}
By the definition of $W_{G, J}$, the sum counts the number of pairs $(a, b)$ in $\bF_p^2$ satisfying $(f_G(a, b), g_G(a, b)) = (A, B)$ for $4A^3 + 27B^2 \equiv 0 \pmod{p}$.
On the other hand, if $q$ is a $p$-power coprime to $6$, then there are exactly two cusps in $\cX_{\bF_q} \cong \cP(4, 6)_{\bF_q}$ which are $[A:B]$ satisfying $4A^3 + 27B^2 \equiv 0 \pmod{p}$ by definition of $\cX_{\Gamma, \bF_q}^{\mathrm{cusp}}$ in (\ref{eqn: def cX cusp}).
Thus, the above sum equals
\begin{align*}
    \sum_{E_z \in \cX_{\bF_q}^{\mathrm{cusp}}(\bF_q)} |\phi_{\Gamma}^{-1}(z)| = |\cX_{\Gamma, \bF_q}^{\mathrm{cusp}}(\bF_q)| = |X_{\Gamma, \bF_q}^{\mathrm{cusp}}(\bF_q)|
\end{align*}
when $\cX_{\Gamma}$ is representable.
Here we identify $\phi_{\Gamma}$ with $(f_G,g_G)$ as in Remark \ref{rmk:Phivsphi}.
In fact, the computation of $\bF_p$-cusps of $X_1(N)$ in Corollary \ref{cor:Numcusps} agrees with \cite[Proposition 2.2]{CJ2}.
Finally, we note that the prime condition in \cite[Proposition 2.2]{CJ2} for $N=7$, namely $\gamma_7 \in (\bF_p[\sqrt{-3}]^\times)^3$, is equivalent to the condition $p \equiv \pm 1 \pmod{7}$.
Therefore, our generalization recovers their result as a special case.
\end{remark}

Using Proposition \ref{prop:EKGamz}, one can also compute the probability of the local condition as in \cite[Theorem 1.4]{CJ1} and \cite[Theorem 1.1.2]{Phi2}.
For the multiplicative reduction condition, we define
\begin{align*}
    \cE_{K, \Gamma, \fp}^{\mathrm{mult}}(X) \vcentcolon = \lcrc{E \in \cE_{K, \Gamma}(X) : E \text{ has multiplicative reduction at } \fp }.
\end{align*}
\begin{theorem} \label{thm:cuspthm}
Let $\Gamma$ be a congruence subgroup of level $N$ such that $\mathcal{X}_\Gamma \cong \bP^1$ over $\Spec\bZ[1/6N]$. For every prime $\fp$ not dividing $6N$,
\begin{align*}
    \lim_{X \to \infty} \frac{|\cE_{K, \Gamma, \fp}^{\mult}(X)|}{|\cE_{K, \Gamma}(X)|} = 
    \frac{|\cX_{\Gamma}^{\mathrm{cusp}}(\bF_q)|}{|\cX_{\Gamma}(\bF_q)|}.
\end{align*}
\end{theorem}
\begin{proof}
Elliptic curves with multiplicative reduction at $\fp$ are mapped to the cusps in $\cX(\bF_q)^*$ under $\psi_\fp$. 
Therefore, by Proposition \ref{prop:EKGamz},
\begin{align} \label{mult count}
    |\cE_{K, \Gamma, \fp}^{\mult}(X)| 
    &= \frac{1}{q+1} \lbrb{ \sum_{z \in \cX^{\mathrm{cusp}}(\bF_q)} |\phi_{\Gamma}^{-1}(z)| } \kappa \cdot X^{\frac{2}{e(\Gamma)}} + O\lbrb{ {\left(1+\frac{1}{q} \right)|\cX^{\mathrm{cusp}}(\bF_q)|}  X^{\frac{2d - 1}{de(\Gamma)}}\log X}.
\end{align}
Since the inverse image of the cusps of $\cX$ under $\phi_\Gamma$ is precisely the set of cusps of $\cX_{\Gamma}$ by Lemma \ref{lem:cuspGam}, we have
\begin{align*}
    \lim_{X \to \infty} \frac{|\cE_{K, \Gamma, \fp}^{\mult}(X)| }{|\cE_{K, \Gamma}(X)| }
    = \frac{1}{q+1} \sum_{z \in \cX^{\mathrm{cusp}}(\bF_q)} |\phi_{\Gamma}^{-1}(z)| 
    = \frac{|\cX_{\Gamma}^{\mathrm{cusp}}(\bF_q)|}{q+1}.
\end{align*}
In our case we have $|\cX_{\Gamma}(\bF_q)| = |\bP^1(\bF_q)| = q+1$.
\end{proof}

\begin{remark} \label{rmk:cusprmk}
This is a generalization of \cite[Corollary 6]{CJ2}, which states that the probability of multiplicative reduction at $p$ is proportional to the number of cusps of $X_1(N)/\bC$ for a set of primes $p$ of positive density. 
Now, by Lemma \ref{lem:cuspGam} and Remark \ref{rem:cuspcompareCJ}, we can consider all primes $\fp$ not dividing $N$.
\end{remark}

From now on, we separate the multiplicative reduction into the split and non-split cases. 
Along the way, we also correct the errors in \cite[Corollary 3.13]{CJ2} and the split multiplicative reduction part of \cite[Theorem 3.7]{CJ2}, as stated in Corollary \ref{cor: Cor of CJ2}.

\begin{lemma} \label{lem: sp nsp cusp}
Let $q$ be a prime power coprime to $6$, and let $\alpha_0 \in \bF_q^\times$ be a nonsquare. 
The cusps of $\cX(\bF_q)$ correspond under the identification $\cX \cong \cP(4,6)$ to the two points $[-3,2]$ and $[-3\alpha_0^2,2\alpha_0^3]$ in $\cP(4,6)(\bF_q)$.  \\
(i) Suppose that $E \in \cY(K_{\fp})$ satisfies $\psi_{\fp}(E) = [-3,2]$.  
Then $E/K_\fp$ has split multiplicative reduction if and only if $3$ is a square in $\bF_q^\times$.  \\
(ii) Suppose that $E \in \cY(K_{\fp})$ satisfies $\psi_{\fp}(E) = [-3\alpha_0^2,2\alpha_0^3]$.  
Then $E/K_\fp$ has split multiplicative reduction if and only if $3$ is a non-square in $\bF_q^\times$.
\end{lemma}

\begin{proof}
By definition (\ref{eqn: def cX cusp}), a point $[A,B] \in \cP(4,6)(\bF_q)$ lies in the image of $\cX^{\mathrm{cusp}}(\bF_q)$ if and only if $4A^3 + 27B^2 = 0$ in $\bF_q$ provided $(q, 6) = 1$ (cf. Remark \ref{rem:cuspcompareCJ}).
The nonzero solutions of this equation are parametrized by $(-3\alpha^2, \pm 2\alpha^3)$ with $\alpha \in \bF_q^\times$.
Thus, when $(q,6)=1$, we may choose four representatives of cusps in $\cP(4,6)(\bF_q)$:
\begin{align*}
    [-3, \pm 2], \qquad [-3 \alpha_0^2, \pm 2 \alpha_0^3],
\end{align*}
where $\alpha_0$ is a nonsquare in $\bF_q^\times$.
Among these four points, two coincide: if $-1$ is a square in $\bF_q$ then $[-3,2]=[-3,-2]$, while if $-1$ is a nonsquare then $[-3,2]=[-3\alpha_0^2,-2\alpha_0^3]$.
In either case, we obtain exactly two distinct cusps, namely
\begin{align*}
    \cX^{\mathrm{cusp}}(\bF_q) \cong \lcrc{[-3, 2], [-3 \alpha_0^2, 2 \alpha_0^3]}.
\end{align*}

Recall that a generalized elliptic curve $E_z \in \cX^{\mathrm{cusp}}(\bF_q)$ corresponds to split (resp. non-split) multiplicative reduction if and only if the slope at its singular point lies in (resp. does not lie in) $\bF_q$.
Since 
\begin{align*}
    y^2 - x^3 + 3\alpha^2x - 2\alpha^3
    = (y - \sqrt{3\alpha}(x - \alpha) )(y + \sqrt{3\alpha}(x - \alpha) ) - (x - \alpha)^3,
\end{align*}
the slope of a nodal curve $y^2 = x^3 - 3\alpha^2 x + 2\alpha^3$
% \begin{align*}
%     y^2 = x^3 - 3\alpha^2 x + 2\alpha^3
% \end{align*}
at the singular point $(\alpha, 0)$ is $\sqrt{3\alpha}$.
Therefore, if $E_z \in \cX^{\mathrm{cusp}}(K_{\fp})$ reduces to $[-3,2] \in \cP(4,6)(\bF_q)$ (resp. $[-3\alpha_0^2, 2\alpha_0^3]$), then it has split multiplicative reduction if and only if $3$ is a square (resp. a non-square) in $\bF_q^\times$.
\end{proof}

The definitions of $\cE_{K,\Gamma,\fp}^{\mathrm{split}}(X)$ and $\cE_{K,\Gamma,\fp}^{\mathrm{nonsplit}}(X)$ are analogous to that of $\cE_{K,\Gamma,\fp}^{\mathrm{mult}}(X)$.

\begin{corollary}[{Correction of \cite[Corollary 3.13]{CJ2} }]
\label{cor: Cor of CJ2}
\begin{align*}
    \lim_{X \to \infty} \frac{|\cE_{\bQ, \Gamma_1(3), p}^{\mathrm{split}}(X)|}{|\cE_{\bQ, \Gamma_1(3),p}^{\mathrm{mult}}(X)|}
    = \left\{
\begin{array}{ll}
    1 & \textrm{if } p \equiv 1, 7 \pmod{12}, \\
    \frac{1}{2} & \textrm{if } p \equiv 5, 11 \pmod{12}.
\end{array}
    \right.
\end{align*}
\end{corollary}
\begin{proof}
By \cite[Theorem 3.7]{CJ2}, there exists an explicit constant $c_3$ such that
\begin{align*}
    |\cE_{\bQ, \Gamma_1(3)}(X)| = c_3X^{\frac{1}{3}}  + O(X^{\frac{1}{4}}).
\end{align*}
Let
\begin{align*}
    w_p \vcentcolon = \left\{
        \begin{array}{lll}
        2(p-1)    & \textrm{if } p \equiv 1 &\pmod{12}, \\
        p-1         & \textrm{if } p \equiv 5, 11 &\pmod{12}, \\
        0       & \textrm{if } p \equiv 7 & \pmod{12}.
        \end{array}
    \right.
\end{align*}
Then by \cite[(5), Proposition 3.6, Theorem 3.7]{CJ2}, 
\begin{align*}
    |\cE_{\bQ, \Gamma_1(3), p}^{\mathrm{mult}}(X)| &= c_3 \frac{2(p-1)}{p^2} \frac{p^{4}}{p^{4}-1} X^{\frac{1}{3}} + O\lbrb{ X^{\frac{1}{4}} + pX^{\frac{1}{12}} },
\end{align*}
and
\begin{align} \label{eqn: CJ2 error}
\left| \lcrc{E \in \cE_{\bQ, \Gamma_1(3)}(X) : \,\,
\psi_p(E) = [-3, 2] 
} \right| &= c_3 \frac{w_p}{p^2} \frac{p^{4}}{p^{4}-1} X^{\frac{1}{3}} + O\lbrb{\frac{w_p}{p} X^{\frac{1}{4}} + X^{\frac{1}{12}} }.
\end{align}
Hence, Lemma \ref{lem: sp nsp cusp} together with the direct computation of the quadratic residue\footnote{In \cite{CJ2}, the authors mistakenly identified the right-hand side of (\ref{eqn: CJ2 error}) as $|\cE_{\bQ, \Gamma_1(3), p}^{\mathrm{mult}}(X)|$. The other parts of \cite[Theorem 3.7]{CJ2} are correct, and we use the multiplicative reduction part in our proof. }
\begin{align*}
    \lbrb{\frac{3}{p}} = \left\{ \begin{array}{lll}
       1  & \textrm{if } p \equiv 1, 11 &\pmod{12}, \\
       -1  & \textrm{if } p \equiv 5, 7 &\pmod{12},
    \end{array} \right.
\end{align*}
yields the result.
\end{proof}

% \begin{align*}
%     \begin{array}{|c|c|c|c|c|c|c} \hline
%         & |\mathrm{E}_{\bQ, \bZ/3\bZ, p}^{\mathrm{split}}| & |\mathrm{E}_{\bQ, \bZ/3\bZ, p}^{\mathrm{nonsplit}}| &
%         \frac{|\mathrm{E}_{\bQ, \bZ/3\bZ, p}^{\mathrm{mult}}|}{|\mathrm{E}_{\bQ,\bZ/3\bZ}|} & \frac{2p-2}{p^2} \\ \hline 
%         p=5 &  9725   & 10500    &\frac{20225}{51405}\approx 0.39344  &   \approx 0.32000      \\ \hline 
%         p=7 &   15476  & 0     &\frac{15476}{51405}\approx 0.30106  &   \approx 0.24490  \\ \hline 
%         p=11 &  4603  & 5041    &\frac{9644}{51405}\approx 0.18761  &   \approx 0.16529  \\ \hline 
%         p=13 &  8141  & 0     &\frac{8141}{51405}\approx 0.15837    &   \approx 0.14201  \\ \hline 
%     \end{array}
% \end{align*}

As of August 2025, the LMFDB \cite{LMFDB} provides a complete list of elliptic curves over $\bQ$ with conductor less than or equal to $500{,}000$. 
Let $\mathrm{E}_{\bQ, \bZ/3\bZ, p}^{\mathrm{split}}$ and $\mathrm{E}_{\bQ, \bZ/3\bZ, p}^{\mathrm{nonsplit}}$ denote the sets of elliptic curves with conductor $\leq 500{,}000$, torsion subgroup $\bZ/3\bZ$, and split or non-split multiplicative reduction at $p$, respectively. 
Then we obtain the following table, which provides a numerical test of Corollary \ref{cor: Cor of CJ2}. 
Note that $|\mathrm{E}_{\bQ, \bZ/3\bZ}| = 51{,}405$.
\begin{align*}
    \begin{array}{|c|c|c|c|} \hline
        & |\mathrm{E}_{\bQ, \bZ/3\bZ, p}^{\mathrm{split}}| & |\mathrm{E}_{\bQ, \bZ/3\bZ, p}^{\mathrm{nonsplit}}| \\ \hline 
        p=5  &  9725   & 10500 \\ \hline 
        p=7  & 15476   & 0     \\ \hline 
        p=11 & 4603    & 5041  \\ \hline 
        p=13 & 8141    & 0     \\ \hline 
    \end{array}
\end{align*}
These counts are consistent with Corollary \ref{cor: Cor of CJ2}, confirming the predicted distribution between split and non-split multiplicative reduction.
Strictly speaking, the above table is not an exact numerical test of Corollary \ref{cor: Cor of CJ2}, for the following two reasons:
\begin{itemize}
    \item We used the naive height order in Corollary \ref{cor: Cor of CJ2}, but the LMFDB list is given in the conductor order.
    \item We used $\cE_{\bQ, \Gamma_1(3)}(X)$ as a set of elliptic curves with $\Gamma_1(3)$-level structure, whereas here we only consider elliptic curves whose torsion subgroup is isomorphic to $\bZ/3\bZ$.
\end{itemize}
However, there are in total $6759$ (resp. $20$, $17$) elliptic curves with conductor $\leq 500{,}000$ and torsion subgroup $\bZ/6\bZ$ (resp. $\bZ/9\bZ$, $\bZ/12\bZ$), which is too small to affect the proportion.

\medskip

Corollary \ref{cor: Cor of CJ2} provides an example (for $\Gamma=\Gamma_1(3)$) in which the probabilities of split and non-split multiplicative reduction at a prime $p$ differ for elliptic curves with prescribed level structure $\Gamma$. 
We expect that this phenomenon disappears after base change to a finite extension, provided $\fp \nmid 6N$. 
When $\cX_{\Gamma}$ is representable, we prove this in what follows.

\begin{corollary} \label{cor:spnonsp}
Let $\Gamma$ be a congruence subgroup of level $N$ such that $\mathcal{X}_\Gamma \cong \bP^1$ over $\Spec \bZ[1/6N]$.
For any algebraic extension $K/\mathbb{Q}(\zeta_N)$ and any prime $\mathfrak{p}$ of $K$ not dividing $6N$, we have
\begin{align*}
    \lim_{X \to \infty} \frac{|\cE_{K, \Gamma, \fp}^{\mathrm{split}}(X)|}{|\cE_{K, \Gamma}(X)|}
=\lim_{X \to \infty} \frac{|\cE_{K, \Gamma, \fp}^{\mathrm{nonsplit}}(X)|}{|\cE_{K, \Gamma}(X)|}.
\end{align*}
\end{corollary}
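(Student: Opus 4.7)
The plan is to apply Proposition \ref{prop:EKGamz} cusp by cusp and use the hypothesis $K\supseteq\mathbb{Q}(\zeta_N)$ to produce a symmetry between the split and non-split cusps of $\mathcal{X}_\Gamma$ over the residue field.

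First, I observe that whether an elliptic curve with $\Gamma$-structure has split or non-split multiplicative reduction at $\fp$ is determined by which cusp of $\mathcal{X}_{\Gamma,\kappa(\fp)}$ it hits under $\psi_\fp$: the curve reduces to a generalized elliptic curve whose identity component is the split $\mathbb{G}_m$ in the split case and its unramified quadratic twist in the non-split case. Combining this with Lemma \ref{lem:cuspGam} and running the argument of Theorem \ref{thm:cuspthm} on the decomposition $\mathcal{X}_\Gamma^{\mathrm{cusp}}(\mathbb{F}_q) = \mathcal{X}_\Gamma^{\mathrm{cusp,sp}}(\mathbb{F}_q)\sqcup\mathcal{X}_\Gamma^{\mathrm{cusp,ns}}(\mathbb{F}_q)$ gives
\begin{align*}
\lim_{X\to\infty}\frac{|\mathcal{E}_{K,\Gamma,\fp}^{\mathrm{split}}(X)|}{|\mathcal{E}_{K,\Gamma}(X)|}=\frac{|\mathcal{X}_\Gamma^{\mathrm{cusp,sp}}(\mathbb{F}_q)|}{q+1},\qquad\lim_{X\to\infty}\frac{|\mathcal{E}_{K,\Gamma,\fp}^{\mathrm{nonsplit}}(X)|}{|\mathcal{E}_{K,\Gamma}(X)|}=\frac{|\mathcal{X}_\Gamma^{\mathrm{cusp,ns}}(\mathbb{F}_q)|}{q+1},
\end{align*}
so the corollary reduces to the numerical identity $|\mathcal{X}_\Gamma^{\mathrm{cusp,sp}}(\mathbb{F}_q)| = |\mathcal{X}_\Gamma^{\mathrm{cusp,ns}}(\mathbb{F}_q)|$.

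Second, I exploit the hypothesis. Since $\fp\nmid N$ and $K\supseteq\mathbb{Q}(\zeta_N)$, the residue field $\kappa(\fp)$ contains $\zeta_N$, hence $N\mid q-1$. In the setup of Lemma \ref{lem:numcuspff}, the Galois action on the geometric cusps is encoded through the cyclotomic character valued in $(\mathbb{Z}/N\mathbb{Z})^\times$; under $N\mid q-1$ this action is trivial, so every geometric cusp of $\mathcal{X}_\Gamma$ is already $\mathbb{F}_q$-rational.

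Third, I construct an explicit bijection $\mathcal{X}_\Gamma^{\mathrm{cusp,sp}}(\mathbb{F}_q)\xrightarrow{\sim}\mathcal{X}_\Gamma^{\mathrm{cusp,ns}}(\mathbb{F}_q)$ given by the unramified quadratic twist on pairs (N\'eron polygon, $\Gamma$-structure). The key point is that because $\zeta_N\in\mathbb{F}_q$, the $N$-torsion on the smooth identity component of each generalized elliptic curve, identified via the Tate uniformizer with $\mu_N$, consists entirely of $\mathbb{F}_q$-rational points. Hence the twist by the unramified quadratic character preserves any $\Gamma$-level structure while swapping the split and non-split types of the underlying polygon, which yields the required bijection, and with it the numerical identity above.

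The main obstacle will be the third step: rigorously verifying that the unramified quadratic twist does lift to a well-defined, $\Gamma$-structure preserving involution on the cuspidal substack of $\mathcal{X}_\Gamma$ over $\mathbb{F}_q$, interchanging the split and non-split loci. One has to check this not just pointwise but compatibly on the entire finite \'etale cuspidal subscheme, and one must ensure that the definition of $\mathcal{X}_\Gamma^{\mathrm{cusp,sp}}(\mathbb{F}_q)$ and $\mathcal{X}_\Gamma^{\mathrm{cusp,ns}}(\mathbb{F}_q)$ used in Step 1 is consistent with the twist-swapping in Step 3; the hypothesis $\zeta_N\in K$ is exactly the rationality of roots of unity required for this consistency.
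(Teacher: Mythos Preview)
Your Steps 1 and 2 follow the paper, and the reduction to the numerical identity $|\phi_\Gamma^{-1}(z_1)|=|\phi_\Gamma^{-1}(z_2)|$ is correct. The gap is in Step 3. Your argument that the unramified quadratic twist preserves the $\Gamma$-structure rests on the assertion that ``the $N$-torsion on the smooth identity component of each generalized elliptic curve \ldots\ consists entirely of $\mathbb{F}_q$-rational points.'' This is true for the split polygon but \emph{false} for the non-split one: the identity component of the non-split $1$-gon is the norm-one torus $T$ with $T(\mathbb{F}_q)\cong\mathbb{Z}/(q+1)\mathbb{Z}$, and since $N\mid q-1$ forces $\gcd(N,q+1)\mid 2$, we get $T(\mathbb{F}_q)[N]=0$ for odd $N$. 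Hence the non-split $1$-gon carries no $\Gamma_1(N)$-structure over $\mathbb{F}_q$ at all, and the twist cannot transport a level structure across.

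In fact the identity you are trying to prove is false. Take $\Gamma=\Gamma_1(5)$ and the residue field $\mathbb{F}_{11}$ of a prime of $\mathbb{Q}(\zeta_5)$ above $11$. Using the Tate normal form $y^2+(1-b)xy-by=x^3-bx^2$, one checks directly that at each of the four cusps $b\in\{0,\infty,b_\pm\}$ (with $b_\pm^2-11b_\pm-1=0$) the tangent cone at the node factors over $\mathbb{F}_{11}$, so all four map under $\phi_\Gamma$ to the \emph{split} cusp $z_1$ and $|\phi_\Gamma^{-1}(z_2)|=0$. This matches the modular picture: an elliptic curve over $K\supseteq\mathbb{Q}(\zeta_N)$ with a $K$-rational point of odd order $N$ can never have non-split multiplicative reduction at $\mathfrak{p}\nmid N$, because the Galois module $E[N]$ for a non-split Tate twist has $E(K_\mathfrak{p})[N]\subseteq E[N][2]=0$. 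Thus the non-split density is $0$ while the split density is $|\mathcal{X}_\Gamma^{\mathrm{cusp}}(\mathbb{F}_q)|/(q+1)\neq 0$. The paper's own proof makes the analogous unjustified step, asserting $|\phi_\Gamma^{-1}(z_1)|=|\phi_\Gamma^{-1}(z_2)|=\deg\phi_\Gamma$ from the $\mathbb{Z}[\zeta_N]$-rationality of the cusps alone; the example shows this conclusion is not correct as stated.
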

% \begin{proof}
%     There are two cusps $z_1$ and $z_2$ of $\cX(\bF_q)$ which correspond to split and non-split multiplicative reduction, respectively (cf. Remark \ref{rem:cuspcompareCJ}).  
%     In other words, an elliptic curve corresponding to a point $x \in \cX(K)$
%     has split  (resp. non-split) multiplicative reduction at $\fp$ if and only if the mod $\fp$ reduction of $x$ is $z_1$ (resp. $z_2$).
%     By Proposition \ref{prop:EKGamz}, we have
%     \begin{align*}
%     \lim_{X \to \infty} \frac{|\cE_{K, \Gamma, \fp}^{\mathrm{split}}(X)|}{|\cE_{K, \Gamma}(X)|}
%     = \frac{q-1}{q^2-1} |\phi^{-1}_{\Gamma}(z_1)|,
%     %\frac{|\phi^{-1}_{\Gamma}(z_1)|}{|\cX_{\Gamma}(\bF_q)|},
%     \qquad    \lim_{X \to \infty} \frac{|\cE_{K, \Gamma, \fp}^{\mathrm{nonsplit}}(X)|}{|\cE_{K, \Gamma}(X)|}
%     = \frac{q-1}{q^2-1}|\phi^{-1}_{\Gamma}(z_2)|.
%     %\frac{|\phi^{-1}_{\Gamma}(z_2)|}{|\cX_{\Gamma}(\bF_q)|}.
%     \end{align*}
%     By Lemma \ref{lem:cuspGam}, the inverse image of $\lcrc{z_1, z_2}$ is exactly the set of cusps of $\cX_{\Gamma}$, which are rational over $\bZ[\zeta_N]$ (cf. \cite[10.9.1]{KM85}).
%     Hence $|\phi_{\Gamma}^{-1}(z_1)|= |\phi_{\Gamma}^{-1}(z_2)| = \deg \phi_{\Gamma}$ for the residue field of any prime $\fp$ {\color{magenta} not dividing $N$?}. 
% \end{proof}

\begin{proof}
Here, we use $\kappa(\fp)$ instead of $\bF_q$.
By Lemma \ref{lem: sp nsp cusp}, there are two cusps $z_1 = z_1(q)$ and $z_2 = z_2(q)$ of $\cX(\kappa(\fp))$ which correspond to split and non-split multiplicative reduction respectively.
In other words, we have $|\cE_{K, \Gamma, \fp}^{\mathrm{split}}(X)| = |\cE_{K, \Gamma, \fp}^{z_1}(X)|$ and $|\cE_{K, \Gamma, \fp}^{\mathrm{nonsplit}}(X)| = |\cE_{K, \Gamma, \fp}^{z_2}(X)|.$
By Proposition \ref{prop:EKGamz}, we have
\begin{align*}
\lim_{X \to \infty} \frac{|\cE_{K, \Gamma, \fp}^{\mathrm{split}}(X)|}{|\cE_{K, \Gamma}(X)|}
= \frac{|\phi^{-1}_{\Gamma}(z_1)|}{q+1} ,
%\frac{|\phi^{-1}_{\Gamma}(z_1)|}{|\cX_{\Gamma}(\bF_q)|},
\qquad    \lim_{X \to \infty} \frac{|\cE_{K, \Gamma, \fp}^{\mathrm{nonsplit}}(X)|}{|\cE_{K, \Gamma}(X)|}
= \frac{|\phi^{-1}_{\Gamma}(z_2)|}{q+1}.
%\frac{|\phi^{-1}_{\Gamma}(z_2)|}{|\cX_{\Gamma}(\bF_q)|}.
\end{align*}
From the discussion in section \ref{subsec:cusps}, we get the following Cartesian squares of algebraic stacks:
\begin{align*}
    \xymatrix{
    \mathcal{X}_{\Gamma,\kappa(\mathfrak{p}),z_i} \ar[d] \ar[r] & \mathcal{X}_{\Gamma,\kappa(\mathfrak{p})}^\mathrm{cusp} \ar[d] \ar@{^(->}[r] & \mathcal{X}_{\Gamma,\kappa(\mathfrak{p})} \ar[d]^-{\phi_{\Gamma,\kappa(\mathfrak{p})}} \\
    \Spec\kappa(\mathfrak{p}) \ar[r]_-{z_i} & \mathcal{X}_{\kappa(\mathfrak{p})}^\mathrm{cusp} \ar@{^(->}[r] & \mathcal{X}_{\kappa(\mathfrak{p})}\rlap{\ .}
    }
\end{align*}
Since $\phi_{\Gamma,\kappa(\mathfrak{p})}$ is representable, the fiber $\mathcal{X}_{\Gamma,\kappa(\mathfrak{p}),z_i}$ is a scheme. Note that the right column of the above diagram is surjective as a map of algebraic stacks because it is a finite dominant map into an irreducible stack $\mathcal{X}_{\kappa(\mathfrak{p})}\cong\mathcal{P}(4,6)_{\kappa(\mathfrak{p})}$. Since surjective maps are stable under base change (cf. \cite[\href{https://stacks.math.columbia.edu/tag/04ZU}{Tag 04ZU}]{Stacks}), each column in the above diagram is surjective. Since all the points of $\mathcal{X}_{\Gamma,\kappa(\mathfrak{p})}^\mathrm{cusp}$ are $\kappa(\mathfrak{p})$-rational (cf. \cite[VII.2]{DR73}, and \cite[10.9.1]{KM85}), the set $\mathcal{X}_{\Gamma,\kappa(\mathfrak{p}),z_i}(\kappa(\mathfrak{p}))$ is nonempty. Consequently, the function $\phi_{\Gamma,\kappa(\mathfrak{p})}:\mathcal{X}_{\Gamma,\kappa(\mathfrak{p})}^\mathrm{cusp}(\kappa(\mathfrak{p}))\rightarrow\mathcal{X}^\mathrm{cusp}_{\kappa(\mathfrak{p})}(\kappa(\mathfrak{p}))=\{z_1,z_2\}$ is surjective. Since $\phi_{\Gamma,\kappa(\mathfrak{p})}$ is finite \'etale on the substacks of cusps (cf. \cite[Th\'eor\`eme IV.3.4]{DR73}), we deduce from the $\kappa(\mathfrak{p})$-rationality that $|\phi_\Gamma^{-1}(z_1)|=\deg\phi_\Gamma=|\phi_\Gamma^{-1}(z_2)|$ for any $\mathfrak{p}$ not dividing $6N$.
\end{proof}
%Maybe we need $\cX_{\Gamma} \cong X_{\Gamma}$ in the last argument, so this corollary is not true for $\Gamma$ for which $\cX \not\cong \bP^1$ over an open subscheme of $\Spec \bZ$.

\section{Moments of traces of the Frobenius} \label{sec:classnum}

\subsection{Class number: generalization}
Let $H(D)$ be the Hurwitz class number of discriminant $D$.
For a given integer $a$ satisfying the Weil bound $[-2\sqrt{p},2\sqrt{p}]$, the number of short Weierstrass equations of elliptic curves over $\bF_p$ with trace of Frobenius $a$ is
\begin{align*}
    \frac{p-1}{2}H(a^2-4p),
\end{align*}
(cf. \cite[Theorem 14.18]{Cox13}).
Building on the computations of the previous section, we propose the following generalization of the Hurwitz class number. This invariant measures the proportion of isomorphism classes of elliptic curves over $\bF_q$ with trace of Frobenius $a$ whose group structure is related to the $\Gamma$-level structure.

Recall that $\phi_{\Gamma} : \cX_{\Gamma} \to \cX$ is the forgetful functor, that $\wt$ is the weight function defined in Definition \ref{def:wtz}, and that $E_z \in \cY(\bF_q)$ is the elliptic curve corresponds to a point $z \in \cP(4, 6)(\bF_q)$.
\begin{definition} \label{def:H Gamma}
For a congruence subgroup $\Gamma$ and an integer $a$ satisfying the Weil bound $[-2\sqrt{q}, 2\sqrt{q}]$, we define
\begin{align*}
    H_{\Gamma}(a, q) \vcentcolon= \frac{1}{q^2}  \sum_{\substack{E_z \in \cY(\bF_q) \\ a_q(E_z) = a}} \sum_{\widetilde{z} \in \phi^{-1}_{\Gamma}(z)} \wt(\widetilde{z}).
\end{align*}
\end{definition}
When $\cX_{\Gamma, \bF_q} \cong \bP_{\bF_q}^1$, we have $\mathrm{wt}(\widetilde{z}) = 1$ for any $\widetilde{z} \in \bP^1(\bF_q)$ by Lemma \ref{lem:wtXast}.
In this case,
\begin{align} \label{eqn:HGamma def rep}
    H_{\Gamma}(a, q) = \frac{q-1}{q^2}\sum_{\substack{E_z \in \cY(\bF_q) \\ a_q(E_z) = a}}
    |\phi_{\Gamma}^{-1}(z)|.
\end{align}

\begin{remark} \label{rmk: HGam and HG}
When $\Gamma$ is a congruence subgroup corresponding to a torsion subgroup of $E/\bQ$,
it is natural to view $H_{\Gamma}(a,q)$ as a refinement of the function
\begin{align*}
    H_{G}(a, p) = \sum_{\substack{J = (A, B) \in \bF_p^2 \\ a_p(E_J) = a \\ 4A^3 + 27B^2 \not\equiv 0 \pmod{p}}} |W_{G, J}|
\end{align*}
defined in \cite[(7)]{CJ2}.
We recall that the definition of $W_{G, J}$ is given in (\ref{eqn:WGJinCJ}).
We note that the sum over $E_z \in \cY(\bF_q)$ in the definition of $H_{\Gamma}(a, q)$ is a sum taken over an $\bF_q$-isomorphism classes of elliptic curves, but the sum over $J \in \bF_p^2$ in $H_G(a, p)$ is a sum over a short Weierstrass equation.
By Lemma \ref{lem:WGJcomapare},
\begin{align*}
    H_{G}(a, p)
    = \sum_{\substack{E_z \in \cY(\bF_p) \\ a_p(E_z) =a }}
    \sum_{\substack{J \in \bF_p^2 \\ E_J \cong E_z}} |W_{G, J}|
    = (p-1) \sum_{\substack{E_z \in \cY(\bF_p) \\ a_p(E_z) =a }} |\phi_{\Gamma}^{-1}(z)|.
\end{align*}
Hence, in this case,
\begin{align*}
    H_{\Gamma}(a, p) = \frac{H_G(a, p)}{p^2}.
\end{align*}
\end{remark}

By definition, (\ref{eqn:EKgama}) gives 
\begin{align} \label{eqn:EKgamaref}
    |\cE_{K, \Gamma, \fp}^a(X)| = \frac{q^2}{q^2-1} \cdot \kappa \cdot  H_{\Gamma}(a, q) \cdot X^{\frac{2}{e(\Gamma)}} + O_{\Gamma, K}\lbrb{  \lbrb{\sum_{a_{\fp}(E_z) = a} 1} X^{\frac{2d - 1}{de(\Gamma)}}\log X}.
\end{align}
%We remark that $q^2/(q^2-1)$-term appears since there is no elliptic curve with $\Gamma$-level structure that has additive reduction at the prime $\fp$ when $\cX_{\Gamma}$ is representable.

For $R = 0, 1,2$, the goal of this section is to give an asymptotic of
\begin{align*}
    \sum_{|a| \leq 2\sqrt{q}} a^R H_{\Gamma}(a, q) 
\end{align*}
which are analogues of \cite[(8), (9), (10)]{CJ2}.
The first one follows from section \ref{subsec:cusps}.
In fact, it also holds in the more general case $\cX_{\Gamma} \cong \cP(u)$, not only when $\cX_{\Gamma} \cong \bP^1$.

\begin{lemma} \label{lem:Hmoment0}
Let $\Gamma$ be a congruence subgroup of level $N$ satisfying $\cX_{\Gamma} \cong \cP(u)$ over $\Spec \bZ[1/N]$, and let $q$ be a prime power satisfying $(q, N) = 1$.
Then
\begin{align*}
\sum_{|a|\leq 2\sqrt{q}}H_{\Gamma}(a, q) = 1 +  O_{\Gamma}(q^{-1}).
\end{align*}
\end{lemma}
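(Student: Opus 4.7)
The plan is to unfold the definition of $H_\Gamma(a,q)$ and swap the order of summation so that the sum over $a$ absorbs into a sum over all $\bF_q$-points of $\cY$. Starting from
\begin{align*}
    H_{\Gamma}(a, q) = \frac{1}{q^2} \sum_{\substack{z \in \cY(\bF_q) \\ a_q(E_z) = a}} \sum_{\widetilde{z} \in \phi^{-1}_{\Gamma}(z)} \wt(\widetilde{z}),
\end{align*}
and noting that every $z\in\cY(\bF_q)$ has trace $a_q(E_z)$ lying in the Weil interval $[-2\sqrt q,2\sqrt q]$, summation over admissible $a$ collapses to a single sum over $\cY(\bF_q)$. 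Since the disjoint union $\bigsqcup_{z\in\cY(\bF_q)}\phi_\Gamma^{-1}(z)$ exhausts $\cY_\Gamma(\bF_q)$ (every level structure lives over some underlying curve), I obtain
\begin{align*}
    \sum_{|a|\leq 2\sqrt{q}}H_{\Gamma}(a,q) = \frac{1}{q^2}\sum_{\widetilde z\in\cY_\Gamma(\bF_q)}\wt(\widetilde z).
\end{align*}

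The second step is to recognize that $\sum_{\widetilde z\in \cX_\Gamma(\bF_q)}\wt(\widetilde z)$ has a closed form. By the very definition of $\wt$ in Definition \ref{def:wtz}, summing $\wt$ over the whole $\bP(u)(\bF_q)^\ast$ counts each element of $\bA^2(\bF_q)$ exactly once, giving $q^2$. Since $\wt(\ast)=1$ by Lemma \ref{lem:wtXast}, removing the dummy point yields
\begin{align*}
    \sum_{\widetilde z\in\cX_\Gamma(\bF_q)}\wt(\widetilde z) = q^2-1,
\end{align*}
using the identification $\cX_\Gamma\cong\bP(u)$ available since $\Gamma$ has genus zero. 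Splitting $\cY_\Gamma(\bF_q)=\cX_\Gamma(\bF_q)\setminus\cX_\Gamma^{\mathrm{cusp}}(\bF_q)$ then gives
\begin{align*}
    \sum_{|a|\leq 2\sqrt q}H_\Gamma(a,q)=\frac{q^2-1}{q^2}-\frac{1}{q^2}\sum_{\widetilde z\in\cX_\Gamma^{\mathrm{cusp}}(\bF_q)}\wt(\widetilde z).
\end{align*}

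The last step is to bound the cusp contribution. By Lemma \ref{lem:cuspbound} the number of cusps is $O_\Gamma(1)$, and by Lemma \ref{lem:wtXast} every individual weight satisfies $\wt(\widetilde z)\leq q-1$. Hence the cusp correction is $O_\Gamma((q-1)/q^2)=O_\Gamma(q^{-1})$, and combining with $(q^2-1)/q^2 = 1 - q^{-2}$ yields the claim. There is no real obstacle here; the only subtlety is bookkeeping—checking that the $\ast$-point does not sneak into the sum (it does not, since $\cY$ consists of honest elliptic curves) and that the identification $\cX_\Gamma\cong\bP(u)$ transports $\wt$ correctly. The representability hypothesis of $\cX_\Gamma$ is not needed for this moment bound, only the genus-zero assumption to apply Lemma \ref{lem:wtXast}.
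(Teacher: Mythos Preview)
Your proof is correct and follows essentially the same approach as the paper's. Both arguments collapse the sum over $a$ into a sum over $\cY_\Gamma(\bF_q)$, use that $\sum_{\widetilde z\in\cX_\Gamma(\bF_q)}\wt(\widetilde z)=q^2-1$ (the paper writes this as $\frac{1}{q-1}\sum\wt(\widetilde z)=q+1$ after first passing through $c(\Gamma,z)$, but it is the same identity), and then bound the cusp correction by $O_\Gamma(q^{-1})$ via Lemmas~\ref{lem:cuspbound} and~\ref{lem:wtXast}; the only cosmetic difference is that the paper routes through the auxiliary constant $c(\Gamma,z)$ and Lemma~\ref{lem:cGambd} while you work directly with the weights.
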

\begin{proof}
By definition of $H_{\Gamma}(a, q)$ and Lemma \ref{lem:cuspGam}, we have
\begin{align*}
    \sum_{|a| \leq 2\sqrt{q} } H_{\Gamma}(a, q) = \frac{1}{q^2} \sum_{|a| \leq 2\sqrt{q} }    \sum_{\substack{E_z \in \cY(\bF_q) \\ a_q(E_z) = a}} \sum_{\widetilde{z} \in \phi_{\Gamma}^{-1}(z)} \wt(\widetilde{z})
    =\frac{1}{q^2}\sum_{\substack{E_z \in \cY(\bF_q) }} \sum_{\widetilde{z} \in \phi_{\Gamma}^{-1}(z)} \wt(\widetilde{z})
    = \frac{1}{q^2} \sum_{E_z \in \cY_{\Gamma}(\bF_q)} \wt(\widetilde{z}).
\end{align*}
By the definition of the weight, we have
\begin{align*}
    \sum_{E_z \in \cX_{\Gamma}(\bF_q)} \wt(\widetilde{z}) = q^2 - 1.
\end{align*}
By Lemma \ref{lem:wtXast}, we have $\wt(\widetilde{z}) \leq q-1$ for any $\widetilde{z} \in \phi_{\Gamma}^{-1}(z)$ when $\cX_{\Gamma} \cong \cP(u)$.
By Lemma \ref{lem:cuspbound},
\begin{align*}
    \sum_{E_z \in \cX_{\Gamma}^{\mathrm{cusp}}(\bF_q)} \wt(\widetilde{z}) 
    \leq (q-1)|\cX_{\Gamma}^{\mathrm{cusp}}(\bF_q)| \ll O_{\Gamma}(q).
\end{align*}
So we obtain the result.
\end{proof}

\subsection{Moments of traces of Frobenius}

We first recall the notations and the result of Kaplan--Petrow \cite[Theorem 3]{KP2}.
In this section, we write $q=p^v$ where $p$ is prime and $v$ is a non-negative integer.
Let $M, N, m, n, c, n_1, \lambda, \tau$ be positive integers satisfying $ \tau \mid \lambda n_1$ and $\lambda \mid (d^2q-1, n_1)$, and let $S_k(\Gamma)$ be the space of cusp forms of weight $k$.
Let 
\begin{enumerate}
\item $\Gamma(N, M) \vcentcolon= \Gamma_1(N) \cap \Gamma_0( NM)$ for $M \mid N$ following \cite[(1.4)]{KP2} and \cite[(1-4)]{Petrow18},
\item $\delta(m,n)$ is the indicator function of $m=n$ and $\delta_c(m,n)$ is that of $m \equiv n \pmod{c}$ (if $q$ is not a square, then we define $\delta_{c}(q^{1/2}, \bullet )=0$ for a natural number $\bullet$),
%\item $g=(\tau, n_1\lambda/\tau)$,
\item $y_i$ is the unique element of $(\mathbb{Z}/(n_1\lambda /g) \mathbb{Z})^\times$ such that $y_i \equiv p^i$ (mod $\tau$) and $y_i\equiv p^{v-i}$ (mod $n_1\lambda/\tau$),
\item $\varphi(n)=n\prod_{ p \mid n}(1-1/p)$, $\psi(n)=n\prod_{ p \mid n}(1+1/p)$, and $\phi(n)=n \prod_{p\mid n}(-\varphi(p))$,
\item $T_q$ (resp. $\lara{d}$) are the Hecke (resp. diamond) operators on $S_k(\Gamma(n_1, \lambda))$,
\item $\sigma$ is the sum of the divisor function.
\end{enumerate}
For an integer $n_1$ and $\lambda \mid (d^2q -1, n_1)$, let
\begin{align*}
    T_{n_1,\lambda}(q,d)=\frac{\psi(n_1^2/\lambda^2)\varphi(n_1/\lambda)}{\psi(n_1^2)}(-T_{\trace}+T_{\id}-T_{\hyp}+T_{\dual}),
\end{align*}
with
\begin{align*}
T_{\trace}&=\frac{1}{\varphi(n_1)}\Tr(T_q \langle d \rangle \mid S_k(\Gamma(n_1,\lambda))),\\
T_{\id} &=\frac{k-1}{24}q^{k/2-1}\psi(n_1 \lambda) \left( \delta_{n_1}\left(q^{1/2},d^{-1} \right) +(-1)^k \delta_{n_1}(q^{1/2},-d^{-1}) \right),\\
 T_{\hyp}& =\frac{1}{4}\sum_{i=0}^v \min (p^i, p^{v-i})^{k-1} \sum_{\substack{ \tau \mid n_1 \lambda \\ g\vcentcolon=(\tau, n_1\lambda/\tau)|p^i-p^{v-i}}} \frac{\varphi(g)\varphi(n_1(\lambda,g)/g)}{\varphi(n_1)} \\
 & \times \left( \delta_{n_1(\lambda,g)/g}(y_i,d^{-1})+(-1)^k\delta_{n_1(\lambda,g)/g}(y_i,-d^{-1}) \right),\\
 T_{\dual}&=\frac{\sigma(q)}{\varphi(n_1)}\delta(k,2).
\end{align*}

We emphasize that $\Gamma(N, M)$ is not $\Gamma_1(M, N)$; that $q = p^v$ in this section, even though $v$ denotes a prime of $K$ in section \ref{sec:counting}; that $d$ denotes a Hecke operator rather than the degree of a number field; and that $\phi$ should be distinguished from $\phi_{\Gamma}$ defined in section \ref{sec:Prestack} and from the test function $\phi$ in section \ref{sec:average}.

We define
\begin{align} \label{eqn: def bE}
	\bE_q(a_E^R \Phi_A) \vcentcolon= \frac{1}{q} \sum_{\substack{E \in  \cY(\bF_q) \\ A \hookrightarrow E(\bF_q)}} \frac{a_q(E)^R}{|\Aut_{\bF_q}(E)|}
\end{align}
where $A$ is a finite abelian group.
Let $U_j$ be the Chebyshev polynomials of the second kind, defined by
\begin{align} \label{eqn:defU}
    U_0(t) = 1, \qquad U_1(t) = 2t, \qquad U_{j+1}(t) = 2tU_{j}(t) - U_{j-1}(t)
\end{align}
for $j \geq 1$, and we also define the normalized form
\begin{align*}
    U_{k-2}(t, q) \vcentcolon = q^{\frac{k}{2}-1}U_{k-2}(t/2\sqrt{q}).
\end{align*}

\begin{theorem}[{\cite[Theorem 3]{KP2}}] \label{thm:KPthm3}
Let $A$ be a finite abelian group of rank at most $2$ and let $n_i(A)$ be invariant factors of $A$ for $i = 1, 2$ with $n_1(A)\geq n_2(A)$. 
Suppose $(q,|A|)=1$ and $k\geq 2$. If $q\equiv 1 \pmod{n_2(A)}$ we have
\begin{align*}
    \mathbb{E}_q(U_{k-2}(a_E,q)\Phi_A)&=\frac{1}{q\varphi(n_1/n_2)} \sum_{\nu \mid \frac{(q-1,n_1)}{n_2}}
\phi(\nu)\left(T_{n_1,n_2\nu}(q,1)-p^{k-1}T_{n_1,n_2\nu}(q/p^2,p) \right) \\
&+q^{k/2-1}\frac{(p-1)(k-1)}{24q}\left(\delta_{n_1}(q^{1/2},1)+(-1)^k\delta_{n_1}(q^{1/2},-1) \right)
\end{align*}
and if $q \not\equiv 1$ (mod $n_2(A)$), then $ \mathbb{E}_q(U_{k-2}(a_E,q)\Phi_A)=0$.
\end{theorem}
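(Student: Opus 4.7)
The plan is to translate the weighted elliptic-curve sum into a geometric/elliptic contribution of the Eichler--Selberg trace formula on an appropriate congruence subgroup, then carefully strip off the remaining boundary terms of that formula. The bridge between elliptic curves over $\bF_q$ and modular forms is the classical Deuring correspondence: weighted by $1/|\Aut_{\bF_q}(E)|$, the isomorphism classes with fixed trace $a=a_q(E)$ are enumerated (up to a factor) by a Hurwitz--Kronecker class number attached to the imaginary quadratic order of discriminant $a^2-4q$, and inserting the factor $\Phi_A$ refines this by demanding that the associated endomorphism ring admit the prescribed rational torsion subgroup. On the automorphic side, $T_{n_1,\lambda}$ is precisely the geometric/elliptic term of the trace of $T_q\langle d\rangle$ on $S_k(\Gamma(n_1,\lambda))$; inverting the trace formula expresses this geometric term as the full trace minus the identity, hyperbolic, and dual contributions, which is exactly the defining identity of $T_{n_1,\lambda}(q,d)$.

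First I would fix a putative embedding $A=\bZ/n_1\bZ\times\bZ/n_2\bZ\hookrightarrow E(\bF_q)$ and note that the existence of such an embedding forces $n_2\mid q-1$ (from the Weil pairing on the full $n_2$-torsion), giving the vanishing in the second case. Assuming $q\equiv1\pmod{n_2}$, the condition $A\hookrightarrow E(\bF_q)$ corresponds to putting a $\Gamma(n_1,n_2)$-level structure on $E$. However, the trace formula for $\Gamma(n_1,\lambda)$ detects curves whose torsion contains $\bZ/n_1\bZ\times\bZ/\lambda\bZ$, not the isomorphism class $\bZ/n_1\bZ\times\bZ/n_2\bZ$ itself. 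I would convert between the two by Möbius inversion over $\lambda=n_2\nu$ with $\nu\mid(q-1,n_1)/n_2$, using the weights $\phi(\nu)/\varphi(n_1/n_2)$; this is the standard inclusion-exclusion that isolates \emph{exact} subgroup type from nested divisibility conditions, and the arithmetic functions are calibrated so that the resulting coefficient of each $T_{n_1,n_2\nu}(q,1)$ matches the density of embeddings into the group.

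For general $q=p^v$ with $v\ge1$ rather than $v=1$, the Hecke operator $T_q$ is not simply a product of $T_p$'s and, more importantly, the geometric term of $\mathrm{Tr}(T_q)$ on $S_k(\Gamma(n_1,\lambda))$ overcounts curves that are base change from a proper subfield; the standard Atkin--Lehner/Serre old-new decomposition produces a correction of shape $p^{k-1}T_{n_1,n_2\nu}(q/p^2,p)$, which is exactly the subtracted piece. Finally the explicit tail term with $\delta_{n_1}(q^{1/2},\pm d^{-1})$ accounts for the supersingular ``boundary'' $a=\pm2\sqrt{q}$ (only relevant when $q$ is a square), which contributes discretely via Deuring's theorem but does not fit into the continuous class-number sum; the factor $(p-1)(k-1)/24q$ is the corresponding Eisenstein-type residue that survives after the $T_{\id}$ cancellations.

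The main obstacle I anticipate is the bookkeeping in the hyperbolic term $T_{\hyp}$: it simultaneously indexes over $0\le i\le v$ (which cusp of $X(\Gamma(n_1,\lambda))$ is hit by base change), over divisors $\tau\mid n_1\lambda$ (the width of the cusp), and over congruences $y_i\equiv p^i,p^{v-i}$ (encoding the Galois action on cusps). Matching this combinatorial picture against the parabolic orbits appearing in the elliptic-curve side when $q$ is a proper prime power, together with carefully normalizing the multiplicative functions $\varphi,\psi,\phi$ so that all factors cancel in the expected way, is where the delicate work lies; once the boundary terms are pinned down the trace-formula inversion is formal.
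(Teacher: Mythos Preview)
The paper does not prove this statement at all: Theorem~\ref{thm:KPthm3} is quoted verbatim from \cite[Theorem~3]{KP2} and used as a black box, so there is no ``paper's own proof'' to compare your proposal against. Your outline is a plausible high-level summary of the Kaplan--Petrow argument (Deuring correspondence, M\"obius inversion over the level, Eichler--Selberg with boundary terms stripped off), though a couple of attributions are slightly off: the subtracted term $p^{k-1}T_{n_1,n_2\nu}(q/p^2,p)$ arises from the Hecke recursion $T_{p^v}=T_pT_{p^{v-1}}-p^{k-1}\langle p\rangle T_{p^{v-2}}$ rather than from an Atkin--Lehner old/new decomposition, and the M\"obius step isolates the condition ``$E(\bF_q)[n_1]$ contains $\bZ/n_1\bZ\times\bZ/n_2\bZ$'' directly rather than passing through an exact-type count.
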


From the definitions, it is clear that for given $n_1$, $\lambda$, $q=p^v$, and $d$, 
\begin{align} \label{bound1}
    T_{\id}, T_{\hyp} \ll_{n_1, \lambda, k} q^{\frac{k-1}{2}}, \quad \text{and} \quad T_{\dual}=0
\end{align}
for $k \geq 3$.
For a natural number $m$, Hecke operator $T_m$, and the number of divisors $d(m)$, we have
\begin{align} \label{Deligne bound}
    \Tr( T_m \langle d \rangle | S_k(\Gamma(M,N))) \leq \frac{k-1}{12}\varphi(N)\psi(NM)d(m)m^{\frac{k-1}{2}}
\end{align}
by Deligne's bound (cf. \cite[(1-6)]{Petrow18}). Using this bound, we have
\begin{align} \label{bound2}
    T_{\text{trace}} \ll_{n_1,\lambda,k} v\cdot q^{\frac{k-1}{2}}.
\end{align}
By (\ref{bound1}) and (\ref{bound2}), we obtain 
\begin{align*}
T_{n_1,\lambda}(q,d) \ll_{n_1,\lambda,k} v\cdot q^{\frac{k-1}{2}}.
\end{align*}
Therefore, by Theorem \ref{thm:KPthm3}
\begin{align} \label{eqn:tr-bound}
\mathbb{E}_q(U_{k-2}(a_E,q)\Phi_A) \ll_{n_1, n_2, k} v \cdot q^{\frac{k-3}{2}}, \qquad (k \geq 3).
\end{align}
Using this estimate, we obtain bounds for the first and second moments of traces of Frobenius, weighted by $H_{\Gamma}(a, q)$. 
From now on, for $q=p^v$, we assume that the exponent $v$ is bounded. 
This assumption is harmless, since in section \ref{sec:average} $q$ will be the norm of a prime ideal $\fp$ in a fixed number field $K$, and hence $v$ is at most the degree of $K$.

\begin{theorem} \label{thm:Hmoment12}
Let $M, N \geq 1$ be integers, let $\Gamma = \Gamma_1(M,N)$ a congruence subgroup such that $\cX_{\Gamma} \cong \bP^1$ over $\Spec \bZ[1/6MN]$, and let $q$ be a prime power satisfying $(q, 6MN) = 1$.
Then,
\begin{align*}
\sum_{|a|\leq 2\sqrt{q}}aH_{\Gamma}(a, q) =O_{\Gamma}(1), \qquad
\sum_{|a|\leq 2\sqrt{q}}a^2H_{\Gamma}(a, q) = q +O_{\Gamma}(q^{\frac{1}{2}}).
\end{align*}
\end{theorem}
\begin{proof}
We define $A_{\Gamma, i}$ as groups satisfying
\begin{align*}
\bZ/MN\bZ \times \bZ/M\bZ  \leq A_{\Gamma,i} \leq \bZ/MN\bZ \times \bZ/MN\bZ
\end{align*}
and $j <i$ if and only if $A_{\Gamma, j} < A_{\Gamma, i}$. 
For example when $M = 1$ and $N=6$, we may choose
$A_{\Gamma, i_1} = \bZ/6\bZ, A_{\Gamma, i_2} = \bZ/6\bZ \times \bZ/2\bZ, A_{\Gamma, i_3} = \bZ/6\bZ \times \bZ/3\bZ$, and $ A_{\Gamma, i_4} = \bZ/6\bZ \times \bZ/6\bZ$ with non-linear ordering 
$i_1 \leq i_k$ for $k = 2, 3, 4$ and $i_2, i_3 \leq i_4$.
We also define
\begin{align*}
    \widetilde{\omega}_{\Gamma, i} = |\phi_{\Gamma}^{-1}(E_z)| |\Aut_{\bF_q}(E_z)| \quad \textrm{ if } \quad E_z(\bF_q)[MN] \cong A_{\Gamma, i}
\end{align*}
and $\omega_{\Gamma, i} = \widetilde{\omega}_{\Gamma, i} - \sum_{j < i}\omega_{\Gamma, j}$.
By Proposition \ref{prop:keymod}, $|\phi_{\Gamma}^{-1}(E_z)| |\Aut_{\bF_q}(E_z)|$ depends only on the group $E_z(\bF_q)[MN]$, even if the elliptic curves $E_z$ are not isomorphic.
Hence $\widetilde{\omega}_{\Gamma, i}$ is well-defined and $O_{\Gamma}(1)$.
%we have shown that $\widetilde{\omega}_{\Gamma, i}$ is $|\underline{\phi_{\Gamma}}^{-1}(E_z)|$, which is the number of level structures on $E_z$, which is bounded by $O_{\Gamma}(1)$.

By (\ref{eqn:HGamma def rep}) and the definition of $\widetilde{\omega}_{\Gamma, i}$, we have
\begin{align*}
\frac{q^2}{q-1}\sum_{|a| \leq 2\sqrt{q}} a^R H_{\Gamma}(a, q) = \sum_{|a| \leq 2\sqrt{q}} a^R \sum_{a_q(E_z) = a} |\phi_{\Gamma}^{-1}(z)| = \sum_{|a| \leq 2\sqrt{q}} a^R \sum_{A_{\Gamma, i}}
\sum_{ \substack{a_q(E_z) = a \\ E_z(\bF_q)[MN] \cong A_{\Gamma, i} }}  \frac{\widetilde{\omega}_{\Gamma, i}}{|\Aut_{\bF_q}(E_z)|},
\end{align*}
for an integer $R \geq 0$.
By definition (\ref{eqn: def bE}), the sum is
\begin{align*}
    \sum_{A_{\Gamma, i}} \sum_{ \substack{ E_z(\bF_q)[MN] \cong A_{\Gamma, i} }}  \frac{a_q(E_z)^R\widetilde{\omega}_{\Gamma, i}}{|\Aut_{\bF_q}(E_z)|}
    =\sum_{A_{\Gamma, i}}\sum_{ \substack{ E_z(\bF_q)[MN] \geq A_{\Gamma, i} }}  \frac{a_q(E_z)^R \omega_{\Gamma, i}} {|\Aut_{\bF_q}(E_z)|} 
    = \sum_{A_{\Gamma, i}} q \omega_{\Gamma, i} \bE_q(a_E^R \Phi_{A_{\Gamma, i}}).
\end{align*}
Consequently, we obtain
\begin{align} \label{eqn: Hmoment proof}
    \frac{q}{q-1}\sum_{|a| \leq 2\sqrt{q}} a^R H_{\Gamma}(a, q) =\sum_{A_{\Gamma, i}} \omega_{\Gamma, i} \bE_q(a_E^R \Phi_{A_{\Gamma, i}}).
\end{align}
From (\ref{eqn:tr-bound}) for $k=3$, we have $\bE_{q}(U_1(a_E, q)\Phi_A) = \bE_q(a_E \Phi_A) = O(1).$
% \begin{align*}
%     \bE_{q}(U_1(a_E, q)\Phi_A) = \bE_q(a_E \Phi_A) = O_A(1).
% \end{align*}
Hence,  (\ref{eqn: Hmoment proof}) for $R = 1$ gives
\begin{align*}
    \sum_{|a| \leq 2\sqrt{q}} a H_{\Gamma}(a, q) = O\lbrb{\sum_{A_{\Gamma, i}} \omega_{\Gamma, i}} = O(1).
\end{align*}

Together with Lemma \ref{lem:Hmoment0} and (\ref{eqn: Hmoment proof}) for $R = 0$, we have
\begin{align*}
    \sum_{A_{\Gamma, i}} \omega_{\Gamma, i} \bE_q(\Phi_{A_{\Gamma, i}}) = 1 + O(q^{-1}).
\end{align*}
From (\ref{eqn:tr-bound}) for $k = 4$, we have $\bE_{q}(U_2(a_E, q)\Phi_A) =  O\left(q^{\frac{1}{2}}\right).$
Since $t^2 = U_2(t, q) + q$,
\begin{align*}
    \sum_{A_{\Gamma, i}} \omega_{\Gamma, i} \bE_q(a_E^2 \Phi_{A_{\Gamma, i}})
    &=\sum_{A_{\Gamma, i}} \omega_{\Gamma, i} \bE_q((U_2(a_E, q) + q) \Phi_{A_{\Gamma, i}}) =\sum_{A_{\Gamma, i}} \omega_{\Gamma, i} \lbrb{ \bE_q(U_2(a_E, q)\Phi_{A_{\Gamma, i}}) + \bE_q(q \Phi_{A_{\Gamma, i}}) } \\
    & = q + O\lbrb{q^{\frac{1}{2}}} .
\end{align*}
Hence (\ref{eqn: Hmoment proof}) for $R = 2$ gives the second estimate.
\end{proof}
The first moment in Theorem \ref{thm:Hmoment12}, which comes from Deligne's bound, is not sufficient to deduce Theorem \ref{mainthm:rankbound}.
Therefore, we retain the traces of Hecke operators on the space of cusp forms, so that we can exploit their cancellation later.

\begin{proposition} \label{prop:Hmoment1ref}
Let $A$ be an abelian group of rank $2$ with invariant factors $n_1, n_2$ with $n_1 \geq n_2$. 
Let $p$ be a prime that does not divide $|A|$. If $p \equiv 1 \pmod{n_2}$, then there are explicit constants $b(n_1, n_2, \nu)$ such that
\begin{align*}
\bE_p(a_E \Phi_{A})= 
\frac{1}{p}\sum_{\nu \mid \frac{(p-1,n_1)}{n_2}}
b(n_1,n_2,\nu)\Tr\left(T_p|S_3(\Gamma(n_1,n_2\nu))\right)  + O_{n_1,n_2}\lbrb{p^{-1}}. 
\end{align*}
If not, $\bE_p(a_E \Phi_{A})=0$.
\end{proposition}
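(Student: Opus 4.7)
The plan is to specialize the Kaplan--Petrow trace formula (Theorem~\ref{thm:KPthm3}) to the case $k = 3$ and $q = p$ prime, and then identify the terms that survive. First, using the normalization $U_1(t,p) = p^{1/2} U_1(t/(2\sqrt{p})) = t$, one has $\bE_p(a_E \Phi_A) = \bE_p(U_1(a_E, p) \Phi_A)$, so Theorem~\ref{thm:KPthm3} with $k = 3$ computes exactly the desired quantity.

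Next I would analyze each of the four contributions $T_{\trace}, T_{\id}, T_{\hyp}, T_{\dual}$ inside $T_{n_1, n_2\nu}(p, 1)$. Since $k = 3 \neq 2$, the term $T_{\dual}$ vanishes. Since $p$ is prime and hence not a perfect square, convention~(3) listed after Theorem~\ref{thm:KPthm3} forces $T_{\id} = 0$, and the same convention also kills the correction term outside the $\nu$-sum (the factor $\delta_{n_1}(p^{1/2}, \pm 1)$ vanishes). For $T_{\hyp}$, substituting $v = 1$ and $k = 3$ gives $\min(p^i, p^{1-i})^{k-1} = 1$ for $i = 0, 1$, while the inner sum over $\tau \mid n_1 n_2 \nu$ has only finitely many terms bounded by a constant depending on $n_1, n_2$ (using that $\nu \mid n_1/n_2$), so $T_{\hyp} = O_{n_1, n_2}(1)$. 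Finally, the second summand $p^{k-1} T_{n_1, n_2\nu}(p/p^2, p)$ in Theorem~\ref{thm:KPthm3} corresponds to $q/p^2 = 1/p$, which is not a prime power; by the conventions of~\cite{KP2} this term vanishes when $v = 1$. All that remains is $T_{\trace} = \varphi(n_1)^{-1} \Tr(T_p \mid S_3(\Gamma(n_1, n_2 \nu)))$.

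Substituting back and collecting the overall prefactors, I would define
\begin{align*}
b(n_1, n_2, \nu) \;=\; -\,\frac{\phi(\nu)\,\psi(n_1^2/(n_2\nu)^2)\,\varphi(n_1/n_2\nu)}{\varphi(n_1/n_2)\,\varphi(n_1)\,\psi(n_1^2)},
\end{align*}
so that the $T_{\trace}$ contributions assemble into the main term of the claimed formula. Each $O_{n_1, n_2}(1)$ contribution from $T_{\hyp}$ carries the outer factor $\phi(\nu)/(p\,\varphi(n_1/n_2))$, giving an error $O_{n_1, n_2}(p^{-1})$; and since the number of divisors $\nu$ of $(p-1, n_1)/n_2$ is at most $\tau(n_1/n_2) = O_{n_1, n_2}(1)$, summing over $\nu$ preserves this bound.

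The only genuinely nontrivial step is verifying that the ``descent'' term $p^{k-1} T_{n_1, n_2\nu}(p/p^2, p)$ really drops out when $v = 1$; this is implicit in the setup of~\cite{KP2} (since $T_{n_1, \lambda}(q, d)$ requires $q$ to be a positive power of $p$) but deserves an explicit citation. The rest is simply bookkeeping from Theorem~\ref{thm:KPthm3} together with the crude Deligne-type bound~\eqref{eqn:tr-bound} applied only to the auxiliary $T_{\hyp}$ piece, rather than to the main $T_{\trace}$ piece whose cancellation is preserved for use in later arguments.
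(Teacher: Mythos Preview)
Your proof is correct and follows essentially the same route as the paper: specialize Theorem~\ref{thm:KPthm3} to $k=3$, $q=p$, observe that $T_{\id}=T_{\dual}=0$ and $T_{\hyp}=O_{n_1,n_2}(1)$, that the descent term $p^{k-1}T_{n_1,n_2\nu}(q/p^2,p)$ and the extra $\delta_{n_1}(p^{1/2},\pm1)$ term both vanish, and extract the constant $b(n_1,n_2,\nu)$ from the prefactors multiplying $T_{\trace}$. One small wording issue: your closing remark about applying the Deligne-type bound~\eqref{eqn:tr-bound} ``to the auxiliary $T_{\hyp}$ piece'' is off, since $T_{\hyp}$ contains no Hecke traces and you already bounded it directly by counting; no Deligne input is needed anywhere in this proof.
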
 
\begin{proof}
When $q=p$ and $k=3$, Theorem \ref{thm:KPthm3} says that
\begin{align*}
 \bE_p(a_E \Phi_{A})=\frac{1}{p\varphi(n_1/n_2)}\sum_{\nu \mid \frac{(p-1,n_1)}{n_2}}\phi(\nu)T_{n_1,n_2\nu}(p,1).
\end{align*}
Since \begin{align*}
    & T_{n_1,n_2 \nu}(p,1)=\frac{\psi(n_1^2/(n_2\nu)^2)\varphi(n_1/n_2\nu)}{\psi(n_1^2)}(-T_{\trace}+T_{\id}-T_{\hyp}+T_{\dual}), \\
& T_{\trace}=\frac{1}{\varphi(n_1)}\Tr(T_p|S_3(\Gamma(n_1,n_2\nu))), \qquad T_{\id}=T_{\dual}=0, \qquad T_{\hyp}=O_{n_1,n_2}(1), 
\end{align*}
the claim follows.
\end{proof}

\begin{remark} \label{nu condition}
In Proposition \ref{prop:Hmoment1ref}, the coefficients $b(n_1, n_2, \nu)$ depend only on $n_1, n_2$ and $\nu$. However, $\nu$ depends on the $p$. For the given prime $p \equiv 1$ (mod $n_2$), the values of $\nu$ are the divisors of $m$, where $(p-1,n_1)=m \cdot n_2$. If $n_2=1$, then $p$ can be any prime co-prime to $n_1$.
\end{remark}

\section{Average analytic rank} \label{sec:average}

\subsection{Statement}
Let $E$ be an elliptic curve defined over a number field $K$ of degree $d = [K:\bQ]$. We normalize the $L$-function associated to the elliptic curve so that the central point is not $1$ but $1/2$. 
Let $N_{K/\bQ}$ be the usual norm map.
Then, it has the Euler product for $\Re(s)>1$ as follows:
\begin{align*}
 & L(E/K, s)=\prod_{\fp}\left(1-\frac{\alpha_E(\fp)}{N_{K/\bQ}(\fp)^s}\right)^{-1}\left(1-\frac{\beta_E(\fp)}{N_{K/\bQ}(\fp)^s} \right)^{-1},
\end{align*}
for some $\alpha_E(\fp), \beta_E(\fp) \in \bC$ where $\fp$ runs over all the prime ideals of the field $K$.
The local parameters $\alpha_E(\fp)$ and $\beta_E(\fp)$ are related to the trace of Frobenius, as described below. 
Let $q$ be the cardinality of the residue field $\kappa(\fp)$, so $q = N_{K/\bQ}(\fp)$. 
When we vary the prime ideal $\fp$, we will use the more precise notation $q_{\fp}$.
The trace $a_E(\fp)$ of Frobenius at $\fp$ is defined by 
$$ 
    a_E(\fp)=\left\{ \begin{array}{cl}
    q+1-\# E(\kappa(\fp)) & \text{if $E$ has good reduction at $\fp$,} \\
    1 & \text{if $E$ has split multiplicative reduction at $\fp$,}\\
    -1 & \text{if $E$ has non-split multiplicative reduction at $\fp$,}\\
    0 & \text{if $E$ has additive reduction at $\fp$.}
\end{array} \right.
$$
If $E$ has good reduction at $\fp$, then $\alpha_E(\fp)$ and $\beta_E(\fp)$ are uniquely determined (up to order) by the relations
$$ \alpha_E(\fp)+\beta_E(\fp) = \frac{a_E(\fp)}{\sqrt{q }}, \quad \alpha_E(\fp)\beta_E(\fp)=1.$$
If $E$ has split (resp. non-split) reduction at $\fp$, we have 
\begin{align*}
    \alpha_E(\fp)=\frac{1}{\sqrt{q}} \quad \textrm{(resp. $\alpha_E(\fp)=\frac{-1}{\sqrt{q}}$ ) }, \quad  \textrm{and} \quad   \beta_E(\fp)=0.
\end{align*}
If $E$ has additive reduction at $\fp$, we have 
\begin{align*}
    \alpha_E(\fp)=\beta_E(\fp)=0.
\end{align*}
Let $\widehat{a}_{E}(\fp) \vcentcolon = a_E(\fp)/q_{\fp}$.
Then $\widehat{a}_E(\fp^k)=\alpha_E(\fp)^k+\beta_E(\fp)^k$ and we see that the logarithmic derivative of $L(E/K,s)$ is given by
\begin{align*}
-\frac{L'}{L}(E/K, s)=\sum_{\fp}\sum_{k=1}^\infty \frac{\widehat{a}_E(\fp^k) \log q_{\fp}}{q_{\fp}^{ks}}
\end{align*}
from the Euler product.

Let $f(E/K)$ be the conductor of $E/K$, $D_K$ the absolute value of the discriminant of $K$, and $\Gamma(s)$ the usual Gamma function.
We define the complete $L$-function of $L(E/K,s)$ by
\begin{align*}
    \Lambda(E/K,s)=A_E^{\frac{s+1/2}{2}}\Gamma_K(s)L(E/K, s)
\end{align*}
where 
\begin{align*}
    A_E=D_K^2 N_{K/\bQ}(f(E/K)), \qquad 
    \Gamma_K(s)=\left( \frac{1}{(2 \pi)^{(s+1/2)}}\Gamma\left(s+\frac{1}{2}\right)\right)^d.
\end{align*}
We assume the following standard conjecture (cf. \cite[\S 16.3]{Hus}).
\begin{conjecture}[Hasse--Weil] \label{H-W} 
The complete $L$-function $\Lambda(E/K,s)$ has analytic continuation to the whole complex plane, and there is a root number $\omega_E \in \lcrc{\pm 1}$ satisfying
\begin{align*}
    \Lambda(E/K,s)=\omega_E \Lambda(E/K,1-s).
\end{align*}
\end{conjecture}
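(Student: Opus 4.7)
The stated Hasse--Weil conjecture is genuinely open for elliptic curves over an arbitrary number field, so any realistic ``proof proposal'' must proceed indirectly via the modularity of $E$. The plan is to attach to $E/K$ an automorphic representation $\pi_E$ of $\mathrm{GL}_2(\mathbb{A}_K)$ (or, when $K$ is totally real, a Hilbert cuspidal eigenform of parallel weight $2$) such that $L(E/K,s)=L(\pi_E,s-1/2)$, and then import the well-known analytic properties of automorphic $L$-functions.

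First I would set up the Galois-theoretic side: the $\ell$-adic Tate module $T_\ell E$ gives a continuous representation $\rho_{E,\ell}:\mathrm{Gal}(\overline{K}/K)\rightarrow\mathrm{GL}_2(\mathbb{Z}_\ell)$ which is odd, unramified outside the primes dividing $\ell f(E/K)$, and de Rham at primes above $\ell$ with the correct Hodge--Tate weights. By the Chebotarev density theorem one has $L(E/K,s)=L(\rho_{E,\ell},s-1/2)$ as Euler products. Hence it suffices to prove that $\rho_{E,\ell}$ is automorphic, i.e.\ arises from a cuspidal automorphic representation of $\mathrm{GL}_2(\mathbb{A}_K)$.

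Next I would attempt a Wiles-style modularity lifting argument. The steps are: (i) reduce $\rho_{E,\ell}$ mod $\ell$ to obtain a residual representation $\overline{\rho}_{E,\ell}$ and verify an appropriate ``big image / Taylor--Wiles primes'' hypothesis after possibly enlarging $K$ by a soluble extension, using Langlands base change to descend automorphy; (ii) establish residual modularity of $\overline{\rho}_{E,\ell}$ for some small $\ell$ (typically $\ell=3$ or $\ell=5$, exploiting an auxiliary prime switch if $\overline{\rho}_{E,3}$ is not surjective); (iii) apply an $R=T$ theorem in the style of Taylor--Wiles--Kisin--Emerton to conclude that every lift with the correct local conditions, and in particular $\rho_{E,\ell}$ itself, is automorphic. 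Once $\pi_E$ is produced, the analytic continuation of $\Lambda(E/K,s)$ and the functional equation $\Lambda(E/K,s)=\omega_E\Lambda(E/K,1-s)$ follow from Godement--Jacquet together with the explicit computation of local and archimedean $L$- and $\varepsilon$-factors for the automorphic side.

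The principal obstacle is precisely step (ii)--(iii): residual modularity and the verification of the Taylor--Wiles conditions are not known in full generality, which is exactly why Conjecture~\ref{H-W} is only a theorem over $K=\mathbb{Q}$ (via Wiles--Taylor--Wiles--Breuil--Conrad--Diamond--Taylor) and over the restricted lists of fields cited in the introduction \cite{FLS,DNS,Box,IIY,CN}. For this reason, I would not expect a direct unconditional proof, and the honest ``proposal'' is to invoke whichever modularity theorem is available for the pair $(K,E)$ and otherwise take the conjecture as a hypothesis, exactly as the authors do throughout the paper.
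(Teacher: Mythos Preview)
Your assessment is correct in spirit, but there is a basic mismatch with the task: the statement in question is labeled as a \emph{conjecture} in the paper, and the paper offers no proof of it whatsoever. Immediately before stating it the authors write ``We assume the following standard conjecture,'' and throughout the paper Conjecture~\ref{H-W} functions purely as a standing hypothesis (alongside GRH) needed to make sense of the analytic rank and to apply Weil's explicit formula. There is therefore no ``paper's own proof'' to compare your proposal against.

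You correctly diagnosed this situation in your final paragraph: the conjecture is open in general, known only over $\mathbb{Q}$ and the restricted families of fields listed in the introduction, and the authors treat it as an input rather than an output. The modularity-lifting sketch you gave is a reasonable outline of how one would \emph{attempt} to prove such a statement, but none of that appears in (or is needed by) the paper. The appropriate response here is simply to note that no proof is expected or supplied.
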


If an $L$-function has analytic continuation, then the analytic rank of the $L$-function, denoted by $r_E$, is defined by the order of zeros at the central point.
We further assume the generalized Riemann hypothesis for $L(E/K, s)$.
Then every non-trivial zero can be denoted by $\frac{1}{2} + i\gamma_E$ where $\gamma_E$ is a real number.
In this paper, we use the following test function
\begin{align*}
    \phi(x) \vcentcolon= \frac{\sin^2(2 \pi x \sigma/2)}{(2 \pi x)^2}, \qquad \text{and} \qquad
    \widehat{\phi}(u) \vcentcolon= \frac{1}{2}\lbrb{\frac{1}{2} \sigma - \frac{1}{2}|u|},
\end{align*}
where $\sigma$ is a positive constant and the support of $\widehat{\phi}$ is $[-\sigma, \sigma]$.
We note that $\phi(0) = \frac{\sigma^2}{4}$ and $\widehat{\phi}(0) = \frac{\sigma}{4}$.
Since the function $\phi$ is non-negative valued, we have the trivial inequality
\begin{align*}
    r_E  \cdot \phi(0) \leq \sum_{\gamma_E} \phi\left( \gamma_E \frac{\log X}{2\pi}\right).
\end{align*}
This yields the following upper bound for the average analytic rank:
\begin{align} \label{rank_ineq}
    \frac{1}{|\cE_{K, \Gamma}(X)|} \sum_{E \in \cE_{K, \Gamma}(X)} r_E
    \leq  \frac{1}{\left| \cE_{K,\Gamma}(X) \right|}\sum_{E \in \cE_{K, \Gamma}(X)} 
    \frac{1}{\phi(0)}\sum_{\gamma_E}\phi \left( \gamma_E \frac{\log X }{2 \pi }\right).
\end{align}

By Ogg's formula, the conductor of the elliptic curve divides the minimal discriminant \cite[Corollary 11.2]{Sil2}.
Let $D(E/K)$ be the minimal discriminant of $E/K$.
Then we have
\begin{align*}
    A_E =D_K^2 N_{K/\bQ}(f(E/K)) \leq D_K^2 N_{K/\bQ}(D(E/K)).
\end{align*}

Let
$\Delta = 16(4A^3 + 27B^2)$ for $A, B$ such that $E_{A, B}$ is isomorphic to $E$.
%and let $|\cdot|_{\infty}$ be the infinite place of $\bQ$.
Since $\Delta\subseteq D(E/K)$ by the minimality of $D(E/K)$, we have $N_{K/\bQ}(D(E/K)) \leq N_{K/\bQ}(\Delta)$.
%$N_{K/\mathbb{Q}}(\Delta)\subseteq N_{K/\mathbb{Q}}(D(E/K))$ and hence $|N_{K/\mathbb{Q}}(D(E/K))|_\infty\leq|N_{K/\mathbb{Q}}(\Delta)|_\infty$. 
In this case, we have
\begin{align*}
    N_{K/\mathbb{Q}}(D(E/K))&\leq N_{K/\mathbb{Q}}(16(4A^3+27B^2))\\
    &=\prod_{v\in M_{K,\infty}}\left|16(4A^3+27B^2)\right|_v\\
    &\leq\prod_{v\in M_{K,\infty}}|16|_v(|4|_v|A^3|_v+|27|_v|B^2|_v)\\
    &\leq\prod_{v\in M_{K,\infty}}|16|_v|54|_v\max\left\{|A^3|_v,|B^2|_v\right\}.
\end{align*}
By Lemma \ref{lem: Height inf}, there is $(A, B)$ satisfying
\begin{align*}
    H_{(4, 6), K}([A, B]) \geq C_K \prod_{v \in M_{K, \infty}}|(A, B)|_{(4, 6), v}
    = C_K \prod_{v \in M_{K, \infty}} \max\lcrc{|A|_v^3, |B|_v^2}^{\frac{1}{12}}.
\end{align*}
Therefore, there is a constant $C_{K, 1}$ depending on $K$ such that
\begin{align*}
    A_E \leq C_{K, 1} \cdot H_{(4, 6), K}([A, B])^{12}.
\end{align*}

Hence, if the height of the elliptic curve under weight $(4,6)$ is bounded by $X$, then 
\begin{align} \label{cond_ineq}
\log A_E \leq 12 \log H_{(4, 6), K}([A, B]) + \log C_{K, 1} \leq 12 \log X + O(1).
\end{align}
By Weil's explicit formula \cite[Lemma 3.1]{CFLS}, we have
 \begin{align*}
	& \frac{1}{\left| \cE_{K,\Gamma}(X) \right|}\sum_{E \in \cE_{K, \Gamma}(X)} \sum_{\gamma_E}\phi \left( \gamma_E \frac{\log X }{2 \pi }\right) \\
	& =\frac{ \widehat{\phi}(0)}{ \left| \cE_{K,\Gamma}(X) \right|}\sum_{E \in \cE_{K,\Gamma}(X)} \frac{ \log A_E }{\log X} + \frac{2}{\pi}\int_{-\infty}^\infty \phi \left( \frac{\log X \cdot r}{2 \pi } \right) \Re \frac{\Gamma_K'}{\Gamma_K}\left(\frac 12 +ir\right)dr \\
	&-\frac{2}{\log X \left| \cE_{K,\Gamma}(X) \right| }\sum_{\fp}\sum_{k=1}^\infty \frac{\log q_\fp}{\sqrt{ q_\fp^k}}\widehat{\phi}\left( \frac{\log q_\fp^k}{\log X}\right) \sum_{E \in \cE_{K,\Gamma}(X)}\widehat{a}_E(\fp^k) 
    \end{align*}
    Let us define 
    \begin{align*}
S_1 &=\frac{2}{\log X \left| \cE_{K,\Gamma}(X) \right| }\sum_{\fp} \frac{\log q_\fp}{\sqrt{q_\fp}}\widehat{\phi}\left( \frac{\log q_\fp}{\log X}\right) \sum_{E \in \cE_{K,\Gamma}(X)}\widehat{a}_E(\fp), \\
S_2 &=\frac{2}{\log X \left| \cE_{K,\Gamma}(X) \right| }\sum_{\fp} \frac{\log q_\fp}{q_\fp}\widehat{\phi}\left( \frac{2\log q_\fp}{\log X}\right) \sum_{E \in \cE_{K, \Gamma}(X)}\widehat{a}_E(\fp^2).
\end{align*}
We can see that the sum over the terms $q_\fp^k$ with $k\geq 3$ converges absolutely. Using the estimate of the digamma function $\frac{\Gamma'}{\Gamma}(z)=\log z + O(1)$ for a fixed $\Re(z)>0$, the integral term is bounded by $O_{\widehat{\phi}}\left(\frac{1}{\log X} \right)$.   
Hence, by \eqref{cond_ineq} and the comments above, we have 
    \begin{align*}
	\frac{1}{\left| \cE_{K,\Gamma}(X) \right|}\sum_{E \in \cE_{K, \Gamma}(X)} \sum_{\gamma_E}\phi \left( \gamma_E \frac{\log X }{2 \pi }\right) \leq 12\widehat{\phi}(0) - S_1 -S_2 +  O_K\left( \frac{1}{\log X}\right).
\end{align*}

From now on, we will focus on showing that 
\begin{align} \label{est_of_s1_1}
& S_1 \ll \frac{\log \log X}{\log X } + X^{\frac{3\sigma}{2} - \frac{1}{de(\Gamma)}}, \\ \label{est_of_s2}
&S_2 = -\frac{1}{2} \phi(0) + O\lbrb{X^{ \frac{\sigma}{2}  - \frac{1}{de(\Gamma)}} }.
\end{align}
Let $\Gamma$ be a congruence subgroup such that $\cX_{\Gamma} \cong \bP^1$.
If $(\ref{est_of_s1_1})$ and $(\ref{est_of_s2})$ are true, we have
\begin{align*}
\frac{1}{|\cE_{K, \Gamma}(X)|} \sum_{E \in \cE_{K, \Gamma}(X)} r_E &\leq \frac{1}{2}+ \frac{12\widehat{\phi}(0)}{\phi(0)} +o(1)  = \frac{1}{2} + \frac{12}{\sigma} + o(1)\\
& = \frac{1}{2} + 18 de(\Gamma) +o(1)
\end{align*}
by taking $\sigma$ arbitrarily close to $\frac{2}{3de(\Gamma)}$.
Hence, we obtain the following under (\ref{est_of_s1_1}) and (\ref{est_of_s2}).

\begin{theorem} \label{thm:rankbound01}
    Let $K$ be a number field of degree $d$, and let $\Gamma = \Gamma_1(M, N)$ be a congruence subgroup such that $\cX_{\Gamma} \cong \bP^1$ over $\Spec \bZ[1/6MN]$.
    Under the Hasse--Weil conjecture and the generalized Riemann hypothesis of $L$-function of elliptic curves over $K$, we have
    
    \begin{align*}
        \limsup_{X\to \infty} \frac{1}{|\cE_{K, \Gamma}(X)|} \sum_{E \in \cE_{K, \Gamma}(X)}r_E
        \leq 18e(\Gamma)d + \frac{1}{2}.
    \end{align*}
\end{theorem}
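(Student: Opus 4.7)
The plan is to establish the two displayed estimates $(\ref{est_of_s1_1})$ and $(\ref{est_of_s2})$; together with the optimization $\sigma\to\tfrac{2}{3de(\Gamma)}$ already laid out in the excerpt, these yield the bound $18de(\Gamma)+\tfrac{1}{2}$ on the $\limsup$. Both estimates are obtained by inserting the point count $(\ref{eqn:EKgamaref})$ into the inner sum $\sum_{E\in\cE_{K,\Gamma}(X)}\widehat{a}_E(\fp^k)$ and then invoking the moment asymptotics for $H_\Gamma(a,q)$ proved in section $\ref{sec:classnum}$.

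For $(\ref{est_of_s2})$, the identity $\alpha_E(\fp)\beta_E(\fp)=1$ at primes of good reduction gives $\widehat{a}_E(\fp^2) = a_E(\fp)^2/q - 2$; the primes of multiplicative reduction contribute only $O(X^{2/e(\Gamma)})$ total by Theorem $\ref{thm:cuspthm}$. Partitioning the sum by trace value and applying Theorem $\ref{thm:Hmoment12}$'s second moment $\sum a^2 H_\Gamma(a,q) = q + O_\Gamma(q^{1/2})$ shows that the $a_E(\fp)^2/q$ contribution cancels $|\cE_{K,\Gamma}(X)|$ from the $-2|\cE_{K,\Gamma}(X)|$ piece, leaving an average of $-1$ per prime plus $O(q^{-1/2})$. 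Feeding this into $S_2$, the prime sum is evaluated by the prime number theorem in $K$ together with the Fourier identity $\int_{-\infty}^\infty\widehat{\phi}(v)\,dv = \phi(0)$, producing the main term $-\phi(0)/2$. The $O(q^{-1/2})$ correction and the $(\ref{eqn:EKgamaref})$ error summed against $a^2$ and then against primes up to $X^{\sigma/2}$ give the stated error bound.

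For $(\ref{est_of_s1_1})$ the crude estimate $\sum a H_\Gamma(a,q) = O_\Gamma(1)$ from Theorem $\ref{thm:Hmoment12}$ produces only an $O(\sigma)$ contribution and is therefore inadequate; instead we invoke Proposition $\ref{prop:Hmoment1ref}$, which at degree-one primes $\fp$ of residue characteristic $p$ coprime to the level expresses $\mathbb{E}_p(a_E\Phi_A)$ as an explicit linear combination of $p^{-1}\Tr(T_p\mid S_3(\Gamma(n_1,n_2\nu)))$ modulo $O_\Gamma(p^{-1})$. Expanding the Hecke trace over a normalized eigenbasis and swapping the two summations reduces the main contribution to sums $\sum_{p\leq X^\sigma}a_f(p)\log p\cdot p^{-2}$ for $f\in S_3(\Gamma(n_1,n_2\nu))$. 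Under GRH for $L(s,f)$, the prime number theorem for holomorphic Hecke cusp forms gives $\sum_{p\leq Y}a_f(p)\log p = O_f(Y^{3/2+\varepsilon})$ (for weight $3$, since $|\lambda_f(p)|\leq 2$ by Deligne), and partial summation turns this into $O(X^{-\sigma/2+\varepsilon})$, absorbed into the $(\log X)^{-1}$ factor of $S_1$. The remaining $X^{3\sigma/2 - 1/(de(\Gamma))}$ term arises from the $(\ref{eqn:EKgamaref})$ error summed against $|a|\leq 2\sqrt{q}$: each prime contributes $O(\sqrt{q})$, and $\sum_{q\leq X^\sigma}\sqrt{q}\log q \ll X^{3\sigma/2}$.

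The main obstacle is this first-moment analysis: without the Hecke-trace structure one cannot extract cancellation between primes, and Deligne's bound $|\Tr(T_p\mid S_3)|\ll p$ alone is insufficient, which is exactly the point flagged in the introductory footnote as the correction to \cite{CJ2}. Additional care is needed to ensure the implicit constants depend only on $\Gamma$ (through the dimensions of the relevant spaces $S_3(\Gamma(n_1,n_2\nu))$ and the invariant factors of $A_{\Gamma,i}$) and that the contribution from primes of residue degree $\geq 2$, primes dividing the level, and primes of small characteristic is absorbed into the $O(1/\log X)$ tail.
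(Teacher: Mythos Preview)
Your overall plan matches the paper: reduce to the two estimates $(\ref{est_of_s1_1})$ and $(\ref{est_of_s2})$, feed the point count $(\ref{eqn:EKgamaref})$ into the inner sums, and use the moment asymptotics of section~\ref{sec:classnum}. Your treatment of $S_2$ is essentially Lemma~\ref{lem:aEpsquare} together with the prime sum evaluation that follows it.

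For $S_1$ there is a genuine gap. You invoke GRH for $L(s,f)$ with $f\in S_3(\Gamma(n_1,n_2\nu))$, but the theorem assumes only Hasse--Weil and GRH for elliptic-curve $L$-functions over $K$; the weight-$3$ eigenforms appearing here are not of that type, so their Riemann hypothesis is not among the hypotheses. The paper circumvents this in Lemma~\ref{coeff-sum}: since $S_3(\Gamma(N,M))=\bigoplus_\chi S_3(\Gamma_0(NM),\chi)$ and a weight-$3$ eigenform with nebentypus is never self-dual, $L(s,f)$ has no Siegel zero, and the \emph{unconditional} prime number theorem for automorphic $L$-functions \cite[(5.52)]{IK} already yields $\psi_f(X)\ll X\exp(-c\sqrt{\log X})$, hence $\sum_{p<X}a_f(p)(\log p)/p^2=O(1)$. (Incidentally, even under GRH your partial summation gives only $O(1)$, not $O(X^{-\sigma/2+\varepsilon})$: the integral $\int t^{-3/2+\varepsilon}\,dt$ converges to a constant. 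This does not affect the conclusion since $O(1)/\log X$ suffices.)

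A secondary issue you wave past is the passage from degree-one primes of $K$ to rational primes. The sum in $(\ref{modular sum})$ runs over $\fp$ with $f(\fp)=1$, so each rational $p$ appears weighted by the number of degree-one primes above it, and this weight fluctuates. The paper treats this in Lemma~\ref{lem:Cheb} by partial summation against the counting function for degree-one primes of $K$, controlled via the explicit formula for $\zeta_K$ (again only a zero-free region, allowing a possible Siegel zero for $\zeta_K$), which reduces the problem to the rational-prime sum already handled by Lemma~\ref{coeff-sum}.
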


\begin{remark} \label{rmk:estpowcompare}
Our bound on the average analytic ranks is better than the cases when the order of the torsion groups is $\geq 5$  in \cite{CJ2}.
The reason is that the estimation of \cite[Corollay 3.10]{CJ2}
\begin{align*}
    |\cE_{G,p}^a(X)|&= c(G)
\frac{H_G(a,p)}{p^2-1} X^{\frac{1}{d(G)}}+O\left(H_G(a,p)X^\frac{1}{e(G)}+\frac{H_G(a,p)}{p}X^{\frac{1}{e(G)}} \log X \right)
\end{align*}
has an additional error term $H_G(a, p)X^{\frac{1}{e(G)}}$, compared with the our estimation (\ref{eqn:EKgamaref}).
One can check that $H_G(a, p)X^{\frac{1}{e(G)}}$ gives the main error term in the proof of \cite[Theorem 1]{CJ2}.
\end{remark}

\subsection{Estimate of $S_1$}
To estimate $S_1$, first, we need to control the inner sum of $S_1$.
\begin{lemma} \label{lem:aEp}
Let $K$ be a number field, $\Gamma$ the congruence subgroup of level $N$ for which  $\cX_{\Gamma} \cong \bP^1$ over $\bZ[1/6N]$.
Then for a prime $\fp$ not dividing $6N$,
\begin{align*}
\sum_{E \in \cE_{K,\Gamma}(X)} \widehat{a}_E(\fp) =
\frac{\kappa}{\sqrt{q_\fp}} \lbrb{ \sum_{|a| \leq 2\sqrt{q_\fp}}aH_{\Gamma}(a, q_\fp)} X^{\frac{2}{e(\Gamma)}}
&+O_{\Gamma}\lbrb{q_\fp  X^{\frac{2d - 1}{de(\Gamma)}}\log X + q_\fp^{-\frac{5}{2}} X^{\frac{2}{e(\Gamma)}}}.
\end{align*}
\end{lemma}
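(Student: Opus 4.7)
My plan is to decompose $\sum_{E\in\cE_{K,\Gamma}(X)}\widehat{a}_E(\fp)$ according to the reduction type at $\fp$ and then substitute the asymptotic counts of section \ref{sec:counting}. Since $\cX_{\Gamma}$ is representable, no $E\in\cE_{K,\Gamma}(X)$ has additive reduction at $\fp$. Using $\widehat{a}_E(\fp)=a_E(\fp)/\sqrt{q}$ with $a_E(\fp)\in[-2\sqrt{q},2\sqrt{q}]$ for good reduction and $\widehat{a}_E(\fp)=\pm 1/\sqrt{q}$ for split/non-split multiplicative reduction, I would write
\begin{align*}
\sum_{E\in\cE_{K,\Gamma}(X)}\widehat{a}_E(\fp)=\frac{1}{\sqrt{q}}\sum_{|a|\leq 2\sqrt{q}} a\,|\cE_{K,\Gamma,\fp}^a(X)|+\frac{1}{\sqrt{q}}\Bigl(|\cE_{K,\Gamma,\fp}^{\mathrm{split}}(X)|-|\cE_{K,\Gamma,\fp}^{\mathrm{nonsplit}}(X)|\Bigr).
\end{align*}

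The first (good-reduction) piece is handled by substituting (\ref{eqn:EKgamaref}). Its main term is $\frac{q^2}{q^2-1}\cdot\frac{\kappa}{\sqrt{q}}\bigl(\sum_{|a|\leq 2\sqrt{q}}aH_{\Gamma}(a,q)\bigr)X^{2/e(\Gamma)}$, and since $\frac{q^2}{q^2-1}=1+O(q^{-2})$ while $\sum aH_{\Gamma}(a,q)=O_{\Gamma}(1)$ by Theorem \ref{thm:Hmoment12}, this matches the claimed main term up to an error of size $O(q^{-5/2}X^{2/e(\Gamma)})$, absorbed in $q^{-3/2}X^{2/e(\Gamma)}$. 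The big-$O$ term of (\ref{eqn:EKgamaref}), once weighted by $|a|$ and summed over $|a|\leq 2\sqrt{q}$, is bounded by $2\sqrt{q}\cdot|\cY(\bF_q)|\cdot X^{(2d-1)/(de(\Gamma))}\log X=O(q^{3/2}X^{(2d-1)/(de(\Gamma))}\log X)$, and division by $\sqrt{q}$ produces the $qX^{(2d-1)/(de(\Gamma))}\log X$ part of the error.

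For the multiplicative contribution, let $z_1,z_2\in\cX(\bF_q)$ denote the cusps corresponding to split and non-split reduction respectively. In the representable genus-zero case $\cX_\Gamma\cong\bP(1,1)\cong\bP^1$, so Lemma \ref{lem:wtXast} forces $\wt(\widetilde{z})=q-1$ on all of $\bP^1(\bF_q)$ and hence $c(\Gamma,z_i)=1$. Proposition \ref{prop:EKGamz} then gives $|\cE_{K,\Gamma,\fp}^{z_i}(X)|=\frac{|\phi_{\Gamma}^{-1}(z_i)|}{q+1}\kappa X^{2/e(\Gamma)}+O(X^{(2d-1)/(de(\Gamma))}\log X)$, and since $|\phi_{\Gamma}^{-1}(z_i)|=O_{\Gamma}(1)$ by Lemma \ref{lem:cuspbound}, subtracting and dividing by $\sqrt{q}$ yields $O(q^{-3/2}X^{2/e(\Gamma)}+q^{-1/2}X^{(2d-1)/(de(\Gamma))}\log X)$, once again absorbed in the stated error. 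The main subtlety is precisely that this split/non-split asymmetry at the cusps cannot be packaged into the weighted class-number sum $\sum aH_{\Gamma}(a,q)$, and it is the source of the $q^{-3/2}X^{2/e(\Gamma)}$ term in the final estimate; keeping it within the error budget relies crucially on the uniform bound $|\phi_{\Gamma}^{-1}(z_i)|=O_{\Gamma}(1)$ on fiber sizes over cusps.
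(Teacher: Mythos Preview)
Your proposal is correct and follows essentially the same approach as the paper: decompose according to reduction type at $\fp$, apply (\ref{eqn:EKgamaref}) to the good-reduction piece, and bound the multiplicative contribution separately. The only cosmetic differences are that the paper handles the multiplicative part via the triangle inequality $|\widehat{a}_E(\fp)|\leq q^{-1/2}$ applied to all of $\cE_{K,\Gamma,\fp}^{\mathrm{mult}}(X)$ and (\ref{mult count}) rather than your split/non-split separation, and it bounds $\sum_{|a|\leq 2\sqrt{q}}\bigl(\sum_{a_\fp(E_z)=a}1\bigr)$ via the Hurwitz identity $\sum_{|a|\leq 2\sqrt{q}}H(a^2-4q)=2q$ rather than your $|\cY(\bF_q)|=O(q)$; both routes give the same error terms.
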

\begin{proof}
We have
\begin{align*}
	\sum_{E \in \cE_{K, \Gamma}(X)} \widehat{a}_E(\fp)= \sum_{|a| \leq 2\sqrt{q_\fp}} 
	\sum_{\substack{E \in \cE_{K,\Gamma}(X) \\  a_E(\fp) =a  }} \widehat{a}_E(\fp)
	+ \sum_{\substack{E \in \cE_{K, \Gamma}(X) \\ \text{$E$ mult at $\fp$}}}\widehat{a}_E(\fp).
\end{align*}
By Lemma \ref{lem:cuspGam} and (\ref{mult count}), we have 
\begin{align*}
    \left| 
    \sum_{\substack{E \in \cE_{K, \Gamma}(X) \\ E, \textrm{ mult at } \fp}} \widehat{a}_E(\fp)
    \right|
    &\leq q_\fp^{-\frac{1}{2}} |\cE_{K, \Gamma, \fp}^{\mathrm{mult}}(X)| = q_\fp^{-\frac{1}{2}}
    \frac{|\cX^{\mathrm{cusp}}_{\Gamma}(\bF_q)|}{q+1} \cdot  \kappa \cdot X^{\frac{2}{e(\Gamma)}} 
    + O\lbrb{ q_\fp^{-\frac{1}{2}}  X^{\frac{2d - 1}{de(\Gamma)}} \log X} \\
    &\ll_\Gamma  q_\fp^{-\frac{3}{2}}X^{\frac{2}{e(\Gamma)}} + q_\fp^{-\frac{1}{2}}X^{\frac{2d-1}{de(\Gamma)}}\log X.
\end{align*} 
Also by (\ref{eqn:EKgamaref}),  
\begin{align*}
&\sum_{|a| \leq  2\sqrt{q_\fp}} \sum_{\substack{E \in \cE_{K,\Gamma}(X) \\ a_E(\fp)=a}}\widehat{a}_E(\fp)
= \sum_{|a| \leq  2\sqrt{q_\fp}} \frac{a}{\sqrt{q}} |\cE_{K, \Gamma, \fp}^a(X)| \\
&= \sum_{|a| \leq 2\sqrt{q_\fp}} \frac{a}{\sqrt{q_\fp}}  \cdot   \frac{q_\fp^2}{q_\fp^2-1} \cdot \kappa \cdot 
    H_{\Gamma}(a, q_\fp)\cdot X^{\frac{2}{e(\Gamma)}} 
 + O\lbrb{\sum_{|a| \leq 2\sqrt{q_\fp}} \frac{|a|}{\sqrt{q_\fp}}  \lbrb{\sum_{a_{\fp}(E_z) = a} 1} X^{\frac{2d - 1}{de(\Gamma)}}\log X}  \\
&=  \frac{\kappa}{\sqrt{q_\fp}} \frac{q_\fp^2}{q_\fp^2-1} \lbrb{ \sum_{|a| \leq 2\sqrt{q_\fp}}aH_{\Gamma}(a, q_\fp)} X^{\frac{2}{e(\Gamma)}}
+O\lbrb{ q_\fp X^{\frac{2d - 1}{de(\Gamma)}}\log X},
\end{align*}
where in the error term of the last identity we applied the inequality
\begin{align*}
 \sum_{|a| \leq 2\sqrt{q_\fp}} \frac{|a|}{\sqrt{q_\fp}}  
 \sum_{\substack{a_{\fp}(E_z) = a \\ E_z \in \cY(\bF_{q_\fp})}} 1 
 \leq 2 |\cY(\bF_{q_\fp})| \ll q_\fp.
\end{align*}
By Theorem \ref{thm:Hmoment12} and $\frac{q_\fp^2}{q_\fp^2-1} = 1 + O(q_\fp^{-2})$, we also obtain $q_\fp^{-\frac{5}{2}}X^{\frac{2}{e(\Gamma)}}$ in the error term.
\end{proof}

By Lemma \ref{lem:aEp}, we have
\begin{align*}
S_1 &=\frac{2}{\log X \left| \cE_{K,\Gamma}(X) \right| }\sum_{\fp} \frac{\log q_{\fp}}{\sqrt{q_{\fp}}}\widehat{\phi}\left( \frac{\log q_{\fp}}{\log X}\right)\\
& \times \left[ \frac{\kappa}{\sqrt{q_{\fp}}} \lbrb{ \sum_{|a| \leq 2\sqrt{q_{\fp}}}aH_{\Gamma}(a, q_{\fp})} X^{\frac{2}{e(\Gamma)}}
+O_{\Gamma}\lbrb{ q_{\fp}  X^{\frac{2d - 1}{de(\Gamma)}}\log X + q_{\fp}^{-\frac{5}{2}} X^{\frac{2}{e(\Gamma)}}} \right].
\end{align*}
Then, the contribution from the error term is, by the prime number theorem and partial summation,
\begin{align*}
    &\ll \frac{1}{\log X \left| \cE_{K,\Gamma}(X) \right| }\sum_{\fp} \frac{\log q_{\fp}}{\sqrt{q_{\fp}}}\widehat{\phi}\left( \frac{\log q_{\fp}}{\log X}\right)  \cdot 
    \lbrb{ q_{\fp}
    X^{\frac{2d - 1}{de(\Gamma)}}\log X  + q_{\fp}^{-\frac{5}{2}} X^{\frac{2}{e(\Gamma)} } } \\
   &\ll_d X^{-\frac{1}{de(\Gamma)}} \left( \sum_{p \leq X^\sigma} \sqrt{p} \log p + \sum_{k=2}^{\log X} \sum_{p \leq X^{\frac{\sigma}{k} }} k p^{ \frac{k}{2}  } \log p \right) + \sum_{ n \leq X^\sigma} \frac{\log n}{n^3 \log X} \\
    &\ll_d X^{\frac{3\sigma}{2} - \frac{1}{de(\Gamma)}}  + \frac{1}{\log X}.
\end{align*} 
The contribution from the main term is
\begin{align*}
    &\frac{2}{\log X \left| \cE_{K,\Gamma}(X) \right| }\sum_{\fp} \frac{ \log q_{\fp}}{\sqrt{q_{\fp}}}\widehat{\phi}\left( \frac{\log q_{\fp}}{\log X}\right)  \cdot 
    \frac{\kappa}{\sqrt{q_{\fp}}} \lbrb{ \sum_{|a| \leq 2\sqrt{q_{\fp}}}aH_{\Gamma}(a, q_{\fp})}
    X^{\frac{2}{e(\Gamma)}} \\
    &= \frac{2 \kappa X^{\frac{2}{e(\Gamma)}}}{\log X \left| \cE_{K,\Gamma}(X) \right|} \sum_{\fp} \frac{\log q_{\fp}}{q_{\fp}}
    \widehat{\phi} \lbrb{ \frac{\log q_{\fp}}{\log X}}
    \lbrb{ \sum_{|a| \leq 2\sqrt{q_{\fp}}}aH_{\Gamma}(a, q_{\fp})}.
\end{align*}

We note that the contribution of $\fp$ dividing the level of $\Gamma$ is negligible.
Let $f(\fp)$ be the residue degree denoted by $f(\fp|p)$ in section \ref{sec:counting}.
By Theorem \ref{thm:Hmoment12} on the first moment, it is easy to see that the contribution of prime ideals $\fp$ with $q_\fp$ not prime is $O\left(\tfrac{1}{\log X}\right)$ because 
\begin{align} \label{eqn: f>1 vanishes}
\frac{2}{\log X} \sum_{\fp, f(\fp)>1 } \frac{\log q_{\fp}}{q_{\fp}}
    \widehat{\phi} \lbrb{ \frac{\log q_{\fp}}{\log X}}
    \lbrb{ \sum_{|a| \leq 2\sqrt{q_{\fp}}}aH_{\Gamma}(a, q_{\fp})} \ll_{\Gamma, d } \frac{2}{\log X} \sum_{k=2}^{[K:\bQ]} \sum_{n=1}^\infty \frac{\log n}{n^k} \ll_{\Gamma, d } \frac{1}{\log X}.
\end{align}
Hence, we consider the prime ideals $\fp$ whose norm is a rational prime only.
From the equation $(\ref{eqn: Hmoment proof})$, we see that the main term contribution becomes
\begin{align} \label{eqn:S1estp}
    \frac{2 }{\log X}  \sum_{\fp, f(\fp)=1 } \frac{\log q_{\fp}}{q_{\fp}}
    \widehat{\phi} \lbrb{ \frac{\log q_{\fp}}{\log X}}
    \lbrb{  \lbrb{1 - \frac{1}{q_{\fp}}} \sum_{i} \omega_{\Gamma, i} \bE_{q_{\fp}}(a \Phi_{A_{\Gamma, i}}) } + O_{\Gamma,d }\lbrb{\frac{1}{\log X}}.
\end{align} 
For simplicity, we denote $A_i\vcentcolon=A_{\Gamma,i}$, $\omega_i \vcentcolon= \omega_{\Gamma, i}$ and invariant factors of $A_i$ by $n_{i,1},n_{i,2}$.
By Proposition \ref{prop:Hmoment1ref} and Remark \ref{nu condition}, (\ref{eqn:S1estp}) is 
\begin{align*}
    &\frac{2}{\log X} \sum_{\substack{\fp, f(\fp)=1}} \frac{\log q_{\fp}}{q_{\fp}}
    \widehat{\phi} \lbrb{ \frac{\log q_{\fp}}{\log X}} \\
    & \times \lbrb{
    \sum_{i} \omega_{i} \lbrb{1 - \frac{1}{q_{\fp}}}
    \lbrb{ \frac{1}{q_{\fp}} \sum_{\nu \mid \frac{( q_{\fp}-1, n_{1, i})}{n_{2,i}}}
    b(n_{1, i}, n_{2, i}, \nu)
    \Tr(T_{q_{\fp}} | S_3(\Gamma(n_{1, i}, n_{2,i}\nu) )) + O\lbrb{\frac{1}{q_{\fp}}} } } \\
    &= \frac{2}{\log X}\sum_{i} \omega_{i}\sum_{\nu \mid \frac{n_{1,i}}{n_{2,i}}} 
 b(n_{1, i}, n_{2, i}, \nu) \sum_{\substack{ q_{\fp} \equiv 1 \text{ (mod $n_{2,i}\nu$)} \\ q_{\fp} \leq X^\sigma}} \frac{\log q_{\fp}}{ q_{\fp}^2} \widehat{\phi} \lbrb{ \frac{\log  q_{\fp}}{\log X}} \Tr(T_{q_{\fp}} | S_3(\Gamma(n_{1, i}, n_{2,i}\nu) )) \\
 & \qquad  + O_{\Gamma, d}\left( \frac{1}{\log X} \right), 
\end{align*}
where the error term is justified by 
\begin{align*}
\frac{1}{\log X} \sum_{\substack{ f(\fp)=1, q_{\fp}\leq X^{\sigma}}} \frac{\log q_{\fp}}{q_{\fp}^2}
    \widehat{\phi} \lbrb{ \frac{\log q_{\fp} }{\log X}}
    \ll_d
    \frac{1}{\log X}  \sum_{p\leq X^{\sigma}}  \frac{\log p}{p^2} \ll_d
    \frac{1}{\log X},
\end{align*}
and
\begin{align*}
    & \frac{1}{\log X}  \sum_{f(\fp)=1, q_{\fp} \leq X^{\sigma}}  \frac{\log q_{\fp}}{q_{\fp}}
    \widehat{\phi} \lbrb{ \frac{\log q_{\fp}}{\log X}}
    \sum_i \frac{\omega_{i}}{q_{\fp}^2}
    \sum_{\nu \mid \frac{(q_{\fp}-1, n_{1, i})}{n_{2,i}}}
    b(n_{1, i}, n_{2, i}, \nu)
    \Tr(T_{q_{\fp}} | S_3(\Gamma(n_{1, i}, n_{2,i}\nu) )) \\
    & \ll_{\Gamma, d} \frac{1}{\log X}  \sum_{p\leq X^{\sigma}}  \frac{\log p}{p^3} \cdot p \\
    & \ll_{\Gamma, d} \frac{1}{\log X}
\end{align*}
by Deligne's bound $(\ref{Deligne bound})$. Hence, we need to give a bound on the sum
\begin{align*}
   \sum_{\substack{f(\fp)=1, q_{\fp} \leq X^\sigma \\ q_{\fp} \equiv 1 \text{ (mod $M$)} }} \frac{\log q_{\fp}}{q_{\fp}^2} \widehat{\phi} \lbrb{ \frac{\log p}{\log X}} \Tr(T_{q_{\fp}} | S_3(\Gamma(N, M) ))
\end{align*}
for fixed positive integers $N$ and $M$ with $M \mid N$.

Let $\mathfrak{B} = \mathfrak{B}(N,M)$ be an eigenform basis of $S_3(\Gamma(N, M))$, and let $a_f(n)$ be the $n$-th Fourier coefficient of $f$.
Then 
\begin{align*}
    \Tr(T_{q_{\fp}} | S_3(\Gamma(N, M) )) = \sum_{f \in \mathfrak{B}} a_{f}(q_{\fp}).
\end{align*}

In the following lemma, we show that for each $f \in \mathfrak{B}(N, M)$, the sum
\begin{align} \label{modular sum}
     \frac{1}{\log X}\sum_{\substack{f(\fp) = 1,q_{\fp} \leq X^\sigma  \\ q_{\fp} \equiv 1 \text{ (mod $M$)}  }}  \frac{a_{f}(q_{\fp})\log q_{\fp}}{q_{\fp}^2} \widehat{\phi} \lbrb{ \frac{\log q_{\fp}}{\log X}}
\end{align} vanishes.

\begin{lemma} \label{lem:Cheb}
    Let $K$ be a number field of degree $d$, and let $f$ be an eigenform in $\mathfrak{B}(N, M)$. Then,
    \begin{align*}
      \frac{1}{\log X}  \sum_{\substack{f(\fp) = 1,q_{\fp} \leq X^\sigma  \\ q_{\fp} \equiv 1 \text{ (mod $M$)}  }} \frac{ a_f( q_{\fp}) \log q_{\fp}}{ q_{\fp}^2}\widehat{\phi}\left( \frac{\log q_{\fp}}{\log X}\right) = O_{d, \sigma, M,N, \widehat{\phi}}\left( \frac{\log \log X}{\log X}\right).
    \end{align*}
\end{lemma}
\begin{proof}
The Dedekind zeta function $\zeta_K(s)$ has the Euler product 
$$ \zeta_K(s)=\prod_{\fp} \left( 1- \frac{1}{q_\fp^s} \right)^{-1}$$
for $\Re(s) > 1$.
Let
\begin{align*}
  \Lambda_K(\mathfrak{n})= \left\{ \begin{array}{cl}
    \log q_{\fp} & \text{ if $\mathfrak{n}=\fp^k$ for some prime ideal $\fp$,}\\
    0 & \text{otherwise.}
    \end{array} \right.
\end{align*}
From the Euler product, we find that the negation of the logarithmic derivative of $\zeta_K(s)$ is 
\begin{align*}
    -\frac{\zeta_K'(s)}{\zeta_K(s)}=\sum_{\mathfrak{\fp}}\sum_{k=1}^\infty \frac{\Lambda_K(\fp^k)}{q_{\fp}^{ks}}.
\end{align*}
Let
\begin{align*}
     \psi(K,t)=\sum_{q_{\fp}^k \leq t} \Lambda_K(\fp^k).
\end{align*}
Since $\zeta_K(s)$ has a simple pole at $s=1$, there is a positive constant $c$ depending on the degree $d$, and $\beta_K$ a (potential) Siegel zero of $\zeta_K(s)$ satisfying
\begin{align*}
\psi(K,t)= t -\frac{t^{\beta_K}}{\beta_K} + O_K \left(t \exp(-c\sqrt{\log t}) \right).
\end{align*}
The term containing $\beta_K$ disappears if it does not exist. (See \cite[(5.52)]{IK}.)  Since the contribution from the terms $\fp^k$ with $q_{\fp}=p^{f(\fp)}$ for a rational prime $p$ and $f(\fp)\geq 2$ to $\psi(K,t)$ is at most $O_d(t^\frac{1}{2}\log t)$, we have
\begin{align*}
    \psi(K,t)=\sum_{\substack{f(\fp)=1 \\ q_{\fp} \leq t}} \log q_{\fp}+O_d(\sqrt{t} \log t) = t -\frac{t^{\beta_K}}{\beta_K} + O_K \left(t \exp(-c\sqrt{\log t}) \right)
\end{align*} 
and this implies that
\begin{align*}
   \theta(K,t) \vcentcolon= \sum_{\substack{f(\fp)=1 \\ q_{\fp} \leq t}}\log q_{\fp} =  t -\frac{t^{\beta_K}}{\beta_K} + O_K \left(t \exp(-c\sqrt{\log t}) \right).
\end{align*}

Let $f$ be a Hecke eigenform in $S_3(\Gamma(N,M))$, and let $a_f(p)$ be the $p$-th Fourier coefficient of $f$. 
We define a function $F(t) : [1,\infty) \rightarrow \mathbb{R}$ as follows. 
If $p$ is a rational prime such that $ p \equiv 1 \pmod{M} $ and $ p = q_{\fp}$  for some prime ideal $\fp$ of $K$, then we define
\begin{align*}
    F(p) = \frac{a_f(p)}{p^2} \widehat{\phi} \left( \frac{\log p}{\log X} \right).
\end{align*}
For $t$ with $|t-p| \leq 1$, $F(t)$ is given from the lines that connect the three points $(p-1,0)$, $(p,F(p))$, and $(p+1,0)$. For the other $t$, we define $F(t)=0$. 
By its construction, 
\begin{align*}
    F'(t) = \left\{
    \begin{array}{ll}
    \displaystyle \frac{a_f(p)}{p^2}\widehat{\phi}\left( \frac{\log p}{\log X}\right)    & \text{if } t \in (p-1, p), \\
    \displaystyle -\frac{a_f(p)}{p^2}\widehat{\phi}\left( \frac{\log p}{\log X}\right)    & \text{if } t \in (p, p+1).
    \end{array}
    \right.
\end{align*}
Therefore, $F(t), F'(t) \ll 1/t$.
Then, we have, by partial summation,
\begin{align*} 
    &\sum_{\substack{f(\fp) = 1,q_{\fp} \leq X^\sigma  \\ q_{\fp} \equiv 1 \text{ (mod $M$)}  }} \frac{ a_f( q_{\fp}) \log q_{\fp}}{ q_{\fp}^2}\widehat{\phi}\left( \frac{\log q_{\fp}}{\log X}\right) \\
    &=\sum_{\substack{f(\fp)=1 \\ q_{\fp} \leq X^\sigma }} (\log q_{\fp} )F(q_{\fp}) 
    =\int_{1}^{X^\sigma}F(x)d\theta(K,x) \\
    &= -\int_{1}^{X^\sigma}  F'(x)\left(x-\frac{x^{\beta_K}}{\beta_K}+O(x\exp(-c\sqrt{\log x})\right)dx +O(1) \\
   & = -F(X^\sigma)\left( X^\sigma-\frac{X^{\sigma \cdot \beta_K}}{\beta_K} + O\left( X^\sigma \exp(-c\sqrt{\sigma \log X} )\right)\right)  \\
    & \qquad + \int_1^{X^\sigma} F(x)\lbrb{1 - \frac{1}{x^{1-\beta_K}} +O\left( \exp(-c\sqrt{\log x})\right) } dx   + O(1) \\
     &=\int_1^{X^\sigma}F(x)dx + O(1)=\sum_{p^* < X^\sigma}\frac{a_f(p)}{p^2}\widehat{\phi}\left( \frac{\log p}{\log X}\right)+ O(1) \ll_{d, \widehat{\phi}, M, N} \sum_{p^* < X^\sigma } \frac{1}{p} + O(1) \\
     &\ll_{d, \widehat{\phi}, M, N} \log \log X, 
\end{align*}
where $p^*$ is a rational prime such that $p \equiv 1 \pmod{M}$ and $p = q_{\fp}$ for some $\fp$, and the claim follows.
\end{proof}
We have reached (\ref{est_of_s1_1}) by our discussions above.

\subsection{Estimate of $S_2$}
To estimate $S_2$, first, we need to control the inner sum of $S_2$.  

\begin{lemma} \label{lem:aEpsquare}
Let $K$ be a number field, and let $\Gamma = \Gamma_1(M,N)$ be the congruence subgroup such that $\cX_{\Gamma} \cong \bP^1$ over $\Spec \bZ[1/6MN]$. 
Then, for a prime $\fp$ not dividing $6MN$,
\begin{align*}
\sum_{E \in \cE_{K,\Gamma}(X)} \widehat{a}_E(\fp^2) 
= - \kappa X^{\frac{2}{e(\Gamma)}}
+ O_{\Gamma}\!\left(q_\fp^{-\frac{1}{2}}X^{\frac{2}{e(\Gamma)}} 
+ q_\fp X^{\frac{2}{e(\Gamma)}- \frac{1}{de(\Gamma)}} \log X \right).
\end{align*}
\end{lemma}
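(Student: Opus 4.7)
The plan is to separate the sum over $\cE_{K,\Gamma}(X)$ according to the reduction type at $\fp$. Because $\cX_\Gamma$ is assumed representable, there are no elliptic curves in $\cE_{K,\Gamma}(X)$ with additive reduction at $\fp$ (this is the content of \eqref{reduction-representable}), so only good and multiplicative reduction contribute. On the good-reduction locus, since the normalized local $L$-factor at $\fp$ has $\alpha_E(\fp)+\beta_E(\fp)=a_E(\fp)/\sqrt q$ and $\alpha_E(\fp)\beta_E(\fp)=1$, one has the identity
\begin{align*}
\widehat a_E(\fp^2)=\alpha_E(\fp)^2+\beta_E(\fp)^2=\frac{a_E(\fp)^2}{q}-2,
\end{align*}
while on the multiplicative locus $\widehat a_E(\fp^2)=a_E(\fp)^2=1$. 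So I would write
\begin{align*}
\sum_{E\in\cE_{K,\Gamma}(X)}\widehat a_E(\fp^2)
=\sum_{|a|\leq 2\sqrt q}\Bigl(\tfrac{a^2}{q}-2\Bigr)|\cE^{a}_{K,\Gamma,\fp}(X)|
+|\cE^{\mult}_{K,\Gamma,\fp}(X)|.
\end{align*}

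Next, I would insert the refined count \eqref{eqn:EKgamaref} for each $|\cE^a_{K,\Gamma,\fp}(X)|$ and collect terms. The main term becomes
\begin{align*}
\frac{q^2}{q^2-1}\,\kappa\, X^{2/e(\Gamma)}\sum_{|a|\leq 2\sqrt q}\Bigl(\tfrac{a^2}{q}-2\Bigr)H_\Gamma(a,q),
\end{align*}
and Lemma \ref{lem:Hmoment0} together with Theorem \ref{thm:Hmoment12} (second-moment estimate) evaluate the inner sum as
\begin{align*}
\frac{1}{q}\bigl(q+O_\Gamma(q^{1/2})\bigr)-2\bigl(1+O_\Gamma(q^{-1})\bigr)=-1+O_\Gamma(q^{-1/2}),
\end{align*}
so the main term collapses to $-\kappa X^{2/e(\Gamma)}+O_\Gamma(q^{-1/2}X^{2/e(\Gamma)})$, absorbing the $q^2/(q^2-1)=1+O(q^{-2})$ discrepancy into the error.

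For the error term arising from \eqref{eqn:EKgamaref}, I would bound $\sum_{|a|\leq 2\sqrt q}(a^2/q+2)\sum_{a_\fp(E_z)=a}1$ trivially: $a^2/q\leq 4$ by the Weil bound, and the total number of $\bF_q$-isomorphism classes in $\cY(\bF_q)$ is $O(q)$. So this contributes $O_\Gamma(qX^{(2d-1)/(de(\Gamma))}\log X)=O_\Gamma(qX^{2/e(\Gamma)-1/(de(\Gamma))}\log X)$, exactly the second error in the statement. Finally, the multiplicative contribution is controlled by \eqref{mult count} together with Lemma \ref{lem:cuspbound} which gives $|\cE^{\mult}_{K,\Gamma,\fp}(X)|=O_\Gamma(q^{-1}X^{2/e(\Gamma)}+X^{2/e(\Gamma)-1/(de(\Gamma))}\log X)$, and is subsumed by the stated error. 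Assembling these pieces yields the claim.

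There is no real obstacle; the only care needed is the bookkeeping of error terms, in particular verifying that the ``number of $\bF_q$-isomorphism classes with trace $a$'' summed against $a^2/q$ is uniformly $O(q)$, and that the multiplicative contribution is of lower order than the good-reduction main term $-\kappa X^{2/e(\Gamma)}$.
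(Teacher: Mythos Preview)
Your argument is essentially identical to the paper's: split by reduction type, use $\widehat a_E(\fp^2)=a_E(\fp)^2/q-2$ on the good locus, plug in \eqref{eqn:EKgamaref}, and evaluate the moment sum via Lemma~\ref{lem:Hmoment0} and Theorem~\ref{thm:Hmoment12}. One small slip: at a multiplicative prime $\widehat a_E(\fp^2)=\alpha_E(\fp)^2=1/q$, not $1$ (the normalized satellite has $\alpha_E(\fp)=\pm1/\sqrt q$, $\beta_E(\fp)=0$); this is what the paper uses, and it only strengthens your bound on $|\cE^{\mult}_{K,\Gamma,\fp}(X)|$, so the conclusion is unaffected.
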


\begin{proof}
First, we note that
\begin{align*}
	\sum_{E \in \cE_{K, \Gamma}(X)} \widehat{a}_E(\fp^2)= \sum_{|a| \leq 2\sqrt{q_\fp}} 
	\sum_{\substack{E \in \cE_{K,\Gamma}(X) \\  a_E(\fp) =a  }} \widehat{a}_E(\fp^2)
	+ \sum_{\substack{E \in \cE_{K, \Gamma}(X) \\ \text{$E$ mult at $\fp$}}}\widehat{a}_E(\fp^2)
\end{align*}
and $\widehat{a}_E(\fp^2)= q_\fp^{-1}$ when $E$ has multiplicative reduction at $\fp$.
Hence the same argument as Lemma \ref{lem:aEp} gives
\begin{align*}
     \left| \sum_{\substack{E \in \cE_{K, \Gamma}(X) \\ \text{$E$ mult at $\fp$}}}\widehat{a}_E(\fp^2) \right|
     \ll q_\fp^{-2} X^{\frac{2}{e(\Gamma)}}  + q_\fp^{-1}X^{\frac{2d-1}{de(\Gamma)}} \log X   .
\end{align*}
For the good reduction case, we use the identity $\widehat{a}_{E}(\fp^2) = \widehat{a}_E(\fp)^2 - 2$. The computation in the proof of Lemma \ref{lem:aEp} gives
\begin{align*}
&\sum_{|a| \leq  2\sqrt{q_\fp}} \sum_{\substack{E \in \cE_{K,\Gamma}(X) \\ a_E(\fp)=a}}\widehat{a}_E(\fp^2)
=\sum_{|a| \leq  2\sqrt{q_\fp}} \sum_{\substack{E \in \cE_{K,\Gamma}(X) \\ a_E(\fp)=a}}
( \widehat{a}_E(\fp)^2 - 2)
= \sum_{|a| \leq  2\sqrt{q_\fp}} \lbrb{\frac{a^2}{q_\fp}-2}|\cE_{K, \Gamma, \fp}^a(X)| \\
&=\kappa \lbrb{  \sum_{|a| \leq 2\sqrt{q_\fp}} \frac{a^2}{q_\fp} \frac{q_\fp^2}{q_\fp^2-1} H_{\Gamma}(a, qq_\fp } X^{\frac{2}{e(\Gamma)}} -2 \kappa \lbrb{  \sum_{|a| \leq 2\sqrt{q_\fp}} \frac{q_\fp^2}{q_\fp^2-1} H_{\Gamma}(a, q_\fp) } X^{\frac{2}{e(\Gamma)}}
+O_{\Gamma}\lbrb{ q_\fp  X^{\frac{2d - 1}{de(\Gamma)}}\log X} \\
&= \frac{\kappa}{q_\fp}
\lbrb{\sum_{|a| \leq 2\sqrt{q_\fp}} a^2 H_{\Gamma}(a, q_\fp) } X^{\frac{2}{e(\Gamma)}}
-2 \kappa \lbrb{\sum_{|a| \leq 2\sqrt{q_\fp}} H_{\Gamma}(a, q_\fp) } X^{\frac{2}{e(\Gamma)}}
+O_{\Gamma}\lbrb{q_\fp^{-2}X^{\frac{2}{e(\Gamma)}} + q_\fp X^{\frac{2d - 1}{de(\Gamma)}}\log X }.
\end{align*}
Again we note that the contribution of $\fp \mid N$ is negligible, so by  Lemma \ref{lem:Hmoment0} and Theorem \ref{thm:Hmoment12} the sum is
\begin{align*}
    - \kappa  X^{\frac{2}{e(\Gamma)}}
+O_{\Gamma}\lbrb{q_\fp^{-\frac{1}{2}}X^{\frac{2}{e(\Gamma)}} + q_\fp X^{\frac{2}{e(\Gamma)} - \frac{1}{de(\Gamma)}}\log X}.
\end{align*}
Thus, the lemma follows.
\end{proof}

By Lemma \ref{lem:aEpsquare},
\begin{align*}
    S_2 & = \frac{2}{\log X \left| \cE_{K,\Gamma}(X) \right| }\sum_{\fp} \frac{ \log q_{\fp}}{q_{\fp}}\widehat{\phi}\left( \frac{2\log q_{\fp}}{\log X}\right) \cdot 
    \lbrb{- \kappa  X^{\frac{2}{e(\Gamma)}}  +O\lbrb{q_{\fp}^{-\frac{1}{2}}X^{\frac{2}{e(\Gamma)}} + q_{\fp} X^{\frac{2d - 1}{de(\Gamma)}}\log X}} \\
    & = \frac{2}{\log X }\sum_{\fp} 
    \frac{ \log q_{\fp}}{q_{\fp}}
    \widehat{\phi}\left( \frac{2\log q_{\fp}}{\log X}\right)
    \lbrb{-1  +O\lbrb{q_{\fp}^{-\frac{1}{2}} + q_{\fp} X^{-\frac{1}{de(\Gamma)}}\log X}} \\
    & = -\frac{2}{\log X }\sum_{\fp} 
    \frac{ \log q_{\fp}}{q_{\fp}}
    \widehat{\phi}\left( \frac{2\log q_{\fp}}{\log X}\right) + O\lbrb{ \frac{1}{\log X} \sum_{ q_{\fp} \leq X^{\frac{\sigma}{2}}}
    \frac{\log q_{\fp}}{q_{\fp}^{\frac 32}} + X^{- \frac{1}{de(\Gamma)}}\sum_{q_{\fp} \leq X^{\frac{\sigma}{2}}}\log q_{\fp} }.
\end{align*}
Since
\begin{align*}
\sum_{ q_{\fp} \leq X^{\frac{\sigma}{2}}}\log q_{\fp} \leq \psi(K, X^{\frac{\sigma}{2}})\ll_d X^{\frac{\sigma}{2}} \textrm{ and }
    \sum_{ q_{\fp} \leq X^{\frac{\sigma}{2}}}
    \frac{\log q_{\fp} }{q_{\fp} ^{\frac 32}} \ll_d \sum_{n=1}^\infty \frac{\log n}{n^\frac{3}{2}} \ll_d 1, 
\end{align*}
(\ref{est_of_s2}) follows from the lemma below. 

\begin{lemma} \label{lem: S2 chev}
Let $\widehat{\phi}$ be an even continuous function supported in $[-\sigma, \sigma]$, and let $\phi$ be its Fourier transform. Then, we have
    \[\frac{2}{\log X }\sum_{\fp} 
    \frac{ \log q_{\fp}}{q_{\fp}}
    \widehat{\phi}\left( \frac{2\log q_{\fp}}{\log X}\right)=\frac{1}{2}\phi(0)+O_{d, \widehat{\phi}} \left( \frac{1}{\log X} \right). \]
\end{lemma}
\begin{proof}
First, we observe that it is enough to consider prime ideals $\fp$ with $f(\fp)=1$ only because the contribution from prime ideals with $f(\fp)\geq 2$ is $O_{d, \widehat{\phi}} \left( \frac{1}{\log X} \right)$, by (\ref{eqn: f>1 vanishes}).
Recall that
\begin{align*}
    \theta(K, t)= \sum_{\substack{f(\fp)=1 \\ q_{\fp} \leq t}} \log  q_{\fp}.
\end{align*}
Let $\tilde{\theta}(K,t)=\theta(K,t)-t$ for $t \geq 1$.
Since $\widehat{\phi}$ is supported in $[-\sigma, \sigma]$, we have, by partial summation,
\begin{align} \label{s2 sum}
    &\frac{2}{\log X }\sum_{\fp} 
    \frac{ \log q_{\fp}}{q_{\fp}}
    \widehat{\phi}\left( \frac{2\log q_{\fp}}{\log X}\right)  = \int_{1}^{X^{ \frac{\sigma}{2} }} \frac{2}{t \log X}\widehat{\phi}\left( \frac{2 \log t}{\log X} \right) d\theta(K,t) \nonumber\\ 
    &= \int_{1}^{X^{\frac{\sigma}{2}}} \frac{2}{t \log X}\widehat{\phi}\left( \frac{2 \log t}{\log X} \right) dt
    +\int_1^{X^{\frac{\sigma}{2}}} \frac{2}{t \log X}\widehat{\phi}\left( \frac{2 \log t}{\log X} \right) d\tilde{\theta}(K,t).
    \end{align}
We extract the main term as below:
\begin{align} \label{s2 main}
   & \int_1^{X^{\frac{\sigma}{2}}} \frac{2}{t \log X}\widehat{\phi}\left( \frac{2 \log t}{\log X} \right) dt= \int_{0}^{\sigma}\widehat{\phi}(t)dt =\frac{1}{2}\int_{-\sigma}^{\sigma}\widehat{\phi}(t)dt. 
\end{align}
We control the second integral in $(\ref{s2 sum})$ as below.
\begin{align} \label{s2 error}
     &\int_1^{X^{\frac{\sigma}{2}}} \frac{2}{t \log X}\widehat{\phi}\left( \frac{2 \log t}{\log X} \right) d\tilde{\theta}(K,t)
     = -  \int_1^{X^{\frac{\sigma}{2}}} \tilde{\theta}(K,t) \frac{d}{dt}\left(  \frac{2}{t \log X}\widehat{\phi}\left( \frac{2 \log t}{\log X} \right) \right) dt  \nonumber\\
     &= -\int_1^{X^{\frac{\sigma}{2}}} \tilde{\theta}(K,t) \left( -\frac{2}{t^2 \log X}\widehat{\phi}\left( \frac{2 \log t}{\log X}\right) 
     + \frac{4}{t^2 \log^2 X}\widehat{\phi}'\left( \frac{2 \log t}{\log X} \right) \right) dt \nonumber \\
     & \ll_{d, \widehat{\phi}} \frac{1}{\log X}
 \end{align}
 because $\tilde{\theta}(K,t) = -\frac{t^{\beta_K}}{\beta_K}+O_d(t\exp(-c\sqrt{\log t}))$. Hence, the claim follows from $(\ref{s2 main})$ and $(\ref{s2 error})$.
\end{proof}

\section{Proof of Theorem \ref{prop:phi411}. } \label{sec: phi refine}

In this section, we use $\# S$ for the cardinality of the set $S$, and $B$ for the parameter $X$, unlike in the previous section. 
Let $f : \cP(u) \to \cP(w)$ be a morphism between weighted projective spaces, $i_{\fp} : K \to K_{\fp}$ the natural embedding, and $\Omega_{\fp}$ a subset of $\cP(u)(K_{\fp})$.
The main goal of this section is to prove Theorem \ref{prop:phi411}, which estimates
\begin{align} \label{eqn: 411 sketch}
    \#\lcrc{y \in f(\cP(u)(K)) : H_{w, K}(y) \leq B, i_{\fp}(y) \in \Omega_{\fp}}
\end{align}
with an error term whose implied constant does not depend on $q_{\fp}$.
Bruin--Manterola Ayala \cite{BM} gave an estimate of 
\begin{align*}
    \#\lcrc{y \in f(\cP(u)(K)) : H_{w, K}(y) \leq B },
\end{align*}
which is an estimate of (\ref{eqn: 411 sketch}) without a local condition.
Phillips \cite{Phi1} considered the same problem (\ref{eqn: 411 sketch}), although the error term in \cite{Phi1} depends on $q_{\fp}$.

We follow the ideas and strategies in \cite{BM, Phi1} through this section.
However, certain aspects of the arguments seem to require further clarification or modification.
See Remarks \ref{rmk: BM Vpad def}, \ref{rmk: BM cN} for \cite{BM} and Remarks \ref{rmk: phi proj local cond}, \ref{rmk: Phi1 mfad},  \ref{rmk: on Phi M} for \cite{Phi1}.
In light of these considerations, the principal technical distinctions are as follows: Proposition \ref{prop:refinePhi3}, which extends \cite[Proposition 3.2.7]{Phi1},
an error estimate with an implied constant independent of $q_{\fp}$, and a structural discrepancy in the objects being counted, as detailed in Remark \ref{rmk: both compact}.

\subsection{Local conditions} \label{subsec: local cond Omega}

For $u = (u_0, \dots, u_n)$ an $(n+1)$-tuple of positive integers and $k \in K$ (or $K_\fp$), we define the $u$-weighted action
\begin{align*}
    k*_u(x_0, \cdots, x_n) \vcentcolon = (k^{u_0}x_0,  \cdots , k^{u_n}x_n ).
\end{align*}
For a subset $X_{\fp} \subset \cP(u)(K_{\fp})$, we define
\begin{align*}
    X_{\fp}^{\aff} &\vcentcolon = \lcrc{ (x_{\fp, 0}, \cdots, x_{\fp, n}) \in K_{\fp}^{n+1} \setminus \lcrc{0} : [x_{\fp, 0}, \cdots, x_{\fp, n}] \in X_{\fp} } \\
    &= \lcrc{k_{\fp} *_{u} (x_{\fp, 0}, \cdots, x_{\fp, n}) \in K_{\fp}^{n+1} \setminus \lcrc{0} : [x_{\fp, 0}, \cdots, x_{\fp, n}] \in X_{\fp}, k_{\fp} \in K_{\fp}^\times }.
\end{align*}
We denote by $i_{\fp}: K \to K_{\fp}$ the natural embedding and the induced maps.
Let $S$ be a set of primes of $K$.
For $\lcrc{\Omega_{\fp}}_{\fp \in S}$ with $\Omega_{\fp} \subset \cP(u)(K_{\fp})$, we define
\begin{align*}
    \Omega_S \vcentcolon= \bigcap_{\fp \in S} i_{\fp}^{-1}(\Omega_{\fp}),
\end{align*}
which is a subset of $\cP(u)(K)$. Then the quotient map $\mathbb{A}^{n+1}\setminus\{0\}\rightarrow\mathcal{P}(u)$ under the $\ast_{u}$-action gives a commutative cube:
\begin{align*}
    \xymatrix@!0@R=4pc@C=8pc{
    & \Omega_S^\mathrm{aff} \ar@{_(->}[dl] \ar[dd]|\hole \ar[rr] & & \displaystyle\prod_{\mathfrak{p}\in S}\Omega_\mathfrak{p}^\mathrm{aff} \ar@{_(->}[dl] \ar[dd] \\
    K^{n+1}\setminus\{0\} \ar[dd] \ar[rr]^(.35){(i_\mathfrak{p})_{\mathfrak{p}\in S}} & & \displaystyle\prod_{\mathfrak{p}\in S}(K_\mathfrak{p}^{n+1}\setminus\{0\}) \ar[dd] & \\
    & \Omega_S \ar@{_(->}[dl] \ar[rr]|\hole & & \displaystyle\prod_{\mathfrak{p}\in S}\Omega_\mathfrak{p} \ar@{_(->}[dl] \\
    \mathcal{P}(u)(K) \ar[rr] & & \displaystyle\prod_{\mathfrak{p}\in S}\mathcal{P}(u)(K_\mathfrak{p}) &
    }
\end{align*}
whose left, right, and bottom faces are Cartesian squares by definition. Hence, the top face becomes a Cartesian square by the pasting law for Cartesian squares. This is equivalent to saying that
\begin{align*}
    \Omega_S^\mathrm{aff}=\bigcap_{\mathfrak{p}\in S}i_\mathfrak{p}^{-1}(\Omega_\mathfrak{p}^\mathrm{aff}).
\end{align*}
Also, we note that $[x]\in\mathcal{P}(u)(K)$ satisfies $\Omega_S$ if and only if one (equivalently, every) $\widetilde{x}\in K^{n+1}\setminus\{0\}$ with $[x]=[\widetilde{x}]$ satisfies $\Omega_S^\mathrm{aff}$.

In the remaining of the section, we use $q = q_{\fp} =  N_{K/\bQ}(\fp)$ and identify the residue field $\cO_{K, \fp}/\fp\cO_{K, \fp}$ with $\bF_q$.
The following lemma illustrates an example.

\begin{lemma} \label{lem: local cond first exam}
Let $[\widetilde{\alpha_{0}}, \widetilde{\alpha_1}] \in \cP(1,1)(\bF_q)$ and let
\begin{align*}
    \Omega_{\fp} = \lcrc{[x_{\fp, 0}, x_{\fp, 1}] \in \cP(1, 1)(K_{\fp}) : \psi_{\fp}([x_{\fp, 0}, x_{\fp, 1}]) = [\widetilde{\alpha_0}, \widetilde{\alpha_1}]}.
\end{align*}
Then, $\Omega_{\fp}^{\aff} \cap \cO_{K,\fp}^2$ is the disjoint union, over $k \geq 1$, of translates of $\fp^k \times \fp^k$.
% There is a bijection
% \begin{align*}
%     \xymatrix{\Omega_{\fp}^{\aff} \cap \cO_{K, \fp}^{2} \ar[r]^-\sim &   \displaystyle \bigsqcup_{k \geq 0}\bigsqcup_{\substack{ \zeta \in \bF_q^\times}} \lbrb{\xi\pi_{\fp}^k*_{(1,1)} \Omega_{\fp, 0}^{\aff} }.}
% \end{align*}
\end{lemma}
\begin{proof}
There is a unique integer $k$ such that $\pi_{\fp}^k x_{\fp, 0}, \pi_{\fp}^k x_{\fp, 1} \in \cO_{K, \fp}$ and
\begin{align*}
    \min\lcrc{\ord_{\fp}(\pi_{\fp}^k x_{\fp, 0}), \ord_{\fp}(\pi_{\fp}^k x_{\fp, 1})} = 0.
\end{align*}
By Remark \ref{reduction-P1}, 
\begin{align*}
    \psi_{\fp}([x_{\fp,  0}, x_{\fp, 1}]) = [ (\pi_{\fp}^k x_{\fp, 0}, \pi_{\fp}^k x_{\fp, 1}) \pmod{\fp}].
\end{align*}
Therefore, when both $\widetilde{\alpha_0}$ and $\widetilde{\alpha_1}$ are non-zero, 
\begin{align*}
    \Omega_{\fp} 
    %&= \lcrc{[x_{\fp, 0}, x_{\fp, 1}] \in \cP(1, 1)(K_{\fp}) : x_{\fp, 0}, x_{\fp, 1} \in \cO_{K, \fp}, \psi_{\fp}([x_{\fp, 0}, x_{\fp, 1}]) = [\widetilde{\alpha_0}, \widetilde{\alpha_1}] }  \\
    &= \lcrc{[x_{\fp, 0}, x_{\fp, 1}] \in \cP(1, 1)(K_{\fp}) : x_{\fp, 0}, x_{\fp, 1} \in \cO_{K, \fp} \setminus \fp\cO_{K, \fp}, (x_{\fp, 0}, x_{\fp, 1}) \equiv (\zeta\widetilde{\alpha_0}, \zeta \widetilde{\alpha_1}) \textrm{ for some } \zeta \in \bF_q^\times  },
\end{align*}
and
\begin{align*}
    \Omega_{\fp}^{\aff}
    &= \lcrc{ (u\pi_{\fp}^k x_{\fp, 0}, u \pi_{\fp}^k x_{\fp, 1}) : x_{\fp, 0}, x_{\fp, 1} \in \cO_{K, \fp} \backslash \fp \cO_{K, \fp},(x_{\fp, 0}, x_{\fp, 1}) \equiv (\zeta\widetilde{\alpha_0}, \zeta\widetilde{\alpha_1}), u \in \cO_{K, \fp}^\times, k \in \bZ, \zeta \in \bF_q^\times } \\
    %&= \bigsqcup_{\zeta \in \bF_q^\times} \lcrc{ (u\pi_{\fp}^k x_{\fp, 0}, u \pi_{\fp}^k x_{\fp, 1}) : x_{\fp, 0}, x_{\fp, 1} \in \cO_{K, \fp},(x_{\fp, 0}, x_{\fp, 1}) \equiv (\zeta\widetilde{\alpha_0}, \zeta\widetilde{\alpha_1}), u \in \cO_{K, \fp}^\times, k \in \bZ } \\
    &= \bigsqcup_{\zeta \in \bF_q^\times } \bigsqcup_{k \in \bZ}
    \lcrc{ (\pi_{\fp}^k x_{\fp, 0}, \pi_{\fp}^k x_{\fp, 1}) : x_{\fp, 0}, x_{\fp, 1} \in \cO_{K, \fp}\backslash \fp \cO_{K, \fp} ,(x_{\fp, 0}, x_{\fp, 1}) \equiv (\zeta\widetilde{\alpha_0}, \zeta\widetilde{\alpha_1}) }.
\end{align*}
Hence, there is a bijection
\begin{align*}
\Omega_{\fp}^{\aff} \cap \cO_{K, \fp}^{2} \to 
\bigsqcup_{\zeta \in \bF_q^\times }\bigsqcup_{k \geq 0}\lcrc{ (\pi_{\fp}^k x_{\fp, 0}, \pi_{\fp}^k x_{\fp, 1}) : x_{\fp, 0}, x_{\fp, 1} \in \cO_{K, \fp}\backslash \fp \cO_{K, \fp}, (x_{\fp, 0}, x_{\fp, 1}) \equiv (\zeta\widetilde{\alpha_0}, \zeta\widetilde{\alpha_1}) }.
\end{align*}
Let $a_{\fp, 0}, a_{\fp, 1} \in \cO_{K, \fp}$ satisfying $(a_{\fp, 0}, a_{\fp, 1}) \equiv (\widetilde{\alpha_0}, \widetilde{\alpha_1})$ modulo $\fp$ and let $\xi \in \cO_{K, \fp}$, the image of Teichm\"uller character of $\zeta$.
Then the codomain can be written as
\begin{align}
\bigsqcup_{\zeta \in \bF_q^\times}
\bigsqcup_{k \geq 0} \lbrb{
\pi_{\fp}^k *_{(1,1)} \lbrb{
    \lcrc{x_{\fp, 0} \in \cO_{K, \fp} : |x_{\fp, 0} - \xi a_{\fp, 0}|_{\fp} \leq \frac{1}{q}} \times 
    \lcrc{x_{\fp, 1} \in \cO_{K, \fp} : |x_{\fp, 1} - \xi a_{\fp, 1}|_{\fp} \leq \frac{1}{q}}  } }.
\end{align}
Hence for
\begin{align*}
    \Omega_{\fp, 0}^{\aff} = \prod_{j=0}^1 \lcrc{x_{\fp, j} \in \cO_{K, \fp} : |x_{\fp, j} - a_{\fp, j}|_{\fp}  \leq \frac{1}{q}}
    = \prod_{j=0}^1 (a_{\fp, j} + \fp\cO_{K, \fp}),
\end{align*}
we have a bijection 
\begin{align*}
    \xymatrix{\Omega_{\fp}^{\aff} \cap \cO_{K, \fp}^{2} \ar[r]^-\sim 
    &   \displaystyle \bigsqcup_{k \geq 0}\bigsqcup_{\substack{ \zeta \in \bF_q^\times}} \lbrb{\xi\pi_{\fp}^k*_{(1,1)} \Omega_{\fp, 0}^{\aff} } %\ar[r] 
    %& \displaystyle \bigsqcup_{k \geq 0} \bigsqcup_{\zeta \in \bF_q^\times} \lbrb{\xi*_{(1, 1)} \Omega_{\fp, k}^{\aff}} 
    }.
\end{align*}
We note that $\pi_{\fp}^k*_{(1, 1)}\Omega_{\fp, 0}^{\aff}$ is a union of translates of $\fp^{k+1} \times \fp^{k+1}$.
If one of $\widetilde{\alpha_0}, \widetilde{\alpha_1}$ is zero, then one of $a_{\fp, j}$ is zero, and the conclusion remains unchanged.
\end{proof}

% \begin{example}
% For $[\widetilde{\alpha_{0}}, \widetilde{\alpha_1}] \in \cP(1,1)(\bF_q)$, we will consider a local condition
% \begin{align*}
%     \Omega_{\fp} = \lcrc{[x_{\fp, 0}, x_{\fp, 1}] \in \cP(1, 1)(K_{\fp}) : \psi_{\fp}([x_{\fp, 0}, x_{\fp, 1}]) = [\widetilde{\alpha_0}, \widetilde{\alpha_1}]}.
% \end{align*}

% \end{example}

For $x_{\fp}\in K_{\fp}^{n+1} \setminus \lcrc{0}$ and a tuple of positive integers $u = (u_0, \cdots, u_n)$, we define
\begin{align*}
    &\epsilon_{u, \fp}(x_{\fp}) \vcentcolon= \min_{0 \leq j \leq n} \left\lfloor \frac{\ord_{\fp}(x_{\fp, j})}{u_j} \right\rfloor.
\end{align*}
For a subset $\Omega_{\fp} \subset \cP(u)(K_{\fp})$ and an integer $k \geq 0$, we define
\begin{align*}
    \Omega_{\fp, k}^{\aff} &\vcentcolon = \lcrc{\pi_{\fp}^k*_{u} x_{\fp} : x_{\fp} \in \cO_{K, \fp}^{n+1}, \epsilon_{u, \fp}(x_{\fp}) = 0,  [x_{\fp}] \in \Omega_{\fp} }.
\end{align*}

We note that it does not depend on the choice of the uniformizer, and that $\Omega_{\fp, k}^{\aff}$ is stable under $u$-weighted $\cO_{K, \fp}^\times$-action.
As in the previous example, for any subset $\Omega_{\fp}$, we have a bijection
\begin{align} \label{eqn: Omega bijec Omega_k}
    \xymatrix{\Omega_{\fp}^{\aff} \cap \cO_{K, \fp}^{n+1} \ar[r]^-\sim & \displaystyle\bigsqcup_{\substack{k\geq 0}} \bigcup_{\zeta \in \mu(K_{\fp})} (\zeta\pi_{\fp}^k*_{u}\Omega_{\fp, 0}^{\aff})
    = \bigsqcup_{\substack{k\geq 0}} \bigcup_{\zeta \in \mu(K_{\fp})} (\zeta*_u\Omega_{\fp, k}^{\aff}).}
\end{align}
When $u \neq (1, \cdots, 1)$, the union over $\zeta \in \mu(K_{\fp})$ may not be disjoint.

\begin{definition} \label{def: irred proj local}
(i) A subset $\Omega_{\fp}\subset \cP(u)(K_{\fp})$ is called a projective local condition at $\fp$, and we say that $[x] \in \cP(u)(K)$ satisfies $\Omega_{\fp}$ if $i_{\fp}([x]) \in \Omega_{\fp}$. \\
(ii) A projective local condition $\Omega_{\fp}$ is called irreducible\footnote{The term ``irreducible'' may not be ideal, as an irreducible local condition can still be written as a union of other irreducible local conditions. However, we adopt this terminology to remain consistent with previous works. A reader may think of an irreducible local condition as a generalization of a box-type set.} if 
\begin{align*}
    \Omega_{\fp, 0}^{\aff} = \prod_{j=0}^{n} \lcrc{x_{\fp, j} \in \cO_{K, \fp} : |x_{\fp, j} - a_{\fp, j}|_{\fp} \leq \omega_{\fp, j}}
    = \prod_{j=0}^n (a_{\fp, j} + \fp^{r_{\fp, j}}\cO_{K, \fp})
\end{align*}
for some $a_{\fp, j} \in \cO_{K, \fp}$, $r_{\fp, j} \geq 0$, and $\omega_{\fp, j} = q^{-r_{\fp, j}}$.
\end{definition}

\begin{remark} \label{rmk: phi proj local cond}
Our definition is based on {\cite[Definition 4.0.3]{Phi2}}, but the order of presentation differs: we first define $\Omega_{\fp, k}^{\aff}$ and then use it to describe the irreducible projective local condition.
Equation (\ref{eqn: Omega bijec Omega_k}) may also serve as a more precise replacement for the one in \cite[Definition 4.0.3]{Phi2}.
See also (\ref{eqn: Omega decomp Omega_k disj}).
\end{remark}

% We note that when $\Omega_{\fp}$ is irreducible,
% \begin{align} \label{eqn: mp Omega pk aff}
%     \Omega_{\fp, k}^{\aff} = \pi_{\fp}^k*_{u} \Omega_{\fp}^{\aff} = \prod_{j=0}^n \lbrb{\pi_{\fp}^{ku_j}a_{\fp, j} + \fp^{r_{\fp, j}+ku_j}\cO_{K, \fp}}
%     \quad \textrm{and} \quad
%     m_{\fp}(\Omega_{\fp, k}^{\aff}) = \prod_{j=0}^n q_{\fp}^{-(r_{\fp, j} + ku_j)}
% \end{align}
% for some $r_{\fp, j} \geq 0$.

The following is elementary, but will be used repeatedly throughout the remainder of the paper.

\begin{lemma} \label{lem: pm cap xpn}
Let $\fp$ be a prime ideal of $K$, let $m \leq n$ be non-negative integers, and let $x \in \mathcal{O}_{K, \fp}$. Then,
\begin{align*}
\mathfrak{p}^m\mathcal{O}_{K,\mathfrak{p}}\cap(x+\mathfrak{p}^n\mathcal{O}_{K,\mathfrak{p}})=\left\{\begin{array}{ll}
    x+\mathfrak{p}^n\cO_{K, \fp} & \textrm{if $x\in\mathfrak{p}^m\mathcal{O}_{K,\mathfrak{p}}$}, \\
    \emptyset & \textrm{otherwise.}
    \end{array}\right.
\end{align*}
\end{lemma}

\subsection{Counting points in a lattice with local conditions}

Let $\Lambda$ be a free $\bZ$-module of rank $d$, and
\begin{align*}
    \Lambda_{\infty} \vcentcolon= \Lambda \otimes_{\bZ} \bR, \qquad 
    \Lambda_{p} \vcentcolon= \Lambda \otimes_{\bZ} \bZ_p.
\end{align*}
% {\color{blue} A choice of basis of $\Lambda$ gives an isomorphism $\iota_{\Lambda} : \Lambda \to \bZ^{n+1}$, and $\iota_{\Lambda, p} \vcentcolon = \iota_{\Lambda} \otimes_{\bZ} \bZ_p$ gives an identification $\Lambda_p \cong \bZ_p^{n+1}$.}
% {\color{cyan} Maybe what we need is an embedding... of lattice to the space...}

For each prime $p$, we choose a Haar measure $m_p$ on $\Lambda_p$, normalized so that $m_p(\Lambda_p) = 1$ for all but finitely many $p$, 
and choose a Haar measure $m_{\infty}$ on $\Lambda_{\infty}$.

Let $i_p$ be the natural embedding $\Lambda \to \Lambda_p$.
Suppose that a non-canonical embedding $i_{\Lambda, p} : \Lambda_p \to \bZ_p^{d}$ is given, which is usually not an isomorphism $\Lambda_p \cong \bZ_p^d$ which comes from the choice of basis of $\Lambda$, and let $a_{p, j} \in \bZ_p$ and $\omega_{p, j} \in \lcrc{p^k : k \in \bZ_{\leq 0}}$.
An irreducible local condition on $\Lambda$ at $p$ with respect to $i_{\Lambda, p}$ is given by
\begin{align} \label{eqn: Omega p def}
\Omega_p = \prod_{i=1}^{d} B_i \quad \textrm{where} \quad 
    B_i =  \lcrc{x_{p, i} \in \bZ_p : |x_{p, i} - a_{p, i}|_p \leq \omega_{p, i}}.
\end{align}
In other words, $x \in \Lambda$ satisfies $\lcrc{\Omega_p}_{p \in S}$ if and only if
\begin{align*}
    i_{\Lambda, p} (i_{p}(x)) \in \Omega_{p}
\end{align*}
for each $p \in S$.

% We define an irreducible local condition for $\Lambda$ that is isomorphic to $\bZ^{n+1}$ via $\iota_{\Lambda}$ at $p$ by
% \begin{align} \label{eqn: Omega p def}
% \Omega_p = \prod_{j=0}^n B_j \quad \textrm{where} \quad 
%     B_j =  \lcrc{x_{p, j} \in \bZ_p : |x_{p, j} - a_{p, j}|_p \leq \omega_{p, j}}
% \end{align}
% for some $a_{p, j} \in \bZ_p$ and $\omega_{p, j} \in \lcrc{p^k : k \in \bZ_{\leq 0}}$.
% In other words, $x \in \Lambda$ satisfies $\lcrc{\Omega_p}_{p \in S}$ if and only if $\iota_pi_p (x) \in \prod_{j=0}^n B_j$ for all $p \in S$, where $i_p$ is the natural embedding $\Lambda \to \Lambda \otimes_{\bZ}\bZ_p$
% {\color{blue} and $\iota_p = \iota_{\Lambda, p}$ is the one defined above. In this paper, we use $i$ for the embedding and $\iota$ for the identification.}

We say that a subset $R \subset \Lambda_{\infty}$ is \emph{definable} if it is definable in some o-minimal structure (cf. \cite[\S 3.1]{Phi2}).
For infinite places, we consider $\Omega_{\infty}\subset \Lambda_{\infty}$ that is bounded and definable.
For a region $R \subset \Lambda_{\infty} \cong \bR^{n+1}$, an $(n+1)$-tuple of positive integers $w = (w_0, \cdots, w_n)$, and a positive number $B$, we define 
\begin{align*}
    B*_w R \vcentcolon=\lcrc{(t^{w_0}x_0, \cdots, t^{w_n}x_n ) : 0 \leq t \leq B, (x_0, \cdots, x_n) \in R \subset \bR^{n+1} }.
\end{align*}
Also for $w = (w_0, \cdots, w_n)$, we define $|w|$ (resp. $w_{\min}, w_{\max}$) by the sum (resp. the minimum, the maximum) of $w_j$.

\begin{lemma}[{\cite[Lemma 3.2.4]{Phi2}}] \label{lem: Phi2 3.2.4}
Let $s, \Lambda, \Lambda_p, \Lambda_{\infty}, m_p, m_{\infty}$, and $\Omega_{\infty}$ be as above.
For a finite set of primes $S$ and irreducible local conditions $\lcrc{\Omega_p}_{p \in S}$ and a positive real $B$, we have
\begin{align*}
    &\#\lcrc{x \in \Lambda \cap (B*_w \Omega_{\infty}) : i_p(x) \in \Omega_p \textrm{ for all } p \in S} \\
    & \qquad = \frac{m_{\infty}(\Omega_{\infty})}{m_{\infty}(\Lambda_{\infty}/\Lambda)} 
    \prod_{p \in S} \frac{m_p(\Omega_p \cap \Lambda_p)}{m_p(\Lambda_p)} B^{|w|}
    + O\lbrb{ \max_{0 \leq j \leq n} \lcrc{\prod_{p \in S} \omega_{p, j}^{-1}} \frac{m_p(\Omega_p \cap \Lambda_p)}{m_p(\Lambda_p)} B^{|w| - w_{\min}} }.
\end{align*}
The implied constant only depends on $\Lambda, \Omega_{\infty}$, and $w$.
\end{lemma}
\begin{proof}
%This is \cite[Lemma 3.2.4]{Phi1}.
In \cite[Lemma 3.2.4]{Phi1}, the local condition is defined by the subset of $\Omega_p$, but our local condition is more generally a subset of $\bZ_p^{\rk(\Lambda)}$.
Hence, we should exchange $m_{p}(\Omega_p)/m_p(\Lambda_p)$ by $m_{p}(\Omega_p \cap \Lambda_p)/m_p(\Lambda_p)$.
The other part of the proof is unchanged.
\end{proof}

Let $K_{\infty} \vcentcolon= K \otimes_{\bQ}\bR$ and let $d = [K:\bQ]$. Then an ideal in $\cO_K$ can be regarded as a lattice of rank $d$ in $K_{\infty}$. Furthermore,
\begin{align*}
    \fa^u = \fa^{u_0} \times \fa^{u_2} \times \cdots \times \fa^{u_n} \subset K_{\infty} \times K_{\infty} \times \cdots \times K_{\infty}
\end{align*}
is also a lattice of rank $(n+1)d$ in $K_{\infty}^{n+1}$.

One can also define an irreducible local condition on the affine space, applicable to $\fa^u$, or more generally, to a lattice in $K^{n+1}$.
Let $a_{\fp, j} \in \cO_{K, \fp}$, $r_{\fp, j} \geq 0$, and let $\omega_{\fp, j} = q_{\fp}^{-r_{\fp, j}}$.
A collection of sets $\lcrc{\Omega_{\fp}^a}_{\fp \in S}$, where
\begin{align} \label{eqn: Omega fp def}
    \Omega_{\fp}^a = \prod_{j=0}^n  \lcrc{x_{\fp, j} \in \cO_{K, \fp}  : |x_{\fp, j} - a_{\fp, j}|_{\fp} \leq \omega_{\fp, j}} = \prod_{j=0}^n (a_{\fp, j} + \fp^{r_{\fp, j}}\cO_{K, \fp}) \subset \cO_{K, \fp}^{n+1}
\end{align}
is called an irreducible affine local condition.
Let
\begin{align*}
    \fa_\fp \vcentcolon= \fa \cO_{K, \fp} = (\fp\cO_{K, \fp})^{\ord_{\fp} (\fa) }.
\end{align*}
Then there is a natural embedding $i_{\fp} : \fa^u \to \fa_{\fp}^u$, and $x \in \fa^u$ satisfies the local condition $\Omega_{\fp}^a$ if and only if $i_{\fp}(x) \in \Omega_{\fp}^a$.
We also note that the elements of $\fa^u$ satisfying the local condition $\Omega_{\fp}^a$ are given by $\fa^u \cap i_{\fp}^{-1}(\Omega_{\fp}^a)$, not merely by $i_{\fp}^{-1}(\Omega_{\fp}^a)$.
For $\Omega = \lcrc{\Omega_{\fp}^a}_{\fp \in S}$, we define
\begin{align*}
    \Omega_S \vcentcolon = \bigcap_{\fp \in S} i_{\fp}^{-1}(\Omega_{\fp}^a)
\end{align*}
in analogy with the definition in the projective local condition case.

To summarize, we have defined four types of irreducible local conditions.
We first introduced an irreducible local condition $\Omega_{\fp} \subset \cP(u)(K_{\fp})$ on projective space and its preimage $\Omega_{\fp}^{\aff} \subset K_{\fp}^{n+1}$ in affine space.
For a lattice $\Lambda$ of rank $d$ with an embedding $i_{\Lambda, p} : \Lambda_p \to \bZ_p^{d}$, we defined an irreducible local condition $\Omega_p \subset \bZ_p^{d}$.
For a lattice $\Lambda \subset K^{n+1}$, we define an irreducible affine local condition $\Omega_{\fp}^a \subset K_{\fp}^{n+1}$.
We note that only the third, $\Omega_p$, is non-canonical and depends on the choice of $i_{\Lambda, p}$.

Let $m_\mathfrak{p}$ be the usual Haar measure on $\cO_{K, \fp}$, normalized so that $m_{\fp}(\cO_{K, \fp}) = 1$, and let 
\begin{align*}
    \iota_{C, p} : \cO_K \otimes_{\bZ} \bZ_p \to \prod_{\fp \mid p} \cO_{K, \fp}
\end{align*}
be the canonical isomorphism from the Chinese remainder theorem.
In this section, we always use $\iota$ for an isomorphism and $i$ for an embedding.
We define a Haar measure on $\cO_{K} \otimes \bZ_p$ using $\iota_{C, p}$.
In other words, if $\iota_{C, p}(S) = \prod_{\fp \mid p} S_{\fp}$ for $S \subset \cO_K \otimes_{\bZ} \bZ_p$, we define
\begin{align*}
    m_p(S) \vcentcolon= \prod_{\fp} m_{\fp}(S_{\fp}).
\end{align*}
It also induces a Haar measure on $(\cO_K \otimes_{\bZ}\bZ_p)^{n+1}$ which is denoted by $m_p$.
Then, $m_{p}(\fa^u_p)$ is well-defined via
\begin{align*}
    \fa^u_p  = \fa^u\otimes_{\bZ} \bZ_p \subset (\cO_{K} \otimes_{\bZ}\bZ_p)^{n+1}.
\end{align*}
We have $m_p(\fa_p^u) = 1$ if no prime $\fp$ above $p$ divides $\fa$.
Also, the standard Haar measure on $\bR$ and $\bC$ induces the Haar measure on $K_\infty$ and $K_\infty^{n+1}$.

In the next lemma, we compare the local conditions $\Omega_p$ for $\Lambda$ and $\Omega_{\fp}^a$ for $\fa^u$, when $\Lambda = \fa^u$.
A choice of basis of $\cO_K$ gives an isomorphism $\iota_K : \cO_K \to \bZ^d$ and $\iota_{K, p} \vcentcolon = \iota_{K} \otimes_{\bZ}\bZ_p$, and also induces an isomorphism
\begin{align*}
\Phi : 
\xymatrix{ \displaystyle
    \prod_{\fp \mid p}\cO_{K, \fp} \ar[r]^-{\iota_{C, p}^{-1}} & \cO_{K} \otimes_{\bZ} \bZ_p \ar[r]^-{\iota_{K, p}} &  \bZ_p^d
    }.
\end{align*}
For a prime $\fp$ of $K$ dividing $p$, recall that we denote its residue degree and the ramification index by $f(\fp | p)$, and $e(\fp | p)$.
Let $\iota_{K, \fp} : \cO_{K, \fp} \to \bZ_p^{e(\fp | p)f(\fp |p)}$ be a usual isomorphism.
Then, $\iota_{K, \fp}$ preserves the Haar measure, so does $\Phi = \prod_{\fp \mid p} \iota_{K, \fp}$.

\begin{lemma} \label{lem: Omega fp p}
We fix an isomorphism $\iota_K: \cO_K \to \bZ^d$, thereby also fixing the induced map $\Phi$.
\\
(i) For each $\fp \mid p$, let $a_{\fp}$ be an element of $\cO_{K, \fp}$, and let $r_{\fp}\geq 0$ be an integer.
Then, there exist $a_{p, i} \in \bZ_p$ and $k_i \geq 0$ for $i = 1, \cdots, d$ such that
\begin{align*}
    \Phi\lbrb{\prod_{\fp \mid p} (a_{\fp} + \fp^{r_{\fp}}\cO_{K, \fp}) } = \prod_{i=1}^d (a_{p, i} + p^{k_i}\bZ_p), \quad \textrm{and} \quad
    \sum_{\fp \mid p} r_{\fp}f(\fp|p) = \sum_{i=1}^d k_i.
\end{align*}
(ii) Given a set of irreducible local conditions $\Omega = \lcrc{\Omega_{\fp}^a}_{\fp \mid p}$ for $\fa^u$, there is an irreducible local condition $\Omega_p$ and an embedding $i_{\Lambda, p} : \fa^u_p \to \bZ_p^{d(n+1)}$ such that $x \in \fa^u$ satisfies $\Omega = \lcrc{\Omega_{\fp}^a}_{\fp \mid p}$ if and only if $x$ satisfies $\Omega_p$, and 
\begin{align*}
m_p(\Omega_p) = \prod_{\fp \mid p}m_{\fp}(\Omega_{\fp}^a \cap \fa_{\fp}^u).
\end{align*}
\end{lemma}
\begin{proof}
(i) We first assume that $a_{\fp} = 0$ for all $\fp \mid p$.
Let $\fb = \prod_{\fp \mid p} \fp^{r_{\fp}}$. Then
\begin{align*}
    \Phi\lbrb{\prod_{\fp \mid p} \fp^{r_{\fp}} \cO_{K, \fp} } = \iota_{K, p}(\fb \otimes_{\bZ} \bZ_p) = \prod_{i=1}^d p^{k_i} \bZ_p
\end{align*}
for some $k_i \geq 0$. Hence,
\begin{align*}
\left. \prod_{\fp \mid p} \cO_{K, \fp}  \middle/ \prod_{\fp \mid p} \fp^{r_{\fp}}\cO_{K, \fp} \right. \cong
\left. \bZ_p^d \middle/  \prod_{i=1}^d p^{k_i} \bZ_p \right. .
\end{align*}
% Let $\fb = \prod_{\fp \mid p} \fp^{r_\fp}$, an ideal of $\cO_K$.
% Then
% \begin{align*}
%     \cO_K/\fb \cong \lbrb{\cO_K/\fb} \otimes_{\bZ} \bZ_p \cong \prod_{\fp \mid p} \cO_{K, \fp}/\fp^{r_{\fp}}\cO_{K, \fp}
% \end{align*}
% is of $p$-power characteristic.
% Since $\cO_{K}$ and $\fb$ have same $\bZ$-rank, $\cO_K/\fb$ is a torsion abelian group and admits a $\bZ/p^N\bZ$-module structure for some $N \geq 1$.
% Since every $\bZ_p$-submodule of $\bZ_p$ is of the form $p^k\bZ_p$, 
% the image of $\fb \otimes \bZ_p$ under any isomorphism $\cO_{K}\otimes_{\bZ} \bZ_p \cong \bZ_p^d$ can be written as $\prod_{i=1}^d p^{k_i}\bZ_p$ for some $k_i\geq 0$ satisfying $p^{k_1 + \cdots +k_d} = |\cO_K/\fb|$.
It gives the result, when $a_\fp = 0$ for $\fp \mid p$.
In the general case, a collection of translations by elements $ a_{\mathfrak{p}} \in \mathcal{O}_{K,\mathfrak{p}} $ at pairwise relatively prime primes $\fp$ corresponds, via the Chinese remainder theorem, to a translation by a single element of $\cO_K$.
Under the isomorphism $\Phi$, the translation in $\cO_K$ induces a translation in $\bZ_p^d$.

(ii)
Tensoring with $\bZ_p$ for a given $\iota_K : \cO_K \to \bZ^d$, we obtain $\iota_{K, p} : \cO_K \otimes_{\bZ}\bZ_p \to \bZ_p^d$, and similarly, an isomorphism $\cO_K^{n+1} \otimes_{\bZ}\bZ_p \to \bZ_p^{(n+1)d}$, which is also denoted by $\iota_{K, p}$.
This construction ensures compatibility between $\iota_K$ and $\iota_{K, p}$.
Therefore, the right square in the diagram below commutes.
\begin{align} \label{eqn: Omega_fp to Omega_p}
    \begin{aligned}
        \xymatrixcolsep{3.5pc}\xymatrix{
    \displaystyle
      \prod_{\fp \mid p} \cO_{K, \fp}^{n+1} \ar[r]^-{\cong} & \cO_K^{n+1} \otimes_{\bZ} \bZ_p \ar[r]^-{\iota_K} & (\bZ^d)^{n+1}\otimes_{\bZ}\bZ_p \ar[r]^-{\cong} & \bZ_p^{(n+1)d} \\
      \fa^u = \Lambda \ar[u]^-{\prod_{\fp \mid p} i_{\fp}} \ar[r]^-{-\otimes_{\bZ}\bZ_p} \ar@{^{(}->}[ur]^-{-\otimes_{\bZ}\bZ_p} & \Lambda_p \ar[rr]^-{\iota_{K, p}} \ar[u]^-{\cup} & & \iota_{K, p}(\Lambda_p)  \ar@{^{(}->}[u]
    }
    \end{aligned}
\end{align}
The two triangles on the left side naturally commute, and the morphism from $\prod_{\fp \mid p} \cO_{K, \fp}^{n+1}$ to $\bZ_p^{(n+1)d}$ is precisely the copy of $\Phi$.
We also denote $\Phi$ for this map.

For the irreducible affine local conditions $\Omega = \lcrc{\Omega_{\fp}^a}_{\fp \mid p}$, we recall that $x \in \fa^u$ satisfies $\Omega$ if and only if its image in $\prod_{\fp \mid p} \cO_{K, \fp}^{n+1}$ is lies in $\prod_{\fp \mid p} \Omega_{\fp}^a$.
Since the diagram commutes, we have
\begin{align*}
    \Phi \lbrb{ \prod_{\fp \mid p} (\Omega_{\fp}^a \cap \fa_\fp^{u}) } \subset \iota_{K, p}(\Lambda_p).
\end{align*}
We denote it by $\Omega_p$, a local condition for $\Lambda = \fa^u$, with an embedding $\Lambda \to \iota_{K, p}(\Lambda_p) \subset \bZ_p^{(n+1)d}$.
Then, $x \in \fa^u$ satisfies $\lcrc{\Omega_{\fp}^a}_{\fp \mid p}$ if and only if it satisfies $\Omega_p$.
By (i) and the definition of $m_p$, and since $\Phi$ preserves the Haar measure, it follows that
\begin{align*}
    \prod_{\fp \mid p} m_{\fp} (\Omega_{\fp}^a \cap \fa_\fp^{u})
    = m_p \lbrb{\prod_{\fp \mid p} (\Omega_{\fp}^a \cap \fa_{\fp}^u )  }
    = m_p \lbrb{\Phi \lbrb{ \prod_{\fp \mid p} (\Omega_{\fp}^a \cap \fa_\fp^{u}) }}
    =m_{p}(\Omega_p).
\end{align*}
This proves (ii).
\end{proof}

For a bounded definable $\Omega_{\infty} \subset K_{\infty}^{n+1}$, there is a natural weight
\begin{align*}
\widetilde{w} \vcentcolon= (w_0, \cdots, w_0, w_1, \cdots, w_1, \cdots, w_n, \cdots, w_n) \in \bZ^{(n+1)d}   
\end{align*}
induced by $w$. We note that $|\widetilde{w}| = d|w|$, which appears in the following propositions.
Following \cite{Phi1, Phi2}, we define
%\footnote{\cite{Phi1, Phi2} introduced the notation $\widetilde{w}$, but used notation $B*_w\Omega_{\infty}$ for our $B*_{\widetilde{w}}\Omega_{\infty}$. }
\begin{align*}
    B*_{\widetilde{w}} \Omega_{\infty} = \lcrc{(t^{w_0}x_0, \cdots, t^{w_0}x_d, t^{w_1}x_{d+1}, \cdots, t^{w_n}x_{(n+1)d}) : 0\leq t \leq B,  x = (x_0, \cdots, x_{(n+1)d}) \in \Omega_{\infty}  }.
\end{align*}
For an irreducible affine local condition $\Omega = \lcrc{\Omega_{\fp}^a}_{\fp \in S}$, we denote by $\omega_{\fp, j}(\Omega)$ the Haar measure of the $j$-th component of $\Omega_{\fp}^a$, which agrees with $\omega_{\fp, j}$ in (\ref{eqn: Omega fp def}).

\begin{proposition}[{\cite[Proposition 3.2.7]{Phi2}}] \label{prop:phi3 327}
Let $u$ and $w$ be $(n+1)$-tuples of positive integers.
Let $\Omega_\infty \subset K_\infty^{n+1}$ be a bounded definable subset, let $B>0$, and let $\Omega = \lcrc{\Omega_{\fp}^a}_{\fp \in S}$ be a set of irreducible local conditions over a finite set $S$. Then we have
\begin{align*}
    &\# \lcrc{ x\in  \fa^{u} \cap (B*_{\widetilde{w}}\Omega_{\infty}) :  i_\fp(x) \in \Omega_\fp^a \textrm{ for all primes } \fp \in S  }
    = \kappa(\fa^u, \Omega) B^{d|w|} + O\lbrb{\epsilon(\fa^u, \Omega) B^{d|w| - w_{\min}}},
\end{align*}
where
\begin{align*}
    &\kappa(\fa^u, \Omega) = \frac{m_{\infty}(\Omega_{\infty})}{m_{\infty}(K_{\infty}^{n+1}/\fa^u)} \prod_{\fp \in S} 
 \frac{m_{\fp}(\Omega_\fp^a \cap \fa_{\fp}^u) }{m_{\fp}(\fa_{\fp}^u)}, \,\,
    \epsilon(\fa^u, \Omega) =  \max_{0 \leq j\leq n} \lcrc{\prod_{\fp \in S} \omega_{\fp, j}^{-1}(\Omega) }\prod_{\fp \in S} 
 \frac{m_{\fp}(\Omega_\fp^a \cap \fa_{\fp}^u ) }{m_{\fp}(\fa_{\fp}^u)}.
\end{align*}
\end{proposition}
\begin{proof}
It suffices to assume that all primes $\fp \in S$ lie above a fixed rational prime $p$. 
Moreover, if there exists a prime $\fp' \mid p$ with $\fp' \notin S$, we set $\Omega_{\fp'}^a = \cO_{K,\fp'}$ and accordingly enlarge $S$ and $\Omega$. 
Since we are counting only integral points, these processes are harmless.
By Lemma  \ref{lem: Omega fp p}, there exists a local condition $\Omega_p$ such that $x \in \fa^u$ satisfies $\Omega_{\fp}^a$ for all $\fp \mid p$ if and only if it satisfies $\Omega_p$, and
\begin{align*}
m_p(\Omega_p) = \prod_{\fp \mid p} m_{\fp}(\Omega_{\fp}^a \cap \fa_{\fp}^u).
\end{align*}

We will use Lemma \ref{lem: Phi2 3.2.4} for $\Lambda = \fa^u$, $\Lambda_{\infty} = K_{\infty}^{n+1}$, its bounded definable subset $\Omega_{\infty}$, and local conditions $\Omega_p$.
We have $|\widetilde{w}| = d|w|$, $\widetilde{w}_{\min} = w_{\min}$, and 
\begin{align*}
    m_p(\Lambda_p) = m_p(\fa^u\otimes_{\bZ} \bZ_p) = m_p\lbrb{\prod_{\fp \mid p} \fa_{\fp}^u} = \prod_{\fp \mid p} m_{\fp}(\fa_\fp^u).
\end{align*}
Therefore, 
\begin{align*}
    \frac{m_p(\Omega_p \cap \Lambda_p)}{m_p(\Lambda_p)} 
    = \prod_{\fp \mid p} \frac{m_{\fp}(\Omega_{\fp}^a \cap \fa_{\fp}^u) }{m_{\fp}(\fa_\fp^u)}
    = \prod_{\substack{\fp \in S }} \frac{m_{\fp}(\Omega_{\fp}^a \cap \fa_{\fp}^u) }{m_{\fp}(\fa_\fp^u)}
\end{align*}
since $\Omega_{\fp'}^a = \cO_{K, \fp'}$ if $\fp' \not\in S$.
\end{proof}

When it is necessary to distinguish between two types of prime ideals, those appearing in local conditions and those denoting general primes, we write $\fp$ for the former and $\fq$ for the latter.\footnote{When local conditions are not involved, we typically use $\fp$ to denote a general prime ideal. We write $p$ for the rational prime below $\fp$, but note that $q = q_{\fp}$ denotes the norm $N_{K/\bQ}(\fp)$, not a rational prime below $\fq$.}
Let $v_{\fq} = (v_{\fq, 0}, \cdots, v_{\fq, n})$ be an $(n+1)$-tuple of non-negative integers satisfying $v_{\fq} = (0, \cdots, 0)$ except finitely many $\fq$, and let
\begin{align*}
    \Lambda_{\fq} = \prod_{j=0}^n \fq^{v_{\fq, j}}\cO_{K, \fq} \subset \cO_{K, \fq}^{n+1}, \quad \textrm{and} \quad
    \Lambda = \bigcap_{\fq} i_{\fq}^{-1}(\Lambda_{\fq}).
\end{align*}
We need to generalize Proposition \ref{prop:phi3 327} to this setting of $\Lambda$.
In this case, $\Lambda$ is a lattice of rank $(n+1)d$, and
\begin{align*}
    \Lambda_p = \Lambda \otimes_{\bZ} \bZ_p = \prod_{\fp \mid p} \Lambda_{\fp}
\end{align*}
%is $\prod_{\fp \mid p} \cO_{K, \fp}$ for all but finitely many $p$. Hence, 
satisfies $m_p(\Lambda_p) = 1$ for all but finitely many $p$.

An element $x \in \Lambda$ satisfies an irreducible local condition $\Omega_{\fp}^a$ defined in (\ref{eqn: Omega fp def}) if and only if $i_{\fp}(x) \in \Lambda_{\fp}$ is in $\Omega_{\fp}^a$, which is equivalent to $i_{\fp}(x_{j}) \equiv a_{\fp, j} \pmod{\fp^{r_{\fp, j}}}$, where $N_{K/\bQ}(\fp^{r_{\fp, j}}) = \omega_{\fp, j}$.
On the other hand, a non-canonical embedding $i_{\Lambda, p} : \Lambda_p \to \bZ_p^{(n+1)d}$, together with the boxes (\ref{eqn: Omega p def}), defines the local condition $\Omega_p$.
More precisely, $x \in \Lambda$ satisfies the local condition $\Omega_p$ if and only if $i_{\Lambda, p}i_p(x) \in \Omega_{p}$. 

\begin{proposition} \label{prop:refinePhi3}
Let $v_{\fq} = (v_{\fq, 0}, \cdots, v_{\fq, n})$ be an $(n+1)$-tuple of non-negative integers satisfying $v_{\fq} = (0, \cdots, 0)$ except finitely many $\fq$, let $B>0$, and let $\Omega_{\infty}$ be a bounded definable subset of $K_{\infty}^{n+1}$.
For
\begin{align*}
    \Lambda = \bigcap_{\fq} i_{\fq}^{-1} \lbrb{\prod_{j=0}^n \fq^{v_{\fq, j}}\cO_{K, \fq}},
\end{align*}
a finite set of irreducible local conditions $\Omega = \lcrc{\Omega_{\fp}^a}_{\fp \in S}$, 
and a fixed vector $v \in K_{\infty}^{n+1}$, we have
\begin{align*}
    \# \lcrc{x \in (v + \Lambda) \cap (B*_{\widetilde{w}}\Omega_{\infty}) : i_{\fp}(x) \in \Omega_\fp^a \textrm{ for all } \fp \in S}
    = \kappa(\Lambda, \Omega) B^{d|w|} + O\lbrb{ \epsilon(\Lambda, \Omega) B^{d|w| - w_{\min}}},
\end{align*}
where
\begin{align*}
    \kappa(\Lambda, \Omega) = \frac{m_{\infty}(\Omega_{\infty})}{m_{\infty}(K_{\infty}^{n+1}/\Lambda)}
    \prod_{\fp \in S} \frac{m_{\fp}(\Omega_{\fp}^a \cap \Lambda_{\fp})}{m_{\fp}(\Lambda_{\fp})},
    \quad
    \epsilon(\Lambda, \Omega) = \max_{0 \leq j \leq  n} \lcrc{\prod_{\fp \in S} \omega_{\fp, j}^{-1}(\Omega ) } \prod_{\fp \in S} \frac{m_{\fp}(\Omega_{\fp}^a \cap \Lambda_{\fp})}{m_{\fp}(\Lambda_{\fp})}.
\end{align*}
The implied constant depends on $\Lambda, \Omega_{\infty}$, and $v$, but not on $q_{\fp}$.
\end{proposition}
\begin{proof}
The diagram (\ref{eqn: Omega_fp to Omega_p}) also works for $\Lambda \subset \cO_{K}^{n+1}$.
Hence, there is
\begin{align*}
    \Omega_p \vcentcolon = \Phi\lbrb{\prod_{\fp \mid p} (\Omega_{\fp}^a \cap \Lambda_{\fp})} \subset i_{\Lambda, p}(\Lambda_p)
\end{align*}
such that $x \in \Lambda$ satisfies $\lcrc{\Omega_{\fp}^a}_{\fp \mid p}$ if and only if $x$ satisfies $\Omega_p$, and
\begin{align*}
    m_p(\Omega_p) = \prod_{\fp \mid p} m_{\fp} (\Omega_{\fp}^a \cap \Lambda_{\fp}).
\end{align*}
We can apply Lemma \ref{lem: Phi2 3.2.4} to $\Lambda$, $\Omega_{\infty}$, and $\Omega_{p}$ since $m_p(\Lambda_p) = 1$ for all but finitely many $p$.
The remainder of the proof proceeds exactly as in Proposition \ref{prop:phi3 327}.
%%%% DO NOT REMOVE BELOW
% Suppose that $v = 0$.
% We apply Lemma \ref{lem: Phi2 3.2.4} to $\Lambda$, $\Lambda_{\infty} \cong K_{\infty}^{n+1}, \Omega_{\infty} \subset K_{\infty}^{n+1}$ bounded definable set, and local condition $\Omega_p$.
% Since the measure $m_p$ on $\Lambda_p \cong \prod_{\fp \mid p} \Lambda_{\fp} \subset \prod_{\fp \mid p} \cO_{K, \fp}^{n+1}$ is defined by the product of Haar measure $m_{\fp}$ on $\cO_{K, \fp}$, we have
% \begin{align*}
%     m_p(\Lambda_p) = m_{p}\lbrb{\prod_{\fp \mid p}\Lambda_{\fp}} = \prod_{\fp \mid p} m_{\fp} (\Lambda_{\fp}).
% \end{align*}
% Therefore, 
% \begin{align*}
%     \frac{m_p(\Omega_p)}{m_p(\Lambda_p)} 
%     = \prod_{\fp \mid p} \frac{ m_{\fp}(\Omega_{\fp}^a \cap \Lambda_{\fp}) }{m_{\fp} (\Lambda_{\fp})}
%     = \prod_{\substack{\fp \in S }} \frac{  m_{\fp}(\Omega_{\fp}^a \cap \Lambda_{\fp}) }{m_{\fp} (\Lambda_{\fp})},
% \end{align*}
% since $\Omega_{\fp}^a = \cO_{K, \fp}$ if $\fp \not\in S$.
% The case of $v \neq 0$ can be deduced by the Chinese remainder theorem, as we did in Lemma \ref{lem: Omega fp p}.
\end{proof}

When $\Lambda$ is explicitly given, we will use $\kappa(\Omega), \epsilon(\Omega)$ instead of $\kappa(\Lambda, \Omega), \epsilon(\Lambda, \Omega)$.

\subsection{Counting points in the weighted projective space with local conditions}
We review some definitions and properties for counting rational points in weighted projective space.
Let $u, w$ be an $(n+1)$-tuple of positive integers, and let $M_{K, 0}$ (resp. $M_{K, \infty}$) be the set of finite (resp. infinite) places of $K$.
For $x \in K^{n+1} \setminus \lcrc{0}$, we define 
\begin{align*}
    \mathfrak{I}_w(x)\vcentcolon=\bigcap_{\substack{\mathfrak{a}\subseteq K \\ x\in\mathfrak{a}^{w_0}\times\cdots\times\mathfrak{a}^{w_n}}}\mathfrak{a}
\end{align*}
where $\mathfrak{a}\subset K$ runs over fractional ideals (cf. \cite[\S 3]{BN}).
We note that $\fI_w(x)$  is characterized by
\begin{align*}
    \fI_w(x)^{-1} = \lcrc{a \in K : a^{w_j}x_j \in \cO_K \textrm{ for } j= 0, \cdots, n}.
\end{align*}
In other words, an element $a \in \fI_w(x)^{-1}$ satisfies $w_j \ord_v(a) + \ord_v(x_j) \geq 0$ for any $v \in M_{K, 0}$ and all $j = 0, \cdots, n$.
Hence, we have a prime factorization
\begin{align*}
    \fI_w(x) = \prod_{v \in M_{K, 0}} \fp_v^{ \min\limits_{0 \leq j \leq n } \left\lfloor \frac{\ord_v(x_j)}{w_j}\right\rfloor},
\end{align*}
where $\fp_v$ is the prime ideal corresponding to $v \in M_{K, 0}$.
For $x_{\fp}\in K_{\fp}^{n+1} \setminus \lcrc{0},$ we have defined
\begin{align*}
    &\epsilon_{w, \fp}(x_{\fp}) \vcentcolon= \min_{0 \leq j \leq n} \left\lfloor \frac{\ord_{\fp}(x_{\fp, j})}{w_j} \right\rfloor.
\end{align*}
The local size function $\fI_{w, \fp}$ is defined by 
\begin{align*}
    \fI_{w, \fp}(x_{\fp}) \vcentcolon=\fp^{\epsilon_{w, \fp}(x_{\fp})}\cO_{K, \fp}.
\end{align*}

For $f : \cP(u) \to \cP(w)$, we define the reduced degree $e(f)$ following \cite[Definition 4.2]{BN}. In the case of $f = \phi_{\Gamma}$, this coincides with the definition given earlier in (\ref{eqn:def eGam}).
The argument of \cite[Lemma 4.1]{BN} says that we may write
\begin{align*}
    \xymatrix{f:\mathcal{P}(u) \ar[r] & \mathcal{P}(w) & [x_0,\cdots,x_n] \ar@{|->}[r] & [f_0(x),\cdots,f_n(x)]}
\end{align*}
where $f_0,\cdots,f_n\in K[x_0,\cdots,x_n]$ are weighted homogeneous polynomials of degree $e(f)w_j$, and the ideal $\sqrt{(f_0,\cdots,f_n)}$ contains $(x_0,\cdots,x_n)$.
For $x \in K^{n+1}$, we define a defect ideal as
\begin{align*}
    \delta_f(x) \vcentcolon= \fI_{w}(f(x)) \fI_{u}(x)^{-e(f)}.
\end{align*}
By \cite[Lemma 1.6]{BM} and $\deg f_j = e(f)w_j$, we have
\begin{align} \label{eqn: weighted action fI f delta}
    \fI_w(\lambda*_wx) = \lambda \fI_w(x), \qquad
    f(\lambda*_u x) = \lambda^{e(f)}*_wf(x), \qquad \delta_f(\lambda*_u x) = \delta_f(x).
\end{align}
Hence, $\delta_f$ is a well-defined fractional ideal when $x \in \cP(u)(K)$ (cf. \cite[Remark 2.3]{BM}).
% Furthermore, we can choose the representative of $x$ by $(x_0, \cdots, x_n)$ so that $\epsilon_{u, \fp}(x_0, \cdots, x_n) = 0$ and $x_j \in \cO_K$. 
% This implies that $\fI_{u, \fp}(x_{\fp}) = \cO_{K, \fp}$.
% If $f_j$ has integral coefficients, then $f(x)$ is also integral so that $\fI_w(f(x))$ is integral.
% Therefore, $\delta_f(x)$ is an integral ideal for each $x \in \cP(u)(K)$.

We denote the set of defects by
\begin{align*}
    \mathfrak{D}_f \vcentcolon = \lcrc{\delta_f(x) : x \in \cP(u)(K)}.
\end{align*}
If $\mathfrak{D}_f$ is finite, we say that $f$ has a finite defect.
We denote by $S_f$ the set of prime ideals dividing some ideal in $\fD_f$.
Recall that the height function $H_w = H_{w, K}$ satisfies
\begin{align*}
    H_{w}(x) = \frac{H_{w, \infty}(x)}{N_{K/\bQ}(\fI_{w}(x))}, \qquad
    \textrm{where} \qquad 
    H_{w, \infty}(x)\vcentcolon= \prod_{v \in M_{K, \infty}} \max_{0 \leq j \leq n}|x_j|_{v}^{\frac{1}{w_j}}
\end{align*}
(cf. \cite[Definition 1.7]{BM}).
Let
\begin{align*}
    V_\mathfrak{p}^{\mathfrak{a},\mathfrak{d}}\vcentcolon=\left\{x_\mathfrak{p}\in K_\mathfrak{p}^{n+1}\setminus\{0\} : \epsilon_{u,\mathfrak{p}}(x_\mathfrak{p})\geq\mathrm{ord}_\mathfrak{p}(\mathfrak{a}),\ \mathrm{ord}_\mathfrak{p}(\mathfrak{I}_{w,\mathfrak{p}}(f(x_\mathfrak{p}))) -\mathrm{ord}_\mathfrak{p}(\mathfrak{I}_{u,\mathfrak{p}}(x_\mathfrak{p}))^{e(f)}) \geq \mathrm{ord}_\mathfrak{p}(\mathfrak{d})\right\}.
\end{align*}
Then
\begin{align*}
    V_{\fp}^{\fa, \fd} = \lcrc{x_{\fp} \in K_{\fp}^{n+1} \setminus \lcrc{0} :
    \epsilon_{u, \fp}(x_{\fp}) \geq \ord_{\fp}(\fa), \epsilon_{w, \fp}(f(x_{\fp})) \geq e(f)\epsilon_{u, \fp}(x_{\fp}) + \ord_{\fp}(\fd)}.
\end{align*}
We also define the global object
\begin{align*}
    V^{\fa, \fd} \vcentcolon = \bigcap_{\fp} i_{\fp}^{-1}(V_{\fp}^{\fa, \fd}).
\end{align*}

For a prime ideal $\fp$ and $t \geq 0$, we define
\begin{align*}
    V_{\fp, t}^{\fa, \fd} \vcentcolon = \lcrc{x_{\fp} \in \cO_{K, \fp}^{n+1} \setminus \lcrc{0} : \epsilon_{u, \fp}(x_{\fp}) = \ord_{\fp}(\fa) + t, \,
    \epsilon_{w, \fp}(f(x_{\fp})) \geq e(f) \epsilon_{u, \fp}(x_{\fp}) + \ord_{\fp}(\fd)  }.
\end{align*}
Then, we have
\begin{align*}
    V_{\fp}^{\fa, \fd} = \bigsqcup_{t \geq 0} V_{\fp, t}^{\fa, \fd}.
\end{align*}

\begin{lemma} \label{lem: congruence equiv cond}
Let $R$ be a ring and $I$ an ideal of $R$. For $V \subset R$, the following are equivalent:\\
(i) The set $V$ is determined by the congruence conditions modulo $I$. \\
(ii) There is a subset $\fS$ of $R/I$ satisfying $V = \rho^{-1}(\fS)$, where $\rho$ is the projection $R \to R/I$. \\
(iii) There is a subset $X$ of $R$ satisfying $V = \bigsqcup_{v \in X} (v + I)$ and $\# X = \# \fS$. \\
(iv) The set $V$ is $I$-stable under addition.
\end{lemma}
\begin{proof}
This is straightforward.
\end{proof}

Let $*$ denote the componentwise product on $(n+1)$-tuples of ideals.
In other words,
\begin{align*}
    \lbrb{\fa_0 \times \cdots \times \fa_n } * \lbrb{\fb_0 \times \cdots \times \fb_n }
    = (\fa_0\fb_0 \times \cdots \times \fa_n\fb_n)
\end{align*}
so that $(\fa \fb)^{w} = \fa^w*\fb^w$ and $\fa*_u \fb^w = \fa^u*\fb^w$.

\begin{lemma} \label{lem: concrete fmfa}
Let $f : \cP(u) \to \cP(w)$ be a morphism.\\
(i) For integral ideals $\fa$ and $\fd$, 
\begin{align*}
    V^{\fa, \fd} = \lcrc{x \in \cO_K^{n+1}\setminus \lcrc{0} : \fI_u(x) \subset \fa, \delta_f(x) \subset \fd} = \lcrc{x \in \fa^{u} \setminus \lcrc{0} : \delta_f(x) \subset \fd}.
\end{align*}
\noindent
(ii) 
Let $\widehat{w} = \widehat{w}(f, t)$ be an $(n+1)$-tuple of positive integers whose definition is given in (\ref{eqn: w hat def}) and let
\begin{align*}
    \fm_{\fp, t}^{\fa, \fd} = \fa_{\fp}^u*\lbrb{\fp^{\widehat{w}_0}\cO_{K, \fp} \times \cdots \times \fp^{\widehat{w}_n}\cO_{K, \fp}}.
\end{align*}
Then, there is a finite set $X_{\fp, t}^{\fa, \fd}$ such that
\begin{align*}
    V_{\fp, t}^{\fa, \fd} = \bigsqcup_{v \in X_{\fp, t}^{\fa, \fd}} (v + \fm_{\fp, t}^{\fa, \fd}).
\end{align*}
\noindent 
(iii) 
%In the above choices of $X_{\fp, t}^{\fa, \fd}$ and $\fm_{\fp,t}^{\fa, \fd}$, 
We have $\# X_{\fp, t}^{\fa, \fd} = \# X_{\fp, t+1}^{\fa, \fd}$ for $t \geq 0$.
\end{lemma}
\begin{proof}
(i)
For each $x\in K^{n+1}\setminus\{0\}$, we have
\begin{align*}
    \mathfrak{I}_u(x)=\prod_{v\in M_{K,0}}\mathfrak{p}_v^{\epsilon_{u,\mathfrak{p}}(x)},
\end{align*}
so $\mathfrak{I}_u(x)\subseteq\mathfrak{a}$ if and only if $\epsilon_{u,\mathfrak{p}}(x)\geq\mathrm{ord}_\mathfrak{p}(\mathfrak{a})$ for every $\mathfrak{p}$.
This is also equivalent to $x \in \fa^{u}$, because $\fI_{u}(x) \subset \fa$ implies $\ord_{\fp}(x_{j}) \geq u_j \ord_{\fp}(\fa)$ for all $j = 0, \cdots, n$.
Similarly, $\delta_f(x) \supset \fd$ if and only if $\ord_{\fp}(\delta_f(x)) \geq \ord_{\fp}(\fd)$ for every $\fp$, which is equivalent to
\begin{align*}
    \epsilon_{w, \fp}(f(x_{\fp})) \geq e(f)\epsilon_{u, \fp}(x_{\fp}) + \ord_{\fp}(\fd)
\end{align*}
for every $\fp$, when $x_{\fp} = i_{\fp}(x)$. This gives (i).

(ii) 
The defect condition on $V_{\fp, t}^{\fa, \fd}$ is
\begin{align*}
    \min_{0 \leq j \leq n} \left\lfloor \frac{\ord_{\fp}(f(x_{\fp})_j)}{w_j} \right\rfloor \geq  e(f)\min_{0 \leq j \leq n} \left\lfloor \frac{\ord_{\fp}(x_{\fp,j})}{u_j} \right\rfloor+ \ord_{\fp}(\fd),
\end{align*}
which can be rewritten as 
\begin{align*}
    \ord_{\fp}(f(x_{\fp})_j) & \geq  \lbrb{e(f)\epsilon_{u, \fp}(x_{\fp}) + \ord_{\fp}(\fd)}w_j \qquad \textrm{ for all } 0 \leq j \leq n.
\end{align*}
Equivalently, this is expressed by the congruence \begin{align} \label{eqn: f residue cond}
\begin{aligned}
    f(x_{\fp})_j &\equiv 0 \pmod{ \lbrb{ \fI_{u, \fp}(x_{\fp})^{e(f)} \fd_{\fp} }^{w_j} } 
    \qquad \textrm{for all } 0\leq j \leq n.
\end{aligned}
\end{align}
Since each $f_j$ is a weighted homogeneous polynomial, $f(x_{\fp})_j$ modulo $\lbrb{\fI_{u,\fp}(x_{\fp})^{e(f)}\fd_{\fp}}^{w_j}$ is determined by $x_{\fp, j}$  modulo $\lbrb{\fI_{u,\fp}(x_{\fp})^{e(f)}\fd_{\fp}}^{w_j}$.
By Lemma \ref{lem: congruence equiv cond}, there is a finite set $\fS_{\fp}^{f,j} \subset \cO_{K, \fp}/\lbrb{\fI_{u,\fp}(x_{\fp})^{e(f)}\fd_{\fp}}^{w_j}$ such that (\ref{eqn: f residue cond}) holds if and only if $x_{\fp, j}$ modulo $\lbrb{\fI_{u,\fp}(x_{\fp})^{e(f)}\fd_{\fp}}^{w_j}$ is in $\fS_{\fp}^{f,j}$.

We first assume that $\fp \nmid \fa$.
By (i), if $x \in V_{\fp, t}^{\fa, \fd}$, then  $\fI_{u, \fp}(x_{\fp}) = \fp^t\cO_{K, \fp}$.
Since $\fI_{u, \fp}(x_{\fp}) = \fp^t \cO_{K, \fp}$ is equivalent to
\begin{align*}
    x_{\fp} \in (\fp^t \cO_{K, \fp})^u \setminus (\fp^{t+1}\cO_{K, \fp})^u,
\end{align*}
the condition $\fI_{u, \fp}(x_{\fp}) = \fp^t \cO_{K, \fp}$ is determined by $x_{\fp, j}$ modulo $\fp^{(t+1)u_j}\cO_{K, \fp}$. 
Therefore, $x_{\fp} \in \cO_{K, \fp}^{n+1}$ is also in $V_{\fp, t}^{\fa, \fd}$ if and only if
\begin{itemize}
    \item $x_{\fp, j} \equiv 0 \pmod{\fp^{tu_j}}$ and $x_{\fp, j} \not\equiv 0 \pmod{\fp^{(t+1)u_j}}$ for each $j$, and
    \item $x_{\fp, j}$ modulo $\lbrb{\fI_{u,\fp}(x_{\fp})^{e(f)}\fd_{\fp}}^{w_j}$ is in $\fS_{\fp}^{f,j}$.
\end{itemize}
Therefore for
\begin{align}
    \widehat{w}_j &\vcentcolon= \max \lcrc{ (\ord_{\fp}(\fd) + te(f))w_j, (t+1)u_j}, \label{eqn: w hat def}
    \\ \nonumber
    \fm_{\fp, t}^{\fa, \fd} & \vcentcolon= (\fp\cO_{K, \fp})^{\widehat{w}} = \fp^{\widehat{w}_0}\cO_{K, \fp} \times \cdots \times \fp^{\widehat{w}_n}\cO_{K, \fp},
\end{align}
there is a finite set $\fS_{\fp, t}^{\fa, \fd} \subset \cO_{K, \fp}^{n+1}/\fm_{\fp, t}^{\fa, \fd}$ such that $x_{\fp} \in V_{\fp, t}^{\fa, \fd}$ if and only if $x_{\fp}$ modulo $\fm_{\fp, t}^{\fa, \fd}$ is in $\fS_{\fp, t}^{\fa, \fd}$.
By Lemma \ref{lem: congruence equiv cond}, we obtain the result when $\fp \nmid \fa$.
We note that the set $X_{\fp, t}^{\fa, \fd}$ defined by Lemma \ref{lem: congruence equiv cond} is stable under $u$-weighted $\cO_{K, \fp}^\times$-action, since $V_{\fp, t}^{\fa, \fd}$ is also stable by (\ref{eqn: weighted action fI f delta}) and (i).

For a prime $\fp$, we choose a uniformizer $\pi_{\fp}$. Then,
\begin{align*}
    \fa_{\fp}^u \to (\fp \fa)_{\fp}^u \qquad x \mapsto \pi_{\fp}*_u x
\end{align*}
is a bijection whose inverses is $x \mapsto \pi_{\fp}^{-1}*_u x$.
By (\ref{eqn: weighted action fI f delta}),
\begin{align*}
    \epsilon_{w, \fp}(f(\pi_{\fp}*_ux_{\fp})) - e(f)\epsilon_{u, \fp}(\pi_\fp *_u x_{\fp}) 
    % &= \ord_{\fp}(\fI_{w, \fp}(f(\pi_{\fp}*_ux_{\fp}))) - e(f) \ord_{\fp}(\fI_{u, \fp}(\pi_{\fp}*_u x_{\fp})) 
    % \\
    % &=\ord_{\fp}(\fI_{w, \fp}(\pi_{\fp}^{e(f)}*_wf(x_{\fp}))) - e(f) \ord_{\fp}(\pi_{\fp}\fI_{u, \fp}(x_{\fp})) \\
    % %&=\ord_{\fp}(\pi_{\fp}^{e(f)}\fI_{w, \fp}(f(x_{\fp}))) - e(f) \ord_{\fp}(\pi_{\fp}\fI_{u, \fp}(x_{\fp})) \\
    % &=\ord_{\fp}(\fI_{w, \fp}(f(x_{\fp}))) - e(f) \ord_{\fp}(\fI_{u, \fp}(x_{\fp})) \\
    & = \epsilon_{w, \fp}(f(x_{\fp})) - e(f) \epsilon_{u, \fp}(x_{\fp}).
\end{align*}
So if
\begin{align*}
    \ord_{\fp}(\fI_{w, \fp}(f(x_{\fp}))) - e(f) \ord_{\fp}(\fI_{u, \fp}(x_{\fp})) \geq \ord_{\fp}(\fd)
\end{align*}
then
\begin{align*}
    \ord_{\fp}(\fI_{w, \fp}(f(\pi_{\fp}*_ux_{\fp}))) - e(f) \ord_{\fp}(\fI_{u, \fp}(\pi_{\fp}*_u x_{\fp})) \geq \ord_{\fp}(\fd).
\end{align*}
Therefore, we have $\pi_{\fp} *_u V_{\fp, t}^{\fa, \fd} \subset V_{\fp, t}^{\fp \fa, \fd}$.
Considering the inverse, we have $\pi_{\fp}*_uV_{\fp, t}^{\fa, \fd} = V_{\fp, t}^{\fp \fa, \fd}$.

By the $(\fp \nmid \fa)$-case, there is a set $\fS_{\fp, t}^{\fa, \fd}$ such that $\rho(V_{\fp, t}^{\fa, \fd}) = \fS_{\fp, t}^{\fa, \fd}$ where $\rho : \cO_{K, \fp}^{n+1} \to \cO_{K, \fp}^{n+1}/(\fp\cO_{K, \fp})^{\widehat{w}}$.
So in this case, we have a commutative diagram
\begin{align*}
    \xymatrix{
    0 \ar[r] & V_{\fp, t}^{\fa, \fd} \ar[r] \ar[d]^-{\pi_\fp*_u-} & \cO_{K,\fp}^{n+1} \ar[r] \ar[d]^-{\pi_\fp*_u-}& \fS_{\fp, t}^{\fa, \fd} \ar[r]\ar[d]^-{\pi_\fp*_u-} & 0 \\
    0 \ar[r] & V_{\fp, t}^{\fp\fa, \fd} \ar[r] & (\fp\cO_{K, \fp})^u \ar[r] & \pi_{\fp}*_u\fS_{ \fp , t}^{\fa, \fd} \ar[r] & 0
    }
\end{align*}
Therefore, $V_{\fp, t}^{\fp\fa, \fd}$ is determined by a congruence condition modulo $(\fp\cO_{K, \fp})^u*(\fp\cO_{K, \fp})^{\widehat{w}}$.
Repeating this procedure yields (ii).

(iii) We note that $V_{\fp, t}^{\fp\fa, \fd} = V_{\fp, t+1}^{\fa, \fd}$.
% \begin{align*}
%     V_{\fp, t}^{\fp\fa, \fd} 
%     &= \lcrc{x_{\fp} \in \cO_{K, \fp}^{n+1} : 
%     \epsilon_{u, \fp}(x_{\fp}) = \ord_{\fp}(\fp \fa) + t, 
%     \epsilon_{w, \fp}(f(x_{\fp})) \geq e(f) \epsilon_{u, \fp}(x_{\fp}) + \ord_{\fp}(\fd)  } \\
%     &= \lcrc{x_{\fp} \in \cO_{K, \fp}^{n+1} : 
%     \epsilon_{u, \fp}(x_{\fp}) = \ord_{\fp}(\fa) + t + 1, 
%     \epsilon_{w, \fp}(f(x_{\fp})) \geq e(f) \epsilon_{u, \fp}(x_{\fp}) + \ord_{\fp}(\fd)  } \\
%     & = V_{\fp, t+1}^{\fa, \fd}.
% \end{align*}
In the above construction in the proof of (ii), we can choose $X_{\fp, t + 1}^{\fa, \fd}$ as $\pi_{\fp}*_uX_{\fp, t}^{\fa, \fd}$ so that $\# X_{\fp, t}^{\fa, \fd} = \# X_{\fp, t+1}^{\fa, \fd}$.
\end{proof}

Since $V_{\fp, t}^{\fa, \fd}$ is stable under $u$-weighted $\cO_{K, \fp}^\times$-action, $\fp *_u V_{\fp, t}^{\fa, \fd}$ make sense and $\fp *_u V_{\fp, t}^{\fa, \fd} = V_{\fp, t}^{\fp\fa, \fd} = V_{\fp, t+1}^{\fa, \fd}$.
Therefore, we also have $\fp*_u V_{\fp}^{\fa, \fd} = V_{\fp}^{\fp\fa, \fd}$, which will be used later.

In a later discussion, we need $\widehat{w}_j$ to be determined by $w_j$, rather than $u_j$.
Therefore, we require the following technical condition on $f$:
\begin{align} \label{eqn: widehat w condition}
{\textrm{for any } \fp \mid \fd,}  \qquad 
    (\ord_{\fp}(\fd) + te(f))w_j \geq (t+1)u_j.
\end{align}
We emphasize that this is not a restrictive assumption. For example, if $w_j \geq u_j$ after reordering then (\ref{eqn: widehat w condition}) holds since $e(f)w_j \geq u_j$ by \cite[Proposition 2.4.8]{Phi1}.

\medskip

The notations $V_{\fp}^{\fa, \fd}$ and $\fm_{\fp}^{\fa, \fd}$ are from \cite{BM}, and the idea of concrete computation of $\fm_{\fp}^{\fa, \fd}$ is given in \cite{Phi1}, whose notation is $\Lambda_{\fp}(\fa)$ in \cite{Phi1}.
However, we should warn that $V_{\fp}^{\fa, \fd}$ and $\fm_{\fp}^{\fa, \fd}$ in this paper are \emph{different} from those of \cite{BM, Phi1}.
The following remarks explain the reasons.
We recall that the condition $\ord_p(n) = k$ on integers $n$ is determined by the congruence modulo $p^{k+1}$, not $p^k$.

\begin{remark} \label{rmk: BM Vpad def}
In \cite{BM}, the authors introduced
\begin{align*}
    &R_{\fp}^{\fa} \vcentcolon= \lcrc{x_{\fp} \in K_{\fp}^{n+1} \setminus \lcrc{0} : \epsilon_{u, \fp}(x_{\fp}) = \ord_{\fp}(\fa) }, \\
    &R_{\fp}^{\fa, \fd} \vcentcolon= \lcrc{x_{\fp} \in K_{\fp}^{n+1} \setminus \lcrc{0} : \epsilon_{u, \fp}(x_{\fp}) =\ord_{\fp}(\fa), \ord_{\fp}(\fI_{w, \fp}(f(x_{\fp}))) -  \ord_{\fp}(\fI_{u, \fp}(x_{\fp})^{e(f)}) = \ord_{\fp}(\fd) }.
\end{align*}
\cite[Lemma 2.8]{BM} claims that $R_{\fp}^{\fa, \fd}$ is determined by a congruence condition modulo $\fm_{\fp}^{\fa, \fd}$ and we can choose $\fm_{\fp}^{\fa, \fd}$ by $\fa_{\fp}^{u}$ if $\ord_{\fp}(\fd) = 0$.
Here is a counterexample to the last statement concerning the case $\ord_{\fp}(\fd) = 0$:
Suppose that $f = \mathrm{id}$ so that $\fD_f = \lcrc{\cO_K}$ and $\ord_{\fp}(\fd) = 0$.
Then, we have $R_{\fp}^{\fa, \fd}=R_{\fp}^{\fa} = \lcrc{x \in K_{\fp}^{n+1} \setminus \lcrc{0} : \epsilon_{u, \fp}(x) = \ord_{\fp}(\fa)}$. 
Since
\begin{align*}
    \min_{0 \leq j \leq n} \left\lfloor \frac{\ord_{\fp}(x_j)}{u_j} \right\rfloor = \ord_{\fp}(\fa),
\end{align*}
$\epsilon_{u, \fp}(x) = \ord_{\fp}(\fa)$ is equivalent to 
``$\ord_{\fp}(x_j) \geq \ord_{\fp}(\fa) u_j$ for all $0 \leq j \leq n$ but at least one of $j$ satisfies $\ord_{\fp}(x_j) < (\ord_{\fp}(\fa)+ 1)u_j$.''
This is equivalent to ``$x_j \in \fp^{\ord_{\fp}(\fa)u_j}$ for all $0 \leq j \leq n$ but at least one of $j$ satisfies $x_j \not\in \fp^{(\ord_{\fp}(\fa)+1)u_j}$.''
This is determined by the congruence condition modulo $\fp^{(\ord_{\fp}(\fa)+1)u_j}$, not $\fp^{\ord_{\fp}(\fa)u_j}$!
Hence, $R_{\fp}^{\fa, \fd}$ is determined by $(\fp^{\ord_{\fp}(\fa)+1})^{u} = (\fp\fa_{\fp})^{u}$, not $\fa_{\fp}^{u}$.
\end{remark}

We suggest considering the condition $\epsilon_{u, \fp}(x_{\fp}) \geq \ord_{\fp}(\fa)$, instead of the equality in both $R_{\fp}^{\fa}$ and $R_{\fp}^{\fa, \fd}$.
If one follows our suggestion, then $R^{\fa}_{\fp}$ is simply $\fa_{\fp}^{u}$, and the notation becomes unnecessary.

% \begin{remark} \label{rmk: Bruin cpt}
% Also, the compactness argument of \cite[Lemma 2.8]{BM} appears to require refinement.
% Following the argument, one may choose the open cover as $\lcrc{\fa^w}$, which forces $s_i = 0$ even when $\ord_{\fp}(\fd) \neq 0$ in the proof of \cite[Lemma 2.8]{BM}.
% \end{remark}

\begin{remark} \label{rmk: Phi1 mfad}
For the construction of $\fm_{\fp}^{\fa, \fd}$, we basically followed the proof of \cite[Lemma 4.1.2]{Phi1} with some modifications.
First, the condition
\begin{align*}
    \min_{0 \leq j \leq n} \left\lfloor \frac{\ord_{\fp}(f(x_{\fp})_j)}{w_j} \right\rfloor = e(f)\min_{0 \leq j \leq n} \left\lfloor \frac{\ord_{\fp}(x_{\fp,j})}{u_j} \right\rfloor+ \ord_{\fp}(\fd),
\end{align*}
is a congruence condition modulo $\lbrb{\fa_{\fp}^{e(f)}\fp^{\ord_{\fp}(\fd)+1}\cO_{K, \fp}}^{w_j}$, not $\lbrb{\fa_{\fp}^{e(f)}\fp^{\ord_{\fp}(\fd)}\cO_{K, \fp}}^{w_j}$.
Second, we replace the $\fD_f$-condition on $\Lambda_{\fp}(\fa)$ in \cite[Lemma 4.1.2]{Phi1} by $\fp \mid \fd$.
A more delicate point is that the residue class condition on $f(x_i)$ may be modulo $(\fI_{w'}(x)^{e(f)}\fd_{\fp})^{w_i}$, not $(\fa_{\fp}^{e(f)}\fd_{\fp})^{w_i}$ following the notations of \cite{Phi1}.
\end{remark}

We also use the condition 
\begin{align*}
    \epsilon_{w, \fp}(f(x_{\fp})) \geq e(f)\epsilon_{u, \fp}(x_{\fp}) + \ord_{\fp}(\fd),
\end{align*}
on defects instead of equality.
Because of this replacement, we need one more M\"obius inversion arguments later (see (\ref{eqn: M'=sum M}), (\ref{eqn: M''= sum M}) and (\ref{eqn: mu M' form}) and compare with \cite[(2)]{BM}, \cite[p.26]{Phi1} ).

\begin{remark} \label{rmk: both compact}
Following our notation, the main difficulty is that $V_{\fp}^{\fa, \fd}$ is neither compact nor a union of translates of a lattice.
Since $V_{\fp, t}^{\fa, \fd}$ is a union of translates of a lattice, we approximate $V_{\fp}^{\fa, \fd}$ by such a union, which is one of the main differences between our paper and \cite{BM, Phi1}.
\end{remark}

One can show that the closure of $V_\mathfrak{p}^{\mathfrak{a},\mathfrak{d}}$ in $K_\mathfrak{p}^{n+1}$ is $V_\mathfrak{p}^{\mathfrak{a},\mathfrak{d}}\cup\{0\}$ and this is compact.
Here is an example.

\begin{example}
Let $\phi = \id_{\cP(1, 1)}$ so that the defect is trivial. Then,
\begin{align*}
    V_{\fp, t}^{\fa, \cO_K} \cup \lcrc{0} = \lcrc{ x_{\fp} \in \cO_{K, \fp}^2 : \fI_{(1,1), \fp}(x_{\fp}) = \fa_{\fp} \cdot \fp^t \cO_{K, \fp} }.
\end{align*}
Hence,
\begin{align*}
    V_{\fp, t}^{\fa, \cO_K} = \lbrb{\fp^{\ord_{\fp}(\fa) + t}\cO_{K, \fp} \times \fp^{\ord_{\fp}(\fa) + t}\cO_{K, \fp} }
     \setminus \lbrb{\fp^{\ord_{\fp}(\fa) + t+1}\cO_{K, \fp} \times \fp^{\ord_{\fp}(\fa) + t+1}\cO_{K, \fp}}
\end{align*}
and
\begin{align*}
\bigsqcup_{t \geq 0} V_{\fp, t}^{\fa, \cO_K} = \lbrb{\fa_{\fp}\times \fa_{\fp}} \setminus \lcrc{0},
\qquad 
    V_{\fp}^{\fa, \cO_K} \cup \lcrc{0} = \fa_{\fp} \times \fa_{\fp}.
\end{align*}
So $V_{\fp}^{\fa, \cO_K}\cup \lcrc{0} $ is compact.
\end{example}

% \begin{lemma} \label{lem: V fp t compact}
% The closure of $V_\mathfrak{p}^{\mathfrak{a},\mathfrak{d}}$ in $K_\mathfrak{p}^{n+1}$ is $V_\mathfrak{p}^{\mathfrak{a},\mathfrak{d}}\cup\{0\}$ and this is compact.
% \end{lemma}
% \begin{proof}
%     Note that $V_\mathfrak{p}^{\mathfrak{a},\mathfrak{d}}$ is not closed because it has a limit point $0\in K_\mathfrak{p}^{n+1}\setminus V_\mathfrak{p}^{\mathfrak{a},\mathfrak{d}}$. Let $(v_i)_{i\geq1}$ be a sequence in $V_\mathfrak{p}^{\mathfrak{a},\mathfrak{d}}$. If there is a subsequence $(v_{i_\lambda})_{\lambda\geq1}$ with $|v_{i_\lambda}|_\mathfrak{p}\rightarrow0$, then it converges to $0\in K_\mathfrak{p}^{n+1}$ because $K_\mathfrak{p}$ is complete. Otherwise, there is a finite $t_\fp$ such that
%     \begin{align*}
%         \{v_i\}_{i\geq1}\subseteq\bigsqcup_{0\leq t\leq t_\fp}V_{\mathfrak{p},t}^{\mathfrak{a},\mathfrak{d}}.
%     \end{align*}
%     Since $X_{\mathfrak{p},0}^{\mathfrak{a},\mathfrak{d}}$ is finite, each $V_{\mathfrak{p},t}^{\mathfrak{a},\mathfrak{d}}$ is a finite union of closed balls in $K_\mathfrak{p}^{n+1}$. Consequently, the union is compact and hence we have a subsequence of $(v_i)_{i\geq1}$ converging in the union. Therefore, $V_\mathfrak{p}^{\mathfrak{a},\mathfrak{d}}\cup\{0\}$ is a compact set containing $V_\mathfrak{p}^{\mathfrak{a},\mathfrak{d}}$.
% \end{proof}

Let $B$ be a real number, and let $t_{\fp} = t_{\fp}(B)$ be the maximal integer satisfying $q_{\fp}^{t_{\fp}} \leq B$.
When $\fp \mid \fd$, we define
\begin{align*}
    V_{\fp}^{\fa, \fd}(B) \vcentcolon = \bigsqcup_{0 \leq t \leq t_{\fp}} V_{\fp, t}^{\fa, \fd}.
\end{align*}

\begin{lemma} \label{lem: V pad lattice}
(i) If $\fp \mid \fd$, $V_{\fp}^{\fa, \fd}(B)$ is a union of translates of $\fm_{\fp, t_{\fp}}^{\fa, \fd}$ and it satisfies
\begin{align*}
V_{\fp}^{\fa, \fd} \setminus V_{\fp}^{\fa, \fd}(B) \subset (\fp^{t_{\fp}+1}\cO_{K, \fp})^u \quad \textrm{and} \quad
    m_{\fp}(V_{\fp}^{\fa, \fd} \setminus V_{\fp}^{\fa, \fd}(B)) 
    \ll B^{-|u|}.
\end{align*}
(ii) If $\fp \nmid \fd$, we have $V_{\fp}^{\fa, \fd} \cup \lcrc{0} = \fa_{\fp}^u$.
\end{lemma}
\begin{proof}
(i) By the definition and Lemma \ref{lem: concrete fmfa} (ii),
\begin{align*}
    V_{\fp}^{\fa, \fd}(B) = \bigsqcup_{0 \leq t \leq t_{\fp}} V_{\fp, t}^{\fa, \fd} = \bigsqcup_{0 \leq t \leq t_{\fp}} \bigsqcup_{v \in X_{\fp, t}^{\fa, \fd}}(v + \fm_{\fp, t}^{\fa, \fd}).
\end{align*}
Since $v + \fm_{\fp,t}^{\fa, \fd}$ is a union of translates of $\fm_{\fp, t'}^{\fa, \fd}$ for $t' \geq t$, the first part follows.

Since $X_{\fp, t+1}^{\fa, \fd} = \pi_{\fp}*_u X_{\fp, t}^{\fa, \fd}$, $v \in X_{\fp, t}^{\fa, \fd}$ is also in $(\fp^t\cO_{K, \fp})^u$.
By Lemma \ref{lem: concrete fmfa} (ii), $\fm_{\fp, t}^{\fa, \fd} \subset (\fp^{t+1}\cO_{K, \fp})^u$.
Hence, 
\begin{align*}
    \bigsqcup_{v \in X_{\fp, t}^{\fa, \fd}}(v + \fm_{\fp, t}^{\fa, \fd})
    \subset (\fp^t\cO_{K, \fp})^u.
\end{align*}
Therefore,
\begin{align*}
    m_{\fp}(V_{\fp}^{\fa, \fd} \setminus V_{\fp}^{\fa, \fd}(B))
    &= m_{\fp}\lbrb{ \bigsqcup_{t > t_{\fp}} \bigsqcup_{v \in X_{\fp, t}^{\fa, \fd}}(v + \fm_{\fp, t}^{\fa, \fd}) }
    = \sum_{t > t_{\fp}} m_{\fp}\lbrb{ (\fp^t\cO_{K, \fp})^u } 
    = \sum_{t > t_{\fp}} q_{\fp}^{-t|u|} \leq q_{\fp}^{-t_{\fp}|u|}.
\end{align*}

(ii) If $\fp \nmid \fd$, there is no defect condition.
We have
\begin{align*}
    V_{\fp}^{\fa, \fd} \cup \lcrc{0} = \lbrb{\bigsqcup_{t \geq 0} V_{\fp, t}^{\fa, \fd}} \cup \lcrc{0} 
    = \lcrc{x \in \cO_{K, \fp}^{n+1} : \epsilon_{u, \fp}(x_{\fp}) \geq \ord_{\fp}(\fa)  }
    = \fa_{\fp}^u
\end{align*}
by the same argument of the previous example.
\end{proof}
 
Motivated by Lemma \ref{lem: V pad lattice}, let
\begin{align} \label{eqn: def fm_fp^fa^fd V_fp^fa^fd}
    \fm_{\fp}^{\fa, \fd}(B) \vcentcolon = 
    \left\{
    \begin{array}{ll}
    \fm_{\fp, t_\fp}^{\fa, \fd}     &  \textrm{ if } \fp \mid \fd, \\
    \fa_{\fp}^u     & \textrm{ if } \fp \nmid \fd,
    \end{array}
    \right. \qquad 
    V_{\fp}^{\fa, \fd}(B) \vcentcolon = 
    \left\{
    \begin{array}{ll}
    \bigsqcup_{0 \leq t \leq t_\fp} V_{\fp, t}^{\fa, \fd}     &  \textrm{ if } \fp \mid \fd, \\
    V_{\fp}^{\fa, \fd} \cup \lcrc{0} = \fa_{\fp}^u     & \textrm{ if } \fp \nmid \fd.
    \end{array}
    \right.
\end{align}
If $\fp \mid \fd$, let $X_{\fp}^{\fa, \fd}(B)$ be the finite set satisfying
\begin{align*}
    V_{\fp}^{\fa, \fd}(B) = \bigsqcup_{v \in X_{\fp}^{\fa, \fd}(B)} (v + \fm_{\fp}^{\fa, \fd}(B))
\end{align*}
and if $\fp \nmid \fd$, let $X_{\fp}^{\fa, \fd}(B) \vcentcolon = \lcrc{0}$.
Then this equality also holds for $\fp \nmid \fd$.
By Lemma \ref{lem: congruence equiv cond}, there is a set $\fS_{\fp}^{\fa, \fd}(B)$ such that $V_{\fp}^{\fa, \fd}(B) = \rho^{-1}(\fS_{\fp}^{\fa, \fd}(B))$.

For the global objects, we define
\begin{align*}
    \fm^{\fa, \fd}(B) \vcentcolon &= \lbrb{ \bigcap_{q_{\fp} \leq B} i_{\fp}^{-1}(\fm_{\fp}^{\fa, \fd}(B)) } \bigcap \lbrb{ \bigcap_{q_{\fp} \geq B} i_{\fp}^{-1}(\cO_{K,\fp}^{n+1})  }, \\
    V^{\fa, \fd}(B) \vcentcolon &= \lbrb{ \bigcap_{q_{\fp} \leq B } i_{\fp}^{-1}(V_{\fp}^{\fa, \fd}(B)) } \bigcap \lbrb{ \bigcap_{q_{\fp} \geq B} i_{\fp}^{-1}(\cO_{K,\fp}^{n+1})  }.
\end{align*} 
Then, $V^{\fa, \fd}(B)$ is determined by the congruence conditions modulo $\fm^{\fa, \fd}(B)$ by the Chinese remainder theorem.
By Lemma \ref{lem: congruence equiv cond}, there are $X^{\fa, \fd}(B)$ and $\fS^{\fa, \fd}(B)$ such that 
\begin{align} \label{eqn: Vad periodic}
    V^{\fa, \fd}(B)  = \bigsqcup_{v \in X^{\fa, \fd}(B)} (v + \fm^{\fa, \fd}(B)) = \rho^{-1}(\fS^{\fa, \fd}(B)), \qquad
    \# \fS^{\fa, \fd}(B) = \# X^{\fa, \fd}(B).
\end{align}

Following \cite[\S 3]{Den}, let  $r_1$ and $r_2$ be the number of real and complex places of $K$ so that $|M_{K, \infty}| = r_1 + r_2$.
For a vector $d = (d_{v_i})_{i=1}^{r_1 + r_2}$ where $d_{v_i} = 1$ for real $v_i$ and $2$ for complex $v_i$, we denote by $H$ the hyperplane of $\bR^{r_1 + r_2}$ which is orthogonal to $d$.
There is the natural projection map $\mathrm{pr} : \bR^{r_1 + r_2} \to H.$

Let $\mu(K)$ be the set of roots of unity in $K$. Then by Dirichlet's unit theorem, $\cO_K^\times/\mu(K)$ is isomorphic to a lattice $\Lambda$ in the hyperplane $H$.
By choosing a basis of $\Lambda$, we have a fundamental domain $\widetilde{\cF}$ of $H/\Lambda$.
For an infinite place $v$ of $K$, we define
\begin{align*}
    \eta_v : K_v^{n+1} \setminus \lcrc{0} \to \bR, \qquad x_v = (x_{v, 0}, \cdots, x_{v, n}) \mapsto
    \log \lbrb{\max_{0 \leq j \leq n} |x_{v, j}|_v^{\frac{1}{u_j}}}
\end{align*}
which naturally induces
\begin{align*}
\eta : \prod_{v\in M_{K, \infty}}\lbrb{K_v^{n+1} \setminus \lcrc{0}} \to \bR^{r_1 + r_2}, \qquad   \eta = \prod_{v\in M_{K, \infty}} \eta_v.
\end{align*}
% $\eta : \prod_{v\in M_{K, \infty}}\lbrb{K_v^{n+1} \setminus \lcrc{0}} \to \bR^{r_1 + r_2}$ by $\eta = \prod_{v\in M_{K, \infty}} \eta_v$.
Let 
\begin{align*}
    \cF \vcentcolon= (\mathrm{pr} \circ \eta)^{-1}(\widetilde{\cF}).
\end{align*}
For $f: \cP(u) \to \cP(w)$, we define
\begin{align*}
    \cD(B) &\vcentcolon= 
    %\lcrc{ (x_v)_{v\in M_{K, \infty}} \in \prod_{v\in M_{K, \infty}} (K_v^{n+1} \setminus \lcrc{0}) : \prod_{v\in M_{K, \infty}} \max_{0 \leq j \leq n} |f_j(x_{v})|_v^{\frac{1}{w_j}} \leq B} \\ &= 
    \lcrc{(x_v)_{v\in M_{K, \infty}} \in \prod_{v\in M_{K, \infty}} (K_v^{n+1} \setminus \lcrc{0}) : H_{w, \infty}(f(x))\leq B}
\end{align*}
and
\begin{align*}
    \cF(B) \vcentcolon= \cF \cap \cD(B).
\end{align*}

Let $i_{\infty} : K^{n+1} \to K_{\infty}^{n+1}$ be the diagonal map. Then, $i_{\infty}(x)$ is in $\cD(B)$ if and only if $H_{w, \infty}(f(x)) \leq B$.
We note that $u$-weighted $\cO_K^\times$-action on $K^{n+1} \setminus \lcrc{0}$ also induces $\cO_K^\times$-action on $\prod_{v \in M_{K, \infty}}(K_v^{n+1} \setminus \lcrc{0})$.

\begin{lemma} \label{lem:Phi41456}
(i) $\cD(B)$ (resp. $\cF$) is stable under the $u$-weighted $\cO_K^\times$-action (resp. the $u$-weighted $\mu(K)$-action).
Furthermore, a natural embedding $i : \cF \to \prod_{v \in M_{K, \infty}}(K_v^{n+1} \setminus \lcrc{0})$ induces a bijection
\begin{align*}
    \cF/\mu(K) \to \left.\lbrb{\prod_{v \in M_{K, \infty}} (K_v^{n+1} \setminus \lcrc{0})}\middle/\cO_K^\times\right. .
\end{align*}
% \begin{align*}
%     \cF/\mu(K) \to \left.\lbrb{\prod_{v \in M_{K, \infty}} (K_v^{n+1} \setminus \lcrc{0})}\middle/\cO_K^\times\right.
% \end{align*}
% defines a bijection.\\
(ii) $\cF(B) = B^{\frac{1}{e(f)d}}*_{\widetilde{u}}\cF(1)$ for all $B > 0$. \\
(iii) $\cF(1)$ is bounded and definable.
\end{lemma}
\begin{proof}
% Following the proof of Lemma \ref{lem:Phi41456} (i), we know that the embedding $i : \cF \to \prod_{v \in M_{K, \infty}}(K_v^{n+1} \setminus \lcrc{0})$ induces a bijection $\cF/\mu(K) \to \prod_{v \in M_{K, \infty}}(K_v^{n+1} \setminus \lcrc{0})/\cO_K^\times$.
The definition of $\widetilde{u}$ is given before Proposition \ref{prop:phi3 327}.
These statements are \cite[Lemmas 4.1.4, 4.1.5, 4.1.6]{Phi1}. See also \cite[Lemma 3.2, Lemma 3.5, Lemma 3.7]{BM}.
\end{proof}

We temporarily reintroduce the distinction between 
$x \in K^{n+1}, \bar{x} \in K^{n+1}/\cO_K^\times, [x ] \in \cP(u)(K)$, as this level of precision is important in the current discussion.
For a set of projective local conditions $\lcrc{\Omega_{\fp}}_{\fp \in S}$ and integral ideals $\fb \subset \fa$, we define
\begin{align*}
    M(\fb, \fa, \fd, \Omega, B) 
    &\vcentcolon= \# \lcrc{ \overline{x} \in V^{\fa, \fd} /\cO_K^\times : \
    \begin{array}{ll}
    \fI_{u}(x) = \fb,  \\
    \delta_f(x) = \fd,
    \end{array} \ 
    \begin{array}{ll}
    H_{w, \infty}(f(x)) \leq B,   \\
    i_{\fp}([x]) \in \Omega_{\fp} \textrm{ for all } \fp \in S
    \end{array}
     }, \\
    M'(\fb, \fa, \fd, \Omega, B) 
    &\vcentcolon= \# \lcrc{ \overline{x} \in V^{\fa, \fd} /\cO_K^\times : \
    \begin{array}{ll}
    \fI_{u}(x) = \fb,  \\
    \delta_f(x) \subset \fd,
    \end{array} \ 
    \begin{array}{ll}
    H_{w, \infty}(f(x)) \leq B,   \\
    i_{\fp}([x]) \in \Omega_{\fp} \textrm{ for all } \fp \in S
    \end{array}
     }, \\
    M''(\fb, \fa, \fd, \Omega, B) 
    &\vcentcolon= \# \lcrc{ \overline{x} \in V^{\fa, \fd} /\cO_K^\times : \
    \begin{array}{ll}
    \fI_{u}(x) \subset \fb,  \\
    \delta_f(x)\subset \fd,
    \end{array} \ 
    \begin{array}{ll}
    H_{w, \infty}(f(x)) \leq B,   \\
    i_{\fp}([x]) \in \Omega_{\fp} \textrm{ for all } \fp \in S
    \end{array}
     }.
\end{align*}
Since $\delta_f$-condition on $M''$ coincides with those on $V^{\fa, \fd}$, it is redundant, but is retained here to facilitate comparison.

By (\ref{eqn: weighted action fI f delta}), $\fI_u(x)$ and $\delta_f(x)$ are stable under $u$-weighted $\cO_{K}^\times$-action.
Since the absolute value of roots of unity is $1$, $H_{w, \infty}(f(x))$ is also stable under $\cO_K^\times$-action.
Finally, $i_{\fp}([x]) \in \Omega_{\fp}$ if and only if one of $\widetilde{x} \in K^{n+1}\setminus \lcrc{0}$ with $[x] = [\widetilde{x}]$ satisfies $i_{\fp}([\widetilde{x}]) \in \Omega_{\fp}$.
Therefore, the above sets are well-defined.
Furthermore, we have
\begin{align} \label{eqn: M'=sum M}
    M'(\fb, \fa, \fd, \Omega, B) = \sum_{\fd' \subset \cO_K} M(\fb, \fa, \fd'\fd, \Omega, B), \\
    M''(\fb, \fa, \fd, \Omega, B) = \sum_{\fc \subset \cO_K} M'(\fb\fc, \fa, \fd, \Omega, B).
    \label{eqn: M''= sum M}
\end{align}
The sum in the first equation (\ref{eqn: M'=sum M}) is finite if $f$ has finite defect, and the second one is always infinite.
We note that they are counterparts of 
\begin{align*}
    \mathcal{N}_{\phi}(\fb, \fa, \fd, T) &\vcentcolon = \widehat{\#}\lcrc{x \in (V^{\fa, \fd} \setminus \lcrc{0})/\cO_K^\times : \fI_{u}(x) = \fb, H_{w, \infty}(f(x)) \leq T}, \\
    \bar{\mathcal{N}}_{\phi}(\fb, \fa, \fd, T) &\vcentcolon = \widehat{\#}\lcrc{x \in (V^{\fa, \fd} \setminus \lcrc{0})/\cO_K^\times : \fI_{u}(x) \subset \fb, H_{w, \infty}(f(x)) \leq T},
\end{align*}
where $\widehat{\#} X = \sum_{x \in X} \frac{1}{\#\Aut x}$ in \cite[\S 3.1]{BM}, but we need to introduce one more notation because of Remark \ref{rmk: BM Vpad def}.

\begin{remark} \label{rmk: on Phi M}
We note that the author defined
\begin{align*}
    &\cM(\fa, \fd)  = \# \lcrc{x \in K^{n+1} \setminus \lcrc{0} : \fI_{u}(x) \subset \fa, \delta_f(x) = \fd},\\
    &M(\Omega, \fa, \fd, B) =
    \# \lcrc{ x \in \cM(\fa, \fd)/\cO_K^\times : H_{w}(f(x)) \leq B, \fI_{u}(x) = \fa, \delta_f(x) = \fd, x \in \Omega^{\aff} }, \\
    &M'(\Omega, \fa, \fd, B) =
    \#\lcrc{ x \in \cM(\fa, \fd)/\cO_K^\times : H_{w}(f(x)) \leq B, \fI_{u}(x) \subset \fa, \delta_f(x) = \fd, x \in \Omega^{\aff} }.
\end{align*}    
in \cite{Phi1, Phi2}, and presented a relation similar to (\ref{eqn: M''= sum M}) in \cite[p.27]{Phi1}.
However, since replacing $\fa$ with $\fa\fb$ in $M'$ affects both the structure of $\cM(\fa, \fd)$ and the condition $\fI_{u}(x) \subset \fa$, the validity of the relation is not immediately evident.
\end{remark}

Since $\fI_{u}(x) \subset \fb$ is equivalent to $x \in \fb^{u}$, there is a natural relation between $M''(\fb, \fa, \fd, \Omega, B)$ and $i_{\infty}(V^{\fa, \fd} \cap \fb^{u} \cap \Omega_S^{\aff}) \cap \cD(B) $, as previously mentioned in \cite[Proposition 3.6]{BM} without local conditions.
From now on, we omit $i_{\infty}$ and write $V^{\fa, \fd} \cap \fb^{u} \cap \Omega_S^{\aff} \cap \cD(B)$ for $i_{\infty}(V^{\fa, \fd} \cap \fb^{u} \cap \Omega_S^{\aff}) \cap \cD(B) $.

We recall (\ref{eqn: Omega bijec Omega_k}), which says that there is a bijection
\begin{align*}
    \Omega_{\fp}^{\aff} \cap \cO_{K, \fp}^{n+1}
    \longrightarrow \bigsqcup_{\substack{k\geq 0}} \bigcup_{\zeta \in \mu(K_{\fp})} (\zeta*_u\Omega_{\fp, k}^{\aff}).
\end{align*}
Let $\mu(K_{\fp})^{(u)} \leq \mu(K_{\fp})$  be the set of roots of unity $\zeta$ satisfying $\zeta^{u_j} = 1$ for each $j = 0, \cdots, n$. Then, $\zeta*_u \Omega_{\fp, k}^{\aff} = \zeta'*_{u} \Omega_{\fp, k}^{\aff}$ if and only if $\zeta' \zeta^{-1} \in \mu(K_{\fp})^{(u)}$.
Hence, if we choose a non-canonical set $\mu_u(K_{\fp})$ as a set of representatives for $\mu(K_{\fp})/\mu(K_{\fp})^{(u)}$, we have a bijection
\begin{align} \label{eqn: Omega decomp Omega_k disj}
    \Omega_{\fp}^{\aff} \cap \cO_{K, \fp}^{n+1}
    \longrightarrow \bigsqcup_{\substack{k\geq 0}} \bigsqcup_{\zeta \in \mu_u(K_{\fp})} (\zeta*_u\Omega_{\fp, k}^{\aff}).
\end{align}
Similarly, one can also define $\mu_u(K)$.

\begin{lemma} \label{lem: M' step1}
(i) Let $Z$ be an $\cO_K^\times$-stable subset of $\prod_{v \in M_{K,\infty}}(K_v^{n+1} \setminus \lcrc{0})$. Then, 
\begin{align*}
    (Z \cap \cF)/\mu(K) \to Z/\cO_K^\times, \qquad x \mu(K) \mapsto x \cO_K^\times
\end{align*}
is a bijection.
\\
(ii) 
Let $\Omega = \lcrc{\Omega_{\fp}}_{\fp \in S}$ be a set of irreducible projective local conditions and let $\fb \subset \fa$ integral ideals of $\cO_K$. Then
\begin{align*}
    M''(\fb, \fa, \fd, \Omega, B) = \frac{1}{\#\mu_{u}(K)} \sum_{\zeta \in \mu_u(K_{\fp})}\sum_{k \geq 0}  \#\lbrb{ V^{\fa, \fd} \cap \fb^{u} \cap 
    \left( \bigcap_{\fp}i_{\fp}^{-1}(\zeta*_u\Omega_{\fp, k}^{\aff})\right)  \cap \cF(B)}.
\end{align*}
\end{lemma}
\begin{proof}
(i) By Lemma \ref{lem:Phi41456} for an $\cO_K^\times$-stable set $Z \subset \prod_{v \in M_{K, \infty}}(K_v^{n+1} \setminus \lcrc{0})$, we have a bijection $(Z \cap \cF)/\mu(K) \to Z/\cO_K^\times$.

(ii) 
%By Lemma \ref{lem:Phi41456} and (\ref{eqn: weighted action fI f delta}), $\cD(B)$ is stable under $\cO_K^\times$-action.
%By definition, $\Omega_{\fp}^{\aff}$ is stable under $\cO_{K, \fp}^\times$-action, so $i_{\fp}(\overline{x})$ is well-defined.
By the discussion at the beginning of the section \ref{subsec: local cond Omega}, for $\overline{x} \in V^{\fa, \fd}/\cO_K^\times$,   $i_{\fp}([\overline{x}]) \in \Omega_{\fp}$ if and only if $i_{\fp}(x) \in \Omega_{\fp}^{\aff} \cap \cO_{K, \fp}^{n+1}$.
Therefore, by (\ref{eqn: Omega decomp Omega_k disj}), we have
\begin{align*}
    &M''(\fb, \fa, \fd, \Omega, B)  \\
    % &= \sum_{\zeta \in \mu_u(K_{\fp})}\# \lcrc{\overline{x} \in V^{\fa, \fd}/\cO_{K}^\times :  H_{w, \infty}(f(x)) \leq B, i_{\fp}(x) \in (\zeta*_u\Omega_{\fp}^{\aff}) \textrm{ for } \fp \in S  } \\
    &=  \sum_{\zeta \in \mu_u(K_{\fp})} \sum_{k \geq 0} \# \lcrc{ \overline{x} \in V^{\fa, \fd}/\cO_{K}^\times : \fI_u(x) \subset \fb,  H_{w, \infty}(f(x)) \leq B, i_{\fp}(x) \in (\zeta*_u\Omega_{\fp, k}^{\aff}) \textrm{ for } \fp \in S }.
\end{align*}

The height condition in the definition of $M''$ is the same as that of $\cD(B)$. 
Therefore, each summand is
\begin{align*}
\left.
    \lbrb{V^{\fa, \fd} \cap \fb^{u} \cap 
    \left( \bigcap_{\fp}i_{\fp}^{-1}(\zeta*_u\Omega_{\fp, k}^{\aff}) \right) \cap \cD(B)} \middle/\cO_{K}^\times, \right.
\end{align*}
whose cardinality is equal to that of
\begin{align*}
    &\left.\lbrb{ V^{\fa, \fd} \cap \fb^{u} \cap 
    \left( \bigcap_{\fp}i_{\fp}^{-1}(\zeta*_u\Omega_{\fp, k}^{\aff}) \right) \cap \cF(B)} \middle/\mu(K)\right. ,
    % \\
    % &\to 
    % \left.
    % \lbrb{ V^{\fa, \fd} \cap \fb^{u} \cap 
    % \left( \bigcap_{\fp}i_{\fp}^{-1}(\zeta*_u\Omega_{\fp, k}^{\aff})\right) \cap \cD(B)} \middle/\cO_{K}^\times , \right.
\end{align*}
as follows from (i) by taking $Z = V^{\fa, \fd} \cap \fb^{u} \cap \left( \bigcap_{\fp}i_{\fp}^{-1}(\zeta*_u\Omega_{\fp, k}^{\aff})\right) \cap \cD(B)$.
Consequently, we have
\begin{align*}
    M''(\fb, \fa, \fd, \Omega, B)
    &= \sum_{\zeta \in \mu_u(K_{\fp})} \sum_{k \geq 0} \#\lbrb{ V^{\fa, \fd} \cap \fb^{u} \cap 
    \left.\left( \bigcap_{\fp}i_{\fp}^{-1}(\zeta*_u\Omega_{\fp, k}^{\aff})\right) \cap \cF(B)} \middle/\mu(K) \right. 
\end{align*}
which gives (ii).
\end{proof}

We recall that $\fD_f$ is the set of defect ideals of $f$ and
$S_f$ is the set of prime ideals that divide an ideal in $\fD_f$. 
Let $\widehat{\fD_f}$ be the set of integral ideals in $\cO_K$ whose prime divisors are in $S_f$ and let $\widetilde{\fD_f}$ be the set of ideals in $\cO_K$ not divisible by any prime ideal in $S_f$.
Then, any integral ideal $I$ can be decomposed as the product of an ideal $\fc_0(I)$ in $\widehat{\fD_f}$ and an ideal $\fc_1(I)$ in $\widetilde{\fD_f}$.

Since $\fb \subset \fa$, there is the decomposition $\fa^{-1}\fb = \fc_0(\fa^{-1}\fb) \fc_1(\fa^{-1}\fb)$ which we denote by $\fc_0$ and $\fc_1$, respectively.
Following \cite[p.15]{BM}, we define
\begin{align*}
    \fm_{\fc_1, \fp}^{\fa, \fd} &\vcentcolon= \fm_{\fp}^{\fa, \fd} \cap (\fa \fc_1)_{\fp}^{u},
    &\quad V_{\fc_1, \fp}^{\fa, \fd} &\vcentcolon= V_{\fp}^{\fa, \fd} \cap (\fa \fc_1)_{\fp}^{u}, \\
    \fm_{\fc_1, \fp}^{\fa, \fd}(B) &\vcentcolon= \fm_{\fp}^{\fa, \fd}(B) \cap (\fa \fc_1)_{\fp}^{u},
    &\quad V_{\fc_1, \fp}^{\fa, \fd}(B) &\vcentcolon= V_{\fp}^{\fa, \fd}(B) \cap (\fa \fc_1)_{\fp}^{u}, \\
    \fm_{\fc_1}^{\fa, \fd}(B) &\vcentcolon= \fm^{\fa, \fd}(B) \cap (\fa \fc_1)^{u},
    &\quad V_{\fc_1}^{\fa, \fd}(B) &\vcentcolon= V^{\fa, \fd}(B) \cap (\fa \fc_1)^{u}.
\end{align*}
These definitions depend on $\fa$ and $\fb$, since $\fc_1$ is defined in terms of them.

By definition (\ref{eqn: Vad periodic}) and Lemma \ref{lem: congruence equiv cond}, $V^{\fa, \fd}(B)$ is $\fm^{\fa, \fd}(B)$-stable under addition.
Therefore, $V^{\fa, \fd}(B) \cap (\fa\fc_1)^{u}$ is closed under addition by $\fm_{\fc_1}^{\fa, \fd}(B)= \fm^{\fa, \fd}(B) \cap (\fa\fc_1)^{u}$.
Hence, there exist a set $\fS_{\fc_1}^{\fa, \fd}(B)$ and a subset $X_{\fc_1}^{\fa, \fd}(B) \subset V_{\fc_1}^{\fa, \fd}(B)$ such that
\begin{align} \label{eqn: Vc1ad periodic}
    V_{\fc_1}^{\fa, \fd}(B)
    = \bigsqcup_{v \in X_{\fc_1}^{\fa, \fd}(B)} (v + \fm_{\fc_1}^{\fa, \fd}(B)), \quad 
    \fS_{\fc_1}^{\fa, \fd}(B) = V_{\fc_1}^{\fa, \fd}(B)/\fm_{\fc_1}^{\fa, \fd}(B), \quad
    \# \fS_{\fc_1}^{\fa, \fd}(B) = \# X_{\fc_1}^{\fa, \fd}(B).
\end{align}
\begin{lemma} \label{lem: V^ad b^w translation}
For integral ideals $\fa, \fb$ satisfying $\fb \subset \fa$, there is a finite set $X_{\fb}^{\fa, \fd}(B)$ such that  $ \# \fS_{\fc_1}^{\fa, \fd}(B) = \# X_{\fb}^{\fa, \fd}(B)$ and
\begin{align*}
    V^{\fa, \fd}(B)  \cap \fb^{u} 
    = \bigsqcup_{v \in X_{\fb}^{\fa, \fd}(B)} \lbrb{v + \fm_{\fc_1}^{\fa, \fd}(B) \cap (\fa \fc_0)^{u} }.
\end{align*}
\end{lemma}
\begin{proof}

By the definition of $\fc_0, \fc_1$, we have $V^{\fa, \fd}(B) \cap \fb^{u} = V_{\fc_1}^{\fa, \fd}(B) \cap (\fa\fc_0)^{u}$.
Since $V_{\fc_1}^{\fa, \fd}(B) + \fm_{\fc_1}^{\fa, \fd}(B) \subset V_{\fc_1}^{\fa, \fd}(B)$, we have
\begin{align*}
    \lbrb{V_{\fc_1}^{\fa, \fd}(B) \cap (\fa \fc_0)^{u}} + \lbrb{\fm_{\fc_1}^{\fa, \fd}(B) \cap (\fa \fc_0)^{u}} \subset  V_{\fc_1}^{\fa, \fd}(B) \cap (\fa \fc_0)^{u}.
\end{align*}
Hence by Lemma \ref{lem: congruence equiv cond}, there are 
\begin{align*}
    \fS_{\fb}^{\fa, \fd}(B) = \frac{V^{\fa, \fd}_{\fc_1}(B) \cap (\fa\fc_0)^{u}}{\fm_{\fc_1}^{\fa, \fd}(B) \cap (\fa \fc_0)^{u}} \qquad \textrm{and} \qquad 
    X_{\fb}^{\fa, \fd}(B) \subset V^{\fa, \fd}_{\fc_1}(B) \cap (\fa\fc_0)^{u}
\end{align*}
satisfying
\begin{align*}
    V^{\fa, \fd}_{\fc_1}(B) \cap (\fa\fc_0)^{u} = \bigsqcup_{v \in X_{\fb}^{\fa, \fd}(B)} \lbrb{v + \fm_{\fc_1}^{\fa, \fd}(B) \cap (\fa \fc_0)^{u} }, \qquad \#\fS_{\fb}^{\fa, \fd}(B) = \# X_{\fb}^{\fa, \fd}(B).
\end{align*}
By definition (\ref{eqn: def fm_fp^fa^fd V_fp^fa^fd}), we have $\fm_{\fp}^{\fa, \fd}(B) = \fa_{\fp}^u$ when $\fp \nmid \fd$. Hence if $\fp \not\in S_f$, we have
\begin{align*}
    \fm_{\fc_1, \fp}^{\fa, \fd}(B) = \fm_{\fp}^{\fa, \fd}(B) \cap (\fa \fc_1)_{\fp}^u = \fa_{\fp}^u.
\end{align*}
If $\fp \in S_f$, we have $(\fa \fc_0)_{\fp}^u = \fa_{\fp}^u$.
Therefore, we have $\fm_{\fc_1, \fp}^{\fa, \fd}(B) + (\fa\fc_0)_{\fp}^u = \fa_{\fp}^u$ for any $\fp$.
Then the natural isomorphism
\begin{align*}
    \frac{\fa^u}{ \fm_{\fc_1}^{\fa, \fd}(B)} = \frac{(\fa\fc_0)^u + \fm_{\fc_1}^{\fa, \fd}(B)}{\fm_{\fc_1}^{\fa, \fd}(B)}
    \cong \frac{(\fa\fc_0)^u}{\fm_{\fc_1}^{\fa, \fd}(B) \cap  (\fa\fc_0)^u}
\end{align*}
gives a bijection between $\fS_{\fc_1}^{\fa, \fd}(B)$ and $\fS_{\fb}^{\fa, \fd}(B)$.
% By the Chinese remainder theorem, we have    
% \begin{align*}
%     \displaystyle\frac{\fa^{u}}{\fm_{\fc_1}^{\fa, \fd}(B) \cap (\fa \fc_0)^{u}} \cong \frac{\fa^{u}}{\fm_{\fc_1}^{\fa, \fd}(B)}  \times \frac{\fa^{u}}{(\fa\fc_0)^{u}}.
% \end{align*}
% The image of $\fS_{\fb}^{\fa, \fd}(B)$ on the second coordinate is zero since $\fS_{\fb}^{\fa, \fd}(B) \subset (\fa\fc_0)^u$.
% On the first coordinate, we also have $\fS_{\fb}^{\fa, \fd}(B) \subset \fS_{\fc_1}^{\fa, \fd}(B)$.
% The converse follows by definition,
% so we have $\# \fS_{\fb}^{\fa, \fd}(B) = \# \fS_{\fc_1}^{\fa, \fd}(B)$.
\end{proof}

\begin{lemma} \label{lem: temp fS}
Let $\lambda$ be an element of $K$ such that $\lambda\cO_K$ is relatively prime to the ideals in $S_f$. Then, $\#\fS_{\fc_1}^{\lambda\fa, \fd}(B) = \#\fS_{\fc_1}^{\fa, \fd}(B)$.
\end{lemma}
\begin{proof}

In the proof of Lemma \ref{lem: concrete fmfa}, we proved that
$\lambda*_{u}V^{\fa, \fd} = V^{\lambda \fa, \fd} = (\lambda\cO_K)^{u} * V^{\fa, \fd}$.
We will use $\lambda\fa$ and $\lambda\fb$ to define $V^{\lambda\fa, \fd}_{\fc_1}$ so that  $\fc_1 = \fc_1(\fa^{-1}\fb) = \fc_1((\lambda\fa)^{-1}(\lambda\fb))$.
Therefore,
\begin{align*}
    V_{\fc_1, \fp}^{\lambda \fa, \fd}(B) &= V_{\fp, t}^{\lambda \fa, \fd} \cap (\lambda \fa \fc_1( (\lambda \fa)^{-1} \lambda \fb ))^u
    =V_{\fp, t}^{\lambda \fa, \fd} \cap (\lambda \cO_{K, \fp} \fa \fc_1 )^u \\
    &= ( (\lambda \cO_{K, \fp})^u * V_{\fp,t}^{\fa, \fd}) \cap (( \lambda \cO_{K, \fp} )^u * (\fa\fc_1)^u ) = (\lambda \cO_{K, \fp})^u * (V_{\fp,t}^{\fa, \fd} \cap  (\fa\fc_1)^u ) \\
    &= (\lambda \cO_{K, \fp} )^u* V_{\fc_1, \fp}^{\fa, \fd}(B). 
\end{align*}

By (\ref{eqn: def fm_fp^fa^fd V_fp^fa^fd}), if $\fp \nmid \fd$, then
$\fm_{\fp}^{\lambda \fa, \fd}(B) = (\lambda \fa)_{\fp}^u = (\lambda \cO_K)_{\fp}^u * \fa_{\fp}^u = (\lambda \cO_K)_{\fp}^u * \fm_{\fp}^{\fa, \fd}(B)$.
Hence, 
\begin{align*}
    \fm_{\fc_1, \fp}^{\lambda \fa, \fd}(B) &= \fm_{\fp}^{\lambda \fa, \fd} (B)\cap (\lambda\cO_{K}  \fa \fc_1)_{\fp}^u  = [(\lambda\cO_K)_{\fp}^{u} * \fm_{\fp}^{\fa, \fd}(B)] \cap [(\lambda\cO_K)_{\fp}^{u} * (\fa \fc_1)_{\fp}^u ] = (\lambda\cO_K)_{\fp}^{u} * \fm_{\fc_1, \fp}^{\fa, \fd}(B).
\end{align*}
Therefore, for primes $\fp \nmid \fd$, we have
\begin{align*}
    \frac{V_{\fc_1, \fp}^{\lambda\fa, \fd}(B)}{\fm^{\lambda \fa, \fd}_{\fc_1, \fp}(B)}
    = \frac{(\lambda \cO_K )_{\fp}^{u} * V_{\fc_1, \fp}^{\fa, \fd}(B)}{(\lambda\cO_K)_{\fp}^{u} * \fm_{\fc_1, \fp}^{\fa, \fd}(B)  } \cong \frac{V_{\fc_1, \fp}^{\fa, \fd}(B)}{\fm_{\fc_1, \fp}^{\fa, \fd}(B) }.
\end{align*}

When $\fp \mid \fd$, we have 
$V_{\fc_1, \fp}^{\lambda \fa, \fd} = V_{\fc_1, \fp}^{\fa, \fd}$ since $\lambda\cO_K$ is not divisible by $\fp \in S_f$.
The constant $t_{\fp}$ depends on $B$ and $\fp$, and the definition of $\fm_{\fp, t_{\fp}}^{\fa, \fd}$ in Lemma \ref{lem: concrete fmfa} (ii) depends only on $\fa_{\fp}, \fd_{\fp}$ and $f$.
Hence, if $\fp \nmid \lambda \cO_K$, $\fm_{\fp}^{\lambda \fa, \fd}(B) = \fm_{\fp, t_{\fp}}^{\lambda\fa, \fd} = \fm_{\fp, t_{\fp}}^{\fa, \fd} = \fm_{\fp}^{\fa, \fd}(B)$.
Consequently, we have
\begin{align*}
    \# \fS_{\fc_1}^{\lambda \fa, \fd}(B)
    = \# \lbrb{ \frac{V_{\fc_1}^{\lambda\fa, \fd}(B)}{\fm^{\lambda \fa, \fd}_{\fc_1}(B)} }
    = \# \lbrb{ \frac{V_{\fc_1}^{\fa, \fd}(B)}{\fm^{\fa, \fd}_{\fc_1}(B)}} = \#\fS_{\fc_1}^{\fa, \fd}(B),
\end{align*}
which gives the result.
\end{proof}

We note that the idea of the proof of Lemma \ref{lem: temp fS} is in \cite[Remark 3.16]{BM}, but the assumption that $\lambda\cO_K$ is relatively prime to $S_f$ is not stated.

We recall that $D_K$ is the absolute value of the discriminant of a number field $K$.

\begin{lemma} \label{lem: determinant}
For $\Lambda = \fm_{\fc_1}^{\fa, \fd}(B) \cap (\fa \fc_0)^{u}$, we have
\begin{align*}
m_{\infty}(K_{\infty}^{n+1}/\Lambda) = 
    \det(\fm_{\fc_1}^{\fa, \fd}(B) \cap (\fa \fc_0)^{u}) = N_{K/\bQ}(\fc_0)^{|u|} [\cO_K^{n+1} : \fm_{\fc_1}^{\fa, \fd}(B)] \frac{D_K^{\frac{n+1}{2}}}{2^{(n+1)r_2}}.
\end{align*}
\end{lemma}
\begin{proof}
Since $m_{\infty}$ is the induced Haar measure on $K_{\infty}^{n+1}$ by the usual Haar measure on $\bR$ and $\bC$, $m_{\infty}(K_{\infty}^{n+1}/\Lambda) = \det(\Lambda)$. 
Then, the claim is followed by \cite[Lemma 3.10]{BM}.
\end{proof}

\begin{lemma} \label{lem: V ad truncate}
Let $\fa$ be an integral ideal and $\fd \in \fD_f$. 
Suppose $B', B >0$ are real numbers such that  $q_{\fp} \leq B'$ for each $\fp \mid \fd$, and denote by $\omega(\fd)$ the number of prime divisors of $\fd$. Then,
\begin{align*}
    \# \lbrb{ (V^{\fa, \fd} \setminus V^{\fa, \fd}(B')) \cap \cF(B) } \ll \frac{B^{\frac{|u|}{e(f)}}}{B'^{|u| \omega(\fd) }}.
\end{align*}
\end{lemma}
\begin{proof}
By Lemma \ref{lem: V pad lattice} (i), there is a positive integer $t_{\fp}$ such that $V_{\fp}^{\fa, \fd} \setminus V_{\fp}^{\fa, \fd}(B') \subset (\fp^{t_{\fp}+1}\cO_{K, \fp})^u $ and $m((\fp^{t_{\fp}+1}\cO_{K, \fp})^u) \leq B'^{-|u|}$.
Let
\begin{align*}
    \ft \vcentcolon= \lbrb{\bigcap_{\fp \mid \fd} i_{\fp}^{-1}((\fp^{t_{\fp}+1}\cO_{K, \fp})^u)} \bigcap \lbrb{\bigcap_{\fp \nmid \fd} i_{\fp}^{-1}(\cO_{K, \fp}^{n+1}) }.
\end{align*}
Then, $V^{\fa, \fd} \setminus V^{\fa, \fd}(B') \subset \ft$ by the definition of $V^{\fa, \fd}(B')$.
By Lemma \ref{lem:Phi41456} (ii), Lemma \ref{lem: determinant} for $\Lambda = \ft$, and Proposition \ref{prop:refinePhi3} for $\Omega_{\infty} = \cF(1)$ and  trivial local condition $\Omega$, we have
\begin{align*}
    \# \lcrc{ x \in \ft \cap \cF(B)} = \# \lcrc{ x \in \ft \cap B^{\frac{1}{de(f)}}*_{\widetilde{u}}\cF(1)} =
    O \lbrb{ \frac{ B^{ \frac{|u|}{e(f)} }}{|m_{\infty}(K_{\infty}^{n+1}/\ft)|} }
    =O \lbrb{ \frac{ B^{ \frac{|u|}{e(f)} }}{[\cO_K^{n+1} : \ft]} }.
\end{align*}
Using $m((\fp^{t_{\fp}+1}\cO_{K, \fp})^u) \leq B'^{-|u|}$, we have
\begin{align*}
    [\cO_K^{n+1} : \ft] = \prod_{\fp \mid \fd} [\cO_{K, \fp}^{n+1} : \ft_{\fp}]
    = \prod_{\fp \mid \fd} [\cO_{K, \fp}^{n+1} : (\fp^{t_{\fp}+1}\cO_{K, \fp})^u] \geq \prod_{\fp \mid \fd} B'^{|u|}.
\end{align*}
Hence, we obtain the result.
\end{proof}

\begin{lemma} \label{lem: fS lattice vol quotient}
Suppose that $f : \cP(u) \to \cP(w)$ has finite defect and satisfies (\ref{eqn: widehat w condition}), and $B' > q_{\fp}$ for any $\fp \in S_f$.
Let $\fb \subset \fa$ be integral ideals, $\fd \in \fD_f$, and $\Lambda = \fm^{\fa, \fd}(B') \cap \fb^u$.
Then, there is a constant $C(\fa, \fb, \fd, f)$ satisfying 
\begin{align*}
    \frac{\# \fS_{\fc_1}^{\fa, \fd}(B')}{m_{\infty}(K_{\infty}^{n+1}/\Lambda)} = C(\fa, \fb, \fd, f) + O(B'^{-|w|}),
\end{align*}
for sufficiently large $B'$.
\end{lemma}
\begin{proof}
We recall that  
\begin{align*}
    m_{\infty}(K_{\infty}^{n+1}/\Lambda) = [\cO_K^{n+1} : \Lambda] = \prod_{\fp} [\cO_{K, \fp}^{n+1} : \Lambda_{\fp}].
\end{align*}
By definition (\ref{eqn: def fm_fp^fa^fd V_fp^fa^fd}), $\fm_{\fp}^{\fa, \fd}(B') = \fa_{\fp}^u$  if $\fp \nmid \fd$.
So $\Lambda_{\fp} = \fm_{\fp}^{\fa, \fd}(B') \cap \fb_{\fp}^u = \fb_{\fp}^u$ and 
\begin{align*}
    [\cO_{K, \fp}^{n+1} : \Lambda_{\fp}] = [\cO_{K, \fp}^{n+1} : \fb_{\fp}^u] = q_{\fp}^{\ord_{\fp}(\fb)|u|}.
\end{align*}
If $\fp \mid \fd$ and  $B'$ is sufficiently large, then $\fm_{\fp, t_{\fp}}^{\fa, \fd}$ defined in (\ref{eqn: def fm_fp^fa^fd V_fp^fa^fd}) is a subset of $\fb_{\fp}^u$.
So we have $\Lambda_{\fp} = \fm_{\fp, t_\fp}^{\fa, \fd}$ in this case.
Recall that the assumption (\ref{eqn: widehat w condition}) says that
\begin{align*}
    (\ord_{\fp}(\fd) + t e(f) ) w_j \geq (t +1)u_j
\end{align*}
for each $j$. Therefore, we have
\begin{align} \label{eqn: side lattice volume}
    m_{\infty}(K_{\infty}^{n+1}/\Lambda) 
    = \prod_{\fp \nmid \fd} q_{\fp}^{\ord_{\fp}(\fb)|u|} \cdot 
    \prod_{\fp \mid \fd} m_{\fp} (\fm_{\fp, t_{\fp}}^{\fa, \fd})^{-1}
    = \prod_{\fp \nmid \fd} q_{\fp}^{\ord_{\fp}(\fb)|u|} \cdot 
    \prod_{\fp \mid \fd} q_{\fp}^{\ord_{\fp}(\fa)|u| + (\ord_{\fp}(\fd) + t_{\fp}e(f))|w|}.
\end{align}

We first compute $\#\fS_{\fp}^{\fa, \fd}(B')$ instead of $\#\fS_{\fc_1, \fp}^{\fa, \fd}(B')$.
When $\fp \mid \fd$, $\fm_{\fp}^{\fa, \fd}(B') = \fm_{\fp, t_{\fp}}^{\fa, \fd}$ and
\begin{align*}
    V_{\fp}^{\fa, \fd}(B') = \bigsqcup_{0 \leq t \leq t_\fp} V_{\fp, t}^{\fa, \fd} = \bigsqcup_{v \in X_{\fp}^{\fa, \fd}(B')}(v + \fm_{\fp, t_{\fp}}^{\fa, \fd}), \qquad
    V_{\fp, t}^{\fa, \fd} = \bigsqcup_{v \in X_{\fp, t}^{\fa, \fd}}(v + \fm_{\fp, t}^{\fa, \fd}).
\end{align*}
By Lemma \ref{lem: concrete fmfa} (ii) and the condition (\ref{eqn: widehat w condition}),
\begin{align*}
    \fm_{\fp, t+1}^{\fa, \fd}
    &= \fa_{\fp}^u * \lbrb{ \fp^{(\ord_{\fp}(\fd) + (t+1)e(f))w_0 } \cO_{K, \fp} \times \cdots \times \fp^{(\ord_{\fp}(\fd) + (t+1)e(f))w_n } \cO_{K, \fp} } \\
    &= \fp^{e(f)}\cO_{K, \fp} *_w \lbrb{ \fa_{\fp}^u * \lbrb{ \fp^{(\ord_{\fp}(\fd) + te(f))w_0 } \cO_{K, \fp} \times \cdots \times \fp^{(\ord_{\fp}(\fd) + te(f))w_n } \cO_{K, \fp} } } \\
    &= \fp^{e(f)} \cO_{K, \fp}*_w \fm_{\fp, t}^{\fa, \fd}.
\end{align*}
So each $v + \fm_{\fp, t}^{\fa, \fd}$ is covered by $q_{\fp}^{e(f)|w|}$ translates of $\fm_{\fp, t+1}^{\fa, \fd}$. Iterating this relation, we find that $V_{\fp, t}^{\fa, \fd}$, originally covered by $\# X_{\fp, t}^{\fa, \fd}$ translates of $\fm_{\fp, t}^{\fa, \fd}$, is covered by $\# X_{\fp, t}^{\fa, \fd} \cdot q_{\fp}^{(t_{\fp} - t)e(f)|w|}$ translates of $\fm_{\fp, t_{\fp}}^{\fa, \fd}$.
%Similarly, $V_{\fp, t}^{\fa, \fd}$ is the disjoint union of $\# X_{\fp, t}^{\fa, \fd} \cdot q_{\fp}^{|w|t}$ translates of $\fm_{\fp, t_{\fp}}^{\fa, \fd}$.
On the other hand, we have $\# X_{\fp, t}^{\fa, \fd} = \# X_{\fp, t+1}^{\fa, \fd}$ by Lemma \ref{lem: concrete fmfa} (iii).
Consequently,
\begin{align*}
    \# X_{\fp}^{\fa, \fd}(B') 
    = \# X_{\fp, 0}^{\fa, \fd} \sum_{0 \leq t \leq t_{\fp}}q_{\fp}^{(t_{\fp} - t)e(f)|w|} 
    = \# X_{\fp, 0}^{\fa, \fd}\sum_{0 \leq t \leq t_{\fp}}q_{\fp}^{te(f)|w|}
    = \# X_{\fp, 0}^{\fa, \fd}\frac{q_{\fp}^{(t_{\fp}+1)e(f)|w|} - 1}{q_{\fp}^{e(f)|w|} - 1}.
\end{align*}

Now we consider $\# \fS_{\fc_1, \fp}^{\fa, \fd}(B')$. 
Following (\ref{eqn: def fm_fp^fa^fd V_fp^fa^fd}), we define $X_{\fp, t}^{\fa, \fd}(B')$ as the set satisfying
\begin{align*}
    V_{\fp, t}^{\fa, \fd} = \bigsqcup_{v \in X_{\fp, t}^{\fa, \fd}(B') } (v + \fm_{\fp}^{\fa, \fd}(B')).
\end{align*}
Since $V_{\fp, t}^{\fa, \fd} = \fp^t\cO_{K, \fp}*_u V_{\fp, 0}^{\fa, \fd}$, we have $V_{\fp, t}^{\fa, \fd} \subset (\fa\fc_1)_{\fp}^u$ if $t \geq \ord_{\fp}(\fa\fc_1)$.
In other words, 
%$V_{\fp, t}^{\fa, \fd} \cap (\fa\fc_1)_{\fp}^u =  V_{\fp, t}^{\fa, \fd}$ if $t \geq \ord_{\fp}(\fa\fc_1)$ and 
the effect of intersection with $(\fa\fc_1)_{\fp}^u$ on $V_{\fp}^{\fa, \fd}(B') \cap (\fa\fc_1)_{\fp}^u$ is only non-trivial for $t < \ord_{\fp}(\fa\fc_1)$ in $V_{\fp}^{\fa, \fd}(B') = \bigsqcup_{0 \leq t \leq t_{\fp}}V_{\fp, t}^{\fa, \fd}$.
Since $\fm_{\fp, t}^{\fa, \fd} = \fa_{\fp}*_u \fp^{\widehat{w}}\cO_{K, \fp}$ with $\widehat{w}_j \geq (t+1)u_j$, we also have $\fm_{\fp, t_{\fp}}^{\fa, \fd} \subset (\fa\fc_1)_{\fp}^u$ if $t_{\fp} \geq \ord_{\fp}(\fa\fc_1)$.
Therefore, if $B'$ is sufficiently large
\begin{align*}
    \fm_{\fc_1, \fp}^{\fa, \fd}(B') = \fm_{\fp, t_{\fp}}^{\fa, \fd} \cap (\fa\fc_1)_{\fp}^u = \fm_{\fp, t_{\fp}}^{\fa, \fd} = \fm_{\fp}^{\fa, \fd}(B').
\end{align*}
Hence, $V_{\fp, t}^{\fa, \fd} \cap (\fa\fc_1)^u$ is a union of translate of $\fm_{\fc_1, \fp}^{\fa, \fd}(B') = \fm_{\fp}^{\fa, \fd}(B')$. 
So there is a finite set $X_{\fc_1, \fp, t}^{\fa, \fd}(B')$ such that
\begin{align*}
    V_{\fp, t}^{\fa, \fd} \cap (\fa\fc_1)^u = \bigsqcup_{v \in X_{\fc_1, \fp, t}^{\fa, \fd}(B')  } (v + \fm_{\fp}^{\fa, \fd}(B')).
\end{align*}
We define
\begin{align*}
    e_{\fc_1, \fp, t}^{\fa, \fd}(B') \vcentcolon = \# X_{\fp, t}^{\fa, \fd}(B') - \# X_{\fc_1, \fp, t}^{\fa, \fd}(B').
\end{align*}
Because $V_{\fp, t}^{\fa, \fd} \cap (\fa \fc_1)^u = V_{\fp, t}^{\fa, \fd}$ if $t \geq \ord_{\fp}(\fa\fc_1)$, we have $e_{\fc_1, \fp, t}^{\fa, \fd}(B') = 0$ if $t \geq \ord_{\fp}(\fa\fc_1)$.

Since $\# X_{\fp, t}^{\fa, \fd}(B')$ (resp. $\# X_{\fp, t}^{\fa, \fd}$) is the number of covers of $V_{\fp, t}^{\fa, \fd}$, 
which consist of translates of $\fm_{\fp, t_{\fp}}^{\fa, \fd}$ (resp. $\fm_{\fp, t}^{\fa, \fd}$), 
we have %$\# X_{\fp, t}^{\fa, \fd}(B') = \# X_{\fp, t}^{\fa, \fd} \frac{m_{\fp}(\fm_{\fp, t}^{\fa, \fd})}{m_{\fp}(\fm_{\fp, t_{\fp}}^{\fa,\fd})}$.
\begin{align*}
    \# X_{\fp, t}^{\fa, \fd}(B') = \# X_{\fp, t}^{\fa, \fd} \frac{m_{\fp}(\fm_{\fp, t}^{\fa, \fd})}{m_{\fp}(\fm_{\fp, t_{\fp}}^{\fa,\fd})}.
\end{align*}
By the same argument, we also have
\begin{align*}
    \# X_{\fc_1, \fp, t}^{\fa, \fd}(B') = \# X_{\fc_1, \fp, t}^{\fa, \fd} \frac{m_{\fp}(\fm_{\fp, t}^{\fa, \fd})}{m_{\fp}(\fm_{\fp, t_{\fp}}^{\fa,\fd})}.
\end{align*}
Therefore,
\begin{align*}
e_{\fc_1, \fp, t}^{\fa, \fd}(B')
    =(\# X_{\fp, t}^{\fa, \fd} - \# X_{\fc_1, \fp, t}^{\fa, \fd} ) \frac{ m_{\fp}(\fm_{\fp, t}^{\fa, \fd}) }{ m_{\fp}(\fm_{\fp,t_{\fp} }^{\fa, \fd})  }
    = (\# X_{\fp, t}^{\fa, \fd} - \# X_{\fc_1, \fp, t}^{\fa, \fd} ) q_{\fp}^{(t_{\fp}-t)e(f)|w|}.
\end{align*}
Since $\# X_{\fp}^{\fa, \fd}(B')$ is equal to the sum of $\# X_{\fp, t}^{\fa, \fd}(B')$ over $0 \leq t \leq t_{\fp}$,
\begin{align*}
    \# \fS_{\fc_1, \fp}^{\fa, \fd}(B')
    &= \sum_{0 \leq t < \ord_{\fp}(\fa \fc_1)} \# X_{\fc_1, \fp, t}^{\fa, \fd}(B')
    +  \sum_{\ord_{\fp}(\fa \fc_1) \leq t \leq t_{\fp}}  \# X_{\fc_1, \fp, t}^{\fa, \fd}(B') \\
    &= \sum_{0 \leq t < \ord_{\fp}(\fa \fc_1)} \lbrb{\# X_{\fp, t}^{\fa, \fd}(B') - e_{\fc_1, \fp, t}^{\fa, \fd}(B') }
    +  \sum_{\ord_{\fp}(\fa \fc_1) \leq t \leq t_{\fp}}  \# X_{\fp, t}^{\fa, \fd}(B') \\
    &=
    \# X_{\fp, 0}^{\fa, \fd}\frac{q_{\fp}^{(t_{\fp}+1)e(f)|w|} - 1}{q_{\fp}^{e(f)|w|} - 1} -  \sum_{0 \leq t < \ord_{\fp}(\fa \fc_1)} e_{\fc_1, \fp, t}^{\fa, \fd}(B') .
\end{align*}
For simplicity, we introduce
\begin{align*}
    e_{\fp}(B') \vcentcolon = \sum_{0 \leq t < \ord_{\fp}(\fa \fc_1)} e_{\fc_1, \fp, t}^{\fa, \fd}(B').
\end{align*}
Then
\begin{align} \label{eqn: side fS}
    \# \fS_{\fc_1}^{\fa, \fd}(B') = \prod_{\fp \mid \fd} \# \fS_{\fc_1, \fp}^{\fa, \fd}(B')
    = \prod_{\fp \mid \fd}
    \lbrb{ e_{\fp}(B') +\# X_{\fp, 0}^{\fa, \fd}\frac{q_{\fp}^{(t_{\fp}+1)e(f)|w|} - 1}{q_{\fp}^{e(f)|w|} - 1} }  .
\end{align}
In the remainder of the proof, we use
\begin{align*}
    c_{\fp} \vcentcolon = \ord_{\fp}(\fd) |w| + \ord_{\fp}(\fa) |u|.
\end{align*}
Then by (\ref{eqn: side lattice volume}) and (\ref{eqn: side fS}), we have
\begin{align*}
    \frac{\#\fS_{\fc_1}^{\fa, \fd}(B') }{m_{\infty}(K^{n+1}_{\infty}/\Lambda) }
    &= \prod_{\fp \nmid \fd} q_{\fp}^{-\ord_{\fp}(\fb)|u|}
    \prod_{\fp \mid \fd} \frac{e_{\fp}(B') + \# X_{\fp, 0}^{\fa, \fd} 
    \lbrb{q_{\fp}^{t_{\fp}e(f)|w|} + q_{\fp}^{e(f)|w|(t_{\fp}-1)} + \cdots + 1} }{q_{\fp}^{\ord_{\fp}(\fa)|u| + (t_{\fp}e(f) + \ord_{\fp}(\fd)) |w| }} \\
    &= \prod_{\fp \nmid \fd} q_{\fp}^{-\ord_{\fp}(\fb)|u|}
    \prod_{\fp \mid \fd} \lbrb{ \frac{e_{\fp}(B')  }{q_{\fp}^{t_{\fp}e(f)|w| +c_{\fp} }} + \# X_{\fp, 0}^{\fa, \fd} \lbrb{q_{\fp}^{-c_{\fp}} + q_{\fp}^{ - e(f)|w| -c_{\fp}} + \cdots + q_{\fp}^{-e(f)|w|t_{\fp} - c_{\fp} }} }.
\end{align*}
By the definition of $e_{\fc_1,\fp,t}^{\fa, \fd}(B')$, we have
\begin{align*}
    \frac{e_{\fp}(B')  }{q_{\fp}^{t_{\fp}e(f)|w| +c_{\fp} }} 
    = \sum_{0 \leq t \leq \ord_{\fp}(\fa\fc_1)}
    (\# X_{\fp, t}^{\fa, \fd} - \# X_{\fc_1, \fp, t}^{\fa, \fd} ) 
    q_{\fp}^{ -te(f)|w| - c_{\fp} }.
\end{align*}
So it does not depend on $B'$.
Since $q_{\fp}^{t_{\fp}} \asymp B'$,
\begin{align*}
q_{\fp}^{-c_{\fp}} + q_{\fp}^{ - |w| -c_{\fp}} + \cdots + q_{\fp}^{-|w|t_{\fp} - c_{\fp} }
= q_{\fp}^{-c_{\fp}}\left(1 + O(B'^{-|w|}) \right).
\end{align*}
%When $\fp \mid \fd$, we have $c_{\fp} \geq 1$ so that the error term would be $O(B'^{-|w|})$ whose implicit constant does not depend on $B'$ and $\fp$.
Therefore,
\begin{align*}
    C(\fa, \fb,\fd, f) \vcentcolon = 
    \prod_{\fp \nmid \fd} q_{\fp}^{-\ord_{\fp}(\fb)|u|}
    \prod_{\fp \mid \fd} \lbrb{ \sum_{0 \leq t \leq \ord_{\fp}(\fa\fc_1)}
    (\# X_{\fp, t}^{\fa, \fd} - \# X_{\fc_1, \fp, t}^{\fa, \fd} ) 
    q_{\fp}^{ -te(f)|w| - c_{\fp} }
    + \# X_{\fp, 0}^{\fa, \fd}q_{\fp}^{-c_{\fp}} }
\end{align*}
satisfies the equation in the statement.
% In order to handle the case where there exists a $j$ such that
% \begin{align*}
%     (\ord_{\fp}(\fd) + t_{\fp}e(f) ) w_j < (t_{\fp}+1)u_j,
% \end{align*}
% we define
% \begin{align*}
%     \Delta_{\fp}(j) \vcentcolon = \max \lcrc{0, (t_{\fp}+1)u_j - (\ord_{\fp}(\fd) + t_{\fp}e(f) ) w_j }.
% \end{align*}
% By modifying (\ref{eqn: side lattice volume}), we obtain the conclusion by setting 
% \begin{align*}
%     C(\fa, \fb,\fd, f) \vcentcolon = 
%     \prod_{\fp \nmid \fd} q_{\fp}^{-\ord_{\fp}(\fb)|u|}
%     \prod_{\fp \mid \fd} \frac{1}{q_{\fp}^{\sum_{j=0}^n \Delta_{\fp}(j)}} \lbrb{ \sum_{0 \leq t \leq \ord_{\fp}(\fa\fc_1)}
%     (\# X_{\fp, t}^{\fa, \fd} - \# X_{\fc_1, \fp, t}^{\fa, \fd} ) q_{\fp}^{-t|w| - c_{\fp}} + \frac{q_{\fp}^{|w| - c_{\fp} +1}}{q_\fp + 1 } }.
% \end{align*}
\end{proof}

\begin{proposition} \label{prop: V b Omega F}
Let $\fb \subset \fa$ be integral ideals, let $\fd \in \fD_f$, and let $f: \cP(u) \to \cP(w)$ be a morphism that has finite defect and satisfies (\ref{eqn: widehat w condition}).
Let $\Omega = \lcrc{\Omega_{\fp}^a}_{\fp \in S}$ be a set of irreducible affine local conditions defined by 
\begin{align*}
    \Omega_{\fp}^a = \prod_{j=0}^n \lcrc{x_{\fp, j} \in K_{\fp}  : |x_{\fp, j} - a_{\fp, j}| \leq \omega_{\fp, j}} = \prod_{j=0}^n (a_{\fp, j} + \fp^{r_{\fp, j}}\cO_{K, \fp})
\end{align*}
for some $a_{\fp, j} \in \cO_{K, \fp}$, $r_{\fp, j} \geq 0$, and $\omega_{\fp, j} = N_{K/\bQ}(\fp^{r_{\fp, j}})$.
Suppose each prime $\fp$ in $S$ is not in $S_f$. 
Then for $C(\fa, \fb, \fd, f)$ is an absolute constant in Lemma \ref{lem: fS lattice vol quotient}, $\Lambda = \fm_{\fc_1}^{\fa, \fd}(B) \cap (\fa \fc_0)^{u}$ and $d = [K:\bQ]$,
\begin{align*}
    &\#\lbrb{V^{\fa, \fd} \cap \fb^{u} \cap \Omega_S \cap \cF(B)} \\
    &=C(\fa, \fb, \fd, f)  m_{\infty}(\cF(1))
    \prod_{\fp \in S}
    \frac{m_{\fp}(\Omega_\fp^a \cap \Lambda_{\fp} ) }{m_{\fp}(\Lambda_{\fp})}B^{\frac{|u|}{e(f)}} +
    O\lbrb{ \epsilon(\Omega)
    \frac{B^{\frac{d|u| - u_{\min}}{de(f)}}}{N_{K/\bQ}(\fc_0)^{{\frac{d|u|-u_{\min}}{d}}}} }.
\end{align*}
\end{proposition}
\begin{remark} \label{rmk: volume not dep on B}
Since $\fp \not\in S_f$, $\fm_{\fp}^{\fa, \fd}(B) = \fa_{\fp}^u$ by definition (\ref{eqn: def fm_fp^fa^fd V_fp^fa^fd}).
Therefore
\begin{align*}
    \frac{m_{\fp}(\Omega_{\fp}^a \cap \Lambda_{\fp})}{m_{\fp}(\Lambda_{\fp})}
\end{align*}
does not depend on $B$ even though $\Lambda$ does.
\end{remark}
\begin{proof}
When $\fd = \cO_K$, then $V^{\fa, \fd} = \fa^u \setminus \lcrc{0}$ and the result follows from the arguments below and Proposition \ref{prop:phi3 327}.
We henceforth assume that $\omega(\fd) \geq 1$.

By Lemma \ref{lem: V ad truncate}, we have
\begin{align*}
    \#\left( (V^{\fa, \fd} \setminus V^{\fa, \fd}(B')) \cap \fb^u \cap \Omega_S \cap \cF(B) \right)
    \leq \#\left( (V^{\fa, \fd} \setminus V^{\fa, \fd}(B')) \cap \cF(B) \right)
    \ll \frac{B^{\frac{|u|}{e(f)}}}{B'^{|u|\omega(\fd) }}.
\end{align*}
So by choosing $B' > B^{\frac{u_{\min}}{d |u| e(f)}}$, we can use $V^{\fa, \fd}(B')$ instead of $V^{\fa, \fd}$.
By Lemma \ref{lem: V^ad b^w translation},
\begin{align*}
    \# \lbrb{  V^{\fa, \fd}(B') \cap \fb^{u} \cap  \Omega_S  \cap \cF( B) }
    % &= \# \lbrb{ V^{\fa, \fd}_{\fc_1}(B') \cap (\fa\fc_0)^{u} \cap  \Omega_S\cap \cF( B) } \\
    &= \# \lbrb{ \bigsqcup_{v \in X_{\fb}^{\fa, \fd}(B')}  \lbrb{v+ \fm_{\fc_1}^{\fa, \fd}(B') \cap (\fa \fc_0)^{u} } \cap   \Omega_S \cap \cF( B) }.
\end{align*}
Recall that the diagonal embedding $i_{\infty}: K^{n+1} \to K_{\infty}^{n+1}$ is omitted here.
Here we set $\Lambda = \fm_{\fc_1}^{\fa, \fd}(B') \cap (\fa \fc_0)^{u}$. 
By Lemma \ref{lem:Phi41456} (ii), we have
\begin{align*}
    &\lbrb{v+ \fm_{\fc_1}^{\fa, \fd}(B') \cap (\fa \fc_0)^{u} } \cap   \Omega_S \cap \cF( B) \\
    &= \lcrc{x \in (v + \Lambda) \cap \left(B^{\frac{1}{e(f)d}}*_{\widetilde{u}} \cF(1) \right) : i_{\fp}(x) \in \Omega_{\fp}^a \textrm{ for all } \fp \in S }.
\end{align*}

Let $\fq$ be a prime ideal of $K$.
By definition, $\lbrb{\fm_{\fc_1}^{\fa, \fd}(B') \cap (\fa \fc_0)^{u}}_{\fq}$ is an $(n+1)$-tuple of $\fq$-powers, whether or not $\fq$ divides $\fd$.
Hence, we can apply Proposition \ref{prop:refinePhi3} to
\begin{align*}
\Lambda = 
    \fm_{\fc_1}^{\fa, \fd}(B') \cap (\fa \fc_0)^{u} 
    =\bigcap_{\fq} i_{\fq}^{-1} \lbrb{ \fm_{\fc_1, \fq}^{\fa, \fd}(B') \cap (\fa \fc_0)_{\fq}^{u} }
\end{align*}
and $\Omega_{\infty} = \cF(1)$.
Since $\# X_{\fb}^{\fa, \fd}(B') = \# \fS_{\fc_1}^{\fa, \fd}(B')$ by Lemma \ref{lem: V^ad b^w translation}, we have
\begin{align} \label{eqn: Vab b Omega F}
    \# \lbrb{ V^{\fa, \fd}(B') \cap \fb^{u} \cap  \Omega_S \cap \cF( B) }
    =   \frac{\#\fS_{\fc_1}^{\fa, \fd}(B') m_{\infty}(\cF(1))}{m_{\infty}(K^{n+1}_{\infty}/\Lambda) } \prod_{\fp \in S}
    \frac{m_{\fp}(\Omega_\fp^a \cap \Lambda_{\fp} ) }{m_{\fp}(\Lambda_{\fp})}B^{\frac{|u|}{e(f)}} +O\lbrb{\epsilon(\Omega) B^{\frac{d|u| - u_{\min}}{d e(f)}} }.
\end{align}
Recall that
\begin{align*}
    \epsilon(\Omega) = \max_{0 \leq j \leq n } \lcrc{\prod_{\fp \in S} \omega_{\fp, j}^{-1}(\Omega)} 
    \prod_{\fp \in S} \frac{m_{\fp}(\Omega_\fp^a \cap \Lambda_{\fp} ) }{m_{\fp}(\Lambda_{\fp})}.
\end{align*}
By Lemma \ref{lem: fS lattice vol quotient}, the right-hand side of (\ref{eqn: Vab b Omega F}) is
\begin{align*}
m_{\infty}(\cF(1)) \lbrb{C(\fa, \fb, \fd,  f) + O(B'^{-|w|})} \prod_{\fp \in S}
    \frac{m_{\fp}(\Omega_\fp^a \cap \Lambda_{\fp} ) }{m_{\fp}(\Lambda_{\fp})}B^{\frac{|u|}{e(f)}} +O\lbrb{\epsilon(\Omega) B^{\frac{d|u| - u_{\min}}{d e(f)}} }.
\end{align*}
If we choose $B' > B^{\frac{u_{\min}}{de(f)|w|}}$, then $O(B'^{-|w|})$-term is absorbed to the error term.

Let $\lcrc{\fa_i}_{i=1}^h$ be a set of integral ideals such that $\lcrc{[\fa_i]}_{i=1}^h$ is a set of representatives of the class group of $K$.
For $\fc_0 \in \widetilde{\fD_f}$ and $\fc_1 \in \widehat{\fD_f}$ satisfying $\fb = \fa \fc_0 \fc_1$, we choose $\lambda \in K^\times$ so that $\fc_0 = \lambda \fa_i$ for some $i$. 
Then, $\lambda\cO_{K}$ is not divided by any prime ideals in $S_f$.
By (\ref{eqn: weighted action fI f delta}),
we have $\delta_f(\lambda*_{u}x) = \delta_f(x)$ and $\fI_{u}(\lambda*_{u}x) = \lambda \fI_{u}(x)$.
Also, 
\begin{align*}
    H_{w, \infty} (f(\lambda*_{u}x)) &= H_{w, \infty} (\lambda^{e(f)}*_{w}f(x)) = \prod_{v \in M_{K, \infty}} \max_{0 \leq j \leq n } \lcrc{|\lambda|_v^{e(f)} |f(x)_j|_v^{\frac{1}{w_j}}} = N_{K/\bQ}(\lambda)^{e(f)} H_{w, \infty}(f(x)).
\end{align*}
Therefore,
\begin{align*}
    & V^{\fa, \fd}(B') \cap \fb^u \cap \Omega_S \cap \cD(B) \\
    & = \lcrc{x \in  V^{\fa, \fd }(B') : \fI_u(x)  \subset \lambda \fa \fa_i \fc_1, \delta_f(x) \subset \fd, H_{w, \infty}(f(x)) \leq B, i_{\fp}(x) \in \Omega_{\fp}^a \textrm{ for } \fp \in S    } \\
    &=\lcrc{x \in V^{\fa, \fd}(B') : 
    \begin{array}{cc}
    \fI_{u}(\lambda^{-1}*_{u}x) \subset \fa \fa_i \fc_1,     &  \\
    \delta_f(\lambda^{-1}*_{u}x) \subset \fd,    & 
    \end{array}
     H_{w, \infty}(f(\lambda^{-1}*_{u}x)) \leq \frac{B}{N_{K/\bQ}(\lambda)^{e(f)}},
     i_{\fp}(x) \in \Omega_{\fp}^a \textrm{ for } \fp \in S 
    }  \\
    &=\lcrc{\lambda*_u x \in V^{\fa, \fd}(B') : 
    \begin{array}{cc}
    \fI_{u}(x) \subset \fa \fa_i \fc_1,     &  \\
    \delta_f(x) \subset \fd,    & 
    \end{array}
     H_{w, \infty}(f(x)) \leq \frac{B}{N_{K/\bQ}(\lambda)^{e(f)}},
     i_{\fp}(\lambda*_ux) \in \Omega_{\fp}^a \textrm{ for } \fp \in S 
    }  \\
    &= (\lambda^{-1}*_u V^{\fa, \fd}(B')) \cap (\fa \fa_i \fc_1)^u \cap \lbrb{\lambda^{-1}*_u \Omega_{S}} \cap \cD \lbrb{\frac{B}{N_{K/\bQ}(\lambda)^{e(f)}}}.
\end{align*}
By Lemma \ref{lem: M' step1} (i),  we have
% \begin{align*}
%     &\# \lbrb{\lbrb{V^{\fa, \fd}(B') \cap \fb^u \cap \Omega_S \cap \cF(B)}/\mu(K) } \\
%     &= \# \left( \lbrb{ V^{\lambda^{-1}\fa, \fd}(B') \cap (\fa \fa_i \fc_1)^u \cap \lbrb{\lambda^{-1}*_u \Omega_{S}} \cap \cD \lbrb{\frac{B}{N_{K/\bQ}(\lambda)^{e(f)}}}} \middle/\cO_K^\times  \right) \\
%     &= \# \left( \lbrb{ V^{\lambda^{-1}\fa, \fd}(B') \cap (\fa \fa_i \fc_1)^u \cap \lbrb{\lambda^{-1}*_u \Omega_{S}} \cap \cF \lbrb{\frac{B}{N_{K/\bQ}(\lambda)^{e(f)}}}} \middle/ \mu(K)  \right)
% \end{align*}
% and
\begin{align*}
    \# \lbrb{V^{\fa, \fd}(B') \cap \fb^u \cap \Omega_S \cap \cF(B)}
    = \# \lbrb{V^{\lambda^{-1}\fa, \fd}(B') \cap (\fa\fa_i\fc_1)^u \cap 
    \lbrb{\lambda^{-1}*_u\Omega_S  }\cap \cF\lbrb{\frac{B}{N_{K/\bQ}(\lambda)^{e(f)}}}  }.
\end{align*}

We now compute the right-hand side.
Since $\fa^{-1}\fb = (\lambda^{-1}\fa)^{-1}\lambda^{-1}\fb$, we have
\begin{align*}
    \fc_i(\fa^{-1}\fb) = \fc_i((\lambda^{-1}\fa)^{-1}\lambda^{-1}\fb).
\end{align*}
In particular, we also have
\begin{align*}
    V^{\lambda^{-1}\fa, \fd}(B') \cap (\fa \fa_i \fc_1(\fa^{-1}\fb))^u
    =V^{\lambda^{-1}\fa, \fd}(B') \cap (\fa \fa_i \fc_1((\lambda^{-1}\fa)^{-1}\lambda^{-1}\fb))^u.
\end{align*}
We will use the simplified notation $\fc_i$ for both $\fc_i(\fa^{-1}\fb)$ and $\fc_i((\lambda^{-1}\fa)^{-1}\lambda^{-1}\fb))$.

Since $V^{\lambda^{-1}\fa, \fd}_{\fc_1}(B')$ is stable under addition by $\fm_{\fc_1}^{\lambda^{-1}\fa, \fd}(B')$,
$V^{\lambda^{-1}\fa, \fd}_{\fc_1}(B') \cap (\lambda^{-1}\fa\fc_0)^u$ is stable under addition by $\fm_{\fc_1}^{\lambda^{-1}\fa, \fd}(B') \cap (\lambda^{-1}\fa\fc_0)^u$.
By Lemma \ref{lem: V^ad b^w translation} for $\lambda^{-1}\fb \subset \lambda^{-1}\fa$,
\begin{align*}
    &\# \lbrb{V^{\lambda^{-1} \fa, \fd}(B') \cap (\fa \fa_i \fc_1)^{u} \cap \lbrb{\lambda^{-1}*_{u}\Omega_S} \cap \cF\lbrb{\frac{B}{N_{K/\bQ}(\lambda)^{e(f)}}} } \\
    &= \# \lbrb{V^{\lambda^{-1} \fa, \fd}_{\fc_1}(B') \cap (\lambda^{-1}\fa \fc_0)^{u} \cap \lbrb{\lambda^{-1}*_{u}\Omega_S} \cap \cF\lbrb{\frac{B}{N_{K/\bQ}(\lambda)^{e(f)}}} }  \\
    &= \# \lbrb{ \bigsqcup_{v \in X_{\fb}^{\lambda^{-1}\fa, \fd}(B')} \lbrb{v + \fm_{\fc_1}^{\lambda^{-1}\fa, \fd}(B') \cap (\lambda^{-1}\fa\fc_0)^{u} } \cap  \lbrb{\lambda^{-1}*_{u}\Omega_S} \cap \cF\lbrb{\frac{B}{N_{K/\bQ}(\lambda)^{e(f)}}} }.
\end{align*}
Applying Proposition \ref{prop:refinePhi3} for $\Lambda = \fm_{\fc_1}^{\lambda^{-1}\fa, \fd}(B') \cap (\lambda^{-1}\fa\fc_0)^{u}$ and $\Omega_{\infty} = \cF(1)$, and Lemma \ref{lem: V^ad b^w translation}, it is
\begin{align} \label{eqn: Vab b Omega F lambda}
    &\frac{ \# \fS^{\lambda^{-1}\fa, \fd}_{\fc_1} (B') m_{\infty}(\cF(1) ) }{m_{\infty}(K^{n+1}_{\infty}/\Lambda) } 
    \prod_{\fp \in S}
    \frac{m_{\fp}(( \lambda^{-1}*_{u}\Omega_S)_\fp \cap \Lambda_{\fp} ) }{m_{\fp}(\Lambda_{\fp})} 
    \frac{B^{\frac{|u|}{e(f)}}}{N_{K/\bQ}(\lambda)^{|u|}}
    +O\lbrb{\epsilon(\lambda^{-1}*_{u}\Omega) \frac{B^{\frac{d|u| - u_{\min}}{d e(f)}}}{N_{K/\bQ}(\lambda)^{\frac{d|u| - u_{\min}}{d}}} }
\end{align}
where 
\begin{align*}
   \epsilon(\lambda^{-1}*_{u}\Omega) = \max_{0 \leq j \leq n} \lcrc{\prod_{\fp \in S} \omega_{\fp, j}^{-1}(\lambda^{-1}*_{u}\Omega) }\prod_{\fp \in S} 
 \frac{m_{\fp}(( \lambda^{-1}*_{u}\Omega_S)_\fp \cap \Lambda_{\fp} )}{m_{\fp}(\Lambda_{\fp})}.
\end{align*}
%We note that $(\lambda^{-1}*_{u}\Omega_S)_{\fp} = i_{\fp}(\lambda^{-1})*_{u}\Omega_{\fp}^a$ justifies the computation of $\epsilon$-factor.
We will compare (\ref{eqn: Vab b Omega F}) and (\ref{eqn: Vab b Omega F lambda}).
Lemma \ref{lem: temp fS} shows that $\#\fS^{\lambda^{-1}\fa, \fd}_{\fc_1}(B') = \#\fS^{\fa, \fd}_{\fc_1}(B')$. 
By Lemma \ref{lem: determinant}, we have
\begin{align*}
    \frac{ m_{\infty}(K_{\infty}^{n+1}/(\fm_{\fc_1}^{\fa, \fd}(B') \cap (\fa \fc_0)^{u}) ) }{ m_{\infty}(K_{\infty}^{n+1}/(\fm_{\fc_1}^{\lambda^{-1}\fa, \fd}(B') \cap (\lambda^{-1}\fa\fc_0)^{u})) }
    =\frac{[\cO_K^{n+1}: \fm_{\fc_1}^{\fa, \fd}(B')]}{[\cO_K^{n+1}: \fm_{\fc_1}^{\lambda^{-1}\fa, \fd}(B')]}.
\end{align*}
For a prime ideal $\fq \mid \fd$ and a sufficiently large $B'$, we have shown that 
\begin{align*}
    \fm_{\fc_1, \fq}^{\fa, \fd}(B') = \fm_{\fq}^{\fa, \fd}(B') = \fm_{\fq, t_{\fq}}^{\fa, \fd}
\end{align*}
in the proof of Lemma \ref{lem: fS lattice vol quotient}.
Since $\lambda \cO_K$ is relatively prime with an ideal in $S_f$, we have $\fa_{\fq}^{u} = (\lambda^{-1}\fa)_{\fq}^u$.
Therefore,
\begin{align*}
    \fm_{\fc_1, \fq}^{\lambda^{-1}\fa, \fd}(B') = \fm_{\fq}^{\lambda^{-1}\fa, \fd}(B') 
    = (\lambda^{-1} \fa)_{\fq}^u* (\fq\cO_{K, \fq})^{\widehat{w}}
    = \fa_{\fq}^u* (\fq\cO_{K, \fq})^{\widehat{w}} = \fm_{\fq}^{\fa, \fd}(B') = \fm_{\fc_1, \fq}^{\fa, \fd}(B').
\end{align*}
When $\fq \nmid \fd$, we have
\begin{align*}
    \fm_{\fc_1, \fq}^{\lambda^{-1}\fa, \fd}(B') = (\lambda^{-1} \fa)_{\fq}^u \cap (\lambda^{-1} \fa \fc_1)_{\fq}^u = (\lambda^{-1} \cO_{K, \fq})^u * \fm_{\fc_1, \fq}^{\fa, \fd}(B').
\end{align*}
Since $\lambda\cO_K$ is not divided by the prime divisors of $\fd$, we have
\begin{align} \label{eqn: det compare}
    m_{\infty}(K_{\infty}^{n+1}/(\fm_{\fc_1}^{\fa, \fd}(B') \cap (\fa \fc_0)^{u}) )
    = m_{\infty}(K_{\infty}^{n+1}/(\fm_{\fc_1}^{\lambda^{-1}\fa, \fd}(B') \cap (\lambda^{-1}\fa\fc_0)^{u})) N_{K/\bQ}(\lambda)^{|u|}.
\end{align}

For a prime ideal $\fp \in S$, we denote $i_{\fp}(\lambda) \in K_{\fp}$ by $\lambda_{\fp}$ in the remainder of the proof.
Then $(\lambda^{-1}\fa\fc_0)_{\fp}^u = \lambda_{\fp}^{-1}*_u(\fa\fc_0)_{\fp}^u$.
Since $\fp \not\in S_f$, we have $\fm_{\fp}^{\fa, \fd}(B') = \fa_{\fp}^u$ and 
\begin{align*}
     ( \fm_{\fc_1}^{\lambda^{-1}\fa, \fd}(B') \cap (\lambda^{-1}\fa\fc_0)^{u} )_{\fp}
    &=  (\lambda^{-1}_{\fp}  *_u \fm_{\fp}^{\fa, \fd}(B')) \cap  (\lambda_{\fp}^{-1}*_u(\fa\fc_0\fc_1)_{\fp}^{u})   
    = \lambda_{\fp}^{-1}*_u ( \fm_{\fp}^{\fa, \fd}(B') \cap (\fa\fc_0)_{\fp}^{u}).
\end{align*}
Since $(\lambda^{-1}*_u \Omega_S)_{\fp} = \lambda_{\fp}^{-1}*_u\Omega_{\fp}^a$,
there are $a_{\fp, j} \in \cO_{K, \fp}$ and $r_{\fp, j} \geq 0$ such that
\begin{align*}
    \Omega_{\fp}^a &= \prod_{j=0}^n \lcrc{x_{\fp, j} \in K_{\fp} : |x_{\fp, j} - a_{\fp, j}| \leq \omega_{\fp, j}} = \prod_{j=0}^n (a_{\fp, j} + \fp^{r_{\fp, j}}\cO_{K, \fp}),  \\
    \lambda_{\fp}^{-1}*_{u} \Omega_{\fp}^a &= \prod_{j=0}^n \lcrc{\lambda_{\fp}^{-u_j} x_{\fp, j} \in K_{\fp} : |x_{\fp, j} - a_{\fp, j}|_{\fp} \leq \omega_{\fp, j} }
    = \prod_{j=0}^n(\lambda_{\fp}^{-u_j} a_{\fp, j} + \lambda_{\fp}^{-u_j}\fp^{r_{\fp, j}}\cO_{K, \fp} ).
\end{align*}
Hence $\lambda_{\fp}^{-1}*_u\Omega_{\fp}^a$ is also a box-type set.
Under the $u$-weighted action by $\lambda^{-1}_{\fp}$, the Haar measure of any box-type subset, including $\Omega_{\fp}^a$ and $\fm_{\fp}^{\fa, \fd}(B') \cap (\fa\fc_0)^u_{\fp}$, is multiplied by $q_{\fp}^{\ord_{\fp}(\lambda_{\fp}) |u|}$.
Therefore,
\begin{align}
    \prod_{\fp \in S}
    \frac{m_{\fp}(( \lambda^{-1}*_{u}\Omega_S)_\fp \cap \Lambda_{\fp} ) }{m_{\fp}(\Lambda_{\fp})} 
    \textrm{ of (\ref{eqn: Vab b Omega F lambda})}
    &= \prod_{\fp \in S}
    \frac{m_{\fp}((\lambda_{\fp}^{-1}*_{u}\Omega_\fp^a) \cap ( \fm_{\fc_1}^{\lambda^{-1}\fa, \fd}(B') \cap (\lambda^{-1}\fa\fc_0)^{u}  )_{\fp} ) }{m_{\fp}(( \fm_{\fc_1}^{\lambda^{-1}\fa, \fd}(B') \cap (\lambda^{-1}\fa\fc_0)^{u}  )_{\fp}   )} \nonumber \\ \nonumber
    &= \prod_{\fp \in S}
    \frac{m_{\fp}((\lambda_{\fp}^{-1}*_{u}\Omega_\fp^a) \cap 
    ( \lambda_{\fp}^{-1}*_u ( \fm_{\fp}^{\fa, \fd}(B') \cap (\fa\fc_0)_{\fp}^{u})   ) }{m_{\fp}( \lambda_{\fp}^{-1}*_u ( \fm_{\fp}^{\fa, \fd}(B') \cap (\fa\fc_0)_{\fp}^{u})   ) } \\ \nonumber
    &= \prod_{\fp \in S}
    \frac{m_{\fp}(\Omega_\fp^a \cap 
    ( \fm_{\fp}^{\fa, \fd}(B') \cap (\fa\fc_0)_{\fp}^{u})    }{m_{\fp}(  \fm_{\fp}^{\fa, \fd}(B') \cap (\fa\fc_0)_{\fp}^{u})   } \\ \label{eqn: compare Haar measure proportion}
    &=\prod_{\fp \in S} \frac{m_{\fp}(\Omega_{\fp}^a \cap \Lambda_{\fp})}{m_{\fp}(\Lambda_{\fp})}
    \textrm{ of (\ref{eqn: Vab b Omega F}) }.
\end{align}
Together with (\ref{eqn: det compare}), we showed that the main terms of (\ref{eqn: Vab b Omega F}) and (\ref{eqn: Vab b Omega F lambda}) are equal.

By (\ref{eqn: compare Haar measure proportion}) and
\begin{align*}
    \omega_{\fp, j}(\lambda^{-1}*_{u}\Omega) = q_{\fp}^{u_j\ord_{\fp}(\lambda_{\fp}) - r_{\fp, j}} = \omega_{\fp, j}(\Omega)  q_{\fp}^{u_j\ord_{\fp}(\lambda_{\fp})},
\end{align*}
the error term in (\ref{eqn: Vab b Omega F lambda}) is
\begin{align*}
    &O\lbrb{\max_{0 \leq j \leq n} \lcrc{\prod_{\fp \in S} \omega_{\fp, j}^{-1}(\lambda^{-1}*_{u}\Omega) }
    \prod_{\fp \in S} \frac{m_{\fp}((\lambda^{-1}*_{u}\Omega)_\fp \cap ( \fm_{\fc_1}^{\lambda^{-1}\fa, \fd}(B') \cap (\lambda^{-1}\fa\fc_0)^{u}  )_{\fp} )}{m_{\fp}(( \fm_{\fc_1}^{\lambda^{-1}\fa, \fd}(B') \cap (\lambda^{-1}\fa\fc_0)^{u}  )_{\fp})}  
    \frac{B^{\frac{d|u| - u_{\min}}{d e(f)}}}{N_{K/\bQ}(\lambda)^{ {\frac{d|u|-u_{\min}}{d}} }} } \\
    &= O\lbrb{\max_{0 \leq j \leq n} \lcrc{\prod_{\fp \in S} \omega_{\fp, j}^{-1}(\Omega)q_{\fp}^{-u_j \ord_{\fp}(\lambda_{\fp})} }
    \prod_{\fp \in S}
    \frac{m_{\fp}(\Omega_\fp^a \cap ( \fm_{\fc_1}^{\fa, \fd}(B') \cap (\fa\fc_0)^{u}  )_{\fp} ) }{m_{\fp}(( \fm_{\fc_1}^{\fa, \fd}(B') \cap (\fa\fc_0)^{u} )_{\fp})}
    \frac{B^{\frac{d|u| - u_{\min}}{de(f)}}}{N_{K/\bQ}(\fc_0)^{{\frac{d|u|-u_{\min}}{d}}}} } \\
    &= O\lbrb{ \epsilon(\Omega)
    \frac{B^{\frac{d|u| - u_{\min}}{de(f)}}}{N_{K/\bQ}(\fc_0)^{{\frac{d|u|-u_{\min}}{d}}}} },
\end{align*}
which appears in the statement.
Here, $q_{\fp}^{-u_j \ord_{\fp}(\lambda_{\fp})} \leq 1$ is used and the implied constant depends on $\fa_i$, where $\fc_0 = \lambda \fa_i$.
To finish the proof, it suffices to choose $B' = B$.
\end{proof}

% {\color{cyan} There is a possibility for a better result since
% $N_{K/\bQ}(\fp)^{-u_j\ord_{\fp}(\lambda_{\fp})}$ is naively bounded.
% However, $\lambda$-part may be hard to control... I will consider it later.}

\begin{remark} \label{rmk: BM cN}
Let $\fb = \fa \fc_0 \fc_1$ and $\fc_0 = \lambda \mathfrak{r}$.
In the proof of \cite[Lemma 3.12]{BM}, the following equalities appear:
\begin{align*}
    \mathcal{N}_{\phi}(\fb, \fa, \fd, T) = \mathcal{N}_{\phi}(\fc_0\fc_1\fa, \fa, \fd, T)
    = \mathcal{N}_{\phi}(\lambda \mathfrak{r}\fc_1 \fa, \fa, \fd, T)
    = \mathcal{N}_{\phi}(\mathfrak{r}\fc_1 \fa, \fa, \fd, N((\lambda))^{-e(f)} T).
\end{align*}
At this point, it is conceivable that the second argument in the final expression might need to be $\lambda^{-1}\fa$, leading to $\mathcal{N}_{\phi}(\mathfrak{r}\fc_1 \fa, \lambda^{-1}\fa, \fd, N((\lambda))^{-e(f)} T).$ 
This adjustment seems consistent with the proof of Proposition \ref{prop: V b Omega F}.
\end{remark}

\subsection{Proof}

A projective local condition $\Omega_{\fp} \subset \cP(u)(K_{\fp})$ is said to be non-trivial if $\Omega_{\fp} \neq \cP(u)(K_{\fp})$, and $\Omega = \lcrc{\Omega_{\fp}}_{\fp \in S}$ is called non-trivial if every $\Omega_{\fp}$ is non-trivial.
%When we write $\Omega = \lcrc{\Omega_{\fp}}_{\fp \in S}$, we assume that each $\Omega_{\fp}$ is non-trivial. 

% Let $\Omega = \lcrc{\Omega_{\fp}}_{\fp \in S}$ be a non-trivial irreducible projective local condition.
% Then by (\ref{eqn: Omega decomp Omega_k disj}),
% \begin{align*}
%     \Omega_{\fp}^{\aff} \cap \cO_{K, \fp}^{n+1} 
%     = \bigsqcup_{k \geq 0} \bigsqcup_{\zeta \in \mu_u(K_{\fp})} (\zeta*_u\Omega_{\fp, k}^{\aff}).
% \end{align*}
% So if one describes $\Omega_{\fp, 0}^{\aff}$, then we can recover $\Omega_{\fp}^{\aff} \cap \cO_{K, \fp}^{n+1}$ and $\Omega_{\fp}$.

\begin{theorem} \label{thm: phi411 general}
Let $f : \cP(u) \to \cP(w)$ be a morphism that has finite defect and satisfies (\ref{eqn: widehat w condition}), and let
$\fp$ be a prime of $K$ not in $S_f$. Let $a_{\fp, j} \in \cO_{K, \fp}$ satisfying $(a_{\fp,0},\cdots,a_{\fp,n}) \neq (0,\cdots,0)$, $r_{\fp, j} \geq 0$, and $\Omega = \lcrc{\Omega_{\fp}}$ a non-trivial irreducible local condition at one prime $\fp$ such that
\begin{align*}
    \Omega_{\fp, 0}^{\aff} = \prod_{j=0}^n \lbrb{ a_{\fp, j} + \fp^{r_{\fp, j}} \cO_{K, \fp} }.
\end{align*}
Then there is an explicit constant $\kappa$ depending on $K$ and $f$ satisfying
\begin{align*}
&\#\lcrc{x \in \cP(u)(K) : H_{w, K}(f(x)) \leq B, x \in \Omega_S} \\
&= \kappa m_{\fp}\left(\Omega_{\fp}^{\aff} \cap \cO_{K, \fp}^{n+1}\right) B^{\frac{|u|}{e(f)}} 
+
O\lbrb{  \lbrb{q+q^{u_{\max} + 1 + \frac{u_{\min}}{d} - |u|} }  \frac{\displaystyle \max_{0\leq j\leq n}\lcrc{    q^{r_{\fp, j}} }}{\prod_{j=0}^n q^{r_{\fp, j}}} 
\frac{q^{|u| - u_{\max}}}{q^{|u| - u_{\max}} -1} B^{\frac{d|u| - u_{\min}}{de(f)}} \log B}.
\end{align*}
\end{theorem}
\begin{proof}
By (\ref{eqn: Omega decomp Omega_k disj}), a point $x$ in $\cO_K^{n+1}$ satisfies $x \in \Omega_S^{\aff}$ if and only if
\begin{align*}
    i_{\fp}(x) \in \bigsqcup_{k \geq 0} \bigsqcup_{\zeta \in \mu_u(K_{\fp})} (\zeta*_u\Omega_{\fp, k}^{\aff}).
\end{align*}
In the proof, we identify elements of $K^{n+1}\setminus \lcrc{0}, V^{\fa, \fd}/\cO_K^\times$, and $\cP(u)(K)$ since the local condition $\bigsqcup_{\zeta \in \mu_u(K_{\fp})} (\zeta*_u\Omega_{\fp, k}^{\aff})$ is stable under $u$-weighted $\cO_{K, \fp}^\times$-action.
Let $\lcrc{[\fa_i]}_{i=1}^h$ be a set of representatives of the class group of $K$.
We can choose the representatives $\fa_i$ to be integral and $\fp \nmid \fa_i$.
There is a natural partition 
\begin{align*}
    &\lcrc{x \in \cP(u)(K) : H_{w}(f(x)) \leq B , x \in \Omega_S} \\
    &=\bigsqcup_{i=1}^h
    \lcrc{x \in \cP(u)(K) : H_{w}(f(x)) \leq B, [\fI_{u}(x)] = [\fa_i], x \in \Omega_S }.
\end{align*}
The canonical relation between the projective space and its affine cone gives a bijection between $i$-th part of the right-hand side and 
\begin{align*}
    \lcrc{x \in (K^{n+1} \setminus \lcrc{0})/\cO_{K}^\times : 
    H_{w}(f(x)) \leq B,
    \fI_{u}(x) = \fa_i, x \in \Omega_S^{\aff} }.
\end{align*}
We denote the cardinality of this set by $M(\fa_i, \Omega,  B)$, so that
\begin{align*}
    \# \lcrc{x \in \cP(u)(K) : H_{w}(f(x)) \leq B, x \in \Omega_S} = \sum_{i=1}^h M(\fa_i, \Omega,  B).
\end{align*}
Since
\begin{align*}
     H_{w}(f(x)) = \frac{H_{w, \infty}(f(x))}{N_{K/\bQ}(\fI_w(f(x)))}
     = \frac{H_{w, \infty}(f(x))}{N_{K/\bQ}(\delta_f(x)\fI_u(x)^{e(f)})},
\end{align*}
we have
% We recall that 
% \begin{align*}
%     H_{w}(f(x)) = \frac{H_{w, \infty}(f(x))}{N_{K/\bQ}(\fI_w(f(x)))} \quad \textrm{and} \quad 
%     \delta_f(x) = \frac{\fI_w(f(x))}{\fI_{u}(x)^{e(f)}}.
% \end{align*}
% If $\fI_{u}(x) = \fa$ and $\delta_f(x) = \fd$, we have
% \begin{align*}
% &\fI_{w}(f(x)) = \fd \fI_{u}(x)^{e(f)} = \fd \fa^{e(f)}, \quad
% &N_{K/\bQ}(\fI_{w}(f(x))) = N_{K/\bQ}(\fd \fI_{u}(x)^{e(f)}) = N_{K/\bQ}(\fd \fa^{e(f)}).
% \end{align*}
% Hence, the condition 
% \begin{align*}
%     \frac{H_{w, \infty}(f(x))}{N_{K/\bQ}(\fd \fa^{e(f)})} \leq B
% \end{align*}
% is equivalent to $H_{w, \infty}(f(x)) \leq B N_{K/\bQ}(\fd \fa^{e(f)}) \leq B N_{K/\bQ}(\fI_w(f(x)))$, which is
% $H_w(f(x)) \leq B$.
% Therefore, 
\begin{align*}
    M(\fa_i, \Omega, B) 
    % &=      \# \lcrc{x \in (K^{n+1} \setminus \lcrc{0})/\cO_{K}^\times : H_{w}(f(x)) \leq B,
    % \fI_{u}(x) = \fa_i, x \in \Omega_S^{\aff} } \\
    &= \sum_{\fd \in \fD_{f}} \#\lcrc{x \in (K^{n+1} \setminus \lcrc{0})/\cO_{K}^\times : H_{w}(f(x)) \leq B,
    \fI_{u}(x) = \fa_i, \delta_f(x) = \fd,  x \in \Omega_S^{\aff} } \\
    &= \sum_{\fd \in \fD_{f}} \#\lcrc{x \in (K^{n+1} \setminus \lcrc{0})/\cO_{K}^\times : \frac{H_{w, \infty}(f(x))}{N_{K/\bQ}(\fd \fa_i^{e(f)})} \leq B,
    \fI_{u}(x) = \fa_i, \delta_f(x) = \fd,  x \in \Omega_S^{\aff} }.
\end{align*}
On the other hand, 
\begin{align*}
    M(\fa, \fa, \fd, \Omega, B) = \# \lcrc{x \in (K^{n+1} \setminus \lcrc{0}) /\cO_{K}^\times : H_{w, \infty}(f(x)) \leq B, \fI_{u}(x) = \fa, \delta_f(x) = \fd, x \in \Omega_S^{\aff} }
\end{align*}
by definition of $M$ and Lemma \ref{lem: concrete fmfa} (i). 
By (\ref{eqn: M'=sum M}), (\ref{eqn: M''= sum M}) and the M\"obius inversion formula, 
\begin{align*}
    M(\fb, \fa, \fd, \Omega, B) 
    = \sum_{\substack{\fd' \subset \cO_K \\ \fd \fd' \in \fD_f}} \mu_K(\fd') M'(\fb, \fa, \fd'\fd, \Omega, B)
    = \sum_{\substack{\fd' \subset \cO_K \\ \fd \fd' \in \fD_f}} \mu_K(\fd') \sum_{\fc \subset \cO_K} \mu_K(\fc) M''(\fb\fc, \fa, \fd'\fd, \Omega, B)
\end{align*}
where 
\begin{align*}
    \mu_K(\fa) \vcentcolon = \left\{ 
    \begin{array}{ll}
    (-1)^r     & \textrm{ if } \fa = \prod_{i=1}^r \fp_i,  \\
    0     & \textrm{ if $\fa$ is not square-free}.
    \end{array} \right.
\end{align*}
Therefore, 
\begin{align}
    \sum_{i=1}^h M(\fa_i, \Omega, B)  &=  \sum_{i=1}^h \sum_{\fd \in \fD_f} M\left(\fa_i, \fa_i, \fd, \Omega, BN_{K/\bQ}(\fa_i^{e(f)}\fd) \right) \nonumber \\
    &= \sum_{i=1}^h \sum_{\fd \in \fD_f} \sum_{\substack{\fd' \subset \cO_K \\ \fd\fd' \in \fD_{f}}} \mu_K(\fd') 
    \sum_{\fc \subset \cO_K} \mu_K(\fc)
    M''\left( \fa_i \fc, \fa_i, \fd'\fd, \Omega, BN_{K/\bQ}(\fa_i^{e(f)}\fd\fd') \right).
    \label{eqn: mu M' form}
\end{align}
From now on, we use 
\begin{align*}
    \fc_0 = \fc_0(\fc) = \fc_0(\fa_i^{-1} \cdot \fa_i \fc), \qquad \text{ and } \qquad
    \fc_1 = \fc_1(\fc) = \fc_1(\fa_i^{-1} \cdot \fa_i \fc)
\end{align*}
in the proof.
By Lemma \ref{lem: M' step1}, we have 
\begin{align}
    & M''\left(\fa_i\fc, \fa_i, \fd'\fd, \Omega, B N_{K/\bQ}(\fa_i^{e(f)}\fd\fd')\right)\nonumber \\
    & = \frac{1}{\# \mu_{u}(K)} \sum_{\zeta \in \mu_u(K_{\fp})} \sum_{t=0}^{\infty} \#\lbrb{V^{\fa_i, \fd'\fd} \cap (\fa_i\fc)^{u} \cap i_{\fp}^{-1}(\zeta*_u\Omega_{\fp, t}^{\aff}) \cap \cF(B N_{K/\bQ}(\fa_i^{e(f)}\fd\fd'))}. \nonumber
\end{align}
We can apply Proposition \ref{prop: V b Omega F} to each summand in the expression above, with $\Omega_{\fp}^a = \Omega_{\fp, t}^{\aff}$.
For simplicity, we first consider
\begin{align} \label{eqn: interim M''}
    \sum_{\zeta \in \mu_u(K_{\fp})} \sum_{t=0}^{\infty} \#\lbrb{V^{\fa_i, \fd} \cap (\fa_i\fc)^{u} \cap i_{\fp}^{-1}(\zeta*_u\Omega_{\fp, t}^{\aff}) \cap \cF(B )}.
\end{align}
In this case, let $\Lambda =  \fm^{\fa_i, \fd}_{\fc_1}(B) \cap (\fa_i\fc_0)^{u}$.
Since $\Omega_{\fp, t}^{\aff}$ is a box-type set, we have
\begin{align*}
    \epsilon(\zeta*_u\Omega_{\fp, t}^{\aff}) 
    = \max_{0 \leq j \leq n }\lcrc{\omega_{\fp, j}^{-1}(\Omega) q_{\fp}^{tu_j}} \frac{m_{\fp}((\zeta*_u\Omega_{\fp, t}^{\aff}) \cap \Lambda_{\fp})}{m_{\fp}(\Lambda_{\fp})}
    = \max_{0 \leq j \leq n }\lcrc{ q_{\fp}^{r_{\fp, j} + tu_j}} \frac{m_{\fp}((\zeta*_u\Omega_{\fp, t}^{\aff}) \cap \Lambda_{\fp})}{m_{\fp}(\Lambda_{\fp})},
\end{align*}
and 
\begin{align*}
    &\#\lbrb{V^{\fa_i, \fd} \cap (\fa_i\fc)^{u} \cap i_{\fp}^{-1}(\zeta*_u\Omega_{\fp, t}^{\aff}) \cap \cF(B )} 
    =C(\fa_i, \fa_i\fc, \fd, f)m_{\infty}(\cF(1))
    \frac{m_{\fp}((\zeta*_u\Omega_{\fp,t}^{\aff}) \cap \Lambda_{\fp} ) }{m_{\fp}(\Lambda_{\fp})}
    B^{\frac{|u|}{e(f)}} \\ 
    &\qquad +O\lbrb{ \max_{0\leq j \leq n}\lcrc{ q_{\fp}^{r_{\fp, j} +tu_j} } \frac{m_{\fp}((\zeta*_u\Omega_{\fp,t}^{\aff}) \cap \Lambda_{\fp} ) }{m_{\fp}(\Lambda_{\fp})}  \frac{B^{\frac{d|u| - u_{\min}}{de(f)}}}{N_{K/\bQ}(\fc_0)^{\frac{d|u|-u_{\min}}{d}}} }.
\end{align*}
Since $\fp \not\in S_f$, we have $\fm_{\fp}^{\fa_i, \fd}(B) = (\fa_i)_{\fp}^u$ by definition (\ref{eqn: def fm_fp^fa^fd V_fp^fa^fd}), which implies that $\Lambda_{\fp} = (\fa_i\fc_0)_{\fp}^{u}$.
Hence, $\Lambda_{\fp}$ is a product of $\fp^{u_j\ord_{\fp}(\fa_i\fc_0)}\cO_{K, \fp}$. 
On the other hand, we have
\begin{align*}
    \zeta*_u\Omega_{\fp, t}^{\aff}
    =\prod_{j=0}^n \lbrb{\zeta^{u_j}\pi_{\fp}^{tu_j}a_{\fp, j} + \fp^{r_{\fp, j}+tu_j}\cO_{K, \fp}}.
\end{align*}
By Lemma \ref{lem: pm cap xpn},
\begin{align*}
(\zeta*_u\Omega_{\fp, t}^{\aff}) \cap (\fa_i\fc_0)_{\fp}^{u}
= \left\{
\begin{array}{ll}
    \emptyset & \textrm{if } t < \ord_{\fp}(\fa\fc_0),  \\
    (\zeta*_u\Omega_{\fp, t}^{\aff}) & \textrm{if } t \geq \ord_{\fp}(\fa\fc_0),
\end{array}
\right.
\end{align*}
because there is $j$ satisfying $a_{\fp, j} \neq 0$,
Therefore if $t\geq \ord_{\fp}(\fa_i\fc_0)$,
\begin{align*}
    \frac{m_{\fp}((\zeta*_u\Omega_{\fp, t}^{\aff}) \cap \Lambda_{\fp} )}{m_{\fp}(\Lambda_{\fp})} 
    = \frac{m_{\fp}(\zeta*_u\Omega_{\fp, t}^{\aff})}{m_{\fp}(\Lambda_{\fp})} 
    = \left. \lbrb{\prod_{j=0}^n q_{\fp}^{-(r_{\fp, j} + tu_j)} } \middle/ 
    \lbrb{\prod_{j=0}^n q_{\fp}^{-u_j \ord_{\fp}(\fa_i\fc_0)}} \right.
    = m_{\fp}(\Omega_{\fp, t-\ord_{\fp}(\fa_i\fc_0)}^{\aff}).
\end{align*}
We note that $m_{\fp}(\Omega_{\fp, t}^{\aff}) = m_{\fp}(\zeta*_u\Omega_{\fp, t}^{\aff})$ for any $\zeta \in \mu(K_{\fp})$ and the left-hand side of the equality does not depend on $B$ by the same reason mentioned in Remark \ref{rmk: volume not dep on B}.

By definition, $\Omega_{\fp, t}^{\aff}$'s are disjoint.
For arbitrary integral ideal $\fc$ that determines $\fc_i = \fc_i(\fc)$ and $\Lambda =  \fm^{\fa_i, \fd}_{\fc_1}(B) \cap (\fa_i\fc_0)^{u}$,
\begin{align*}
    \sum_{\zeta \in \mu_u(K_{\fp})}\sum_{t=0}^{\infty} \frac{m_{\fp}((\zeta*_u\Omega_{\fp, t}^{\aff}) \cap \Lambda_{\fp} )}{m_{\fp}(\Lambda_{\fp})} 
    &=\sum_{\zeta \in \mu_u(K_{\fp})}\sum_{t = \ord_{\fp}(\fa_i\fc_0)}^{\infty} m_{\fp}(\Omega_{\fp, t-\ord_{\fp}(\fa_i\fc_0)}^{\aff}) \\
    &= \sum_{\zeta \in \mu_u(K_{\fp})} \sum_{t=0}^{\infty} m_{\fp}(\zeta*_u\Omega_{\fp, t}^{\aff}) \\
    &= m_{\fp}\left(\Omega_{\fp}^{\aff} \cap \cO_{K, \fp}^{n+1}\right),
\end{align*}
by (\ref{eqn: Omega decomp Omega_k disj}). 
Hence, the main term of (\ref{eqn: interim M''}) is
\begin{align*}
    C(\fa_i, \fa_i\fc, \fd,  f) m_{\infty}(\cF(1))
    m_{\fp}\left(\Omega_{\fp}^{\aff} \cap \cO_{K, \fp}^{n+1}\right) B^{\frac{|u|}{e(f)}}.
\end{align*}
There is a naive bound $\#\mu_u(K_{\fp}) \leq q_{\fp}$ that can be used to bound the summation over $\zeta \in \mu_u(K_{\fp})$.
So the error term of (\ref{eqn: interim M''}) is
\begin{align*}
    & \leq  q_{\fp} \sum_{t=\ord_{\fp}(\fa_i\fc_0)}^{\infty}
    \max_{0\leq j\leq n}\lcrc{    q_{\fp}^{r_{\fp, j} + tu_j} } 
    \lbrb{\prod_{j=0}^n q_{\fp}^{-r_{\fp, j}-(t - \ord_{\fp}(\fa_i\fc_0))u_j}  } 
    \frac{B^{\frac{d|u| - u_{\min}}{de(f)}}}{N_{K/\bQ}(\fc_0)^{\frac{d|u|-u_{\min}}{d}}}  \\
    & \leq q_{\fp} \sum_{t=\ord_{\fp}(\fa_i\fc_0)}^{\infty}
    q_{\fp}^{tu_{\max}} \max_{0\leq j\leq n}\lcrc{    q_{\fp}^{r_{\fp, j}} }  
    \lbrb{ q_{\fp}^{-(t - \ord_{\fp}(\fa_i\fc_0))|u|} \prod_{j=0}^n q_{\fp}^{-r_{\fp, j}} } 
    \frac{B^{\frac{d|u| - u_{\min}}{de(f)}}}{N_{K/\bQ}(\fc_0)^{\frac{d|u|-u_{\min}}{d}}}  \\
    & = q_{\fp}
    \frac{\displaystyle \max_{0\leq j\leq n}\lcrc{    q_{\fp}^{r_{\fp, j}} }}{\prod_{j=0}^n q_{\fp}^{r_{\fp, j}}}
    \frac{B^{\frac{d|u| - u_{\min}}{de(f)}}}{N_{K/\bQ}(\fc_0)^{\frac{d|u|-u_{\min}}{d}}}
    \sum_{t=0}^{\infty}
    q_{\fp}^{-t|u| + (t + \ord_{\fp}(\fa_i\fc_0)) u_{\max}} \\
    & =
    \frac{\displaystyle \max_{0\leq j\leq n}\lcrc{    q_{\fp}^{r_{\fp, j}} }}{\prod_{j=0}^n q_{\fp}^{r_{\fp, j}}}
    \frac{B^{\frac{d|u| - u_{\min}}{de(f)}}}{N_{K/\bQ}(\fc_0)^{\frac{d|u|-u_{\min}}{d}}}
    q_{\fp}^{\ord_{\fp}(\fa_i\fc_0)u_{\max} + 1}
    \frac{q_{\fp}^{|u| - u_{\max}}}{q_{\fp}^{|u| - u_{\max}} -1}.
\end{align*}
Altogether, we have
\begin{align*}
    (\ref{eqn: interim M''}) &= C(\fa_i, \fa_i\fc, \fd, f) m_{\infty}(\cF(1))
     m_{\fp}\left(\Omega_{\fp}^{\aff} \cap \cO_{K, \fp}^{n+1}\right) B^{\frac{|u|}{e(f)}} \\
    & \qquad + O\lbrb{ \frac{\displaystyle \max_{0\leq j\leq n}\lcrc{    q^{r_{\fp, j}} }}{\prod_{j=0}^n q^{r_{\fp, j}}}
    \frac{B^{\frac{d|u| - u_{\min}}{d e(f)}}}{N_{K/\bQ}(\fc_0)^{\frac{d|u|-u_{\min}}{d}}}
    q_{\fp}^{\ord_{\fp}(\fa_i\fc_0)u_{\max}+1}
    \frac{q_{\fp}^{|u| - u_{\max}}}{q_{\fp}^{|u| - u_{\max}} -1}}.
\end{align*}
By plugging this into (\ref{eqn: mu M' form}), the main term is
\begin{align*}
    &\frac{m_{\infty}(\cF(1))}{\# \mu_{u}(K)} \sum_{i=1}^h \sum_{\fd \in \fD_f} \sum_{\substack{\fd' \subset \cO_K \\ \fd\fd' \in \fD_{f}}} \sum_{\fc \subset \cO_K} \mu_K(\fc) \mu_K(\fd')
     C(\fa_i, \fa_i\fc, \fd\fd',  f)
     m_{\fp}\left(\Omega_{\fp}^{\aff} \cap \cO_{K, \fp}^{n+1}\right) (BN_{K/\bQ}(\fa_i^{e(f)}\fd \fd'))^{\frac{|u|}{e(f)}}= \\
    &\frac{m_{\infty}(\cF(1)) m_{\fp}\left(\Omega_{\fp}^{\aff} \cap \cO_{K, \fp}^{n+1}\right) B^{\frac{|u|}{e(f)}}}{\# \mu_{u}(K)} 
    \sum_{i=1}^h \sum_{\fd \in \fD_f} \sum_{\substack{\fd' \subset \cO_K \\ \fd\fd' \in \fD_{f}}} \sum_{\fc \subset \cO_K} \mu_K(\fc) \mu_K(\fd') C(\fa_i, \fa_i\fc, \fd\fd',  f)
    N_{K/\bQ}(\fa_i^{e(f)}\fd \fd')^{\frac{|u|}{e(f)}}.
\end{align*}

Let 
\begin{align*}
    \kappa(\fa, \fd) \vcentcolon =  \sum_{\substack{\fd' \subset \cO_K \\ \fd\fd' \in \fD_{f}}} \sum_{\fc \subset \cO_K} \mu_K(\fc) \mu_K(\fd') C(\fa, \fa\fc, \fd \fd', f)
    N_{K/\bQ}(\fa^{e(f)}\fd \fd')^{\frac{|u|}{e(f)}}.
\end{align*}
We claim that $\kappa(\fa, \fd)$ is finite.
Since the sum over $\fd' \subset \cO_K$ such that $\fd \fd' \in \fD_f$ is finite, it suffices to show that
\begin{align*}
    \sum_{\fc \subset \cO_K} \mu_K(\fc) C(\fa, \fa\fc, \fd, f)
    % N_{K/\bQ}(\fa^{e(f)}\fd)^{\frac{|u|}{e(f)}}
\end{align*}
is finite. 
By the construction of $C(\fa, \fb, \fd, f)$ in Lemma \ref{lem: fS lattice vol quotient}, there are constants $C_{\fp}(\fa, \fa\fc, \fd, f)$ satisfying
\begin{align*}
    C(\fa, \fa\fc, \fd, f)
    = \prod_{\fp} C_{\fp}(\fa, \fa\fc, \fd, f)
\end{align*}
and $C_{\fp}(\fa, \fa\fc, \fd, f) = q_{\fp}^{-\ord_{\fp}(\fa\fc)|u|}$ if $\fp \nmid \fd$.
Let $\widetilde{S(\fd)}$ (resp. $\widehat{S(\fd)}$) be the set of integral ideals only divided (resp. not divided) by prime ideals dividing $\fd$.
Then, we have 
\begin{align*}
    \sum_{\fc \subset \cO_K} \mu_K(\fc) C(\fa, \fa\fc, \fd, f)
    = \lbrb{\sum_{\fc_0 \in \widetilde{S(\fd)}} \frac{\mu_K(\fc_0)}{ N(\fa\fc_0)^{|u|} }  }
    \lbrb{\sum_{\fc_1 \in \widehat{S(\fd)}} \mu_K(\fc_1) C_{\fp}(\fa, \fa\fc, \fd, f) }.
\end{align*}
The summation over $\widehat{S(\fd)}$ is finite, since it contains only finitely many square-free ideals.
The summation over $\widetilde{S(\fd)}$ is also finite since
\begin{align} \label{eqn: mu/norm = 1/zeta}
    \sum_{\fc \in \cO_K} \frac{\mu_K(\fc)}{N_{K/\bQ}(\fa\fc)^{|u|}}
    = \frac{1}{N_{K/\bQ}(\fa)^{|u|}} \frac{1}{\zeta_K(|u|)} 
    %\prod_{\fp \not\in \widetilde{S(\fd)} } \frac{1}{1- N_{K/\bQ}(\fp)^{-|u|}}.
\end{align}
and there are only finitely many prime ideals not in $\widetilde{S(\fd)}$.
Hence, both $\kappa(\fa_i, \fd)$ and
\begin{align*}
    \kappa \vcentcolon = 
    \frac{m_{\infty}(\cF(1)) }{\# \mu_{u}(K)} 
    \sum_{i=1}^h \sum_{\fd \in \cD_f} \kappa(\fa_i, \fd)
\end{align*}
are finite.

Substituting the error term from (\ref{eqn: interim M''}) into (\ref{eqn: mu M' form}), we obtain the following error term:
\begin{align*}
&\sum_{i=1}^h \sum_{\fd \in \fD_f} \sum_{\substack{\fd' \subset \cO_K \\ \fd\fd' \in \fD_{f}}} \sum_{\fc \subset \cO_K} \mu_K(\fc\fd') 
    \frac{\displaystyle \max_{0\leq j\leq n} \lcrc{    q_{\fp}^{r_{\fp, j}} }}{\prod_{j=0}^n q_{\fp}^{r_{\fp, j}}}
    \frac{N_{K/\bQ}(\fa_i^{e(f)}\fd \fd')^{\frac{d|u|-u_{\min}}{de(f)}} B^{\frac{d|u| - u_{\min}}{de(f)}}}{N_{K/\bQ}(\fc_0)^{\frac{d|u|-u_{\min}}{d}}}
    q_{\fp}^{\ord_{\fp}(\fa_i\fc_0)u_{\max}+1}
    \frac{q_{\fp}^{|u| - u_{\max}}}{q_{\fp}^{|u| - u_{\max}} -1} \\
&=\frac{\displaystyle \max_{0\leq j\leq n}\lcrc{    q_{\fp}^{r_{\fp, j}} }}{\prod_{j=0}^n q_{\fp}^{r_{\fp, j}}} 
    \frac{q_{\fp}^{|u| - u_{\max}}}{q_{\fp}^{|u| - u_{\max}} -1} B^{\frac{d|u| - u_{\min}}{de(f)}}
    \sum_{i=1}^h \sum_{\fd \in \fD_f} \sum_{\substack{\fd' \subset \cO_K \\ \fd\fd' \in \fD_{f}}} \sum_{\fc \subset \cO_K} \mu_K(\fc\fd') 
    q_{\fp}^{\ord_{\fp}(\fa_i\fc_0)u_{\max}+1} 
    \frac{N_{K/\bQ}(\fa_i^{e(f)}\fd\fd')^{\frac{d|u|-u_{\min}}{de(f)}}}{N_{K/\bQ}(\fc_0)^{\frac{d|u|-u_{\min}}{d}}}.
\end{align*}
The inner summation can be arranged by
\begin{align*}
    \sum_{i=1}^h N_{K/\bQ}(\fa_i)^{\frac{d|u|-u_{\min}}{d}} 
    \sum_{\fd \in \fD_f} \sum_{\substack{\fd' \subset \cO_K \\ \fd\fd' \in \fD_{f}}} 
    \mu_K(\fd')N_{K/\bQ}(\fd\fd')^{\frac{d|u|-u_{\min}}{de(f)}}
    \sum_{\fc \subset \cO_K} 
    \frac{ \mu_K(\fc) q_{\fp}^{\ord_{\fp}(\fa_i\fc_0)u_{\max}+1}  }{N_{K/\bQ}(\fc_0)^{\frac{d|u|-u_{\min}}{d}}}.
\end{align*}
The first three sums are over finite sets, so they are negligible.
Since we have assumed that $\fp \nmid \fa_i$, the last sum can be replaced by
\begin{align*}
    \sum_{\fc \subset \cO_K} 
    \frac{ \mu_K(\fc) q_{\fp}^{\ord_{\fp}(\fc_0)u_{\max}+1}   }{N_{K/\bQ}(\fc_0)^{\frac{d|u|-u_{\min}}{d}}}
    = \lbrb{\sum_{\fc_1 \in \widehat{\fD_f}} \mu_K(\fc_1) }
    \lbrb{\sum_{\fc_0 \in \widetilde{\fD_f}} \frac{ \mu_K(\fc_0) q_{\fp}^{\ord_{\fp}(\fc_0)u_{\max}+1} }{N_{K/\bQ}(\fc_0)^{\frac{d|u|-u_{\min}}{d}}} }.
\end{align*}
The first sum over $\fc_1$ is finite since $\widehat{\fD_f}$ is finite and does not depend on $q_{\fp}$, and the second one is 
\begin{align*}
    \sum_{\fc_0 \in \widetilde{\fD_f}} \frac{ \mu_K(\fc_0) q_{\fp}^{\ord_{\fp}(\fc_0)u_{\max}+1} }{N_{K/\bQ}(\fc_0)^{\frac{d|u|-u_{\min}}{d}}}
    = \sum_{\substack{\fc_0 \in \widetilde{\fD_f} \\ \fp \nmid \fc_0 }} \frac{ \mu_K(\fc_0)q_{\fp} }{N_{K/\bQ}(\fc_0)^{\frac{d|u|-u_{\min}}{d}}}
    - \frac{q_{\fp}^{u_{\max}+1}}{q_{\fp}^{|u| - \frac{u_{\min}}{d}}} \sum_{\substack{\fc_0 \in \widetilde{\fD_f} \\ \fp \nmid \fc_0 }} \frac{ \mu_K(\fc_0) }{N_{K/\bQ}( \fc_0)^{\frac{d|u|-u_{\min}}{d}}}.
\end{align*}
By (\ref{eqn: mu/norm = 1/zeta}), the same argument gives 
\begin{align*}
    \sum_{\substack{\fc_0 \in \widetilde{\fD_f} \\ \fp \nmid \fc_0 }} \frac{ \mu_K(\fc_0) }{N_{K/\bQ}( \fc_0)^{\frac{d|u|-u_{\min}}{d}}} = O(1).
\end{align*}
Hence, the second sum over $\fc_0 \in \widetilde{\fD_f}$ is $O(q_{\fp}+q_{\fp}^{u_{\max} + 1 + \frac{u_{\min}}{d} - |u|})$. Consequently, the error term is
\begin{align*}
    O\lbrb{\frac{\displaystyle \max_{0\leq j\leq n}\lcrc{    q_{\fp}^{r_{\fp, j}} }}{\prod_{j=0}^n q_{\fp}^{r_{\fp, j}}} 
    \frac{q_{\fp}^{|u| - u_{\max}}}{q_{\fp}^{|u| - u_{\max}} -1} B^{\frac{d|u| - u_{\min}}{de(f)}}
    \lbrb{q_{\fp}+q_{\fp}^{u_{\max} +1 + \frac{u_{\min}}{d} - |u|}  }}
\end{align*}
and the implied constant depends on $f, K, u, w$.
To deal with the case $K = \bQ$, we also have an additional error term $\log B$ on the error term.
\end{proof}

To deduce Theorem \ref{prop:phi411} from Theorem \ref{thm: phi411 general}, we need to compute the number of preimages of the forgetful functor $\phi_{\Gamma}: \cX_{\Gamma}(K) \to \cX(K)$.
This is essentially achieved by \cite[Lemma 3.1.8]{CKV}, which we now introduce.
For a subgroup $G \leq \GL_2(\bZ/N\bZ)$ and the natural projection $\pi_N : \SL_2(\bZ) \to \SL_2(\bZ/N\bZ)$, we define
\begin{align*}
    \Gamma_G \vcentcolon= \pi_N^{-1}(G \cap \SL_2(\bZ/N\bZ)).
\end{align*}
For example, when
\begin{align*}
    G = \lcrc{\begin{pmatrix}
        1 & b \\ 0 & d
    \end{pmatrix} \in \GL_2(\bZ/N\bZ) : b, d \in \bZ/N\bZ },
\end{align*}
we have $\Gamma_G = \Gamma_1(N)$. 
Similarly, $\Gamma_{G} = \Gamma(N)$ when $G = \lcrc{I} \leq \GL_2(\bZ/N\bZ)$.
For the congruence group $\Gamma_1(M, N) = \Gamma(M) \cap \Gamma_1(MN)$, we may take $G$ to be the intersection of the preimages of the subgroups corresponding to $\Gamma(M)$ and $\Gamma_1(MN)$ in $\mathrm{GL}_2(\mathbb{Z}/MN\mathbb{Z})$.
We define
\begin{align*}
    r(G) \vcentcolon = \left\{
    \begin{array}{llll}
    {[N_{\GL_2(\bZ/N\bZ)}(G) : G ]} & \textrm{ if } -I \in \Gamma,\\
    {[N_{\GL_2(\bZ/N\bZ)}(G) : G ]}/2 & \textrm{ otherwise.}
    \end{array}
    \right.
\end{align*}
We note that $\# \phi^{-1}_{\Gamma}(E)$ is denoted by $r_G(E)$ in \cite{CKV}.

From now on, we write $\Gamma$ for $\Gamma_G$ and $\Gamma'$ for $\Gamma_{G'}$.
\begin{lemma} \label{lem: CKV preimages}
Let $E$ be an elliptic curve over $K$ satisfying $j(E) \neq 0, 1728$.  Then,\\
(i) If $\# \phi^{-1}_{\Gamma}(E) \geq 1$, then $\# \phi_{\Gamma}^{-1}(E) \geq r(G)$. \\
(ii) If $\# \phi^{-1}_{\Gamma}(E) > r(G)$, then there exists $G' \leq G$ such that $\# \phi^{-1}_{\Gamma'}(E) \geq 1$.
\end{lemma}
\begin{proof}
This is proved in \cite[Lemma 3.1.8]{CKV} for elliptic curves over $\bQ$, and the same argument applies to elliptic curves over any number field $K$.
\end{proof}

\begin{lemma} \label{lem: Serre}
Let $X$ be an irreducible algebraic curve over $K$ with a morphism $\pi : X \to \bP^1$, and assume that $X$ has genus at least 1.
For any $\epsilon > 0$,
\begin{align*}
    \# \lcrc{t \in \bP^1(K) : t \in \pi(X(K)), H(t) \leq B} = O(B^{\epsilon}).
\end{align*}
\end{lemma}
\begin{proof}
It is a part of \cite[Theorem, p.133]{Ser97}. 
In fact, it is $O(\log B^{r(X/K)})$ when $X/K$ is an elliptic curve whose algebraic rank is $r(X/K)$, and $O(1)$ if the genus of $X$ is at least 2, by Faltings’s theorem.
\end{proof}

\begin{lemma} \label{lem: Height compare}
Let $j : \cP(4, 6)(K) \to \bP^1(K)$ be a morphism defined by
\begin{align*}
    j([A, B]) \vcentcolon = [2^8\cdot 3^3\cdot A^3,4A^3+27B^2].
\end{align*}
Then there is a constant $C$ depending only on $K$ such that $H_{(1, 1)}(j([A, B])) \leq CH_{(4, 6)}([A, B])^{12}$.
\end{lemma}
\begin{proof}
Let $S$ be a finite set of finite places such that $\cO_{K, S}$ is a principal ideal domain.
Then for integral $A$ and $B$, 
\begin{align*}
    H_{(1, 1), K}([A, B]) &= \prod_{v \in S} |(A, B)|_{(1, 1), v} \times \prod_{v \in M_{K, \infty}} |(A, B)|_{(1, 1), v} \leq \prod_{v \in M_{K, \infty}} |(A, B)|_{(1, 1), v}\\
    &= \prod_{v \in M_{K, \infty}} \max \lcrc{|A|_v, |B|_v}.
\end{align*}
In the proof of Lemma \ref{lem: Height inf}, we showed that the same set $S$ satisfies
\begin{align*} 
    H_{(4, 6), K}([A, B]) 
    %&= \prod_{v \in M_{K, 0}} |(A, B)|_{(4, 6), v} \times  \prod_{v \in M_{K, \infty}} |(A, B)|_{(4, 6), v} \geq \prod_{v \in S} q_v^{-e(\fp_v|p)} \prod_{v \in M_{K, \infty}} |(A, B)|_{(4, 6), v} \\
    & \geq \prod_{v \in S} q_v^{-n_v} \prod_{v \in M_{K, \infty}} \max \lcrc{|A|_v^{\frac{1}{4}}, |B|_v^{\frac{1}{6}}}
\end{align*}
for some $n_v$, depending only on $K$.
Therefore,
\begin{align*}
    H_{(1, 1), K}(j([A, B])) &= H_{(1, 1), K}([ 1728 \cdot 4 A^3, 4A^3 + 27B^2] )\\
    &\leq 1728 H_{(1, 1), K}( [4 A^3, 4A^3 + 27B^2] ) 
    \leq 1728 \prod_{v \in M_{K, \infty}} \max \lcrc{|4A^3|_v, |4A^3 + 27B^2|_v} \\
    &\leq 1728 \prod_{v \in M_{K, \infty}} \lbrb{|4A^3|_v + |27B^2|_v}
    \leq 1728 \cdot 4 \cdot 27 \cdot 2 \prod_{v \in M_{K, \infty}} \max \lcrc{|A^3|_v, |B^2|_v}  \\
    &= 1728 \cdot 4 \cdot 27 \cdot 2 \prod_{v \in M_{K, \infty}}\lbrb{  \max \lcrc{|A|_v^{\frac{1}{4}}, |B|_v^{\frac{1}{6}} } }^{12} \\
    &\leq 1728 \cdot 4 \cdot 27 \cdot 2 \prod_{v \in S} q_v^{12 n_v } H_{(4, 6), K}([A, B])^{12}.
\end{align*}
So we obtain the result.
\end{proof}

In (\ref{eqn:def eGam}), we have defined
\begin{align*}
    e(\phi_{\Gamma}) = e(\Gamma) \vcentcolon = \frac{u_0u_1}{24}[\SL_2(\bZ) : \Gamma]
\end{align*}
where $\cX_{\Gamma} \cong \cP(u)$, following \cite[Lemma 4.1]{BN}.
The main idea of the following proposition comes from \cite[Step 4, proof of Theorem 3.3.1]{CKV}.

\begin{proposition} \label{prop: preimage degree}
Let $\Gamma$ be a congruence subgroup of level $N$ such that $\mathcal{X}_\Gamma\cong\mathbb{P}^1$ over $\Spec\mathbb{Z}[1/6N]$. Let $\phi_\Gamma:\mathcal{P}(1,1)\rightarrow\mathcal{P}(4,6)$ be the morphism corresponding to forgetting the level structure, and let $\Omega = \lcrc{\Omega_{\fp}}$ be a local condition of $\cP(1, 1)$ at a prime $\fp$ which is not in $S_{\phi_{\Gamma}}$. 
Then,
\begin{align*} 
\begin{aligned}
    &\#\lcrc{y \in \phi_{\Gamma}(\cP(1,1)(K)) : H_{(4, 6)}(y) \leq B, i_{\fp}(y) \in \phi_{\Gamma}(\Omega_{\fp})} \\
    &= r(G) \#\lcrc{x \in \cP(1,1)(K) : H_{(4, 6)}(\phi_{\Gamma}(x)) \leq B, i_{\fp}(x) \in \Omega_{\fp}} + O\lbrb{m_{\fp}\left(\Omega_{\fp}^{\aff} \cap \cO_{K, \fp}^{2}\right)
    \max_{\Sigma < \Gamma} \lbrb{ B^{\frac{2}{e({\Sigma})}}  } },
\end{aligned}
\end{align*}
where the maximum is taken over maximal congruence subgroups $\Sigma$ of $\Gamma$.
\end{proposition}
\begin{proof}
By Lemma \ref{lem: CKV preimages}, if $\#\phi_{\Gamma}^{-1}(E) \neq r(G)$ and $\#\phi_{\Gamma}^{-1}(E) \geq 1$, then the elliptic curve satisfies $j(E) = 0, 1728$ or lies in the image of $\phi_{\Gamma'}$ for $\Gamma' \lneq \Gamma$.
We will show that the contribution from such elliptic curves is $\displaystyle O\lbrb{m_{\fp}\left(\Omega_{\fp}^{\aff} \cap \cO_{K, \fp}^{2}\right)
\max_{\Sigma < \Gamma} \lbrb{ B^{\frac{2}{e({\Sigma})}}  }}$.
Then the result follows.

Given a congruence subgroup $\Sigma\subseteq\mathrm{SL}_2(\mathbb{Z})$ such that $\mathcal{X}_\Sigma$ is representable, consider the following commutative diagram taken from section \ref{sec:Prestack}:
\begin{align*}
    \xymatrix{\cX_{\Sigma}(K) \ar[r]^-{\phi_{\Sigma}} \ar[d]^-\wr & \cX(K) \ar[d]^-{j} \\
    X_{\Sigma}(K) \ar[r]^-{\pi} & \bP^1(K)
    }
\end{align*}
where the left column is a bijection because $\cX_\Sigma$ is representable. Hence, for $t\in\mathbb{P}^1(K)$,
\begin{align*}
    \#\phi_\Sigma^{-1}(\im\phi_\Sigma\cap j^{-1}(t))
    =\# \phi_\Sigma^{-1}(j^{-1}(t))=\#\pi^{-1}(t)\leq\deg\pi
\end{align*}
is finite. 
As a result, elliptic curves with $j$-invariant $0$ or $1728$ are negligible.

Next, define
\begin{align*}
    N_{X,\Sigma}(B)\vcentcolon=\left\{t\in\pi(X_\Sigma(K)):H_{(1,1),K}(t)\leq B\right\}\text{ and } N_{\mathcal{X},\Sigma}(B)\vcentcolon=\left\{y\in\phi_\Sigma(\mathcal{X}_\Sigma(K)):H_{(4,6),K}(y)\leq B\right\}.
\end{align*}
Since we are considering the elliptic curves whose $j$-invariant is not $0$ or $1728$, we consider $E \in \im\phi_{\Gamma'}$ for some $\Gamma' \lneq \Gamma $. 
First, suppose that the genus of $\Gamma'$ is $\geq 1$.
Then, $N_{X, \Gamma'}(B) = O(B^{\epsilon})$ for any $\epsilon >0$ by Lemma \ref{lem: Serre}, and $j(N_{\cX, \Gamma'}(B)) \subset N_{X, \Gamma'}(C B^{12})$ by Lemma \ref{lem: Height compare}.
Therefore, for any $\epsilon > 0$,
\begin{align*}
    \# N_{\cX, \Gamma'}(B) \leq \# N_{X, \Gamma'}(CB^{12}) = O(B^\epsilon).
\end{align*}

Suppose that the genus of $\Gamma'$ is zero.
Let
\begin{align*}
    \Gamma \gneq \Gamma' \gneq \Gamma_1 \gneq \Gamma_2 \gneq  \cdots
\end{align*}
be a chain of congruence subgroups such that there is no congruence subgroup strictly between $\Gamma_i$ and $\Gamma_{i+1}$.
For each chain, there exists $k \in \mathbb{N}$ such that the genus of $\Gamma_k$ is zero, and the genus of any $\Sigma \lneq \Gamma_k$ is $\geq 1$ since there are only finitely many congruence subgroups of genus zero (cf. \cite{CP03}).
By Proposition \ref{representability-(M,N)}, $\cX_{\Gamma_k}$ is representable over $\Spec(\bZ[1/6N])$. %for any $\Gamma_k \leq \Gamma$.
So we have
\begin{align*}
    &\#\lcrc{y \in \phi_{\Gamma_k}(\cP(1,1)(K)) : H_{w}(y) \leq B, i_{\fp}(y) \in \phi_{\Gamma_k}(\Omega_{\fp})} \\
    &=r(G_k)\#\lcrc{x \in \cP(1,1)(K) : H_{w}(\phi_{\Gamma_k}(x)) \leq B, i_{\fp}(x) \in \Omega_{\fp}} + O(B^\epsilon) 
    % \\
    % &= O\lbrb{m_{\fp}\left(\Omega_{\fp}^{\aff} \cap \cO_{K, \fp}^{2}\right)B^{\frac{2}{e({\Gamma_k})}}}
\end{align*}
where $G_k$ is the group satisfying $\Gamma_{G_k} = \Gamma_k$.
Since $\cX_{\Gamma_k} \cong \bP^1$, $\phi_{\Gamma_k}$ has finite defect by \cite[Corollary 6.5]{BN} and satisfies (\ref{eqn: widehat w condition}) by definition.
Hence, by applying Theorem \ref{thm: phi411 general}, we obtain
\begin{align*}
    \#\lcrc{y \in \phi_{\Gamma_k}(\cP(1,1)(K)) : H_{w}(y) \leq B, i_{\fp}(y) \in \phi_{\Gamma_k}(\Omega_{\fp})}
    =O\lbrb{m_{\fp}\left(\Omega_{\fp}^{\aff} \cap \cO_{K, \fp}^{2}\right)B^{\frac{2}{e({\Gamma_k})}}}.
\end{align*}

Since there is no congruence subgroup between $\Gamma_i$ and $\Gamma_{i+1}$,
any congruence subgroup
$\Sigma \lneq \Gamma_{k-1}$ is either one of $\Gamma_k$ (possibly in a different chain) or has genus $\geq 1$.
By the previous argument and Theorem \ref{thm: phi411 general},
\begin{align*}
    &\#\lcrc{y \in \phi_{\Gamma_{k-1}}(\cP(1,1)(K)) : H_{w}(y) \leq B, i_{\fp}(y) \in \phi_{\Gamma_{k-1}}(\Omega_{\fp})} \\
    &=r(G_{k-1}) \#\lcrc{x \in \cP(1,1)(K) : H_{w}(\phi_{\Gamma_{k-1}}(x)) \leq B, i_{\fp}(x) \in \Omega_{\fp}} \\
    & \qquad + O\lbrb{m_{\fp}\left(\Omega_{\fp}^{\aff} \cap \cO_{K, \fp}^{2}\right) \max_{\Gamma_k < \Gamma_{k-1}}\lbrb{ B^{\frac{2}{e({\Gamma_k})}}} }.
\end{align*}
Here, the maximum is taken over all possible congruence subgroups $\Gamma_k$ in each chain.
Since $e({\Gamma_i}) \nleq e({\Gamma_{i-1}})$ whenever $\Gamma_{i-1} \gneq \Gamma_i$, the result follows by induction.
\end{proof}

The following gives Theorem \ref{prop:phi411}.

\begin{proposition} \label{prop: phi411 representable}
Let $\Gamma$ be a congruence subgroup of level $N$ such that $\mathcal{X}_\Gamma\cong\mathbb{P}^1$ over $\Spec\mathbb{Z}[1/6N]$. Let $\phi_\Gamma:\mathcal{P}(1,1)\rightarrow\mathcal{P}(4,6)$ be the morphism corresponding to forgetting the level structure, and let $\psi_{\fp}$ be the mod $\fp$ reduction map.
Let $z \in \cP(4, 6)(\bF_q)$.
Suppose that the projective local condition $\Omega_{\fp}$ at a single prime $\fp \not\in S_{\phi_{\Gamma}}$ is given by
\begin{align*}
        \Omega_{\fp, z} \vcentcolon= \lcrc{y \in \cP(4, 6)(K_{\fp}) : \psi_{\fp}(y) = z }.
    \end{align*}
Then
\begin{align*}
    &\#\lcrc{y \in \phi_{\Gamma}(\cP(1,1)(K)) : H_{(4, 6), K}(y) \leq B, i_{\fp}(y) \in \Omega_{\fp, z}} \\
    &= \kappa \cdot r(G) \cdot \frac{\# \phi_{\Gamma}^{-1}(z)}{q+1} \cdot B^{\frac{2}{e({\Gamma})}} + O\lbrb{ \lbrb{1 + q^{\frac{1}{d} - 1}} \lbrb{1 + \frac{1}{q}} B^{\frac{d-1}{de({\Gamma})}} \log B }.
\end{align*}
% {\color{cyan}
% \begin{align*}
%     &\#\lcrc{y \in \phi_{\Gamma}(\cP(1,1)(K)) : H_{(4, 6), K}(y) \leq B, i_{\fp}(y) \in \phi_{\Gamma}(\Omega_{\fp, z})} \\
%     &= \kappa \cdot \frac{\# \phi_{\Gamma}^{-1}(z)}{q+1} \cdot B^{\frac{2}{e({\Gamma})}} 
%     + O\lbrb{ \frac{1}{q^{1 - \frac{1}{d}}} \lbrb{1 + \frac{1}{q}} B^{\frac{ 2d-1}{d e({\Gamma})}} \log B 
%     + \frac{1}{q} B^{\frac{1}{e(\Gamma)} }  }
% \end{align*}
% }
\end{proposition}
\begin{proof}
By Proposition \ref{prop: preimage degree},
\begin{align*} 
\begin{aligned}
    & \frac{1}{r(G)}\#\lcrc{y \in \phi_{\Gamma}(\cP(1,1)(K)) : H_{w}(y) \leq B, i_{\fp}(y) \in \Omega_{\fp, z}} \\
    &= \#\lcrc{x \in \cP(1,1)(K) : H_{w}(\phi_{\Gamma}(x)) \leq B, i_{\fp}(x) \in \phi_{\Gamma}^{-1}( \Omega_{\fp, z})  } + O\lbrb{
    m_{\fp}\left(\Omega_{\fp, z}^{\aff} \cap \cO_{K, \fp}^{2}\right)
    \max_{\Sigma < \Gamma} \lbrb{ B^{\frac{2}{e({\Sigma})}}  } }.
\end{aligned} 
\end{align*}
For $\widetilde{z} \in \cP(1, 1)(\bF_q)$, we define
\begin{align*}
    \Omega_{\fp, \widetilde{z}} \vcentcolon = \lcrc{ x \in \cP(1,1)(K_{\fp}) : \psi_{\fp}(x) = \widetilde{z}}.
\end{align*}
By the diagram (\ref{eqn:comdiag}), we have $\phi_{\Gamma} \circ \psi_{\fp} = \psi_{\fp} \circ \phi_{\Gamma}$. So
\begin{align*}
    \phi_{\Gamma}^{-1}(\Omega_{\fp, z}) = \bigsqcup_{\widetilde{z} \in \phi_{\Gamma }^{-1}(z) }
    \lcrc{ x \in \cP(1, 1)(K_{\fp}) : \psi_{\fp}(x) = \widetilde{z} }
    = \bigsqcup_{\widetilde{z} \in \phi_{\Gamma}^{-1}(z) } \Omega_{\fp, \widetilde{z}}.
\end{align*}
Let $(z_{\fp, 0}, z_{\fp, 1}) \in \cO_{K, \fp}^2$ satisfying $\psi_{\fp}([z_{\fp, 0}, z_{\fp, 1}]) = \widetilde{z}$. 
Then by Lemma \ref{lem: local cond first exam}
\begin{align*}
    \Omega_{\fp, \widetilde{z}, 0}^{\aff} \vcentcolon= \prod_{j=0}^1 (z_{\fp, j} + \fp \cO_{K, \fp}), \quad \textrm{and} \quad 
    \Omega_{\fp, \widetilde{z}} = \bigsqcup_{k \geq 0} \bigsqcup_{\zeta \in \bF_q^\times} (\zeta*_{(1,1)}\Omega_{\fp, \widetilde{z}, k}^{\aff}).
\end{align*}
Since $\phi_{\Gamma}$ has finite defect by \cite[Corollary 6.5]{BN} and satisfies (\ref{eqn: widehat w condition}), we can apply Theorem \ref{thm: phi411 general} for each summand of
\begin{align*}
&\#\lcrc{x \in \cP(1,1)(K) : H_{w}(\phi_{\Gamma}(x)) \leq B, i_{\fp}(x) \in \phi_{\Gamma}^{-1}(\Omega_{\fp, z }) } \\
&= \sum_{\widetilde{z} \in \phi_{\Gamma}^{-1}(z)}
\#\lcrc{x \in \cP(1,1)(K) : H_{w}(\phi_{\Gamma}(x)) \leq B, i_{\fp}(x) \in \Omega_{\fp, \widetilde{z} }}.
\end{align*}
Since the constant $\kappa$ does not depend on the local condition, the main term is
\begin{align*}
    \kappa \cdot r(G) \cdot   B^{\frac{2}{e(\Gamma)}} \cdot \sum_{\widetilde{z} \in \phi_{\Gamma}^{-1}(z)} m_{\fp}\left(\Omega_{\fp, \widetilde{z}}^{\aff} \cap \cO_{K, \fp}^{2}\right).
\end{align*}
Since
\begin{align*}
    m_{\fp}(\Omega_{\fp, \widetilde{z}}^{\aff} \cap  \cO_{K, \fp}^{2})
    = (q-1) \sum_{k=0}^{\infty} m_{\fp}(\Omega_{\fp, \widetilde{z}, k}^{\aff} \cap  \cO_{K, \fp}^{2})
    = (q-1)\sum_{k=1}^{\infty} \frac{1}{q^{2k}} = \frac{1}{q+1},
\end{align*}
this is exactly the main term in the statement.
The additional error term from Theorem \ref{thm: phi411 general} is
\begin{align*}
O\lbrb{
     \lbrb{1 + \frac{1}{q-1}}  B^{\frac{d|u| - u_{\min}}{d e({\Gamma})}}
    \lbrb{1 + q^{u_{\max} + \frac{u_{\min}}{d} - |u| } } \log B }
=O\lbrb{ \lbrb{1 + q^{\frac{1}{d} - 1}} \lbrb{1 + \frac{1}{q}} B^{\frac{d-1}{de({\Gamma})}} \log B }.
\end{align*}
By definition,
\begin{align*}
e({\Gamma}) = \frac{1}{24}[\SL_2(\bZ) : \Gamma].
\end{align*}
So if $\Sigma < \Gamma$, then  $2e({\Gamma}) < e({\Sigma})$.
Hence,
\begin{align*}
    \frac{2}{e({\Sigma})} < \frac{1}{e({\Gamma})} \leq \lbrb{2 - \frac{1}{d}}\frac{1}{e({\Gamma})} = \frac{2d-1}{de({\Gamma})}.
\end{align*}
Therefore,
\begin{align*}
    \max_{\Sigma < \Gamma} \lbrb{ B^{\frac{2}{e({\Sigma})}}  } \ll 
    B^{\frac{d-1}{de({\Gamma})}} \log B, \text{ and } 
     m_{\fp}\left(\Omega_{\fp, z}^{\aff} \cap \cO_{K, \fp}^{2}\right) \ll \lbrb{1 + q^{\frac{1}{d} - 1}} \lbrb{1 + \frac{1}{q}}.
\end{align*}
So the error term part is also deduced.
\end{proof}


\begin{thebibliography}{alpha}

%\bibitem[BCP+08]{BCP} B. Brown, N. J. Calkin, T. B. Flowers, K. James, E. Smith, A. Stout, Elliptic curves, modular forms, and sums of Hurwitz class numbers. \emph{J. Number Theory} \textbf{128} (2008), no.6, 1847–1863.

%\bibitem[Bir68]{Bir} B. J. Birch, How the number of points of an elliptic curve over a fixed prime field varies. J. London Math. Soc. 43 (1968), 57–60.

%\bibitem[BK19]{BK19} K. Bringmann, B. Kane, Sums of class numbers and mixed modular forms. \emph{Math. Proc. Cambridge Philos. Soc.} \textbf{167} (2019), no.2, 321–333.

%\bibitem[BKP]{BKP} K. Bringmann, B. Kane, S. Pujahari, Odd moments for the trace of Frobenius and the Sato--Tate conjecture in arithmetic progression, preprint.

\bibitem[BH22]{BH} M. Bhargava, W. Ho, On average sizes of Selmer groups and ranks in families of elliptic curves having marked points, https://arxiv.org/abs/2207.03309, preprint.

\bibitem[BLR90]{BLR90} S. Bosch, W. L\"utkebohmert, M. Raynaud: \emph{N\'eron models}, Springer-Verlag (1990).

\bibitem[BN22]{BN} P. Bruin, F. Najman, Counting elliptic curves with prescribed level structures over number fields. \emph{J. Lond. Math. Soc. (2)} \textbf{105} (2022), no.4, 2415–2435.

\bibitem[BM23]{BM} P. Bruin, I. Manterola Ayala, Counting rational points on weighted projective spaces over number fields, \url{https://arxiv.org/abs/2302.10967}, preprint.

\bibitem[Box22]{Box} J. Box, Elliptic curves over totally real quartic fields not containing $\sqrt{5}$ are modular. \emph{Trans. Amer. Math. Soc.} \textbf{375} (2022), no.5, 3129–3172.

\bibitem[Bru92]{Bru} A. Brumer, The average rank of elliptic curves I, \emph{Invent. Math.} \textbf{109} (1992), no. 3, pp.445--472. 

\bibitem[BS15]{BS15} M. Bhargava, A. Shankar, Ternary cubic forms having bounded invariants, and the existence of a positive proportion of elliptic curves having rank 0.  
\emph{Ann. of Math. (2)} \textbf{181} (2015), no. 2, pp.587--621. 

\bibitem[BS13]{BS} M. Bhargava, A. Shankar, The average size of the 5-Selmer group of elliptic curves is 6, and the average rank is less than 1, preprint, https://arxiv.org/abs/1312.7859.

\bibitem[BS24]{BS24} B. Boggess, S. Sankar, Counting elliptic curves with a rational $N$-isogeny for small $N$. \emph{J. Number Theory} \textbf{262} (2024), 471–505.

\bibitem[Cox13]{Cox13} D. A. Cox, Primes of the form $x^2+ny^2$.
Fermat, class field theory, and complex multiplication. Second edition
Pure Appl. Math. (Hoboken)
\emph{John Wiley \& Sons, Inc., Hoboken, NJ,} 2013. xviii+356 pp.

%\bibitem[CDKS16]{CDKS} V. Chandee, C. David, D. Koukoulopoulos, E. Smith, The frequency of elliptic curve groups over prime finite fields. \emph{Canad. J. Math.} 68 (2016), no. 4, 721–761. 

\bibitem[CFLS]{CFLS} P. J. Cho, D. Fiorilli, Y. Lee, A. S\"odergren, Omega results for cubic field counts via lower-order terms in the one-level density, \emph{Forum Math. Sigma} \textbf{10} (2022), Paper No. e80, 33 pp.

\bibitem[CKV22]{CKV} J. Cullinan, M. Kenney, J. Voight, On a probabilistic local-global principle for torsion on elliptic curves. \emph{J. Th\'eor. Nombres Bordeaux} \textbf{34} (2022), no.1, 41–90.


\bibitem[CJ23a]{CJ1} P. J. Cho, K. Jeong, On the distribution of analytic ranks of elliptic curves, \emph{Math. Z.} \textbf{305}, 42 (2023)

\bibitem[CJ23b]{CJ2} P. J. Cho, K. Jeong, The average analytic rank of elliptic curves with prescribed torsion. \emph{J. Lond. Math. Soc. (2)} \textbf{107} (2023), no.2, 616–657.

\bibitem[CJ24]{CJ3} P. J. Cho, K. Jeong, Corrigendum: The average analytic rank of elliptic curves with prescribed torsion. \emph{J. Lond. Math. Soc. (2)} \textbf{110} (2024), no. 6, Paper No. e70032, 11 pp.

%\bibitem[CJP25]{CJP2} P. J. Cho, K. Jeong, J. Park, Average analytic rank of elliptic curves with prescribed level structure II, preprint.

\bibitem[CN23]{CN} A. Caraiani, J. Newton, On the modularity of elliptic curves over imaginary quadratic fields, preprint, \url{https://arxiv.org/abs/2301.10509}.

\bibitem[Con07]{Con} B. Conrad, Arithmetic moduli of generalized elliptic curves. \emph{J. Inst. Math. Jussieu} \textbf{6} (2007), no.2, 209–278.


\bibitem[CP03]{CP03} C. J. Cummins, S. Pauli, Congruence subgroups of $\mathrm{PSL}_2(\mathbb{Z})$ of genus less than or equal to $24$. \emph{Experiment. Math.} \textbf{12} (2003), no.2, 243–255.

\bibitem[CS21]{CS} J. E. Cremona, M. Sadek, Local and global densities for Weierstrass models of elliptic curves. \emph{Math. Res. Lett.} \textbf{30} (2023), no. 2, 413–461.

\bibitem[Dar21]{Dar} R. Darda, Rational points of bounded height on weighted projective stacks, Ph.D. thesis.

\bibitem[Den98]{Den} A. W. Deng, Rational Points on Weighted projective Spaces, preprint, \url{https://arxiv.org/abs/math/9812082}.

\bibitem[DNS20]{DNS} M. Derickx, F. Najman, S. Siksek, Elliptic curves over totally real cubic fields are modular. \emph{Algebra Number Theory} \textbf{14} (2020), no.7, 1791–1800.

\bibitem[DR73]{DR73} P. Deligne, M. Rapoport, Les schémas de modules de courbes elliptiques, Lecture Notes in Math., Vol. 349, Springer, Berlin, 1973.

\bibitem[DS05]{DS} F. Diamond, J. Shurman, A first course in modular forms. Grad. Texts in Math., 228 \emph{Springer-Verlag, New York,} 2005. xvi+436 pp.

%\bibitem[DS13]{DS1} D. Chantal, E. Smith, Elliptic curves with a given number of points over finite fields. \emph{Compos. Math.} 149 (2013), no. 2, 175–203.

%\bibitem[DS14a]{DS1e} D. Chantal, E. Smith, Corrigendum: Elliptic curves with a given number of points over finite fields. \emph{Compos. Math.} 150 (2014), no. 8, 1347–1348.

%\bibitem[DS14b]{DS2} D. Chantal, E. Smith, A Cohen-Lenstra phenomenon for elliptic curves. \emph{J. Lond. Math. Soc. (2)} 89 (2014), no. 1, 24–44. 

%\bibitem[DS14c]{DS2e} D. Chatal, E. Smith, Corrigendum: A Cohen-Lenstra phenomenon for elliptic curves(J. London Math. Soc. 89 (2014) 24–44). \emph{J. Lond. Math. Soc. (2)} 89 (2014), no. 1, 45–46. 

%\bibitem[ESZ23]{ESZ} J. S. Ellenberg, M. Satriano, D. Zureick-Brown, Heights on stacks and a generalized Batyrev-Manin-Malle conjecture. Forum Math. Sigma 11 (2023), Paper No. e14, 54 pp.

\bibitem[FLS15]{FLS} N. Freitas, B. V. Le Hung, S. Siksek, Elliptic curves over real quadratic fields are modular. \emph{Invent. Math.} \textbf{201} (2015), no.1, 159–206.

%\bibitem[GS03]{GS} A. Granville, K. Soundararajan, The distribution of values of $L(1, \chi_d)$. \emph{Geom. Funct. Anal.} 13 (2003), no. 5, 992–1028.

%\bibitem[GS04]{GS2} A. Granville, K. Soundararajan, Errata to: The distribution of values of $L(1, \chi_d)$, \emph{Geom. Funct. Anal.} 14 (2004), no. 1, 245–246.


\bibitem[Gro90]{Gro90} B. H. Gross, A tameness criterion for Galois representations associated to modular forms $\pmod p$, \emph{Duke Math. J.} \textbf{61} (1990), no 2, 445--517.


\bibitem[Hea04]{Hea} D. R. Heath-Brown, The average analytic rank of elliptic curves, \emph{Duke Math. J.} \textbf{122} (2004), no. 3, pp.591-623.  

%\bibitem[Hin81]{Hin} J. G. Hinz, On the theorem of Barban and Davenport-Halberstam in algebraic number fields. \emph{J. Number Theory} 13 (1981), no. 4, 463–484. 

\bibitem[HS14]{HS} R. Harron, A. Snowden, Counting elliptic curves with prescribed torsion, \emph{J. Reine Angew. Math.} \textbf{729} (2017), pp.151--170.

\bibitem[Hus04]{Hus} D. Husem\"oller, Elliptic curves, Second edition. With appendices by Otto Forster, Ruth Lawrence, and Stefan Theisen
Grad. Texts in Math., 111 Springer-Verlag, New York, 2004. xxii+487 pp.

\bibitem[IK04]{IK} H. Iwaniec, E. Kowalski, Analytic Number Theory, Amer. Math. Soc. Colloq. Publ., 53
American Mathematical Society, Providence, RI, 2004. xii+615 pp

\bibitem[IIY22]{IIY} Y. Ishitsuka, T. Ito, S. Yoshikawa, The modularity of elliptic curves over all but finitely many totally real fields of degree $5$. \emph{Res. Number Theory} \textbf{8} (2022), no.4, Paper No. 82, 23 pp.

\bibitem[KP17]{KP2} N. Kaplan, I. Petrow, Elliptic curves over a finite field and the trace formula. \emph{Proc. Lond. Math. Soc. (3)} \textbf{115} (2017), no.6, 1317–1372.

\bibitem[KM85]{KM85} N. Katz, B. Mazur, Arithmetic moduli of elliptic curves, Ann. of Math. Stud., 108. \emph{Princeton University Press, Princeton, NJ}, 1985. xiv+514 pp.

\bibitem[Lag13]{Lag} J. Laga, Graded Lie Algebras, Compactified Jacobians and Arithmetic Statistics, thesis.

%\bibitem[Len11]{Len} C. Lennon, Trace formulas for Hecke operators, Gaussian hypergeometric functions, and the modularity of a threefold. \emph{J. Number Theory} 131 (2011), no. 12, 2320–2351. 

\bibitem[LMFDB]{LMFDB} The LMFDB Collaboration, The $L$-functions and modular forms database, \url{https://www.lmfdb.org}, 2025.

%\bibitem[LO75]{LO} J. C. Lagarias, A. M. Odlyzko, Effective versions of the Chebotarev density theorem. Algebraic number fields: L-functions and Galois properties (Proc. Sympos., Univ. Durham, Durham, 1975), 409--464. Academic Press, Inc. [Harcourt Brace Jovanovich, Publishers], London-New York, 1977

\bibitem[MV23]{MV} G. Molnar, J. Voight, Counting elliptic curves over the rationals with a 7-isogeny. \emph{Res. Number Theory} \textbf{9} (2023), no. 4, Paper No. 75, 31 pp.



%\bibitem[KP18]{KP} N. Kaplan, I. Petrow, Traces of Hecke operators and refined weight enumerators of Reed-Solomon codes. \emph{Trans. Amer. Math. Soc.} 370 (2018), no. 4, 2537–2561. 

%\bibitem[KP23]{KP23} B. Kane, S. Pujahari, Distribution of moments of Hurwitz class numbers in arithmetic progressions and holomorphic projection, preprint.

%\bibitem[MPS14]{MPS} G. Martin, P. Pollack, E. Smith,  Averages of the number of points on elliptic curves. \emph{Algebra Number Theory} 8 (2014), no. 4, 813–836.

\bibitem[Pet18]{Petrow18} I. Petrow, Bounds for traces of Hecke operators and applications to modular and elliptic curves over a finite field, \emph{Algebra Number Theory} \textbf{12} (2018), no.10, 2471–2498.

\bibitem[Phi24]{Phi1} T. Phillips, Rational points of bounded height on some genus zero modular curves over number fields, preprint, \url{https://arxiv.org/abs/2201.10624}.

\bibitem[Phi25]{Phi2} T. Phillips, Average analytic ranks of elliptic curves over number fields. \emph{Forum Math. Sigma} \textbf{13} (2025), Paper No. e40, 36 pp.

%\bibitem[Phi22c]{Phi3} T. Phillips, Average analytic ranks of elliptic curves over number fields, preprint, version 20240202.

\bibitem[PPV20]{PPV} M. Pizzo, C. Pomerance, J. Voight, Counting elliptic curves with an isogeny of degree three. \emph{Proc. Amer. Math. Soc. Ser. B} \textbf{7} (2020), 28–42.

\bibitem[PR12]{PR12} B. Poonen, E. Rains, Random maximal isotropic subspaces and Selmer groups. \emph{J. Amer. Math. Soc.} \textbf{25} (2012), no. 1, 245–269.

\bibitem[PS21]{PS} C. Pomerance, E. F. Schaefer, Elliptic curves with Galois-stable cyclic subgroups of order 4. \emph{Res. Number Theory} \textbf{7} (2021), no. 2, Paper No. 35, 19 pp.


\bibitem[Ser97]{Ser97} J-P. Serre, Lectures on the Mordell-Weil theorem. Third edition. Aspects Math. \emph{Friedr. Vieweg \& Sohn, Braunschweig,} 1997. x+218 pp.


%\bibitem[Sch87]{Sch} R. Schoof, Nonsingular plane cubic curves over finite fields. \emph{J. Combin. Theory Ser. A} \textbf{46} (1987), no.2, 183–211.

\bibitem[Sha13]{Sha} A. Shankar,  The average rank of elliptic curves over number fields. Thesis (Ph.D.)–Princeton University \emph{ProQuest LLC, Ann Arbor, MI,} 2013. 54 pp.

\bibitem[Sil09]{Sil} J. H. Silverman, The arithmetic of elliptic curves. Second edition. Grad. Texts in Math., 106 Springer, Dordrecht, 2009. xx+513 pp.

\bibitem[Sil94]{Sil2} J. H. Silverman, Advanced topics in the arithmetic of elliptic curves. Grad. Texts in Math., 151 Springer-Verlag, New York, 1994. xiv+525 pp.

%\bibitem[Sil89]{Sil89} J. H. Silverman, Elliptic curves of bounded degree and height. \emph{Proc. Amer. Math. Soc.} \textbf{105} (1989), no.3, 540–545.

%\bibitem[Smi09]{Smi1} E. Smith, A generalization of the Barban-Davenport-Halberstam theorem to number fields. \emph{J. Number Theory} 129 (2009), no. 11, 2735–2742. 

%\bibitem[Smi10]{Smi2} E. Smith, A Barban-Davenport-Halberstam asymptotic for number fields. \emph{Proc. Amer. Math. Soc.} 138 (2010), no. 7, 2301–2309. 

\bibitem[Sta18]{Stacks}
Stacks Project Authors: \emph{The Stacks Project}, \url{https://stacks.math.columbia.edu} (2018)

\bibitem[You06]{You} M. P. Young, Low-lying zeros of families of elliptic curves, \emph{J. Amer. Math. Soc.} \textbf{19} (2006), no. 1, 205--250. 

\end{thebibliography}
\end{document}